\numberwithin{equation}{section}
\renewcommand\d{\partial}
\renewcommand\a{\alpha}
\renewcommand\b{\beta}
\newcommand\s{\sigma}
\newcommand\R{\mathbb R}\newcommand\N{\mathbb N}
\def\g{\gamma}
\def\de{\mathfrak{z}}
\def\O{\Omega}
\def\th{\theta}
\def\l{\lambda}
\def\vp{\varphi}
\def\epsilon{\varepsilon}
\def\e{\varepsilon}
\newcommand\br{\begin{rem}}
\newcommand\er{\end{rem}}
\newcommand\bp{\begin{pmatrix}}
\newcommand\ep{\end{pmatrix}}
\newcommand\be{\begin{equation}}
\newcommand\ee{\end{equation}}
\newcommand\ba{\begin{equation}\begin{aligned}}
\newcommand\ea{\end{aligned}\end{equation}}
\newcommand{\TT}{{\mathbb T}}
\newcommand{\KK}{{\mathbb K}}
\newcommand{\CC}{{\mathbb C}}
\newcommand{\AAA}{{\mathbb A}}
\newcommand{\BB}{{\mathbb B}}
\newcommand{\II}{{\mathbb I}}
\newcommand{\DD}{{\mathbb D}}
\newcommand{\SSS}{{\mathbb S}}
\newcommand{\OO}{{\mathbb O}}
\newcommand{\PP}{{\mathbb P}}
\newcommand{\QQ}{{\mathbb Q}}
\newcommand{\YY}{{\mathbb Y}}
\newcommand{\tr}{{\rm tr }}
\newcommand{\vv}{{\mathbf v}}
\newcommand{\ff}{{\mathbf f}}
\newcommand{\ww}{{\mathbf w}}
\newcommand{\vvarphi}{{\boldsymbol \varphi}}
\newcommand{\Ov}[1]{\overline{#1}}
\newcommand{\DC}{C^\infty_c}
\newcommand{\vr}{\varrho}
\newcommand{\vu}{\vc{u}}
\newcommand{\vc}[1]{{\bf #1}}
\newcommand{\Div}{{\rm div}}
\newcommand{\Grad}{\nabla_x}
\newcommand{\dx}{{\rm d} {x}}
\newcommand{\dt}{{\rm d} t }
\newcommand{\dq}{{\rm d} q }
\newcommand{\intO}[1]{\int_{\O} #1 \, \dx}
\newtheorem{definition}{Definition}[section]
\newtheorem{theorem}[definition]{Theorem}
\newtheorem{proposition}[definition]{Proposition}
\newtheorem{lemma}[definition]{Lemma}
\newtheorem{remark}[definition]{Remark}
\numberwithin{equation}{section}
\def\ocirc#1{\ifmmode\setbox0=\hbox{$#1$}\dimen0=\ht0
    \advance\dimen0 by1pt\rlap{\hbox to\wd0{\hss\raise\dimen0
    \hbox{\hskip.2em$\scriptscriptstyle\circ$}\hss}}#1\else
    {\accent"17 #1}\fi}
\begin{document}

\title[Global weak solutions to a compressible Oldroyd-B
model]{Existence of large-data finite-energy global weak solutions\\to a
compressible Oldroyd-B model}

\author[John W. Barrett]{John W. Barrett}
\address{Department of Mathematics, Imperial College London, London SW7 2AZ, UK}
\email{j.barrett@imperial.ac.uk}

\author[Yong Lu]{Yong Lu}
\address{Faculty of Mathematics and Physics, Charles University, Sokolovsk\'a 83, 186 75 Prague, Czech Republic}
\email{luyong@karlin.mff.cuni.cz}

\author[Endre S\"{u}li]{Endre S\"{u}li}
\address{Mathematical Institute, University of Oxford\\ Andrew Wiles Building, Woodstock Rd., Oxford OX2 6GG, UK}
\email{suli@maths.ox.ac.uk}

\keywords{Weak solution; Compressible Navier--Stokes equation; Oldroyd-B model}
\subjclass[2010]{35A01, 35Q35, 76A05}

\date{}

\begin{abstract}

A compressible Oldroyd--B type model with stress diffusion is derived from a compressible Navier--Stokes--Fokker--Planck system arising in the kinetic theory of dilute polymeric fluids, where polymer chains immersed in a barotropic, compressible, isothermal, viscous Newtonian solvent, are idealized as pairs of massless beads connected with Hookean springs.
We develop \textit{a priori} bounds for the model, including a logarithmic bound,
which guarantee the nonnegativity of the elastic extra stress tensor, and we prove the existence of large data global-in-time finite-energy weak solutions in two space dimensions.
\end{abstract}

\maketitle


\renewcommand{\refname}{References}


~\vspace{-4mm}

\section{Introduction}

Micro-macro models of dilute polymeric fluids that arise from statistical physics are based on coupling the Navier--Stokes system to the Fokker--Planck equation. In these models polymer molecules are idealized as chains of massless beads, linearly connected with inextensible rods or elastic springs. In the simplest case of two massless beads connected with a single Hookean spring the elastic spring-force is assumed to be a linear function of the conformation vector $q \in \mathbb{R}^d$, $d \in \{2,3\}$, describing the orientation of the spring, and the model is referred to as the \emph{Hookean dumbbell model}. An interesting aspect of the Hookean dumbbell model is that it has a (formal) macroscopic closure in the sense that the macroscopic evolution equation for the elastic extra stress tensor associated with the classical Oldroyd-B model with stress diffusion can be deduced from it by multiplying the Fokker--Planck equation with the rank-1 matrix $q \otimes q:=q q^{\rm T}$, integrating with respect to $q$ over a ball $B(0,R) \subset \mathbb{R}^d$ of radius $R>0$, and performing (formal) partial integrations, where contour/surface integrals over $\partial B(0,R)$ are set to zero in the limit of $R \rightarrow +\infty$, by postulating that the (nonnegative) probability density function satisfying the Fokker--Planck equation decays to $0$
sufficiently rapidly as $|q| \rightarrow \infty$.

In \cite{Barrett-Suli} and \cite{BS2016},  Barrett \& S\"uli proved the existence of large data global-in-time finite-energy weak solutions to a compressible Navier--Stokes--Fokker--Planck system, where the solvent was assumed to be a barotropic, compressible, isothermal, viscous Newtonian fluid confined to a bounded domain $\O\subset \R^d$, $d \in \{2,3\}$, and where the elastic spring force was, instead of a Hookean spring potential, modelled by a finitely extensible nonlinear elastic (FENE-type) spring potential. In \cite{FLS16} the results of \cite{Barrett-Suli} were extended to compressible
Navier--Stokes--Fokker--Planck systems with viscosity coefficients that depend on the polymer number density.
The aim of the present paper is to explore the existence of weak solutions to a fully macroscopic model, the compressible Oldroyd-B system, which arises, upon the formal macroscopic closure described above, from a compressible Navier--Stokes--Fokker--Planck system, with polymer chains idealized as Hookean dumbbells. The main contribution of the paper is the proof, in the case of two space dimensions ($d=2$), of the existence of large data global-in-time finite-energy weak solutions to this model.
In the case of the \textit{incompressible} Oldroyd-B model with stress diffusion in two space dimensions
the existence of large data global weak solutions was shown by Barrett \& Boyaval \cite{Barrett-Boyaval},
and the existence and uniqueness of a global strong solution,
again in two space dimensions, was proved by Constantin and Kliegl \cite{CK}.
The question of existence of large data global weak solutions to both the \textit{incompressible} and the \textit{compressible} Oldroyd-B model with stress diffusion remains a nontrivial open problem in the case of $d=3$.
We note in passing that in the \textit{incompressible} case the existence of
global weak solutions to the Navier--Stokes--Fokker--Planck system with Hookean dumbbells was recently proved in \cite{Barrett-Suli-in-prep} for $d=2$, as part of the research programme initiated in the series of papers \cite{Barrett-Suli2,Barrett-Suli4,Barrett-Suli1}; for $d=3$ the problem is open, and the problem is also open in the \textit{compressible} case for both $d=2$ and $d=3$. The results of the present paper may however lead one to speculate that in the case of $d=2$ at least the question of existence of large data global-in-time finite-energy weak solutions to the Hookean dumbbell model for the compressible Navier--Stokes--Fokker--Planck system can also be answered positively.

For the moment we shall keep the presentation general, with $\Omega \subset \mathbb{R}^d$ assumed to be a bounded open domain with $C^{2,\beta}$ boundary (briefly, a $C^{2,\beta}$ domain), with $\beta \in (0,1)$, and $d \in \{2,3\}$. In subsequent instances, whenever we are forced to restrict ourselves to the case of $d=2$ this restriction will be clearly stated. We consider the following compressible Oldroyd-B model, posed in the time-space cylinder $(0,T]\times \O$:
\begin{alignat}{2}
\label{01a}
\d_t \vr + \Div_x (\vr \vu) &= 0,
\\
\label{02a}
\d_t (\vr\vu)+ \Div_x (\vr \vu \otimes \vu) +\nabla_x p(\vr)  -
\Div_x\, \SSS(\nabla_x \vu) &=\Div_x \big(\TT - (kL\eta + \de\, \eta^2)\,\II\, \big)  +  \vr\, \ff,
\\
\label{03a}
\d_t \eta + \Div_x (\eta \vu) &= \e \Delta_x \eta,
\\
\label{04a}
\d_t \TT + {\rm Div}_x (\vu\,\TT) - \left(\nabla_x \vu \,\TT + \TT\, \nabla_x^{\rm T} \vu \right) &= \e \Delta_x \TT + \frac{k\,A_0}{2\lambda}\eta  \,\II - \frac{A_0}{2\lambda} \TT,
\end{alignat}
where the pressure $p$ and the density $\vr$ of the solvent are supposed to be related by the typical power law relation:
\be\label{pressure}
 p(\vr)=a \vr^\gamma, \quad a>0, \ \gamma >\frac{d}{2},
\ee
and the {\em Newtonian stress tensor} $\!\!~\SSS(\nabla_x \vu)$ is defined by
\be\label{Newtonian-tensor}
\SSS(\nabla_x \vu) = \mu^S \left( \frac{\nabla_x \vu + \nabla^{\rm T}_x \vu}{2} - \frac{1}{d} (\Div_x \vu) \II \right) + \mu^B (\Div_x \vu) \II,
\ee
with constant shear and bulk viscosity coefficients, respectively, $\mu^S>0$ and $\mu^B\geq 0$. The velocity gradient matrix is defined as
\be\label{def-nabla-u}
( \nabla_x \vu )_{1\leq i,j\leq d}= (\d_{x_j} \vu_i)_{1\leq i,j\leq d}.
\ee
The symmetric matrix function $\TT = (\TT_{\kappa,\iota})$, $1\leq \kappa,\iota\leq d$, defined on $(0,T]\times \O$, is the extra stress tensor and the notation ${\rm Div}_x(\vu\,\TT)$ is defined by
\be\label{def-Div-tau}
\left({\rm Div}_x(\vu\,\TT)\right)_{\kappa,\iota} = \Div_{ x}(\vu\,\TT_{\kappa,\iota}), \quad 1\leq \kappa,\iota\leq d.
\ee
The meaning of the various quantities and parameters appearing in \eqref{01a}--\eqref{04a} will be introduced in the derivation of the model in Section \ref{sec:der-Oldroyd-B}. In particular, the parameters $\e$, $k$, $A_0$, $\l$ are all positive numbers, whereas $\de \geq 0$ and $L\geq 0$ with $\de + L \neq 0$.
We note in passing that, in contrast with the \textit{compressible} Oldroyd-B model considered here, in the case of the \textit{incompressible} Oldroyd-B model the term $\Div_x \big((kL\eta + \de\, \eta^2)\,\II\, \big)$ appearing on the right-hand side of \eqref{02a} plays no particular role in the proof of the existence of global weak solutions and can be absorbed into the pressure
term $\nabla_x p$ on the left-hand side of the equation.

The equations \eqref{01a}--\eqref{04a} are supplemented
by initial conditions for $\vr$, $\vu$, $\eta$ and $\TT$,
and the following boundary conditions:
\begin{alignat}{2}
\label{05a}
\vu&=\mathbf{0}   &&\quad \mbox{on}\ (0,T]\times \partial\Omega,
\\
\label{06a}
\d_{\bf n} \eta  &=0 &&\quad \mbox{on}\  (0,T]\times \d\O,
\\
\label{07a}
\d_{\bf n} \TT &=0 &&\quad \mbox{on}\  (0,T]\times \d\O.
\end{alignat}
Here $\d_{\bf n}:= {\bf n}\cdot \nabla_x$, where ${\bf n}$ is the outer unit normal vector on the boundary $\d\O$,
and the external force $\ff$ is assumed to be an element of the function space $L^\infty((0,T]\times \O;\R^d)$.

Our proof is based on several levels of regularization, the first of which
involves supplementing \eqref{02a} by an additional term, including the regularization parameter $\alpha>0$,
and replacing $\eta$ in \eqref{04a} by $\eta +\alpha$. The procedure results in the following regularized compressible Oldroyd-B model,
posed on $(0,T] \times \O$:
\begin{alignat}{2}
\label{01}
\d_t \vr + \Div_x (\vr \vu) &= 0,
\\
\label{02}
\d_t (\vr\vu)+ \Div_x (\vr \vu \otimes \vu) +\nabla_x p(\vr) + \nabla_x\big(kL\eta+\de\,\eta^2\big) &-
\Div_x\, \SSS(\nabla_x \vu) =\Div_x \TT  - \frac{\alpha}{2}\,\nabla_x  \tr\left(\log \TT \right)+  \vr\, \ff,
\\
\label{03}
\d_t \eta + \Div_x (\eta \vu) &= \e \Delta_x \eta,
\\
\label{04}
\d_t \TT + {\rm Div}_x (\vu\,\TT) - \left(\nabla_x \vu \,\TT + \TT\, \nabla_x^{\rm T} \vu \right) &= \e \Delta_x \TT + \frac{k\,A_0}{2\lambda}(\eta+\alpha)  \,\II - \frac{A_0}{2\lambda} \TT.
\end{alignat}
The equations \eqref{01}--\eqref{04}
are once again supplemented
by initial conditions for $\vr$, $\vu$, $\eta$ and $\TT$,
and the boundary conditions (\ref{05a})--(\ref{07a}).
The regularization term on the right-hand side of (\ref{02a}) presupposes that $\TT$ is
symmetric positive definite, but this will be proved rigorously below in the case
of $d=2$,
provided that $\TT$ is
symmetric positive definite at $t=0$.
In the final step of the proof, we shall pass to the limit $\alpha \rightarrow 0$ with the regularization parameter $\alpha$.

The paper is organized as follows. In Section \ref{sec:der-Oldroyd-B}, we shall derive the compressible Oldroyd-B model \eqref{01a}--\eqref{07a} from the compressible Navier--Stokes--Fokker--Planck system in the Hookean dumbbell setting.  In the cental part of the paper, between Section \ref{sec:a-priori} and the first part of Section \ref{sec:completion-proof}, we shall focus on the regularized model
\eqref{01}--\eqref{04}, with $\a>0,\ \de>0$, and the global-in-time existence of weak solutions in two-dimensional space will be proved in this case. In the second part of Section \ref{sec:completion-proof}, we will show the global-in-time existence of weak solutions in two-dimensional space to the original model \eqref{01a}--\eqref{07a} when $\de>0$, by passing to the limit $\a\to 0$. Finally, the existence result in the case of $\de=0$ will be established in Section \ref{sec:de-to-0}, by passing to the limit $\de\to 0$. We note that the condition $L>0$ is only needed in the passage to the limit $\de\to 0$; in other words, as long as $\de>0$, it suffices to assume that $L \geq 0$.

The mathematical analysis of compressible viscoelastic fluid flow models has been the subject of active research in recent years.
The existence and uniqueness of local strong solutions and the existence of global solutions near
equilibrium for macroscopic models of three-dimensional compressible viscoelastic fluids was considered in \cite{Hu-Wang1,Qian-Zhang,
Qian,Hu-Wang2,Hu-Wang3,Hu-Wu}.
Fang and Zi \cite{Fang-Zi}
proved the existence of a unique local strong solution to a compressible Oldroyd-B model for all initial data satisfying a certain compatibility condition, and established a blow-up criterion for strong solutions.
Lei \cite{Lei} proved the local and global existence of classical solutions to a compressible Oldroyd-B system in a torus with small initial data; he also studied the incompressible limit problem and showed that solutions to the compressible flow model with well-prepared initial data converge to those of the incompressible model when the Mach number converges to zero.
Guillop\'{e}, Salloum, and Talhouk \cite{GST} investigated weakly compressible viscoelastic fluids
satisfying the Oldroyd constitutive law;
they obtained \textit{a priori} estimates that are uniform in the Mach number, which then allowed them to prove that weakly compressible flows with well-prepared initial data converge to incompressible flows when the Mach number converges to zero.
The existence of measure-valued solutions to
non-{N}ewtonian compressible, isothermal, monopolar fluid flow models was studied
by Ne\v{c}asov\'{a} in \cite{Necasova1,Necasova2};
for bipolar isothermal non-{N}ewtonian compressible
fluids related analysis was pursued in \cite{Necasova3}.
In a series of papers (cf. \cite{Mamontov1,Mamontov2,Mamontov3})
Mamontov developed \textit{a priori} estimates for two- and three-dimensional
compressible nonlinear viscoelastic flow problems and studied the existence of solutions.
Zhikov \& Pastukhova \cite{Zhik-Past} proved the existence of global weak solutions to a
class of
compressible viscoelastic flow models with $p$-Laplacian structure. There is also a substantial literature in chemical
engineering on the use of the compressible Oldroyd-B system in modelling bubble dynamics in compressible viscoelastic
liquids (cf., for example, \cite{brujan}).
Bae \& Trivisa \cite{BaeKon} have established the existence of global weak solutions to
Doi's rod-model in three-dimensional bounded domains; the model concerns suspensions of
rod-like molecules in compressible fluids and involves the coupling of a
Fokker--Planck type equation with the compressible Navier--Stokes system. In a related context,
Jiang, Jiang \& Wang \cite{Jiang-Jiang-Wang} have studied the existence of
global weak solutions to the equations of compressible flow of nematic liquid crystals in two dimensions.
For a survey of macroscopic models of compressible viscoelastic flow, the reader is referred to the paper by Bollada \& Phillips \cite{Boll-Phil-2012}. As was noted there, even for isothermal viscoelastic models, the transition from the incompressible to the compressible case is nontrivial; in fact, the precise form of temperature-dependence in compressible viscoelastic models is not yet properly understood, the development of complete, thermodynamically consistent, models being the subject of ongoing research. We shall therefore confine ourselves here to the isothermal setting, with the temperature assumed to be held fixed.

\section{Derivation of the compressible Oldroyd-B model}\label{sec:der-Oldroyd-B}

In this section we recall the compressible Navier--Stokes--Fokker--Planck system considered in \cite{Barrett-Suli1}.
We shall then (formally) derive from it the compressible Oldroyd-B model \eqref{01a}--\eqref{07a}
by considering the special case of the compressible Hookean dumbbell model and formulating its (formal) macroscopic
closure.

\subsection{Compressible Navier--Stokes--Fokker--Planck system}\label{sec:derivation-NSFP}
The solvent density $\vr$ and the solvent velocity field $\vu$ are defined in  $(0,T]\times \O$ and  $(0,T]\times \overline{\O}$, respectively, with $T>0$, and satisfy the compressible Navier--Stokes equations with an elastic extra stress-tensor $\KK$:
\begin{alignat}{2}\label{NS1}
\d_t \vr + \Div_x (\vr \vu) &= 0\qquad &&\mbox{in $(0,T] \times \Omega$},
\\
\label{NS2}
 \d_t (\vr\vu)+ \Div_x (\vr \vu \otimes \vu) +\nabla_x p(\vr) -
\Div_x\, \SSS(\nabla_x \vu) &=\Div_x \KK +\vr\, \ff \qquad
&&\mbox{in $(0,T] \times \Omega$}.
\end{alignat}
The pressure $p(\vr)$ and the {\em Newtonian shear stress tensor} $\SSS$ are defined by \eqref{pressure} and \eqref{Newtonian-tensor}.
We shall impose a no-slip boundary condition on the velocity field; i.e.,
\be\label{NS4}
\vu=\mathbf{0}\quad \mbox{on}\ (0,T]\times \partial\Omega.
\ee

\medskip

In \emph{a bead-spring chain model} consisting of $K+1$ beads coupled with $K$ elastic springs representing a polymer chain,  the non-Newtonian elastic extra stress tensor $\KK$ is defined by a version of the Kramers expression (cf. \eqref{def-TT0} below), depending on the probability density function $\psi$, which, in addition to $t$ and $x$, also depends on the conformation vector $(q_1^{\rm T},\dots, q_K^{\rm T})^{\rm T} \in \R^{dK}$, with $q_i$ representing the $d$-component \emph{conformation/orientation vector} of the $i$th spring in the chain. Let $D:=D_1\times \cdots \times D_K \subset \R^{dK}$ be the domain of admissible conformation vectors. Typically $D_i$ is the whole space $\R^d$ or a bounded open ball centered at the origin $0$ in $\R^d$, for each $i=1,\dots,K$. When $K=1$, the model is referred to as the \emph{dumbbell model}. Here we consider the Hookean bead-spring chain model, where $D_i = \R^d$ for all $i \in \{1,\dots,K\}$, and the \emph{elastic spring-force} $F_i : q_i \in D_i \mapsto  U_i'(\frac{1}{2}|q_i|^2)q_i \in \R^d$ and the \emph{spring potential} $U_i : \R_{\geq 0} \to \R_{\geq 0} $ of the $i$th spring in the chain are defined by
\be\label{def-Fi-Ui}
F_i(q_i)=q_i\ \mbox{for all $q_i\in D_i$},\quad U_i(s)=s \ \mbox{for all $s\geq 0$}, \quad \mbox{$i=1,\ldots,K$}.
\ee

The {\em extra-stress tensor} $\KK$ is defined by the formula:
\be\label{def-TT0}
\KK (\psi)(t,x) := \KK_1 (\psi) (t,x) -\left(\int_{D\times D} \gamma(q,q')\, \psi(t,x,q)\,\psi(t,x,q') \, \dq\,\dq'\right)\II,
\ee
where, similarly to \cite{Barrett-Suli,FLS16}, the interaction kernel $\g$ is assumed to be a nonnegative constant $\gamma(q,q')\equiv \de \geq 0.$
Consequently,
\be\label{def-TT}
\KK (\psi) := \KK_1 (\psi)  -\de \left(\int_{D}  \psi \,\dq\right)^2\II .
\ee
The first part, $\KK_1(\psi)$, of $\KK(\psi)$ is given by the {\em Kramers expression}
\be\label{def-TT1}
\KK_1(\psi) := k \left[\left(\sum_{i=1}^K \CC_i(\psi)\right)- L \left(\int_{D}  \psi \ \dq\right)\II\right],
\ee
where $k>0$ is the product of the Boltzmann constant and the absolute temperature, $L=K+1$ is the number of beads in the polymer chain in the classical Kramers expression (in our setting $L$ can be taken to be any nonnegative real number as long as $\de>0$; in order to cover the case of $\de=0$ in the final step of our proof we pass to the limit $\de\to 0$, and this requires that $L>0$ in this step), and
\be\label{def-Ci}
\CC_{i}(\psi)(t,x) := \int_D \psi(t,x,q)\, U_i'\bigg(\frac{|q_i|^2}{2}\bigg)\, q_i q_i^{\rm T}\,\dq, \quad i=1,\dots,K.
\ee

By noting \eqref{def-Fi-Ui}, one deduces from \eqref{def-TT0}--\eqref{def-Ci} that in the Hookean case
\be\label{def-TT-f}
\KK (\psi) = \TT -\left(k L  \eta  + \de\,  \eta^2\right) \II ,
\ee
where
\be\label{def-tau-eta}
\TT(t,x):=k\sum_{i=1}^K \int_D \psi(t,x,q)\, q_i q_i^{\rm T}\,\dq,\quad \eta(t,x):= \int_{D} \psi(t,x,q)\, \dq,
\ee
with the quantity $\eta$ being called the \emph{polymer number density}. We thus arrive at the momentum equation \eqref{02a}. Since $\psi$ is a probability density function, and
therefore nonnegative a.e. on $[0,T] \times \Omega \times D$ and $\int_{\Omega \times D} \psi(t,x,q)\, \dx\, \dq = 1$ for a.e.
$t \in [0,T]$, it is clear from \eqref{def-tau-eta} that $\TT(t,x)$ is symmetric and
nonnegative definite for a.e. $(t,x) \in [0,T] \times \Omega$.

We introduce the {\em partial Maxwellian} $M_i : D_i \to [0,\infty)$ by
\[\label{def-Mi}
M_i(q_i) := \frac{1}{Z_i} {\exp}\left(- U_i\left(\frac{1}{2}|q_i|^2\right)\right),\quad \mbox{where } Z_i:=\int_{D_i} \exp\bigg(- U_i\left(\frac{1}{2}|p_i|^2\right)\bigg)\,{\rm d}p_i, \qquad i=1,\dots,K.
\]
The {\em Maxwellian} $M : D \to [0,\infty)$ is then defined as the product of the $K$ partial Maxwellians: i.e., for any $q=(q_1^{\rm T},\dots, q_K^{\rm T})^{\rm T}$ contained in $D = D_1\times\cdots \times D_K$, we have that
\[M(q) := \prod_{i=1}^K M_i(q_i).\]
Clearly,
$
\int_D M(q)\,\dq=1.
$

The probability density function $\psi$ satisfies the following {\em Fokker--Planck equation} in $(0,T]\times \O\times D$:
\ba\label{eq-psi}
\d_t \psi + \Div_x (\vu\,\psi) + \sum_{i=1}^K \Div_{q_i} \left(  (\nabla_x \vu)\, q_i\, \psi \right)= \e \Delta_x \psi+\frac{1}{4\lambda} \sum_{i=1}^K\sum_{j=1}^K A_{ij}\, \Div_{q_i}\!\left( M \nabla_{q_j} \left( \frac{\psi}{M} \right)\right).
\ea
A simple calculation reveals that in the case of Hookean springs, when $U_i(\frac{1}{2}|q_i|^2) = \frac{1}{2}|q_i|^2$,
 $i=1,\dots,K$, the expression appearing in the second term on the right-hand side of \eqref{eq-psi} can be rewritten as follows:
\ba\label{eq-psi-lastterm}
 M \nabla_{q_j} \left( \frac{\psi}{M} \right) =\nabla_{q_j} \psi + \psi \, q_j.
\ea

The \emph{centre-of-mass diffusion} term $\e \Delta_x \psi$ is generally of the form
$
\e \Delta_x \left( \frac{\psi}{\zeta(\vr)} \right)$,
which involves the {\em drag coefficient} $\zeta(\cdot)$ depending on the fluid density $\vr$. Here we assume that $\zeta$ is a constant function, which is, for simplicity, taken to be identically $1$. The constant parameter $\e>0$ is the {\em centre-of-mass diffusion coefficient}. The parameter $\l>0$ is called the \emph{Deborah number}; it characterizes the elastic relaxation property of the fluid. The constant matrix $A=(A_{ij})_{1\leq i,j\leq K}$, called the \textit{Rouse matrix}, is symmetric and positive definite. We denote by $A_0$ the smallest eigenvalue of $A$; clearly, $A_0>0$.

The Fokker--Planck equation needs to be supplemented by suitable boundary conditions; in the Hookean case considered here, with $D=\R^{Kd}$, these are:
\ba\label{boundary-psi}
 \psi \,|q_j| &\to 0,\quad \nabla_{q_j}\psi\cdot \frac{q_j}{|q_j|}\to 0, \quad &&\mbox{as}\ |q_j|\to \infty, \quad \mbox{for all \ $(t,x) \in (0,T] \times \O, \quad j=1,\ldots,K$},\\
 \d_{\bf n} \psi &=0 \quad &&\mbox{on}\ (0,T]\times \d\O \times D.
\ea

Finally, by (formally) integrating the partial differential equation \eqref{eq-psi} over $D$ and using the boundary condition in $\eqref{boundary-psi}_1$,
 and by integrating the boundary condition $\eqref{boundary-psi}_2$ over $D$, we deduce the following partial differential equation and boundary condition for the function $\eta$:
\be\label{eq-eta}
\d_t \eta + \Div_x (\vu \,\eta) = \e \Delta_x \eta\quad \mbox{in}\ (0,T] \times \O;\qquad \d_{\bf n} \eta =0 \quad \mbox{on}\  (0,T]\times \d\O.
\ee

The \emph{compressible Navier--Stokes--Fokker--Planck system} in the case of Hookean bead-spring chains consists of \eqref{NS1}, \eqref{NS2}, \eqref{eq-psi}, \eqref{eq-eta}, supplemented with the boundary conditions in \eqref{NS4}, \eqref{boundary-psi}, \eqref{eq-eta}, and suitable initial conditions
for $\vr$, $\vu$, $\psi$ and $\eta$. In the next section we focus on the special case of this model, when $K=1$, and use formal computations to derive the compressible Oldroyd-B model whose analysis is thereafter pursued in the rest of the paper.

\subsection{Compressible Oldroyd-B model}

This section is devoted to the derivation of the model \eqref{01a}--\eqref{07a}
from the Navier--Stokes--Fokker--Planck system stated in Section \ref{sec:derivation-NSFP}, consisting of equations \eqref{NS1}, \eqref{NS2}, \eqref{eq-psi}, \eqref{eq-eta}, and the boundary conditions  \eqref{NS4}, \eqref{boundary-psi}, \eqref{eq-eta}. Since the special case of the dumbbell model, corresponding to $K=1$, is sufficiently representative from the point of view of highlighting the main technical difficulties, for the sake of simplicity of the exposition we shall assume following equation \eqref{eq-psi-tau-5-3} below that $K=1$. Up until that point we shall admit $K\geq 1$ so as to illuminate the connection with the defining expression \eqref{def-tau-eta} for the tensor $\TT$.

The continuity equation \eqref{01a} for the fluid density $\vr$  and the equation \eqref{03a} for the polymer number density $\eta$, as well as the boundary condition \eqref{06a} for $\eta$, follow directly from \eqref{NS1} and \eqref{eq-eta}. From \eqref{NS2} and \eqref{def-TT-f}, we deduce the balance of momentum equation \eqref{02a}; the boundary condition for the velocity field $\vu$ follows from \eqref{NS4}.

The boundary condition \eqref{07a} can be deduced from \eqref{boundary-psi} and \eqref{def-tau-eta}. It remains to derive the evolution equation \eqref{04a} for the elastic extra stress tensor $\TT$. To this end, we note the definition of $\TT$ in \eqref{def-tau-eta}, multiply equation \eqref{eq-psi} by the matrix $k\sum_{i=1}^K q_i q_i^{\rm T}$, and integrate it with respect to $q \in D$. In the following we shall calculate the results, term by term. We will see that in the special case of $K=1$ the resulting evolution equation for the extra stress tensor $\TT$ is precisely \eqref{04a}.

We begin by noting that for the first term in \eqref{eq-psi}, associated with the time derivative, we have that
\be\label{eq-psi-tau-1}
\int_D \d_t \psi \left(k\sum_{i=1}^K q_i q_i^{\rm T}\right)  \dq = \d_t  \int_D \psi \left(k\sum_{i=1}^K q_i q_i^{\rm T}\right)  \dq = \d_t \TT.
\ee
For the second term in \eqref{eq-psi}, we have that
\ba\label{eq-psi-tau-2}
\int_D \Div_x (\vu\,\psi)\left(k\sum_{i=1}^K q_i q_i^{\rm T}\right)  \dq & = \int_D \left[(\Div_x \vu) \,\psi+ (\vu\cdot \nabla_x)\psi\right]\left(k\sum_{i=1}^K q_i q_i^{\rm T}\right)  \dq\\
& =(\Div_x \vu)  \int_D \psi \left(k\sum_{i=1}^K q_i q_i^{\rm T}\right)  \dq +(\vu\cdot \nabla_x) \int_D \psi\left(k\sum_{i=1}^K q_i q_i^{\rm T}\right)  \dq\\
& =(\Div_x \vu)  \TT  +  (\vu\cdot \nabla_x) \TT\\
&=\left(\Div_{ x}(\vu\,\TT_{\kappa,\iota})\right)_{1\leq \kappa,\iota\leq d}=:{\rm Div}_x (\vu\,\TT).
\ea
For the third term in \eqref{eq-psi}, for any $1\leq \kappa,\iota\leq d$, we have,
with $q_j^\kappa$ being the $\kappa^{\rm th}$ component of $q_j$,
 that
\ba\label{eq-psi-tau-3}
&\int_D \sum_{i=1}^K \Div_{q_i} \left(  (\nabla_x \vu)\, q_i\, \psi \right) \left(k\sum_{j=1}^K q_j^\kappa q_j^\iota\right)  \dq \\
 &= -k \int_D \sum_{i=1}^K \sum_{j=1}^K \left(  (\nabla_x \vu)\, q_i\, \psi \right)\cdot \nabla_{q_i} \left( q_j^\kappa q_j^\iota\right)  \dq\\
& = - k \int_D \sum_{i=1}^K \sum_{\a,\b=1}^d\left(  \d_{x_\b} \vu_\a \, q_i^{\beta} \, \psi \right)  \d_{q_i^\a} \left( q_i^\kappa q_i^\iota\right)  \dq\\
& =- k \int_D \sum_{i=1}^K \sum_{\a,\b=1}^d\left(  \d_{x_\b} \vu_\a \, q_i^{\beta} \, \psi \right)   \left( q_i^\kappa \delta_{\a,\iota} + q_i^\iota \delta_{\a,\kappa}\right)  \dq\\
& =- k \int_D \sum_{i=1}^K \sum_{\b=1}^d \left(  \d_{x_\b} \vu_\iota \, q_i^{\beta} \, \psi \,  q_i^\kappa\right) \dq - k \int_D \sum_{i=1}^K \sum_{\b=1}^d\left(  \d_{x_\b} \vu_\kappa \, q_i^{\beta} \, \psi \,q_i^\iota \right)  \dq\\
&= - \left(\nabla_x \vu \,\TT\right)_{\iota,\kappa} - \left(\nabla_x \vu \,\TT\right)_{\kappa,\iota}\\
&= - \left(\nabla_x \vu \,\TT\right)_{\kappa,\iota} - \left(\TT\, \nabla_x^{\rm T} \vu \right)_{\kappa,\iota}.
\ea
For the fourth term in \eqref{eq-psi} we have that
\be\label{eq-psi-tau-4}
\int_D \e \Delta_x \psi \left(k\sum_{i=1}^K q_i q_i^{\rm T}\right)  \dq = \e \Delta_x  \int_D \psi \left(k\sum_{i=1}^K q_i q_i^{\rm T}\right)  \dq = \e \Delta_x \TT.
\ee

It remains to deal with the last term in \eqref{eq-psi}.
By \eqref{eq-psi-lastterm}, for any $1\leq i, j\leq K$, $1\leq \kappa, \iota \leq d$, we have by (formal) integration by parts and ignoring the ``boundary" terms at $|q|=\infty$,
\ba\label{eq-psi-tau-5-0}
\int_D \Div_{q_i}\left( M \nabla_{q_j} \left( \frac{\psi}{M} \right)\right) \left(k\sum_{h=1}^K q_h^\kappa q_h^\iota\right)& =\int_D \Div_{q_i}\left( \nabla_{q_j} \psi + \psi \, q_j \right) \left(k\sum_{h=1}^K q_h^\kappa q_h^\iota\right) \dq\\
&= - k \int_D \left( \nabla_{q_j} \psi + \psi \, q_j \right) \cdot \nabla_{q_i} \left(\sum_{h=1}^K q_h^\kappa q_h^\iota\right) \dq\\
&= - k \int_D \left( \nabla_{q_j} \psi + \psi \, q_j \right) \cdot \nabla_{q_i} \left(q_i^\kappa q_i^\iota\right) \dq.
\ea
We compute the last term in \eqref{eq-psi-tau-5-0}, which can be decomposed into two terms. The first one is, after a second (formal) partial integration,
\ba\label{eq-psi-tau-5-1}
-k \int_D \left( \nabla_{q_j} \psi \right) \cdot \nabla_{q_i} \left(q_i^\kappa q_i^\iota\right) \dq=  k \int_D \psi \Delta_{q_i} \left(q_i^\kappa q_i^\iota\right)  \delta_{i,j}\,\dq = 2k \,\delta_{i,j} \,\delta_{\kappa,\iota}\int_D \psi\, \dq = 2k \,\eta\,\delta_{i,j} \,\delta_{\kappa,\iota}.
\ea
The second one is
\ba\label{eq-psi-tau-5-2}
- k \int_D \left(  \psi \, q_j \right) \cdot \nabla_{q_i} \left(q_i^\kappa q_i^\iota\right) \dq &= - k \int_D \sum_{\a=1}^d \psi \, q_j^\a \, \d_{q_i^\a} \left(q_i^\kappa q_i^\iota\right) \dq\\
&= - k \int_D \sum_{\a=1}^d \psi \, q_j^\a \, \left(q_i^\kappa\delta_{\a,\iota} + q_i^\iota \delta_{\a,\kappa}\right) \dq\\
&= - k \int_D \psi \,  \left(q_i^\kappa q_j^\iota  + q_i^\iota q_j^\kappa\right) \dq.
\ea

Thus, by \eqref{eq-psi-tau-5-0}--\eqref{eq-psi-tau-5-2}, we have that
\ba\label{eq-psi-tau-5-3}
&\int_D  \frac{1}{4\lambda} \sum_{i=1}^K\sum_{j=1}^K A_{ij}\, \Div_{q_i}\left( M \nabla_{q_j} \left( \frac{\psi}{M} \right)\right) \left(k\sum_{h=1}^K q_h q_h^{\rm T}\right) \dq \\
& =\frac{1}{4\lambda} \sum_{i=1}^K\sum_{j=1}^K A_{ij}\,(-k) \int_D \left( \nabla_{q_j} \psi + \psi \, q_j \right) \cdot \nabla_{q_i} \left(q_i q_i^{\rm T}\right) \dq \\
& =\frac{1}{4\lambda} \sum_{i=1}^K\sum_{j=1}^K A_{ij}\,(2k\,\eta \,\delta_{i,j}) \,\II - \frac{1}{4\lambda} \sum_{i=1}^K\sum_{j=1}^K A_{ij}\,k\,\int_D \psi \, \left(q_i q_j^{\rm T} + q_j q_i^{\rm T}\right) \dq\\
& =\frac{1}{4\lambda} {\rm tr}\, (A)\,(2k\,\eta ) \,\II - \frac{1}{4\lambda} \sum_{i=1}^K\sum_{j=1}^K A_{ij}\,k\,\int_D \psi \, \left(q_i q_j^{\rm T} + q_j q_i^{\rm T}\right) \dq,
\ea
where $\tr(\cdot)$ denotes the trace of a square matrix.
Finally, by restricting ourselves at this point to the dumbbell model, where $K=1$, we have that $A = A_0>0$ is a constant, and
we thus obtain
\ba\label{eq-psi-tau-5}
&\int_D \frac{1}{4\lambda} \sum_{i=1}^K\sum_{j=1}^K A_{ij}\, \Div_{q_i}\left( M \nabla_{q_j} \left( \frac{\psi}{M} \right)\right) \left(k\sum_{h=1}^K q_h q_h^{\rm T}\right) \dq \\
& =\frac{1}{4\lambda}\,A_0 \,(2k\,\eta ) \,\II - \frac{1}{4\lambda} A_0 \,(2\TT)\\
& =\frac{k\,A_0}{2\lambda}\eta  \,\II - \frac{A_0}{2\lambda} \TT.
\ea
By combining \eqref{eq-psi-tau-1}
--\eqref{eq-psi-tau-4} and \eqref{eq-psi-tau-5}, we then obtain
\[
\d_t \TT + {\rm Div}_x(\vu\,\TT) - \left(\nabla_x \vu \,\TT + \TT\, \nabla_x^{\rm T} \vu \right) = \e \Delta_x \TT + \frac{k\,A_0}{2\lambda}\eta  \,\II - \frac{A_0}{2\lambda} \TT,
\]
which is precisely \eqref{04a}.
We note that if $\TT$ solves \eqref{04a} for a given $\vu$, then taking the transpose of
\eqref{04a} we see that $\TT^{\rm T}$ solves \eqref{04a}. Hence the symmetry of $\TT$
from the definition \eqref{def-tau-eta} is encoded in the macroscopic equation
\eqref{04a}.

Having derived the model, we now focus our attention on its mathematical analysis. We begin by establishing \textit{a priori} bounds that will form the basis of the weak compactness argument leading to the proof of existence of a weak solution to the system under consideration.

Finally, we note from (\ref{eq-psi-tau-5-3}) that in the case of $K \geq 2$
one would need to introduce the family of symmetric tensors
$\TT^{i,j}(t,x) := k\,\int_D \psi(t,x,q)\,q_i q_j^{\rm T} \dq \in \mathbb{R}^{d \times d}$ for $i, j=1,\ldots,K$,
to obtain a closed macroscopic model; see
\cite{Barrett-Suli-in-prep} for a discussion of this in the incompressible case.

\section{A priori bounds}\label{sec:a-priori}

This section is devoted to the derivation of \textit{a priori} bounds for the regularized compressible Oldroyd-B model \eqref{01}--\eqref{04} with $\a,\,\de >0$ subject to the boundary conditions
(\ref{05a})--(\ref{07a}), and given proper initial conditions.

\subsection{Initial data and \textit{a priori} bound}

We adopt the following hypotheses on the initial data:
\ba\label{ini-data}
&\vr(0,\cdot) = \vr_0(\cdot) \ \mbox{with}\ \vr_0 \geq 0 \ {\rm a.e.} \ \mbox{in} \ \O, \quad \vr_0 \in L^\gamma(\O),\\
&\vu(0,\cdot) = \vu_0(\cdot) \in L^r(\O;\R^d) \ \mbox{for some $r\geq 2\g'$}\ \mbox{such that}\ \vr_0|\vu_0|^2 \in L^1(\O),\\
&\eta(0,\cdot)=\eta_0 \ \mbox{with}\ \eta_0 \geq 0 \ {\rm a.e.} \ \mbox{in} \ \O, \quad \eta_0 \in L^2 (\O),\\ 
&\TT(0,\cdot) = \TT_0(\cdot) \ \mbox{with}\ \TT_0=\TT_0^{\rm T} > 0 \ \mbox{a.e. in}  \ \O ,\quad  \TT_0 \in L^2(\O; \R^{d \times d}), \ \tr(\log \TT_0) \in L^1(\O).
\ea
Here $\g'$ denotes the conjugate exponent to $\g>1$, i.e., $1/\g + 1/\g'=1$, and
$\TT_0 > 0$ signifies that $\TT_0$ is positive definite.

Because the density $\vr$ is required to be a nonnegative function,
we have assumed that the initial datum $\vr_0$
is nonnegative.
Since the probability density function $\psi$ is, by definition, nonnegative,
then the definitions of $\eta$ and $\TT$
stated in  \eqref{def-tau-eta}
for the Navier--Stokes--Fokker--Planck system automatically imply that
$\eta$ must be a nonnegative function and
$\TT$ must be a symmetric nonnegative definite matrix a.e.\ on $(0,T] \times
\Omega$.
However, this information concerning the nonnegativity  of $\TT$
is not \textit{a priori} encoded in the
macroscopic counterpart of
this kinetic model, the compressible Oldroyd-B model \eqref{01a}--\eqref{07a}.
Furthermore, because of the presence of the logarithmic term in the alpha-regularised model,
see (\ref{02}), we
require $\TT > 0$ a.e.\ in $(0,T] \times \Omega$.
We have therefore assumed nonnegativity/positivity of the initial data for $\eta$ and $\TT$,
respectively, in \eqref{ini-data}.
For the purposes of the formal energy estimates developed in this section, we will temporarily
\textit{assume}
that $(\vr,\vu,\eta,\TT)$ is a smooth
solution to \eqref{01}--\eqref{04}, (\ref{05a})--(\ref{07a}), (\ref{ini-data})
with $\a,\,\de >0$, and, in addition, that
$\vr \geq 0$, $\eta >0$ and $\TT >0$ a.e.\ in $(0,T] \times \Omega$.
We stress that the energy estimates below, and these nonnegativity and positivity constraints
on $\vr$, $\eta$ and $\TT$, will be made rigorous in the case of $d=2$ later in the paper.

We deduce from $\eqref{ini-data}_1$ and $\eqref{ini-data}_2$ by using H\"older's inequality that
\[
(\vr\vu)(0,\cdot) = \vr_0 \vu_0 =\sqrt{\vr_0}\sqrt{\vr_0} \vu_0 \in L^{\frac{2\g}{\g+1}}(\O;\R^d).
\]

\medskip

For the fluid density $\vr$, integration of \eqref{01} over $\O$ with respect to the spatial variable $x$, performing partial
integration and noting the no-slip boundary condition \eqref{05a} for the velocity field gives
\[
\frac{\rm d}{\dt} \int_\O \vr(t,x)\,\dx =0 ~~~\Longrightarrow~~~ \int_\O \vr(t,x)\,\dx = \int_\O \vr_0\,\dx,
\qquad  \mbox{$t \in (0,T]$}.
\]

For the polymer number density $\eta$, integrating \eqref{03} over $\O$ with respect
to the spatial variable $x$, performing
partial integration and noting, in addition to (\ref{05a}),
the homogeneous Neumann boundary condition \eqref{06a} on $\eta$ gives
\be\label{a-priori-eta}
\frac{\rm d}{\dt} \int_\O \eta(t,x)\,\dx =0 ~~~\Longrightarrow~~~ \int_\O \eta(t,x)\,\dx = \int_\O \eta_0\,\dx,
\qquad \mbox{$t \in (0,T]$}.
\ee

In order to derive a formal energy identity we take the inner product of the momentum equation \eqref{02} with the velocity field $\vu$, integrate over $\O$ with respect to the spatial variable $x$, and perform partial integration noting the  no-slip boundary condition \eqref{05a} for $\vu$. In order to explain the details of the calculation, we shall perform the computations term by term. We begin by noting that for the first term in \eqref{02} we have
\[
\int_\O \d_t(\vr \vu) \cdot \vu \,\dx = \int_\O (\d_t\vr) |\vu |^2\,\dx +  \int_\O \vr\, \d_t \frac{|\vu|^2}{2}  \,\dx = \frac{1}{2}\,\frac{\rm d}{\dt} \int_\O   \vr |\vu|^2\,\dx + \frac 12 \int_\O (\d_t\vr) |\vu |^2\,\dx,
\qquad  \mbox{$t \in (0,T]$}.
\]
For the second term in \eqref{02},
\begin{align*}
\int_\O \Div_{ x}(\vr \vu \otimes \vu) \cdot \vu \,\dx &= - \int_\O (\vr \vu \otimes \vu) : \nabla_x\vu \,\dx = -\sum_{i,j=1}^d \int_\O \vr \vu_i \vu_j\, \d_{x_j}\vu_i \,\dx\\
& = - \sum_{i,j=1}^d \int_\O \vr  \vu_j\, \frac{1}{2}\d_{x_j}|\vu_i|^2 \,\dx
= \frac{1}{2} \int_\O \Div_{ x}(\vr \vu)\, | \vu |^2\,\dx,
\qquad  \mbox{$t \in (0,T]$}.
\end{align*}
For the third term in \eqref{02},
\[
\int_\O \nabla_x p(\vr) \cdot \vu \,\dx = -\int_\O (a \vr^\gamma)\, \Div_{ x} \vu \,\dx,
\qquad  \mbox{$t \in (0,T]$}.
\]
Multiplication of \eqref{01} by $\g\vr^{\gamma-1}$ gives
\[
\d_t \vr^\g + \Div_x (\vr^\g \vu) + (\g-1) \vr^\g \Div_{ x}\vu = 0.
\]
Thus, thanks to the boundary condition \eqref{05a} and our assumption that $\gamma>1$, we have that
\[
\int_\O \nabla_x p(\vr) \cdot \vu \,\dx = \int_\O \frac{a}{\g-1} \left(\d_t \vr^\g + \Div_x (\vr^\g \vu)\right) \dx = \frac{a}{\g-1}\,\frac{\rm d}{\dt} \int_\O  \vr^\g\,\dx,
\qquad  \mbox{$t \in (0,T]$}.
\]
For the fourth term in \eqref{02},
\[
\int_\O \nabla_x\left(kL\eta+\de\,\eta^2\right) \cdot \vu \,\dx = -kL \int_\O \eta\, \Div_{ x} \vu \,\dx -\de \int_\O\eta^2\, \Div_{ x} \vu \,\dx,
\qquad  \mbox{$t \in (0,T]$}.
\]
Let $b: [0,\infty) \to \R $ be a continuous function and $b\in C^1(0,\infty)$.
Recalling our assumption $\eta>0$, multiplication of \eqref{03} by $b'(\eta)$
yields that
\be\label{eq-eta-renomal}
\d_t b(\eta) + \Div_x \big(b(\eta) \vu\big) + \big(b'(\eta) \eta - b(\eta)\big) \Div_{ x}\vu = \e b'(\eta)\,\Delta_x \eta.
\ee
By choosing $b(\eta):=\eta \log \eta + 1$,
we obtain from \eqref{eq-eta-renomal} that
\[
\d_t (\eta \log \eta + 1) + \Div_x \big((\eta \log \eta + 1) \vu\big) +  \eta\, \Div_{ x}\vu - \Div_{ x} \vu  = \e (1+\log \eta)\,\Delta_x \eta.
\]
Thanks to the boundary conditions \eqref{05a} and \eqref{06a} we then have that
\ba\label{a-priori-vu-4-1}
 -k L\int_\O \eta\, \Div_{ x} \vu \,\dx = k L\frac{\rm d}{\dt} \int_\O  (\eta \log \eta + 1) \,\dx
 + \e \,k L\int_\O  \frac{1}{\eta} |\nabla_x \eta|^2 \,\dx,
\qquad  \mbox{$t \in (0,T]$}.
\ea
By choosing $b(\eta):=\eta^2$ in \eqref{eq-eta-renomal} we obtain from \eqref{eq-eta-renomal} that
\[
\d_t (\eta^2) + \Div_x (\eta^2 \vu )
+  \eta^2 \Div_{ x}\vu  = 2 \e \eta\, \Delta_x \eta.
\]
Hence, thanks again to the boundary conditions \eqref{05a} and \eqref{06a}, we obtain
\ba\label{a-priori-vu-4-2}
 -\de \int_\O \eta^2\, \Div_{ x} \vu \,\dx = \de \,\frac{\rm d}{\dt} \int_\O \eta^2  \,\dx +
2\, \e \,\de\,
\int_\O  |\nabla_x \eta|^2 \,\dx,
\qquad  \mbox{$t \in (0,T]$}.
\ea
By combining \eqref{a-priori-vu-4-1} and \eqref{a-priori-vu-4-2}, the fourth term in
\eqref{02} can be therefore rewritten as follows:
\begin{align*}
\int_\O \nabla_x\left(k L \eta+\de\,\eta^2\right) \cdot \vu \,\dx &= \frac{\rm d}{\dt} \int_\O \left(k L (\eta \log \eta + 1) + \de \,\eta^2 \right)\dx\\
 &\qquad + \int_\O \left( \frac{k L}{\eta}
 +  2\de\right) \e\, |\nabla_x \eta|^2 \dx,
\qquad  \mbox{$t \in (0,T]$}.
\end{align*}
For the fifth term in \eqref{02} we have
\begin{align*}
-\int_\O \Div_{ x}\, \SSS(\nabla_x \vu) \cdot \vu \,\dx
&= \int_\O \left(\mu^S \left( \frac{\nabla_x \vu + \nabla^{\rm T}_x \vu}{2} - \frac{1}{d} (\Div_x \vu) \II \right) + \mu^B (\Div_x \vu) \II\right) : \nabla_x \vu \,\dx\\
&= \int_\O \mu^S \left| \frac{\nabla_x \vu + \nabla^{\rm T}_x \vu}{2} - \frac{1}{d} (\Div_x \vu) \II \right|^2 + \mu^B |\Div_x \vu|^2\,\dx,
\qquad  \mbox{$t \in (0,T]$}.
\end{align*}
For the sixth term in \eqref{02} we have
\begin{align*}
\int_\O \Div_{ x}\TT \cdot \vu \,\dx &= - \int_\O \TT : \nabla_x \vu \,\dx,
\qquad  \mbox{$t \in (0,T]$}.
\end{align*}
For the seventh term in \eqref{02} we have
\begin{align*}
-\frac{\a}{2}\,\int_\O \nabla_x  \tr\left(\log \TT \right)  \cdot \vu \,\dx &= \frac{\a}{2} \int_\O  \tr\left(\log \TT \right)  (\Div_{ x} \vu) \,\dx,
\qquad  \mbox{$t \in (0,T]$}.
\end{align*}
Therefore, by summing up the above identities, we deduce, on noting (\ref{01}), that
\ba\label{a-priori-vu}
&\frac{\rm d}{\dt} \int_\O \left[ \frac{1}{2} \vr |\vu|^2 + \frac{a}{\g-1} \vr^\g + \left(k L  (\eta \log \eta + 1) + \de \,\eta^2\right)\right]\dx \\
&\quad +  \int_\O \left( \frac{k L}{\eta}
 +  2\de\right) \e\, |\nabla_x \eta|^2\,\dx + \int_\O \mu^S \left| \frac{\nabla_x \vu + \nabla^{\rm T}_x \vu}{2} - \frac{1}{d} (\Div_x \vu) \II \right|^2 + \mu^B |\Div_x \vu|^2\,\dx\\
&= - \int_\O \TT : \nabla_x \vu \,\dx + {  \frac{\a}{2} \int_\O  \tr\left(\log \TT \right)
\Div_{ x} \vu \,\dx } + \int_\O \vr\,\ff \cdot  \vu \,\dx,
\qquad  \mbox{$t \in (0,T]$}.
\ea

\medskip

In order to complete the derivation of a (formal) energy identity for the system, we need to deal with the first two terms on the right-hand side of equation \eqref{a-priori-vu}. As far as the first term on the right-hand side of
\eqref{a-priori-vu} is concerned, by taking the trace of equation \eqref{04}, integrating over $\O$ with respect to the spatial variable $x$, and using the boundary conditions \eqref{05a} and \eqref{07a}, we deduce that
\ba\label{a-priori-tau}
\frac{\rm d}{\dt} \int_\O \tr \left(\TT\right) \,\dx  + \frac{A_0}{2\l}\int_\O \tr\left(\TT
\right) \,\dx
= \frac{k\,A_0\,d}{2\lambda} \int_\O (\eta+\alpha)  \,\dx  +  2 \int_\O \TT : \nabla_x \vu \,\dx,
\qquad  \mbox{$t \in (0,T]$}.
\ea
Here, we have also noted that $\tr(\PP\,\QQ^{\rm T}) = \PP:\QQ$
for all $\PP,\,\QQ \in {\mathbb R}^{n \times n}$.
Therefore, $\eqref{a-priori-vu}+\frac{1}{2}\eqref{a-priori-tau}$ gives
\ba\label{a-priori-1}
&\frac{\rm d}{\dt} \int_\O \left[ \frac{1}{2} \vr |\vu|^2 + \frac{a}{\g-1} \vr^\g + \left(k L  (\eta \log \eta + 1) + \de \,\eta^2\right)+ \frac{1}{2}\tr\left(\TT\right) \right]\dx \\
&\quad +  \int_\O \left( \frac{k L}{\eta}
 +  2\de\right) \e\, |\nabla_x \eta|^2 \,\dx + \int_\O \mu^S \left| \frac{\nabla_x \vu + \nabla^{\rm T}_x \vu}{2} - \frac{1}{d} (\Div_x \vu) \II \right|^2 + \mu^B |\Div_x \vu|^2\,\dx + \frac{A_0}{4\l}\int_\O \tr\left(\TT\right) \,\dx\\
&=  \int_\O \vr\,\ff \cdot  \vu \,\dx + \frac{k\,A_0\,d}{4\lambda} \int_\O (\eta+\alpha)  \,\dx + {  \frac{\a}{2} \int_\O  \tr\left(\log \TT \right)  \Div_{ x} \vu \,\dx},
\qquad  \mbox{$t \in (0,T]$}.
\ea
We are left to deal with the final term on the right-hand side of \eqref{a-priori-1} (which is the same as the second term on the right-hand side of \eqref{a-priori-vu}); this requires some nontrivial calculations, which we shall next discuss.

\subsection{A logarithmic bound}\label{sec:est-log}

In this section, inspired by the study of the incompressible Oldroyd-B model in
\cite{Hu-Lelievre, Bris-Lelievre, Barrett-Boyaval}, we derive a logarithmic bound on the extra stress tensor $\TT$ and a bound on $\TT^{-1}$ . These are needed both to complete
the derivation of the (formal) energy identity for the system, as well as in the construction of the sequence of approximating solutions.
As we shall see below, the computations aimed at dealing with the final term on the right-hand side of \eqref{a-priori-1} will yield a term in the (formal) energy identity for the system
\eqref{01}--\eqref{04}, (\ref{05a})--(\ref{07a}), (\ref{ini-data}),
which will ultimately ensure the positivity of $\TT$ a.e. on $\Omega \times (0,T]$.
For the purposes of the formal calculations that will now follow,
we temporarily \textit{assume} that $\TT$ is a symmetric positive definite matrix.

Let $\TT^{-1}=(\TT^{-1})_{1\leq \kappa,\iota\leq d}$ be the inverse of $\TT$. We compute the inner product of equation \eqref{04} and $\TT^{-1}$. We recall the following formula, usually referred to as Jacobi's formula: 
\be\label{Jacobi-formula}
\d (\det \TT) = (\det \TT)\, \tr\,(\TT^{-1}\d  \TT); \qquad \mbox{hence} 
\quad \d \left(\log \det \TT \right)=\tr\,(\TT^{-1}\d \TT)= \d \TT:\TT^{-1},
\ee
where, in the present context, $\d$ is a derivative in space or time.
Since $\TT $ is symmetric positive definite, we can define its real logarithm, $\log \TT$, which is a symmetric matrix such that ${\rm e}^{\log \TT}=\TT$. Indeed, upon diagonalization of $\TT$, using the orthogonal  $d \times d$ matrix $\OO$, we have that
\[
 \TT  = \OO \,{\rm diag\,}\{\l_1,\l_2,\ldots, \l_d\} \,\OO^{\rm T}
\qquad \mbox{and therefore}\qquad
\log \TT = \OO \, {\rm diag\,}\{\log\l_1,\log \l_2,\ldots, \log\l_d\}\, \OO^{\rm T},
\]
where $\l_\kappa>0$, $\kappa=1,\ldots, d$, are the eigenvalues of $\TT$.
Thus, we have the following identity:
\be\label{tr-log-tau}
\tr\left(\log \TT\right) = \log \det \TT.
\ee
By \eqref{Jacobi-formula} and \eqref{tr-log-tau}, we have for the first term in \eqref{04} that
\be\label{est-log-1}
\d_t \TT : \TT^{-1} = \d_t \left(\log \det \TT \right) = \d_t \tr\left(\log \TT \right).
\ee
For the second term in \eqref{04} we have that
\ba\label{est-log-2}
{\rm Div}_x (\vu\,\TT)  : \TT^{-1} & = \left( (\vu\cdot \nabla_x)\, \TT + (\Div_{ x} \vu)\,\TT  \right)
:\TT^{-1}
= (\vu \cdot \nabla_x)  \tr\left(\log \TT \right) + d\,\Div_{ x}   \vu.
\ea
For the third term in \eqref{04} we have
\begin{align*}
&- \left(\nabla_x \vu \,\TT + \TT\, \nabla_x^{\rm T} \vu \right)  : \TT^{-1}
= - \tr\left(\left(\nabla_x \vu \,\TT + \TT\, \nabla_x^{\rm T} \vu \right)\TT^{-1}\right)
= -2\,\tr(\nabla_x \vu)
=-2\, \Div_{ x} \vu.
\end{align*}
Thus, by taking the inner product of equation \eqref{04} with $\TT^{-1}$, we obtain
\ba\label{eq-log-tau}
\d_t \tr\left(\log \TT \right) + (\vu \cdot \nabla_x) \tr\left( \log \TT \right) + (d-2) \,\Div_{ x} \vu = \e\, \Delta_x \TT : \TT^{-1} +  \frac{k\,A_0}{2\lambda}(\eta+\alpha)  \,\tr\left(\TT^{-1}\right)
 - \frac{d\,A_0}{2\lambda}.
\ea
After integrating \eqref{eq-log-tau} over $\O$, performing partial integration and noting the boundary condition \eqref{05a} on $\vu$, we have that, for $t \in (0,T]$,
\ba\label{est-log-f1}
\frac{\rm d}{\dt} \int_\O \tr\left(\log \TT  \right) \dx =
\int_\O (\Div_{ x}\vu)\, \tr\left(\log \TT \right) \dx + \int_\O \e\, \Delta_x \TT : \TT^{-1}\,\dx  + \frac{k\,A_0}{2\lambda}
\int_\O (\eta+\alpha)
\,\tr\left(\TT^{-1}\right)\,\dx - \frac{d\,A_0}{2\lambda}|\O|.
\ea
To proceed, we require the following lemma whose proof is elementary but rather lengthy and has been therefore relegated to Appendix \ref{appendixa}.
\begin{lemma}\label{lem-lap-tau-inverse}
Let $\PP \in W^{2,2}(\Omega;\mathbb{R}^{d \times d})
\cap C^1(\overline{\Omega};\R^{d\times d})$ be a symmetric matrix function,
which is positive definite, uniformly on $\overline{\Omega}$,
and satisfies $\d_{\bf n} \PP =0 \mbox{ on } \d\O$;
then,
\ba\label{est-log-lap-tau}
\int_\O \Delta_x \PP : \PP^{-1}\,\dx &= \sum_{j=1}^d  \int_\O
\tr \left(\left((\d_{x_j} \PP)(\PP^{-1})\right)^2\right) \dx
\geq \frac{1}{d}  \int_\O \left|\nabla_x \tr\left(\log \PP \right)\right|^2 \dx.
\ea
\end{lemma}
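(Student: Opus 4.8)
The plan is to prove the two claims in \eqref{est-log-lap-tau} separately: first the identity expressing $\int_\O \Delta_x \PP : \PP^{-1}\,\dx$ as a sum of integrals of traces of squares, and then the pointwise/integral inequality bounding that quantity below by $\frac1d \int_\O |\nabla_x \tr(\log \PP)|^2\,\dx$. For the identity, I would integrate by parts in each coordinate direction: since $\d_{\bf n}\PP = 0$ on $\d\O$, the boundary term vanishes and
\[
\int_\O \Delta_x \PP : \PP^{-1}\,\dx = - \sum_{j=1}^d \int_\O (\d_{x_j}\PP) : \d_{x_j}(\PP^{-1})\,\dx.
\]
Then I would use the standard differentiation formula $\d_{x_j}(\PP^{-1}) = -\PP^{-1}(\d_{x_j}\PP)\PP^{-1}$, so that the integrand becomes $(\d_{x_j}\PP) : \PP^{-1}(\d_{x_j}\PP)\PP^{-1}$; rewriting the Frobenius inner product $A:B = \tr(AB^{\rm T})$ and using symmetry of $\PP$, $\PP^{-1}$ and $\d_{x_j}\PP$, this equals $\tr\big(((\d_{x_j}\PP)\PP^{-1})^2\big)$, which gives the asserted identity. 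The regularity hypotheses $\PP \in W^{2,2}\cap C^1(\overline\Omega)$ together with uniform positive definiteness are exactly what is needed to justify the integration by parts and to ensure $\PP^{-1}$ is $C^1$ with bounded entries, so all integrals are finite.

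For the inequality, the key is a pointwise linear-algebra estimate: for a symmetric positive definite matrix $\PP$ and any symmetric matrix $\mathbf{S}$ (here $\mathbf{S} = \d_{x_j}\PP$), one has
\[
\tr\big((\mathbf{S}\PP^{-1})^2\big) \;\geq\; \frac1d\,\big(\tr(\mathbf{S}\PP^{-1})\big)^2.
\]
This follows by diagonalizing $\PP^{-1/2}\mathbf{S}\PP^{-1/2}$ (a symmetric matrix similar to $\mathbf{S}\PP^{-1}$, hence with the same trace and the same trace of its square), calling its eigenvalues $\mu_1,\dots,\mu_d$, and invoking Cauchy--Schwarz: $\sum_\kappa \mu_\kappa^2 \geq \frac1d (\sum_\kappa \mu_\kappa)^2$. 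Combined with Jacobi's formula \eqref{Jacobi-formula}, namely $\tr((\d_{x_j}\PP)\PP^{-1}) = \d_{x_j}\tr(\log\PP)$, this yields $\tr\big(((\d_{x_j}\PP)\PP^{-1})^2\big) \geq \frac1d |\d_{x_j}\tr(\log\PP)|^2$ pointwise; summing over $j$ and integrating over $\O$ gives the claim.

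I expect the main (mild) obstacle to be the careful justification of the integration by parts and of the differentiation identity $\d_{x_j}(\PP^{-1}) = -\PP^{-1}(\d_{x_j}\PP)\PP^{-1}$ at the stated level of regularity, together with confirming that $\PP^{-1} \in W^{1,2}\cap C^0(\overline\Omega)$ so that the trace terms make sense and the boundary integral genuinely vanishes; the linear-algebra steps themselves are routine. One should note that $\mathbf{S}\PP^{-1}$ need not be symmetric, so it is important to pass through the symmetric conjugate $\PP^{-1/2}\mathbf{S}\PP^{-1/2}$ (or equivalently to observe $\mathbf{S}\PP^{-1}$ is diagonalizable with real spectrum) before applying Cauchy--Schwarz, rather than trying to bound $\tr$ of a square of a general matrix. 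Since the lemma is stated as elementary and its proof is deferred to Appendix~\ref{appendixa}, I would keep the exposition to exactly these two moves: the integration-by-parts identity and the pointwise eigenvalue inequality.
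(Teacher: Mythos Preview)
Your proof is correct, and for the equality it coincides with the paper's argument (integration by parts using $\d_{\bf n}\PP=0$, followed by $\d_{x_j}(\PP^{-1}) = -\PP^{-1}(\d_{x_j}\PP)\PP^{-1}$). For the inequality, however, you take a genuinely different and more direct route. The paper first establishes a \emph{finite-difference} matrix inequality: for symmetric positive definite $\AAA,\BB$ it shows $|\tr(\log\AAA)-\tr(\log\BB)|^2 \le -d\,\tr((\AAA-\BB)(\AAA^{-1}-\BB^{-1}))$, proved via the scalar inequality $-(a-b)(a^{-1}-b^{-1})\ge(\log a-\log b)^2$ and a somewhat lengthy diagonalization argument; it then applies this with $\AAA=\PP(x+he_j)$, $\BB=\PP(x)$, divides by $h^2$, and passes to the limit $h\to 0$. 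Your approach bypasses all of this by observing directly that $(\d_{x_j}\PP)\PP^{-1}$ is similar to the symmetric matrix $\PP^{-1/2}(\d_{x_j}\PP)\PP^{-1/2}$, hence has real eigenvalues $\mu_1,\dots,\mu_d$, and then Cauchy--Schwarz on the eigenvalues gives the pointwise bound immediately. Your argument is shorter and more transparent for Lemma~\ref{lem-lap-tau-inverse} as stated. The paper's difference-quotient detour, on the other hand, pays off later: the two-matrix inequality it proves is the natural starting point for the cut-off version in Lemma~\ref{lem-lap-tau-inverses3}, where $\chi_{\s_3}(\PP)$ is only Lipschitz in $\PP$ and a direct pointwise derivative argument is unavailable.
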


We continue our formal calculations under the assumption that $\TT(t,\cdot)$ satisfies the hypotheses of Lemma
\ref{lem-lap-tau-inverse} for $t \in (0,T]$.
By subtracting  $\frac{\a}{2}\eqref{est-log-f1}$ from the \textit{a priori} bound \eqref{a-priori-1} and using
\eqref{est-log-lap-tau}, we finally obtain the following (formal) energy identity:
\ba\label{a-priori-log-f2}
&\frac{\rm d}{\dt} \int_\O \left[ \frac{1}{2} \vr |\vu|^2 + \frac{a}{\g-1} \vr^\g + \left(k L  (\eta \log \eta + 1) + \de \,\eta^2\right)+ \frac{1}{2}\tr\left(\TT -
\a \,\log \TT \right) \right]\dx \\
&\quad +  \int_\O \left( \frac{k L}{\eta}
 +  2\de\right) \e\, |\nabla_x \eta|^2 \,\dx + \frac{\a\,\e}{2}\sum_{j=1}^d  \int_\O \tr \left(\left((\d_{x_j} \TT)(\TT^{-1})\right)^2\right) \dx\\
&\quad + \int_\O \mu^S \left| \frac{\nabla_x \vu + \nabla^{\rm T}_x \vu}{2} - \frac{1}{d} (\Div_x \vu) \II \right|^2 + \mu^B |\Div_x \vu|^2\,\dx + \frac{A_0}{4\l}\int_\O \tr\left(\TT\right) \,\dx
+ \frac{\a\,k\,A_0}{4\lambda} \int_\O (\eta+\alpha)  \,\tr\left(\TT^{-1}\right)\,\dx \\
&=   \int_\O \vr\,\ff \cdot  \vu \,\dx + \frac{k\,A_0\,d}{4\lambda} \int_\O (\eta+\alpha)  \,\dx + \frac{\a\,d\,A_0}{4\lambda}|\O|,
\qquad  \mbox{$t \in (0,T]$}.
\ea
This (formal) energy identity will be the starting point for the development of the weak compactness argument leading to the proof of existence of a global-in-time large data finite-energy weak solution to the compressible Oldroyd-B system under consideration.

To this end, we make some preliminary observations. Let us denote by $\l_\kappa$, $\kappa=1,\ldots,d$, the
eigenvalues of the symmetric positive definite matrix $\TT$. Then,
\[
\tr \left(\TT- \a  \,\log \TT  \right) = \sum_{\kappa=1}^d \left(\l_\kappa - \a \log \l_\kappa\right)  \geq \sum_{\kappa=1}^d \left(\a - \a \log \a\right) = d\left(\a - \a \log \a\right).
\]
Hence, for any $\a>0$, we have that
 \ba\label{tau-log-tau2}
\tr \left(\TT - \a\, \log \TT  \right) + d\left( \a \log \a - \a \right) \geq 0.
\ea

Motivated by \eqref{a-priori-log-f2} and \eqref{tau-log-tau2}, for $t \in [0,T]$ we consider the following nonnegative energy functional:
\be\label{def-energy-free}
E(t): = \int_\O \left[ \frac{1}{2} \vr |\vu|^2 + \frac{a}{\g-1} \vr^\g + \left(k L  (\eta \log \eta + 1) + \de \,\eta^2\right)+ \frac{1}{2}\left(\tr \left(\TT- \a\, \log \TT \right) + d\left( \a \log \a - \a \right) \right) \right]\dx.
\ee
H\"older's inequality then gives
$$
 \int_\O \vr\,\ff \cdot  \vu \,\dx    \leq \|\ff\|_{L^\infty((0,T]\times \O;\R^d)} \|\sqrt\vr |\vu|\|_{L^2(\O;\R^d)} \|\sqrt\vr \|_{L^2(\O)} \leq C\,E(t),
\qquad  \mbox{$t \in (0,T]$},
$$
for some positive constant $C=C(\ff, a, \gamma)$.
Also, as $\eta(\log \eta -1) + 1 \geq 0$ for $\eta\geq 0$, we have that
\begin{align}\label{kL1}
\int_\O (\eta (t,x) +\alpha) \,\dx\leq \frac{\max\{1,\alpha |\Omega|\}}{\min\{1,kL\}}(E(t)+1),
\qquad  \mbox{$t \in (0,T]$}, \qquad \mbox{when $L>0$ and $\de \geq 0$};
\end{align}
similarly, as $\eta + \alpha \leq (\frac{1}{2} + \alpha) + \frac{1}{2\de}\,\de\eta^2$, by integrating this inequality over $\Omega$ we deduce that
\begin{align}\label{kL2}
\int_\O (\eta (t,x) +\alpha)\,\dx \leq \max\left\{\left(\frac{1}{2}+\alpha\right)|
\Omega|,\frac{1}{2\de}\right\}(E(t) + 1),
\qquad  \mbox{$t \in (0,T]$},\qquad \mbox{when $L\geq 0$ and
$\de>0$.}
\end{align}

Thus, integrating \eqref{a-priori-log-f2} over the time interval $[0,t]$ with respect to the temporal variable and noting (\ref{est-log-lap-tau}) implies that
\ba
\label{a-priori-2-0}
 E(t) &+ \int_0^t \int_\O \left( \frac{k L}{\eta}
 +  2\de\right) \e\, |\nabla_x \eta|^2 \,\dx\,\dt'  + \frac{\a\,\e}{2d}  \int_0^t  \int_\O \left|\nabla_{x} \tr\left(\log \TT \right)\right|^2\dx \, \dt' \\
 &+  \int_0^t  \int_\O \mu^S \left| \frac{\nabla_x \vu + \nabla^{\rm T}_x \vu}{2} - \frac{1}{d} (\Div_x \vu) \II \right|^2 + \mu^B |\Div_x \vu|^2\,\dx \,\dt'  \\
 &+ \frac{A_0}{4\l} \int_0^t  \int_\O \tr\left(\TT\right) \,\dx \,\dt'
 + \frac{\a\,k\,A_0}{4\lambda} \int_0^t  \int_\O (\eta+\alpha)  \,\tr\left(\TT^{-1}\right)\,\dx \,\dt' \\
 \leq  &\ E_0 + C \int_0^t E(t') \,\dt'  + C \,t,
\qquad  \mbox{$t \in (0,T]$},
\ea
where, if $L>0$,  the positive constant $C$ depends only on
$\|\ff\|_{L^\infty((0,T]\times \O;\R^d)}$
and the parameters  $a,\gamma, k, d, L, \l, A_0, \a, |\O|$,
but it is independent of $\de\geq 0$; whereas if $\de>0$, then the positive constant $C$ depends only on $\|\ff\|_{L^\infty((0,T]\times \O;\R^d)}$ and the parameters $a,\gamma, k, d, \de, \l, A_0, \a, |\O|$,
but it is independent of $L\geq 0$.
The initial energy
\be\label{def-energy-free0}
E_0: = \int_\O \left[ \frac{1}{2} \vr_0 |\vu_0|^2 + \frac{a}{\g-1} \vr_0^\g +
\left(k L  (\eta_0 \log \eta_0 + 1) + \de \,\eta_0^2\right)+
\frac{1}{2}\left(\tr\left(\TT_0-  \a\, \log \TT_0 \right) + d\left( \a \log \a - \a \right) \right) \right]\dx
\ee
is finite thanks to the assumptions on the initial data stated in \eqref{ini-data}. Thus, Gronwall's inequality implies that
\ba\label{a-priori-2}
 E(t) &+ \int_0^t \int_\O \left( \frac{k L}{\eta}
 +  2\de\right) \e\, |\nabla_x \eta|^2 \,\dx\,\dt'  + \frac{\a\,\e}{2d}  \int_0^t  \int_\O \left|\nabla_{x} \tr\left(\log \TT \right)\right|^2\,\dx \, \dt' \\
 &+  \int_0^t  \int_\O \mu^S \left| \frac{\nabla_x \vu + \nabla^{\rm T}_x \vu}{2} - \frac{1}{d} (\Div_x \vu) \II \right|^2 + \mu^B |\Div_x \vu|^2\,\dx \,\dt'  \\
 &+ \frac{A_0}{4\l} \int_0^t  \int_\O \tr\left(\TT\right) \,\dx \,\dt' + \frac{\a\,k\,A_0}{4\lambda} \int_0^t  \int_\O (\eta+\a)  \,\tr\left(\TT^{-1}\right)\,\dx \,\dt' \\
\leq &\   (E_0+C\,t)\, {\rm e}^{Ct},
\qquad  \mbox{$t \in (0,T]$},
\ea
where, if $L>0$,  the positive constant $C$ depends only on
$\|\ff\|_{L^\infty((0,T]\times \O;\R^d)}$ and the parameters  $a,\gamma,
k, d, L, \l, A_0, \a, |\O|$, but it is independent of $\de\geq 0$;
whereas if $\de>0$, then the positive constant $C$ depends only on
$\|\ff\|_{L^\infty((0,T]\times \O;\R^d)}$ and the parameters $a,\gamma, k, d, \de, \l, A_0, \a, |\O|$, but it is independent of $L\geq 0$.

\medskip

Next we recall
Korn's inequality (see, for example,
\cite{Dain}): there exists a positive constant $C=C(d,\O)$ such that
\be\label{korn}
\|\nabla_x \vv \|_{L^2(\O;\R^{d\times d})} \leq C \left\| \frac{\nabla_x \vv
+ \nabla^{\rm T}_x \vv}{2} - \frac{1}{d} (\Div_x \vv) \II \right\|_{L^2(\O;\R^{d\times d})}
\qquad \forall\, \vv \in W_0^{1,2}(\Omega,\R^d).
\ee
Thus we deduce the following (formal) inclusions from the \textit{a priori} inequality \eqref{a-priori-2}:
\ba\label{a-priori-3}
&\vr\in L^\infty(0,T; L^\g(\O)), \quad \vu \in L^2(0,T; W^{1,2}_0(\O;\R^d)), \quad \vr|\vu|^2 \in L^\infty(0,T; L^1(\O)),\\
&\eta \in L^\infty(0,T; L^2(\O)) \cap L^2(0,T; W^{1,2}(\O)),\quad (\eta+\a)\, \tr\left(\TT^{-1}\right) \in L^1(0,T; L^1(\O)),\\
& \tr \left(\TT- \a\,\log \TT \right) \in L^\infty(0,T; L^1(\O)),
\quad \nabla_{x} \tr\left(\log \TT \right) \in L^2 (0,T; L^2(\O;\R^d)).
\ea

\subsection{A further bound in two space dimensions}\label{sec:2d-est}

In this section we will show that when $d=2$ one can establish stronger bounds on $\TT$ than those stated in
\eqref{a-priori-2} and \eqref{a-priori-3}.
The key step is to take the inner product of \eqref{04} with $\TT$ and integrate over $\O$ with respect to $x$.  Direct calculations imply that
\begin{align*}
&\frac{1}{2}\frac{\rm d}{\dt} \int_\O  | \TT|^2 \,\dx + \e \int_\O |\nabla_x \TT|^2 \,\dx + \frac{A_0}{2\l}\int_\O | \TT|^2 \,\dx \\
&\quad= - \int_\O {\rm Div}_x (\vu\,\TT) : \TT \,\dx + \int_\O \left(\nabla_x \vu \,\TT + \TT\, \nabla_x^{\rm T} \vu \right) : \TT \,\dx + \frac{k\,A_0}{2\lambda}\int_\O (\eta+\a) \,\tr\left(\TT\right) \,\dx \\
&\quad \leq \frac{1}{2}\int_\O  (\Div_{ x} \vu) \,  |\TT|^2 \,\dx + 2 \int_\O |\nabla_x \vu|\, |\TT|^2 \,\dx  + \frac{A_0}{8\l} \int_\O |\tr\left(\TT\right)|^2 \,\dx + \frac{k^2 A_0}{2\l}
\int_\O (\eta+\a)^2 \,\dx\\
&\quad \leq  3 \int_\O |\nabla_x \vu|\, |\TT|^2 \,\dx  + \frac{A_0}{4\l} \int_\O |\TT|^2 \,\dx +
\frac{k^2 A_0}{2\l} \int_\O (\eta+\a)^2 \,\dx,
\qquad  \mbox{$t \in (0,T]$}.
\end{align*}
Thus, for $t \in (0,T]$,
\ba\label{a-priori-tau-f2}
\frac{1}{2}\frac{\rm d}{\dt} \int_\O  | \TT|^2 \,\dx + \e \int_\O |\nabla_x \TT|^2 \,\dx + \frac{A_0}{4\l}\int_\O | \TT|^2 \,\dx  \leq  3\, \| \nabla_x \vu\|_{L^2(\O;\R^{2\times 2})}\, \|\TT\|_{L^4(\O;\R^{2\times 2})}^2    + \frac{k^2 \,A_0}{2\l} \int_\O (\eta+\a)^2 \,\dx.
\ea
We recall the following Gagliardo--Nirenberg inequality: let $G\subset\R^d$ be a bounded Lipschitz domain; then, for any $r\in [2,\infty)$ if $d=2$, and $r\in [2,6]$ if $d=3$, one has, for any $v\in W^{1,2}(G)$, that:
\be\label{G-N-ineq}
\|v\|_{L^r(G)} \leq C(r,d,\O) \|v\|_{L^2(G)}^{1-\th} \|v\|_{W^{1,2}(G)}^\th,\quad \th:=d\left(\frac{1}{2}-\frac{1}{r}\right).
\ee
Hence, in the case of $d=2$ and $G=\Omega \subset \mathbb{R}^2$, we have, for $t \in (0,T]$, that
\be\label{sobolev-R2}
\|\TT\|_{L^4(\O;\R^{2\times 2})}^2 \leq C\,\|\TT\|_{L^2(\O;\R^{2\times 2})} \| \TT\|_{W^{1,2}(\O;\R^{2\times 2})} \leq C\,\|\TT\|_{L^2(\O;\R^{2\times 2})}  \left(\|\TT\|_{L^2(\O;\R^{2\times 2})}+ \| \nabla_x\TT\|_{L^{2}(\O;\R^{2\times 2\times 2})}\right).
\ee
This implies, for $t \in (0,T]$, that
\be\label{sobolev-R2-a}
3\, \| \nabla_x \vu\|_{L^2(\O;\R^{2\times 2})}\, \|\TT\|_{L^4(\O;\R^{2\times 2})}^2 \leq C\,\| \nabla_x \vu\|_{L^2(\O;\R^{2\times 2})}^2 \|\TT\|_{L^2(\O;\R^{2\times 2})}^2 + \frac{A_0}{8\l}\|\TT\|_{L^2(\O;\R^{2\times 2})}^2+ \frac{\e}{2}  \|\nabla_x \TT\|_{L^2(\O;\R^{2\times 2 \times 2})}^2.
\ee
As $\de>0$, we have from the
\textit{a priori} bound (\ref{a-priori-2}) that
$\|\eta\|^2_{L^\infty(0,t;L^2(\O))}\leq C(t,E_0,\de^{-1})$.
We thus deduce from \eqref{a-priori-tau-f2} and \eqref{sobolev-R2-a}
that
\[
\frac{\rm d}{\dt} \int_\O  | \TT|^2 \,\dx + \e \int_\O |\nabla_x \TT|^2 \,\dx + \frac{A_0}{4\l}\int_\O | \TT|^2 \,\dx  \leq  C\, \| \nabla_x \vu\|_{L^2(\O)}^2 \|\TT\|_{L^2(\O)}^2
+ C(t,E_0,\de^{-1}). \,
\qquad  \mbox{$t \in (0,T]$}.
\]
Now, Gronwall's inequality implies that
\ba\label{a-priori-tau-f4}
\|\TT(t,\cdot)\|_{L^2(\O;\R^{2\times 2})}^2 \leq
{\rm e}^{C\int_0^t \| \nabla_x \vu(t',\cdot)\|_{L^2(\O;\R^{2\times 2})}^2\,\dt'}
\left( \|\TT_0\|_{L^2(\O;\R^{2\times 2})}^2 + C(t,E_0,\de^{-1}) \right).
\qquad  \mbox{$t \in (0,T]$}.
\ea
Finally, by invoking the \textit{a priori} bound \eqref{a-priori-2} again we deduce that
\ba\label{a-priori-tau-f5}
 \int_\O  | \TT(t)|^2 \,\dx + \e  \int_0^t \int_\O |\nabla_x \TT|^2  \,\dx\,\dt' + \frac{A_0}{4\l}\int_0^t \int_\O | \TT|^2 \dx\, \dt'  \leq C(t,E_0,\de^{-1},\|\TT_0\|_{L^2(\O;\R^{2\times 2})}^2),
\qquad  \mbox{$t \in (0,T]$}.
\ea
Thus, in the case of $d=2$  and $\de>0$,
we can supplement \eqref{a-priori-3} with the following stronger inclusion for $\TT$:
\[
\TT \in L^\infty(0,T; L^2(\O;\R^{2\times 2})) \cap L^2(0,T;W^{1,2}(\O;\R^{2\times 2})).
\]

Finally, we will also consider the case of  $d=2$ and $\de=0$ in the final step of our existence proof. To this end we require bounds on $\|\nabla_x \vu\|_{L^2(0,T;L^2(\Omega;\mathbb{R}^{2\times 2}))}$ and the
second term on the right-hand side of \eqref{a-priori-tau-f2} that are uniform in $\de$ as $\de\to 0$. The derivation of the required $\de$-uniform bounds is the subject of the following remark.

\begin{remark}\label{rem-eta2-tau}
The $\de$-uniform bound on $\|\nabla_x \vu\|_{L^2(0,T;L^2(\Omega;\mathbb{R}^{2 \times 2}))}$ is a direct consequence of \eqref{a-priori-2}, with $L>0$, and \eqref{korn}. To show that the constant on the right-hand side of \eqref{a-priori-tau-f5} is uniform as $\de\to 0$, by \eqref{a-priori-tau-f2}
it suffices  to show that the norm $\|\eta\|_{L^2(0,T;L^2(\O))}$ is uniformly bounded as $\de\to 0$.
As $|\eta \log \eta| \leq \eta \log \eta + 1$ for all $\eta \geq 0$, it follows from \eqref{a-priori-2}, considered in the case when $L>0$ and $\de \geq 0$, that
\ba\label{est-tau-de=0-1}
\| \eta  \log \eta \|_{L^\infty(0,T;L^1(\O))} + \| \nabla_x \eta ^{\frac{1}{2}} \|_{L^2(0,T;L^{2}(\O;\R^2))}  \leq C (T,E_0,L^{-1}),
\ea
where, for any (fixed) $L>0$, the constant $C(T,E_0,L^{-1})$ is bounded as $\de\to 0$. By direct computation
\[
\int_\O |\nabla_x \eta| \,\dx = \int_\O |2 \eta^{\frac{1}{2}}\nabla_x \eta^{\frac{1}{2}}| \,\dx \leq 2\, \| \eta^{\frac{1}{2}} \|_{L^2(\O)} \|\nabla_x \eta^{\frac{1}{2}} \|_{L^2(\O)} = 2\, \| \eta \|_{L^1(\O)}^{\frac{1}{2}} \|\nabla_x \eta^{\frac{1}{2}} \|_{L^2(\O)},
\qquad  \mbox{$t \in (0,T]$},
\]
and by \eqref{est-tau-de=0-1} we therefore have that
\[
\| \eta \|_{L^2(0,T;W^{1,1}(\O))}   \leq C (T,E_0,L^{-1}).
\]
As $d=2$, the Sobolev embedding of $W^{1,1}(\O)$ into $L^2(\Omega)$ then gives that
\[
\| \eta \|_{L^2(0,T;L^{2}(\O))}   \leq C (T,E_0,L^{-1}),
\]
as required. Consequently the constant appearing on the right-hand side of the inequality \eqref{a-priori-tau-f5} is independent
of $\de$, and \eqref{a-priori-tau-f5} therefore provides a uniform bound on $\TT$ in $L^\infty(0,T;L^2(\Omega;\R^{2 \times 2}) \cap
L^2(0,T;W^{1,2}(\Omega;\R^{2 \times 2}))$ as $\de \rightarrow 0$, for any (fixed) $L>0$.
\end{remark}

Motivated by these formal calculations, we shall now embark on a rigorous argument aimed at proving the existence of global-in-time large data finite-energy weak solutions to the compressible Oldroyd-B model
for the case $d=2$.

\section{Weak solutions, main results and the construction of approximating solutions}\label{sec:main-result}

The rest of the paper is devoted to the proof of the existence of global-in-time large data finite-energy weak solutions to the regularized compressible Oldroyd-B model \eqref{01}--\eqref{04},
\eqref{05a}--\eqref{07a}, (\ref{ini-data})
in the case $d=2$, followed by passage to the limit $\alpha \rightarrow 0$ with the regularization parameter $\alpha>0$ under the assumption that $\de>0$. Finally, we cover the entire range of $\de \geq 0$ by passing to the limit $\de \rightarrow 0$ assuming that $L>0$.

\subsection{Weak solutions and main results}

Our main result is the proof of the existence of global-in-time large data finite-energy weak solutions to the compressible Oldroyd-B model in the two-dimensional setting. First of all, we give the definition of a finite-energy weak solution to the $\alpha$-regularized problem \eqref{01}--\eqref{04},
\eqref{05a}--\eqref{07a}.
\begin{definition}\label{def-weaksl} Let $T>0$ and let $\O\subset \R^d$ be a bounded $C^{2,\beta}$ domain, with $0<\beta<1$. Let $\ff\in L^\infty((0,T]\times \O;\R^d)$. We say that $(\vr,\vu,\eta,\TT)$ is a \textit{finite-energy
weak solution} in $(0,T]\times \O$ to the system of equations \eqref{01}--\eqref{04},
\eqref{05a}--\eqref{07a},
with $\de>0$ fixed and $\alpha > 0$, supplemented by the initial data \eqref{ini-data}, if:
\begin{itemize}

\item $\vr \geq 0 \ {\rm a.e.\  in} \ (0,T] \times \Omega,\quad  \vr \in  C_w ([0,T];  L^\gamma(\Omega)),\quad \vu\in L^{2}(0,T;W_0^{1,2}(\Omega; \R^d)),$ \quad $\TT$ {\rm is symmetric},
\ba\label{weak-est}
&\vr \vu \in C_w([0,T]; L^{\frac{2 \gamma}{ \gamma + 1}}(\Omega; \R^d)),\quad \vr |\vu|^2 \in L^\infty(0,T; L^{1}(\Omega)),\\
&\eta  \geq 0  \ {\rm a.e.\  in} \ (0,T] \times \Omega,\quad \eta \in C_w ([0,T];  L^2(\Omega)) \cap L^2 (0,T;  W^{1,2}(\Omega)),\\
&{ \TT > 0  \ {\rm a.e.\  in} \ (0,T] \times \Omega,\quad \TT \in C_w ([0,T];  L^2(\Omega;\R^{d \times d})) \cap L^2 (0,T;  W^{1,2}(\Omega;\R^{d \times d}))},\\
&{  \tr\left(\log \TT \right)\in L^\infty (0,T;  L^{1}(\Omega))\cap L^2 (0,T;  W^{1,2}(\Omega)), \quad (\eta +\a) \,\tr\left(\TT^{-1}\right) \in  L^1 (0,T;  L^{1}(\Omega)).}
\ea

\item For any $t \in (0,T]$ and any test function $\phi \in C^\infty([0,T] \times \Ov{\Omega})$, one has

\be\label{weak-form1}
\int_0^t\intO{\big[ \vr \partial_t \phi + \vr \vu \cdot \Grad \phi \big]} \,\dt' =
\intO{\vr(t, \cdot) \phi (t, \cdot) } - \intO{ \vr_{0} \phi (0, \cdot) },
\ee
\be\label{weak-form2}
\int_0^t \intO{ \big[ \eta \partial_t \phi + \eta \vu \cdot \Grad \phi - \e \nabla_x\eta \cdot \nabla_x \phi \big]} \, \dt' =  \intO{ \eta(t, \cdot) \phi (t, \cdot) } - \intO{ \eta_{0} \phi (0, \cdot) }.
\ee

\item For any $t \in (0,T]$ and any test function $\vvarphi \in C^\infty([0,T]; \DC(\Omega;\R^d))$, one has
\ba\label{weak-form3}
&\int_0^t \intO{ \big[ \vr \vu \cdot \partial_t \vvarphi + (\vr \vu \otimes \vu) : \Grad \vvarphi  + p(\vr)\, \Div_x \vvarphi + \big(kL\eta+\de\,\eta^2\big)\, \Div_x \vvarphi  - \SSS(\nabla_x \vu) : \Grad \vvarphi \big] } \, \dt'\\
&= \int_0^t \intO{ \TT : \nabla_x\vvarphi - \frac{\a}{2} \,\tr\left(\log \TT\right) \,\Div_{ x}\vvarphi - \vr\, \ff \cdot \vvarphi} \, \dt' + \intO{ \vr \vu (t, \cdot) \cdot \vvarphi (t, \cdot) } - \intO{ \vr_{0} \vu_{0} \cdot \vvarphi(0, \cdot) }.
\ea

\item For any $t \in (0,T]$ and any test function $ \YY \in C^\infty([0,T] \times \Ov{\Omega};\R^{d \times d})$, one has
\ba\label{weak-form4}
&\int_0^t \intO{ \left[ \TT : \partial_t \YY +  (\vu \,\TT) ::
\nabla_{x} \YY  + \left(\nabla_x \vu \,\TT + \TT\, \nabla_x^{\rm T} \vu \right):\YY
- \e \nabla_x  \TT :: \nabla_x \YY  \right]}\,\dt'\\
&=\int_0^t \intO{ \left[ - \frac{k\,A_0}{2\lambda}(\eta+\a) \,\tr\left(\YY\right)
+ \frac{A_0}{2\lambda} \TT:\YY \right]} \, \dt' + \intO{ \TT (t, \cdot) : \YY (t, \cdot) } - \intO{ \TT_{0} : \YY(0, \cdot) },
\ea
where the terms involving the notation $::$ are
\be\label{def-::}
(\vu \,\TT) :: \nabla_{x} \YY = \sum_{\kappa=1}^d \vu_\kappa \,\YY : \d_{x_\kappa} \YY,\quad \nabla_x  \TT :: \nabla_x \YY = \sum_{\kappa=1}^d \d_{x_\kappa}\TT : \d_{x_\kappa} \YY.
\ee

\item The continuity equation holds in the sense of renormalized solutions:
\be\label{weak-renormal}
\d_t b(\vr) +\Div_x (b(\vr)\vu) + (b'(\vr)\vr - b(\vr))\,\Div_x \vu =0 \quad \mbox{in} \ \mathcal{D}' ((0,T)\times \O),
\ee
for any $b\in C[0,\infty)\cap C^1(0,\infty)$ such that
\be\label{cond-b-renormal}
|b'(s)|<C s^{-\lambda_0}\quad \forall \,s\in (0,1] \qquad
\mbox{and} \qquad |b'(s)|<Cs^{\lambda_1}\quad \forall \,s \geq 1,
\ee
where $\lambda_0 <1$ and $\lambda_1 \in (-1,\infty)$; see {\rm(6.2.9)} and
{\rm(6.2.10)} in
\cite{N-book}.
\item  For a.e. $t \in (0,T]$ the following \emph{energy inequality} holds:
\ba\label{energy1}
 &E(t) + 2\e\int_0^t \int_\O 2 k L   \,|\nabla_x \eta^{\frac12}|^2 +  \de\,  |\nabla_x \eta|^2 \,\dx\,\dt'  + \frac{\a\,\e}{2d}  \int_0^t  \int_\O \left|\nabla_{x} \tr\left(\log \TT \right)\right|^2\,\dx \, \dt' \\
 &\qquad +  \int_0^t  \int_\O \mu^S \left| \frac{\nabla_x \vu + \nabla^{\rm T}_x \vu}{2} - \frac{1}{d} (\Div_x \vu) \II \right|^2 + \mu^B |\Div_x \vu|^2\,\dx \,\dt'  \\
 &\qquad+ \frac{A_0}{4\l} \int_0^t  \int_\O \tr\left(\TT\right) \,\dx \,\dt'
 + \frac{\a\,k\,A_0}{4\lambda} \int_0^t  \int_\O (\eta+\a)  \,\tr\left(\TT^{-1}\right)\,\dx \,\dt' \\
 &\leq  E_0 + \int_0^t \left[\int_\O \vr\,\ff \cdot  \vu \,\dx
 + \frac{k\,A_0\,d}{4\lambda} \int_\O (\eta+\a)  \,\dx + \frac{\a\,d\,A_0}{4\lambda}|\O|\right] \dt',
\ea
where $E(t)$ and $E_0$ are defined by \eqref{def-energy-free} and \eqref{def-energy-free0}.
\end{itemize}
\end{definition}

\begin{remark}\label{rem-def1} Definition \ref{def-weaksl} is fairly standard. The energy inequality \eqref{energy1} identifies an important class of weak solutions, usually termed \emph{dissipative or finite-energy} weak solutions. We note that, given a smooth solution, the energy inequality \eqref{energy1} can be derived by integrating the \textit{a priori} bound \eqref{a-priori-log-f2} over $[0,t]$ with respect to the temporal variable and using Lemma \ref{lem-lap-tau-inverse}.
\end{remark}

\begin{remark}\label{rem-def2} In Definition \ref{def-weaksl}, we assume sufficient regularity for $\TT$ in \eqref{weak-est}. This allows us to choose $\TT$ as a test function in the weak formulation \eqref{weak-form4} and to derive the following inequality in the two-dimensional setting:
\ba\label{energy2}
&\frac{1}{2}\int_\O  | \TT(t)|^2 \,\dx +  \e \int_0^t \int_\O |\nabla_x \TT|^2 \,\dx \,\dt' + \frac{A_0}{4\l} \int_0^t \int_\O | \TT|^2 \,\dx \,\dt'\\
&\quad \leq \frac{1}{2}\int_\O | \TT_0|^2 \,\dx   + 3 \int_0^t\int_\O |\nabla_x \vu|\, |\TT|^2 \,\dx\,\dt'  + \frac{k^2\,A_0}{2\l} \int_0^t \int_\O  (\eta+\a)^2 \,\dx \,\dt',
\qquad  \mbox{for a.e. $t \in (0,T]$}.
\ea
Given a symmetric positive definite matrix function $\TT$ satisfying \eqref{weak-est}, all of the terms appearing in \eqref{energy2} are meaningful. Moreover, by the argument presented in Section \ref{sec:2d-est}, we can derive the uniform bounds stated in inequality \eqref{a-priori-tau-f5}.

\end{remark}

We are now ready to state our first main theorem, which asserts the global-in-time existence of large data finite-energy weak solutions to the $\a$-regularized compressible Oldroyd-B model in the two-dimensional setting when $\de>0$.

\begin{theorem}\label{thm}
Let $d=2$ and suppose that $\g>1$, $\de>0$ and $\a>0$. Then, there exists a finite-energy weak solution $(\vr,\vu,\eta,\TT)$ to the $\a$-regularized compressible Oldroyd-B model \eqref{01a}--\eqref{07a} with initial data \eqref{ini-data}, in the sense of Definition \ref{def-weaksl}. Moreover, the extra stress tensor $\TT $ appearing as the fourth component of such a weak solution $(\vr,\vu,\eta,\TT)$  satisfies the inequality \eqref{a-priori-tau-f5}, and the constant
on the right-hand side of \eqref{a-priori-tau-f5} is independent of $\de$ as long as $L>0$.

\end{theorem}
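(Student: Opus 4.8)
The plan is to construct a sequence of approximating solutions via a multi-level Galerkin/regularization scheme and pass to the limit using the \textit{a priori} bounds derived in Section \ref{sec:a-priori}, in particular the energy inequality \eqref{a-priori-log-f2}/\eqref{energy1} together with the two-dimensional bound \eqref{a-priori-tau-f5}. The starting point is the standard theory for the compressible Navier--Stokes system (Lions--Feireisl--Novotn\'y, cf.\ \cite{N-book}), adapted to accommodate the coupling to $\eta$ and $\TT$. Concretely, I would introduce an artificial viscosity parameter $\e_0>0$ (adding $\e_0 \Delta_x \vr$ to the continuity equation) and an artificial pressure term $\delta_0 \vr^\Gamma$ with $\Gamma$ large, solve the resulting system by a Faedo--Galerkin approximation in the velocity variable coupled with the parabolic equations \eqref{03}--\eqref{04} for $\eta$ and $\TT$ (these are linear in $\eta$, $\TT$ respectively once $\vu$ is frozen, so they are solved by standard parabolic theory), and then pass to the limit in the Galerkin parameter, then $\e_0 \to 0$, then $\delta_0 \to 0$. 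At each stage one needs the energy inequality to survive, which is why the logarithmic quantity $\tr(\log\TT)$ and the term $(\eta+\alpha)\tr(\TT^{-1})$ must be carried along.

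The key steps, in order, are: (i) at the Galerkin level, establish positivity $\eta>0$ and positive definiteness $\TT>0$ --- for $\eta$ this follows from the maximum principle for \eqref{03} (with $\e\Delta_x$ providing the needed parabolicity and the velocity field being smooth at this level), and for $\TT$ one exploits the structure of \eqref{04}: freezing $\vu$, the equation for $\TT$ is a linear parabolic system whose zeroth-order reaction term $\frac{kA_0}{2\lambda}(\eta+\alpha)\II$ is positive definite and whose solution starting from $\TT_0>0$ stays positive definite, using an argument on the smallest eigenvalue or a comparison/ODE argument along characteristics combined with the diffusion; (ii) derive the discrete analogue of \eqref{a-priori-log-f2}, which requires Lemma \ref{lem-lap-tau-inverse} and therefore enough regularity of $\TT$ to justify taking the inner product of \eqref{04} with $\TT^{-1}$ --- at the Galerkin/artificial-viscosity level $\TT$ is smooth enough; (iii) use these bounds, plus the two-dimensional bound \eqref{a-priori-tau-f5} obtained by testing \eqref{04} with $\TT$ and using the Gagliardo--Nirenberg inequality \eqref{sobolev-R2} and Gronwall (as in Section \ref{sec:2d-est}), to extract weakly/weakly-$\ast$ convergent subsequences; (iv) pass to the limit in the nonlinear terms. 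The compactness of $\vr$ requires the effective viscous flux identity and the renormalization technique for the continuity equation, exactly as in \cite{N-book}; the compactness of $\eta$ and $\TT$ is easier since they solve parabolic equations with a diffusion term, so Aubin--Lions gives strong convergence in $L^2((0,T)\times\Omega)$, which handles the quadratic terms $\de\,\eta^2$, $\nabla_x\vu\,\TT$, $(\eta+\alpha)\II$, $\TT:\YY$ and, crucially, allows passage to the limit in $\tr(\log\TT)$ and in the lower-order $\alpha$-terms once one also controls $\TT^{-1}$ via the uniform bound on $(\eta+\alpha)\tr(\TT^{-1})$.

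The main obstacle I expect is the passage to the limit in the term $\frac{\alpha}{2}\tr(\log\TT)\,\Div_x\vu$ in the momentum equation \eqref{weak-form3}, and more broadly the rigorous propagation of the strict positivity of $\TT$ and the integrability of $\TT^{-1}$ through the limits. Strong convergence of $\TT$ in $L^2$ and the uniform bound $\TT\in L^\infty(0,T;L^2)\cap L^2(0,T;W^{1,2})$ do not by themselves prevent eigenvalues of $\TT$ from degenerating to zero on a set of positive measure in the limit; one needs the uniform bound on $\int\!\!\int (\eta+\alpha)\tr(\TT^{-1})$ together with $\eta+\alpha\geq\alpha>0$ (which is precisely why the $\alpha$-regularization replaces $\eta$ by $\eta+\alpha$ in \eqref{04}) to deduce $\tr(\TT^{-1})\in L^1((0,T)\times\Omega)$ in the limit, hence $\TT>0$ a.e. Then Vitali's convergence theorem, combined with the continuity of $\log$ on positive definite matrices and the weak convergence of $\Div_x\vu$, yields the limit of the product --- but checking the required uniform integrability/equi-integrability (to rule out concentration) is the delicate point. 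The remaining work --- recovering the renormalized continuity equation, the effective viscous flux, and lower semicontinuity of the energy functional (using convexity of $\vr\mapsto\vr^\gamma$, of $\eta\mapsto\eta\log\eta$, of $\TT\mapsto\tr\TT - \alpha\log\det\TT$, and weak lower semicontinuity of the dissipation terms) --- is technically substantial but follows established patterns. Finally, one records that throughout the argument, whenever $L>0$, the bounds \eqref{a-priori-2} and hence \eqref{a-priori-tau-f5} are $\de$-uniform by Remark \ref{rem-eta2-tau}, which gives the last assertion of the theorem.
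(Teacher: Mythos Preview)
Your three-level plan (artificial pressure, artificial viscosity in the continuity equation, Faedo--Galerkin in the velocity) matches the paper's first three levels of approximation, and your identification of the main obstacle --- propagating the positive definiteness of $\TT$ and the integrability of $\tr(\TT^{-1})$ through the limits so that the term $\tfrac{\alpha}{2}\tr(\log\TT)\,\Div_x\vu$ makes sense --- is exactly right. Your mechanism for passing positivity through the limits (the uniform $L^1$ bound on $(\eta+\alpha)\tr(\TT^{-1})$, combined with $\eta+\alpha\ge\alpha>0$) is also what the paper does.

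The genuine gap is step~(i)/(ii) at the innermost level. You assert that at the Galerkin stage, with smooth $\vu_n$, one can show $\TT_n>0$ by ``an argument on the smallest eigenvalue or a comparison/ODE argument along characteristics combined with the diffusion'', and that $\TT_n$ is then ``smooth enough'' to justify testing \eqref{04} with $\TT_n^{-1}$. Neither claim is straightforward: a smallest-eigenvalue argument is obstructed by the fact that the eigenvector direction depends on $(t,x)$, and even if one establishes $\TT_n>0$ a.e., testing with $\TT_n^{-1}$ and applying Lemma~\ref{lem-lap-tau-inverse} rigorously requires a \emph{uniform} lower bound $\TT_n\ge c\,\II$ on $\overline\Omega$, which a bare positivity statement does not give. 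There is a circularity here: the $L^1$ bound on $\tr(\TT_n^{-1})$ that you want to use to control degeneracy is itself the output of the testing procedure that already presupposes nondegeneracy.

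The paper resolves this by introducing a \emph{fourth} level of approximation: a cut-off $\chi_{\sigma_3}(s)=\max\{\sigma_3,s\}$ is applied (in the matrix-functional-calculus sense) to the symmetric part of $\TT$ everywhere it appears in \eqref{02-l5} and \eqref{04-l5}, and one works with the regularized logarithm $G_{\sigma_3}$ (with $G_{\sigma_3}'=\chi_{\sigma_3}^{-1}$). Then $\chi_{\sigma_3}(\TT_{n,\sigma_3})\ge\sigma_3\,\II$ holds \emph{by construction}, the testing with $\chi_{\sigma_3}(\TT_{n,\sigma_3})^{-1}$ is rigorous (via Lemmas~\ref{lem-pt-mf} and \ref{lem-lap-tau-inverses3}, which replace Lemma~\ref{lem-lap-tau-inverse}), and one obtains the $\sigma_3$-uniform bound $\|\tr(\chi_{\sigma_3}(\TT_{n,\sigma_3})^{-1})\|_{L^1}\le C$. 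Only \emph{after} passing $\sigma_3\to 0$ does one conclude $\TT_n>0$ a.e., by the very argument you describe in your paragraph on the ``main obstacle''. In short, the positivity of $\TT$ is an \emph{output} of the construction, not an input, and the extra cut-off layer is what makes the logarithmic/inverse bounds accessible without presupposing it.
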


The proof of Theorem \ref{thm} involves four levels of approximation, which are described in Section \ref{sec:4level-app}; the respective passages to the limits with the four levels of approximation are carried out in Sections \ref{sec:4th-app-full}--\ref{sec:completion-proof}.

Our second main result is stated in Theorem \ref{thm-f}, and concerns passing to the limit $\alpha \rightarrow 0$ with the regularisation parameter $\alpha>0$ in the sequence of solutions whose existence is asserted by Theorem \ref{thm}, thus proving the existence of large data finite-energy global weak solutions, in the sense of Definition \ref{def-weaksl-f}, to \eqref{01a}--\eqref{07a}, with $L\geq 0$ and $\de>0$. Finally, in Theorem \ref{thm-ff} we pass to the limit $\de \rightarrow 0$, assuming that $L>0$, to deduce the existence of large data finite-energy global weak solutions to the compressible Oldroyd-B model with stress diffusion in two space dimensions, for the entire range of parameters $\de \in [0,\infty)$, including $\de=0$. We conclude with a further result, which shows that if the initial polymer number density has stronger integrability than $L \log L(\Omega)$, say $\eta_0 \in L^q(\Omega)$, $q > 1$, then the regularity and the integrability properties of $\eta(t,\cdot)$ for $t \in (0, T ]$ are also improved.

Before embarking on the technical part of the paper, we recall, in Section \ref{sec-prelim}, a number of preliminary results, which will be required in the proofs.

\section{Preliminaries}\label{sec-prelim}

In this section we recall some technical tools that will be required in the rest of the paper.

\subsection{Classical mollifiers}\label{sec:mollifier} Let $\zeta \in C_c^\infty(\R^d)$ be a nonnegative, radially symmetric function such that
$$
{\rm supp\,}\zeta \subset B(0,1), \quad \int_{\R^d} \zeta(x)\,\dx =1.
$$
We define the mollification kernel
$$
\zeta_\th(\cdot)= \frac{1}{\th^d}\zeta\bigg(\frac{\cdot}{\th}\bigg),\quad \mbox{for any $\th>0$}.
$$
For any locally integrable function $v$ defined on $\R^d$ with values in a Banach space $X$, we define the classical (Friedrichs) {\em mollifier} $S_\th$ as the following convolution operator:
 $$
 S_\th[v] := \zeta_\th * v = \int_{\R^d} \zeta_\th (x-y)\ v(y) \, {\rm d}y.
 $$
Some of the key properties of $S_\th$ are summarized in the next lemma.
  \begin{lemma}[Theorem 10.1 in \cite{F-N-book}]\label{lem-mollifier1}
Let $X$ be a Banach space. If $v \in L^1_{\rm loc}(\R^d;X)$, we have that $S_\th [v]\in C^\infty(\R^d;X)$. In addition, the following hold:
\begin{itemize}
\item[(i)] If $v\in L^p_{\rm loc}(\R^d;X),\  1 \leq p < \infty$, then $S_\th [v] \in  L^p_{\rm loc}(\R^d;X)$ and
$$S_\th [v]\to v \ {\rm in}\  L^p_{\rm loc}(\R^d;X), \quad  {\rm as}\  \th  \to 0.$$

\item[(ii)] If $v\in L^p(\R^d;X),\  1 \leq p < \infty$, then
$$\|S_\th [v]\|_{L^p(\R^d;X)}\leq  \|v\|_{L^p(\R^d;X)};\quad S_\th [v]\to v \ {\rm in}\  L^p(\R^d;X), \quad  {\rm as}\  \th   \to 0.$$

\item[(iii)] If $v\in L^\infty(\R^d;X)$, then
$$\|S_\th [v]\|_{L^\infty(\R^d;X)}\leq  \|v\|_{L^\infty(\R^d;X)}.$$

\end{itemize}

\end{lemma}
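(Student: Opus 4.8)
The plan is to transcribe the classical scalar argument for Friedrichs mollifiers to the Bochner (Banach-space-valued) setting, paying attention only to the few places where vector-valued integration theory enters. First I would establish the smoothness claim: for $v \in L^1_{\rm loc}(\R^d;X)$ and fixed $x \in \R^d$, the integrand $y \mapsto \zeta_\th(x-y)\,v(y)$ in $S_\th[v](x) = \int_{\R^d}\zeta_\th(x-y)\,v(y)\,{\rm d}y$ is Bochner-integrable, being supported in the fixed compact ball $\overline{B(x,\th)}$ on which $v$ is integrable; since $\zeta_\th \in C_c^\infty(\R^d)$, every partial derivative of $x \mapsto \zeta_\th(x-y)$ is continuous and bounded locally uniformly in $y$, so a dominated-convergence argument applied to difference quotients (valid for Bochner integrals) lets one differentiate under the integral sign repeatedly, giving $S_\th[v]\in C^\infty(\R^d;X)$ with all derivatives obtained by differentiating the kernel.

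Next I would dispose of the norm bounds. For (iii), $\|S_\th[v](x)\|_X \le \int_{\R^d}\zeta_\th(x-y)\,\|v(y)\|_X\,{\rm d}y \le \|v\|_{L^\infty(\R^d;X)}$, using the Bochner inequality $\|\int f\|_X \le \int \|f\|_X$ together with $\int_{\R^d}\zeta_\th = 1$. For the bound in (ii) I would note that $\zeta_\th\,{\rm d}y$ is a probability measure, so either Jensen's inequality applied pointwise to $t \mapsto t^p$ followed by integration in $x$, or Minkowski's integral inequality, yields $\|S_\th[v]\|_{L^p(\R^d;X)} \le \|v\|_{L^p(\R^d;X)}$; in particular $S_\th$ is a contraction on $L^p(\R^d;X)$ uniformly in $\th$.

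For the convergence statements in (ii) I would argue by density. For $v \in C_c(\R^d;X)$ one has, by uniform continuity of $v$, that $\|S_\th[v](x) - v(x)\|_X \le \int_{\R^d}\zeta_\th(x-y)\,\|v(y)-v(x)\|_X\,{\rm d}y \le \sup_{|z|\le\th}\,\sup_x \|v(x-z) - v(x)\|_X \to 0$ as $\th \to 0$, uniformly in $x$, and since all the functions involved are supported in one fixed compact set this gives $S_\th[v] \to v$ in $L^p(\R^d;X)$. For general $v \in L^p(\R^d;X)$ I would use density of $C_c(\R^d;X)$ in $L^p(\R^d;X)$: given $\epsilon > 0$, pick $w \in C_c(\R^d;X)$ with $\|v - w\|_{L^p(\R^d;X)} < \epsilon$, estimate $\|S_\th[v] - v\|_{L^p} \le \|S_\th[v-w]\|_{L^p} + \|S_\th[w]-w\|_{L^p} + \|w - v\|_{L^p} \le 2\epsilon + \|S_\th[w]-w\|_{L^p}$, then let $\th \to 0$ and $\epsilon \to 0$. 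The local versions in (i) then follow by localization: for a bounded open $U$ and $\th \le \theta_0$, $S_\th[v]|_U$ depends only on the restriction of $v$ to the bounded set $U_{\theta_0} := \{x : {\rm dist}(x,U) < \theta_0\}$, so replacing $v$ by $\chi v$ with $\chi \in C_c^\infty(\R^d)$, $\chi \equiv 1$ on a neighbourhood of $\overline{U_{\theta_0}}$, reduces the claims on $U$ to the global statements already established applied to $\chi v \in L^p(\R^d;X)$; since $U$ is arbitrary this yields $S_\th[v] \in L^p_{\rm loc}(\R^d;X)$ and $S_\th[v] \to v$ in $L^p_{\rm loc}(\R^d;X)$.

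The only genuinely non-cosmetic points, and the ones I would write out, are (a) the legitimacy of differentiating under the Bochner integral sign in the first step, and (b) the density of $C_c(\R^d;X)$ in $L^p(\R^d;X)$ used in the third step — standard, but it does require reducing to simple functions and approximating indicators of finite-measure sets by continuous compactly supported scalar functions (via Lusin/Urysohn), then scaling by the corresponding elements of $X$. Everything else is a line-by-line transcription of the scalar proof, which is why in the body of the paper the lemma is simply quoted from \cite{F-N-book}.
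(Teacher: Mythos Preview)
Your proposal is correct and follows the standard classical argument for Friedrichs mollifiers in the Bochner setting. The paper itself does not prove this lemma at all: it is simply quoted as Theorem~10.1 from \cite{F-N-book}, exactly as you anticipated in your closing remark.
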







\subsection{
The Bogovski{\u\i} operator}

We recall the Bogovski{\u\i} operator, whose construction can be found in \cite{bog} and in Chapter III of Galdi's book \cite{Galdi-book}; see also Lemma 3.17 in \cite{N-book}.

\begin{lemma}\label{lem-bog}
 Let $1<p<\infty$ and suppose that $G\subset \R^d$ is a bounded  Lipschitz domain. Let $L^{p}_0(G)$ be the space of all $L^p(G)$ functions with zero mean value. Then, there exists a linear operator ${\mathcal B}_G$ from $L_0^p(G)$ to $W_0^{1,p}(G;\R^d)$ such that for any $\rho \in L_0^p(G)$ one has
$$
\Div_x \mathcal{B}_G (\rho) =\rho \quad \mbox{in} \ G; \quad \|\mathcal{B}_G (\rho)\|_{W_0^{1,p}(G;\R^d)} \leq c (p,d,G)\, \|\rho\|_{L^p(G)}.
$$

If, in addition, $\rho=\Div_x {\bf g}$ for some ${\bf g} \in L^{q}(G;\R^d), \  1<q<\infty, \  {\bf g} \cdot {\bf n}  = 0 $ on $\partial G$, then the following inequality holds:
$$
\|\mathcal{B}_G (\rho)\|_{L^{q}(G;\R^d)} \leq c (d, q,G)\, \| {\bf g} \|_{L^q(G;\R^d)}.
$$
\end{lemma}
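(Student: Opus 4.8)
The statement is the classical theorem of Bogovski{\u\i}, and the plan is to follow its standard two-stage proof: first solve the divergence equation by an explicit formula on a domain star-shaped with respect to a ball, then localise to an arbitrary bounded Lipschitz domain via a partition of unity. First I would treat the star-shaped case: assuming $G$ is star-shaped with respect to a ball $B\subset G$, fix $\omega\in C_c^\infty(B)$ with $\int_{\R^d}\omega=1$ and, for $\rho\in L^p_0(G)$, set
\[
\mathcal{B}_G(\rho)(x):=\int_G \rho(y)\,\frac{x-y}{|x-y|^d}\left(\int_{|x-y|}^{\infty}\omega\!\left(y+\xi\,\frac{x-y}{|x-y|}\right)\xi^{d-1}\,{\rm d}\xi\right){\rm d}y .
\]
Differentiating under the integral sign, and using that star-shapedness forces the ray emanating from $y$ through $x$ to meet $\mbox{supp}\,\omega$ only inside $G$, one obtains the identity $\Div_x\mathcal{B}_G(\rho)=\rho-\big(\int_G\rho\big)\,\omega$; the zero-mean hypothesis $\int_G\rho=0$ then gives $\Div_x\mathcal{B}_G(\rho)=\rho$ in $G$.

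Next I would prove the $W^{1,p}$ estimate in the star-shaped case. Differentiating the formula once more exhibits $\nabla_x\mathcal{B}_G(\rho)$ as a sum of a principal-value singular integral operator, whose kernel is homogeneous of degree $-d$ in $x-y$, smooth off the diagonal and with vanishing mean on spheres (uniformly in $x\in G$), and a remainder operator with kernel of order $|x-y|^{1-d}$, which is locally integrable. After extending $\rho$ by zero to $\R^d$, the Calder\'on--Zygmund theorem bounds the singular part on $L^p$ for $1<p<\infty$, Young's convolution inequality bounds the remainder, and the Poincar\'e inequality on the bounded domain $G$ controls $\|\mathcal{B}_G(\rho)\|_{L^p(G)}$ by its gradient; a density argument with smooth zero-mean $\rho$ also gives $\mathcal{B}_G(\rho)\in W_0^{1,p}(G;\R^d)$. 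This yields $\|\mathcal{B}_G(\rho)\|_{W^{1,p}_0(G;\R^d)}\le c(p,d,G)\,\|\rho\|_{L^p(G)}$.

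For a general bounded Lipschitz domain $G$, I would use that $\partial G$ being Lipschitz allows a finite decomposition $G=\bigcup_{m=1}^M G_m$ into bounded Lipschitz subdomains, each star-shaped with respect to a ball $\overline{B_m}\subset G_m$, and take a partition of unity $\{\chi_m\}$ subordinate to this cover. Given $\rho\in L^p_0(G)$, I would write $\rho=\sum_{m=1}^M\rho_m$ with each $\rho_m$ supported in $G_m$ and of zero mean over $G_m$: beginning from $\chi_m\rho$, the surplus mass $\int_G\chi_m\rho$ is transported step by step along a chain of overlapping subdomains, the global condition $\int_G\rho=0$ guaranteeing that this process closes up, and each correction being bounded in $L^p$ by $c\,\|\rho\|_{L^p(G)}$. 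Then $\mathcal{B}_G(\rho):=\sum_{m=1}^M\mathcal{B}_{G_m}(\rho_m)$, with each summand extended by zero, solves the divergence equation and inherits the asserted bound. I expect this patching to be the main obstacle: setting up the redistribution of mass so the local data remain controlled, and verifying that the chain-of-subdomains decomposition exists for an arbitrary bounded Lipschitz domain, is the delicate step, and is exactly where the Lipschitz regularity of $\partial G$ is used; everything else reduces to the explicit formula and Calder\'on--Zygmund theory.

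Finally, for the improved estimate when $\rho=\Div_x\mathbf{g}$ with $\mathbf{g}\cdot\mathbf{n}=0$ on $\partial G$, I would first note that the divergence theorem gives $\int_G\rho=0$, so $\mathcal{B}_G(\rho)$ is well defined. On each star-shaped piece one integrates by parts in the defining formula, transferring the derivative from $\mathbf{g}$ onto the kernel: the boundary term vanishes because $\mathbf{g}\cdot\mathbf{n}=0$, and differentiating the kernel in $y$ again produces a Calder\'on--Zygmund kernel in $x-y$. Thus $\mathcal{B}_G(\rho)$ is represented as a singular integral operator applied directly to $\mathbf{g}$, and the Calder\'on--Zygmund theorem gives $\|\mathcal{B}_G(\rho)\|_{L^q(G;\R^d)}\le c(d,q,G)\,\|\mathbf{g}\|_{L^q(G;\R^d)}$ for $1<q<\infty$, after reassembling the local contributions through the same partition of unity.
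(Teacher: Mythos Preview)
Your sketch is correct and follows the classical two-stage argument (explicit Bogovski{\u\i} formula on star-shaped domains plus localisation via a partition of unity with mass redistribution along a chain of overlapping subdomains). This is precisely the approach in the references the paper cites. However, the paper itself does not prove Lemma~\ref{lem-bog}: it simply recalls the statement and refers the reader to Bogovski{\u\i}'s original paper~\cite{bog}, Chapter~III of Galdi~\cite{Galdi-book}, and Lemma~3.17 in Novotn\'y--Stra\v{s}kraba~\cite{N-book} for the construction and proof. So there is nothing to compare at the level of argument; you have supplied a proof where the paper deliberately omits one.
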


\medskip

\subsection{Compactness theorems} We begin by recalling the following result, usually referred to as the Aubin--Lions--Simon compactness theorem (see Simon \cite{Simon}).
\begin{lemma}\label{lem-ALS}
Let $X_0$, $X$ and $X_1$ be three Banach spaces with $X_0 \subset X \subset X_1$. Suppose that $X_0 \hookrightarrow \hookrightarrow X$, i.e. $X_0$ is compactly embedded in $X$,  and that $X \hookrightarrow X_1$, i.e. $X$ is continuously embedded in $X_1$. For $1 \leq  p, q \leq \infty$, let
$$Y = \{ v \in L^p (0, T; X_0) : \d_t v \in L^q (0, T; X_1) \}.$$
Then, the following properties hold:
\begin{itemize}
\item[(i)] If $p <\infty $, then the embedding of $Y$ into $L^p(0, T; X)$ is compact;
\item[(ii)] If $p  =\infty$ and $q  >  1$, then the embedding of $Y$ into $C([0, T]; X)$ is compact.
\end{itemize}
\end{lemma}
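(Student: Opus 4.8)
I would derive both parts from three ingredients: an Ehrling-type interpolation inequality relating the norms of $X_0$, $X$ and $X_1$; the vector-valued Riesz--Fr\'echet--Kolmogorov characterisation of relatively compact subsets of $L^p(0,T;X_1)$, together with the Arzel\`a--Ascoli theorem in $C([0,T];X_1)$; and the identity $v(t+h)-v(t)=\int_t^{t+h}\d_t v(s)\,{\rm d}s$ in $X_1$, valid for every $v\in Y$ once $v$ is identified with its absolutely continuous representative $[0,T]\to X_1$ (which exists since $\d_t v\in L^q(0,T;X_1)\subset L^1(0,T;X_1)$). Throughout, let $\{u_n\}_{n\in\N}$ be a sequence bounded in $Y$, say $\|u_n\|_{L^p(0,T;X_0)}+\|\d_t u_n\|_{L^q(0,T;X_1)}\le M$; the task is to extract a subsequence convergent in $L^p(0,T;X)$ for (i), respectively in $C([0,T];X)$ for (ii).

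\textbf{Interpolation and reduction.} First I would establish that for every $\eta>0$ there is a $C_\eta>0$ with $\|w\|_X\le \eta\|w\|_{X_0}+C_\eta\|w\|_{X_1}$ for all $w\in X_0$; this is a routine contradiction argument using the compactness of $X_0\hookrightarrow\hookrightarrow X$ and the continuity of $X\hookrightarrow X_1$. Applying this pointwise in $t$ and integrating (resp. taking the essential supremum over $t$) yields, for all $m,n$,
\[
\|u_n-u_m\|_{L^p(0,T;X)}\le 2M\eta+C_\eta\,\|u_n-u_m\|_{L^p(0,T;X_1)},
\]
and likewise with $C([0,T];X)$ and $C([0,T];X_1)$ in place of the $L^p$ spaces in case (ii). Hence it suffices to produce a subsequence that is Cauchy in $L^p(0,T;X_1)$ (case (i)), respectively in $C([0,T];X_1)$ (case (ii)); letting $\eta\to 0$ after the extraction then finishes the proof.

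\textbf{Case (i).} I would verify the two hypotheses of the Riesz--Fr\'echet--Kolmogorov theorem for $\{u_n\}$ in $L^p(0,T;X_1)$. For equicontinuity of translations, the calculus identity gives $\|u_n(\cdot+h)-u_n(\cdot)\|_{X_1}\le g_n$ with $g_n(t):=\int_t^{t+h}\|\d_t u_n(s)\|_{X_1}\,{\rm d}s$; since $0\le g_n\le M$ pointwise and $\int_0^{T-h}g_n(t)\,{\rm d}t\le h\,\|\d_t u_n\|_{L^1(0,T;X_1)}\le Ch$ uniformly in $n$ (by H\"older in $t$, using $q\ge 1$ and $T<\infty$), the convexity estimate $g_n^p\le M^{p-1}g_n$ yields $\sup_n\|u_n(\cdot+h)-u_n(\cdot)\|_{L^p(0,T-h;X_1)}\le (CM^{p-1})^{1/p}h^{1/p}\to 0$ as $h\to 0^+$; this also covers the borderline case $q=1$. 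For the relative compactness of the local means, for fixed $h>0$ one has $\big\|\tfrac{1}{2h}\int_{t-h}^{t+h}u_n(s)\,{\rm d}s\big\|_{X_0}\le (2h)^{-1/p}M$, a bound independent of $n$ and $t$, so these averages lie in a fixed bounded subset of $X_0$, hence in a relatively compact subset of $X_1$ since $X_0\hookrightarrow\hookrightarrow X\hookrightarrow X_1$. The theorem then gives that $\{u_n\}$ is relatively compact in $L^p(0,T;X_1)$, as required.

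\textbf{Case (ii) and main obstacle.} When $p=\infty$ and $q>1$, for each $t$ in a common full-measure set the sequence $\{u_n(t)\}_{n\in\N}$ is bounded in $X_0$ by $M$, hence relatively compact in $X_1$; moreover H\"older's inequality gives the uniform modulus of continuity $\|u_n(t)-u_n(s)\|_{X_1}\le M|t-s|^{1/q'}$, which is exactly where $q>1$ (i.e. $q'<\infty$) is used. A diagonal extraction over a countable dense set of times, combined with this equicontinuity, produces a subsequence that is Cauchy in $C([0,T];X_1)$; this is the Arzel\`a--Ascoli argument. The technical core of the whole proof is the vector-valued Riesz--Fr\'echet--Kolmogorov theorem invoked in case (i): its proof (via dyadic partitions, or via time-mollification of the whole family) must be run with a general Banach space in the target, keeping both the translation estimate and the local-mean compactness uniform over the family, and with attention to the endpoints $t=0,T$, where translates are undefined. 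The borderline case $q=1$ in part (i), and the need to work throughout with the absolutely continuous $X_1$-valued representative of elements of $Y$ so that pointwise values and the fundamental theorem of calculus are legitimate, are the remaining points requiring care; none is deep, but each should be made explicit.
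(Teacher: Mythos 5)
The paper itself gives no proof of this lemma: it is recalled as a known result and cited directly to Simon \cite{Simon}, so there is no in-paper argument to compare with. Your proposal follows what is essentially Simon's own route (an Ehrling-type interpolation inequality, the vector-valued Riesz--Fr\'echet--Kolmogorov criterion, and an Arzel\`a--Ascoli argument), and part (i) as you present it is sound: the translation estimate via $u_n(t+h)-u_n(t)=\int_t^{t+h}\partial_t u_n(s)\,{\rm d}s$ together with the $X_0$-boundedness (hence $X_1$-precompactness) of the local means verifies exactly the two hypotheses of the vector-valued Kolmogorov theorem in $L^p(0,T;X_1)$, and the interpolation inequality then upgrades this to compactness in $L^p(0,T;X)$, with the borderline case $q=1$ correctly covered.

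The genuine gap is the ``likewise with $C([0,T];X)$'' reduction in part (ii). The claimed bound $\|u_n-u_m\|_{C([0,T];X)}\le 2M\eta+C_\eta\,\|u_n-u_m\|_{C([0,T];X_1)}$ comes from applying the Ehrling inequality pointwise in $t$, but the hypothesis only gives $\|u_n(t)\|_{X_0}\le M$ for a.e.\ $t$; at the exceptional times the value of the $X_1$-continuous representative need not lie in $X_0$ at all (think of $X_0=C^1$, $X=C^0$: a uniform limit of a $C^1$-bounded sequence is merely Lipschitz), so the inequality cannot be invoked there. Relatedly, you never establish that elements of $Y$ (with $p=\infty$, $q>1$) have representatives in $C([0,T];X)$, which is part of what the assertion ``the embedding of $Y$ into $C([0,T];X)$'' requires. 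The standard repair: every value $u_n(t)$, for \emph{every} $t$, lies in $K:=\overline{B_{X_0}(0,M)}^{\,X}$, which is compact in $X$ (hence closed in $X_1$, so the a.e.\ statement extends to all $t$ by $X_1$-continuity); on $K$ the identity map $(K,\|\cdot\|_{X_1})\to(K,\|\cdot\|_{X})$ is a homeomorphism with uniformly continuous inverse, so $X_1$-equicontinuity upgrades to $X$-equicontinuity, each $u_n$ is continuous into $X$, and convergence in $C([0,T];X_1)$ upgrades to convergence in $C([0,T];X)$. Alternatively, one can recover your sup-norm interpolation inequality at every $t$ by a limiting argument along times in the full-measure set; either way, this step has to be argued rather than asserted.
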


We shall also require the following generalization of the Aubin--Lions--Simon compactness theorem due to Dubinski\u{\i} \cite{Dubin} (see also Barrett \& S\"uli \cite{Barrett-Suli3}). Before stating the result, we recall the
concept of \emph{seminormed set} (in the sense of Dubinski\u{\i}).
A subset $X_0$ of a linear space $X$ over the field of real numbers is said to be a \emph{seminormed set} if
$$
\l \, v \in X_0, \quad \mbox{for any $ \l\in [0,\infty)$ and any $v\in X_0$},
$$
and there exists a functional on $X_0$ (namely the \emph{seminorm} of $X_0$), denoted by $[\cdot ]_{X_0}$, satisfying the following two properties:
\begin{itemize}
\item[(i)] $[v]_{X_0}\geq 0$ for any $v\in X_0$, and $[v]_{X_0}= 0$ if and only if $v=0$;
\item[(ii)] $[\l\,v]_{X_0} = \l\, [v]_{X_0}$ for any $\l \in [0,\infty)$ and any $v\in X_0$.
\end{itemize}

A subset $B$ of a seminormed set $X_0$ is said to be bounded if there exists a positive constant $c>0$ such that $[v]_{X_0} \leq c$ for any $v\in B$. A seminormed set $X_0$ contained in a normed linear space $X$ with norm $\|\cdot \|_{X}$  is said to be \emph{continuously embedded} in $X$, and we write
$X_0 \hookrightarrow X$, if there exists a constant $c>0$ such that
\[\|v\|_X \leq c \,[v]_{X_0}, \quad \mbox{for any $v\in X_0$}.\]
The embedding of a seminormed set $X_0$ into a normed linear space $X$ is said to be \emph{compact} if from any bounded infinite set of elements of $X_0$ one can extract a subsequence that converges in $X$.

We remark here that, for the sake of simplicity of the exposition and our mathematical notations, the extraction of subsequences from sequences (e.g. the extraction of weakly or weakly-* convergent subsequences from bounded sequences, or the extraction of almost everywhere convergent subsequences from strongly convergent sequences) will not be explicitly indicated.

\begin{lemma}[Dubinski\u{\i}'s compactness theorem]\label{lem-Dubinskii}
Suppose that $X_0$ is a seminormed set that is compactly embedded into a Banach space $X$, which is continuously embedded into another Banach space $X_1$. Then, for any $1 \leq  p, q < \infty$, the embedding
$$
\{ v \in L^p (0, T; X_0) : \d_t v \in L^q (0, T; X_1) \} \hookrightarrow L^p (0, T; X)
$$
is compact.
\end{lemma}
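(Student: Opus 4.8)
The plan is to treat the assertion as the seminormed analogue of the Aubin--Lions--Simon theorem (Lemma~\ref{lem-ALS}). As in every Aubin--Lions argument, the engine is an Ehrling--Lions interpolation inequality inserting the middle space between the top and bottom spaces, so I would first prove: for every $\epsilon>0$ there is $C_\epsilon>0$ such that $\|w\|_X\le\epsilon\,[w]_{X_0}+C_\epsilon\,\|w\|_{X_1}$ for all $w\in X_0$. This is by contradiction: if it fails there are $\epsilon_0>0$ and $w_n\in X_0$ with $\|w_n\|_X>\epsilon_0[w_n]_{X_0}+n\|w_n\|_{X_1}$; necessarily $[w_n]_{X_0}>0$ (else $w_n=0$ and the inequality reads $0>0$), so $v_n:=w_n/[w_n]_{X_0}$ has $[v_n]_{X_0}=1$ and $\|v_n\|_X>\epsilon_0+n\|v_n\|_{X_1}$. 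Since $\{v_n\}$ is bounded in $X_0$ it has an $X$-convergent subsequence $v_{n_k}\to v$; as convergent sequences are bounded, $\|v_{n_k}\|_{X_1}<\|v_{n_k}\|_X/n_k\to0$, and since $X\hookrightarrow X_1$ is continuous and injective, $v_{n_k}\to v$ in $X_1$ with $v=0$, whence $\|v_{n_k}\|_X\to0$, contradicting $\|v_{n_k}\|_X>\epsilon_0$.

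Next, let $\{u_n\}$ be a bounded sequence in the space on the left of the claimed embedding, say $\|[u_n(\cdot)]_{X_0}\|_{L^p(0,T)}\le K$ and $\|\partial_t u_n\|_{L^q(0,T;X_1)}\le K$. By continuity of $X_0\hookrightarrow X$ it is bounded in $L^p(0,T;X)$; and, writing $u_n(t)=u_n(s)+\int_s^t\partial_t u_n$ and choosing $s$ by a pigeonhole so that $\|u_n(s)\|_{X_1}$ is under control, it is bounded in $C([0,T];X_1)$ with the modulus $\|u_n(t+\tau)-u_n(t)\|_{X_1}\le\tau^{1-1/q}K$ (for $q>1$; an $L^1$-in-time increment bound when $q=1$). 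Now I would truncate at the level of the seminorm: put $u_n^M(t):=u_n(t)$ where $[u_n(t)]_{X_0}\le M$ and $u_n^M(t):=0$ otherwise, which is legitimate because $0\in X_0$ and gives $[u_n^M(t)]_{X_0}\le M$ everywhere; set $r_n^M:=u_n-u_n^M$. Applying the inequality of Step~1 on the set $\{[u_n(\cdot)]_{X_0}>M\}$, together with Chebyshev's inequality $|\{[u_n(\cdot)]_{X_0}>M\}|\le K^p/M^p$ and the uniform $C([0,T];X_1)$ bound, gives $\|r_n^M\|_{L^p(0,T;X)}^p\lesssim\epsilon^pK^p+C_\epsilon^pK^p/M^p$, so $\sup_n\|r_n^M\|_{L^p(0,T;X)}\to0$ as $M\to\infty$ (take $\epsilon$ small, then $M$ large).

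For fixed $M$ the whole family $\{u_n^M(t):n\in\mathbb N,\ t\in[0,T]\}$ lies in $\mathcal K_M:=\{w\in X_0:[w]_{X_0}\le M\}$, which is precompact in $X$; on the compact set $\overline{\mathcal K_M}\subset X$ the $X$-norm and $X_1$-norm topologies coincide, being linked by a continuous injection of a compact space into a Hausdorff one. Combining this with the time-regularity inherited from $u_n$ — the point requiring care, since the truncation is an indicator in time and one must control, via the $C([0,T];X_1)$ bound and the small measure of the ``bad'' time-set, the jumps it produces — a diagonal extraction over a countable dense set of times, with subsequential limits identified through their (unique) $X_1$-values on $\overline{\mathcal K_M}$, produces a subsequence along which $u_n^M\to\xi^M$ in $X$ for a.e.\ $t$; since all values stay in the fixed compact $\overline{\mathcal K_M}$, dominated convergence promotes this to convergence in $L^p(0,T;X)$. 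Diagonalising over $M=1,2,\dots$ gives one subsequence of $\{u_n\}$ along which $\{u_n^M\}$ converges in $L^p(0,T;X)$ for every $M$, and a standard $3\epsilon$-argument using the uniform tail bound gives that this subsequence is Cauchy, hence convergent, in $L^p(0,T;X)$, which is the claim.

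The main obstacle, present throughout, is that $X_0$ carries no linear structure. In the proof of Lemma~\ref{lem-ALS} one bounds $\|u_n(t+\tau)-u_n(t)\|_X$ by applying the interpolation inequality directly to the increment $u_n(t+\tau)-u_n(t)\in X_0$, obtaining cheaply a uniform $L^p$-in-time modulus of continuity in the middle space; here that increment — like any difference or average of elements of $X_0$ — need not lie in $X_0$, so the inequality of Step~1 and the compact embedding $X_0\hookrightarrow\hookrightarrow X$ may be used only on the $X_0$-valued quantities $u_n(t)$ and on their subsequential limits. The device above of truncating at the level of $[\cdot]_{X_0}$ so as to trap a dominant part of $\{u_n\}$ inside a single fixed compact subset of $X$, on which the weaker topology of $X_1$ already detects convergence, is the key new idea; the genuinely delicate point is reconciling this truncation with the time-regularity, the cut-off being discontinuous in time, and this bookkeeping — carried out carefully by Barrett \& S\"uli \cite{Barrett-Suli3} — is where the real work lies.
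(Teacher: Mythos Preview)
The paper does not prove this lemma; it simply cites Dubinski\u{\i}'s original work and the exposition by Barrett \& S\"uli \cite{Barrett-Suli3}. So there is no paper proof to compare against, and your attempt goes beyond what the paper itself supplies.

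Your Step~1 is correct but is the wrong Ehrling inequality. The tool that actually drives Dubinski\u{\i}'s argument is the \emph{two-point} version: for every $\varepsilon>0$ there exists $C_\varepsilon>0$ such that
\[
\|u-v\|_X \le \varepsilon\bigl([u]_{X_0}+[v]_{X_0}\bigr) + C_\varepsilon\|u-v\|_{X_1}
\qquad\text{for all } u,v\in X_0,
\]
proved by precisely your contradiction argument, now normalising by $[u_n]_{X_0}+[v_n]_{X_0}$ and extracting $X$-convergent subsequences of the two sequences separately. Its whole point is that it bounds the difference $u-v$ (which need not lie in $X_0$) using only $[u]_{X_0}$ and $[v]_{X_0}$ individually; applied with $u=u_n(t)$, $v=u_m(t)$ and integrated, it gives
\[
\|u_n-u_m\|_{L^p(0,T;X)} \le 2\varepsilon K + C_\varepsilon\|u_n-u_m\|_{L^p(0,T;X_1)},
\]
reducing everything to extracting an $L^p(0,T;X_1)$-Cauchy subsequence. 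That last step uses the uniform $C([0,T];X_1)$ equicontinuity you established together with pointwise precompactness at well-chosen times: in each cell $I_j$ of a partition of mesh $h$ one picks, for each $n$, a time $s_j^n\in I_j$ at which $[u_n(s_j^n)]_{X_0}^p\le h^{-1}\int_{I_j}[u_n]_{X_0}^p$, so that $\{u_n(s_j^n)\}_n$ is bounded in $X_0$ and hence precompact in $X_1$; a diagonal extraction over $j$ and over $h\to 0$ then yields the Cauchy subsequence. This is the route taken in \cite{Dubin} and \cite{Barrett-Suli3}, and it resolves exactly the obstruction you identify in your final paragraph --- not by truncation, but by strengthening the interpolation inequality.

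Your truncation route, by contrast, has a genuine gap at the step you yourself flag. The claim that for each fixed $M$ the sequence $\{u_n^M\}$ admits an a.e.-convergent subsequence in $X$ is not established, and your proposed mechanism --- diagonal extraction over a countable dense set of times --- cannot work: $t\mapsto u_n^M(t)=u_n(t)\,\mathbf 1_{[u_n(t)]_{X_0}\le M}$ is discontinuous, and if $[u_n(\cdot)]_{X_0}$ oscillates about the level $M$ (which nothing in the hypotheses prevents), then $u_n^M$ can oscillate wildly, so convergence at rational times says nothing about a.e.\ convergence. A concrete scalar instance ($X_0=X=X_1=\mathbb R$, $u_n(t)=M+\tfrac1n\sin(nt)$) already shows that $\{u_n^M\}$ need not be precompact in $L^p(0,T;X)$ for this particular $M$, even though $\{u_n\}$ is. The deferral of this ``bookkeeping'' to \cite{Barrett-Suli3} does not help: their argument is the two-point one sketched above, not a hard cut-off, so the work you postpone is work for a different proof.
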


\subsection{On $C_w([0, T];X)$ type spaces}

Let $X$ be a Banach space. We denote by $C_w([0, T];X)$ the set of all functions $v\in L^\infty(0, T;X)$ such that the mapping
$t \in  [0, T] \mapsto \langle \phi,v(t)\rangle_{X}\in \R$ is continuous on $[0, T]$ for all $\phi \in X'$. Here and throughout the paper, we use $X'$ to denote the
dual space of $X$, and $\langle \cdot,\cdot \rangle_{X}$ to denote the duality pairing between $X'$ and $X$.

Whenever $X$ has a predual $E$, in the sense that $E'=X$, we denote by $C_{w*}([0, T];X)$ the set of all functions $v\in L^\infty(0, T;X)$ such that the mapping $t \in  [0, T] \mapsto
\langle v(t),\phi\rangle_{E}\in \R$ is continuous on $[0, T]$ for all $\phi \in E$. We reproduce Lemma 3.1 from \cite{Barrett-Suli}.
\begin{lemma}\label{lem-Cw-Cw*}
Suppose that $X$ and $Y$ are Banach spaces.
\begin{itemize}
\item[(i)] Assume that the space $X$ is reflexive and is continuously embedded in the space $Y$; then,
$$ L^\infty(0, T;X) \cap C_w([0, T];Y) = C_w([0, T];X).$$
\item[(ii)] Assume that $X$ has a separable predual $E$ and $Y$ has a predual $F$ such that $F$ is continuously
embedded in $E$; then,
   $$ L^\infty(0, T;X) \cap C_{w*}([0, T];Y) = C_{w*}([0, T];X).$$
\end{itemize}
\end{lemma}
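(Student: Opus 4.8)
\textbf{Proof plan for Lemma \ref{lem-Cw-Cw*}.}

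The plan is to prove each of the two statements by the same circle of ideas: one inclusion is essentially trivial, while the reverse inclusion is a density-plus-uniform-boundedness argument exploiting the extra structure of $X$ (reflexivity in (i), separability of a predual in (ii)).

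For part (i): the inclusion $C_w([0,T];X) \subset L^\infty(0,T;X) \cap C_w([0,T];Y)$ is immediate, since every element of $C_w([0,T];X)$ is by definition in $L^\infty(0,T;X)$, and the continuous embedding $X \hookrightarrow Y$ together with the boundedness of the embedding dual map $Y' \to X'$ shows that weak continuity into $X$ forces weak continuity into $Y$ (for $\phi \in Y'$, $\langle \phi, v(t)\rangle_Y = \langle \iota^*\phi, v(t)\rangle_X$, which is continuous in $t$). For the reverse inclusion, take $v \in L^\infty(0,T;X) \cap C_w([0,T];Y)$. First I would fix $t \in [0,T]$ and a sequence $t_n \to t$; by the $L^\infty$-bound, $\{v(t_n)\}$ is bounded in $X$, so since $X$ is reflexive every subsequence has a weakly-$X$-convergent sub-subsequence, say $v(t_{n_k}) \rightharpoonup w$ in $X$. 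The continuous embedding $X \hookrightarrow Y$ is weak-weak continuous, so $v(t_{n_k}) \rightharpoonup w$ in $Y$ as well; but $v \in C_w([0,T];Y)$ forces $v(t_{n_k}) \rightharpoonup v(t)$ in $Y$, and since the embedding $X \hookrightarrow Y$ is injective, $w = v(t)$. As every subsequence of $\{v(t_n)\}$ has a further subsequence converging weakly in $X$ to the \emph{same} limit $v(t)$, the whole sequence $v(t_n) \rightharpoonup v(t)$ weakly in $X$; since $t_n \to t$ was arbitrary this gives weak-$X$ continuity of $t \mapsto v(t)$, i.e. $v \in C_w([0,T];X)$. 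A small point to address is that this argument tacitly uses that $v(t) \in X$ for \emph{every} $t$ (not just a.e.\ $t$); this follows because the weak-$X$ limit $v(t)$ of the bounded sequence exists in $X$ and agrees with the value $v(t) \in Y$, so one may (and does) take this $X$-valued representative.

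For part (ii): again $C_{w*}([0,T];X) \subset L^\infty(0,T;X) \cap C_{w*}([0,T];Y)$ is the easy direction, using the continuous embedding $F \hookrightarrow E$ to restrict test elements from $E$ to $F$. For the reverse inclusion, let $v \in L^\infty(0,T;X) \cap C_{w*}([0,T];Y)$, fix $\phi \in E$, and aim to show $t \mapsto \langle v(t),\phi\rangle_E$ is continuous. Here the key extra ingredient is separability of $E$: pick a dense sequence $\{\phi_j\}$ in $F$ (using $F \hookrightarrow E$ densely, or else approximating within $E$ by elements of a countable dense subset — one should check $F$ is dense in $E$, which is the natural hypothesis; if not, approximate directly in $E$). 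For each $\phi_j \in F$, weak-$*$ continuity into $Y$ gives continuity of $t \mapsto \langle v(t),\phi_j\rangle_E$. Given arbitrary $\phi \in E$ and $\varepsilon > 0$, choose $\phi_j$ with $\|\phi - \phi_j\|_E < \varepsilon$; then for $s,t \in [0,T]$,
\[
|\langle v(t),\phi\rangle_E - \langle v(s),\phi\rangle_E| \le |\langle v(t),\phi_j\rangle_E - \langle v(s),\phi_j\rangle_E| + 2\,\|v\|_{L^\infty(0,T;X)}\,\|\phi - \phi_j\|_E,
\]
and the first term can be made small for $|t-s|$ small while the second is $\le 2\varepsilon\,\|v\|_{L^\infty(0,T;X)}$; hence $t\mapsto\langle v(t),\phi\rangle_E$ is continuous for every $\phi\in E$, i.e.\ $v\in C_{w*}([0,T];X)$.

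The main obstacle in both parts is the same and is of a technical rather than conceptual nature: the statements concern functions defined at \emph{every} $t\in[0,T]$, whereas $L^\infty(0,T;X)$ objects are a priori only defined almost everywhere, so one has to argue that the weak (resp.\ weak-$*$) continuity pins down a canonical everywhere-defined representative with values in the smaller space $X$. This is handled exactly as in the reverse inclusion above — the weak/weak-$*$ limit of the $X$-bounded net $\{v(t_n)\}$ furnishes an element of $X$ that must coincide with the $Y$-value — but it is the point that requires care. Since this is a restatement of Lemma 3.1 of \cite{Barrett-Suli}, I would in practice simply cite that reference; the sketch above indicates the argument for completeness.
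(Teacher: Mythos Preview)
The paper does not actually prove this lemma: it cites Strauss for part (i) and states that part (ii) ``is proved analogously, via the sequential Banach--Alaoglu theorem.'' Your argument for (i) is precisely the standard Strauss argument, so that matches.

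For part (ii) you take a genuinely different route from what the paper indicates. The paper's intended proof mirrors (i): given $t_n\to t$, the sequence $\{v(t_n)\}$ is bounded in $X=E'$, and since $E$ is separable the sequential Banach--Alaoglu theorem yields a weak-$*$ convergent subsequence in $X$; one then identifies the limit via the $C_{w*}([0,T];Y)$ assumption exactly as in (i). Your approach instead fixes $\phi\in E$, approximates by $\phi_j\in F$, and uses a $3\varepsilon$-argument with the uniform $X$-bound. This is correct \emph{provided} $F$ is dense in $E$, which you rightly flag. However, your fallback suggestion --- ``if not, approximate directly in $E$'' --- does not work: for $\phi_j\in E\setminus F$ you have no information at all on the continuity of $t\mapsto\langle v(t),\phi_j\rangle_E$, so the approximation is useless. (Note though that density of $F$ in $E$ is in any case implicit in the lemma: without it the restriction map $X=E'\to F'=Y$ fails to be injective, and the intersection $L^\infty(0,T;X)\cap C_{w*}([0,T];Y)$ is ill-posed.) The Banach--Alaoglu route the paper points to is slightly cleaner because it uses the separability of $E$ for sequential compactness rather than approximation, and keeps the two parts strictly parallel; your density argument is more elementary once the density hypothesis is granted.
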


Part (i) is due to Strauss \cite{Strauss} (cf. Lions \& Magenes \cite{Lions-Magenes}, Lemma 8.1, Ch. 3, Sec. 8.4); part
(ii) is proved analogously, via the sequential Banach--Alaoglu theorem.

We recall the following Arzel\`{a}--Ascoli type result, and refer to Lemma 6.2 in \cite{N-book} for its proof.
\begin{lemma}\label{lem-Cw} Let $r, s \in (1,\infty)$ and let $G$ be a bounded Lipschitz domain in $\R^d, \ d\geq 2$. Suppose that $(g_n)_{n\in \N}$ is a sequence of functions in $C_w([0, T]; L^s(G))$ such that $(g_n)_{n\in \N}$ is bounded in $C([0, T]; W^{-1,r}(G)) \cap L^\infty(0, T;L^s(G))$. Then, there exists a subsequence (not indicated) such that the following hold:
\begin{itemize}
\item[(i)] $g_n\to g$ in $C_w([0, T]; L^s(G))$;
\item[(ii)] If, in addition, $r\leq \frac{d}{d-1}$, or $r>\frac{d}{d-1}$ and $s>\frac{d\,r}{d+r}$, then $g_n\to g$ strongly in $C([0, T]; W^{-1,r}(G))$.
\end{itemize}
\end{lemma}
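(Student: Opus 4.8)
The statement is a variant of the Arzel\`a--Ascoli theorem (cf. Lemma 6.2 in \cite{N-book}); the plan is to prove it in three stages: (a) a diagonal Arzel\`a--Ascoli extraction for the scalar functions $t\mapsto\langle g_n(t),\phi\rangle$ with $\phi$ ranging over a countable dense subset of $L^{s'}(G)$; (b) a density argument upgrading the convergence to all test functions in $L^{s'}(G)$, which yields (i); (c) for (ii), invoking the compact Sobolev embedding $L^s(G)\hookrightarrow\hookrightarrow W^{-1,r}(G)$, whose validity is governed precisely by the stated relations between $r$, $s$ and $d$.

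For (a) and (b): since $1<s<\infty$, the space $L^{s'}(G)$ (with $1/s+1/s'=1$) is separable, so I would fix a countable set $\mathcal D=\{\phi_j\}_{j\in\N}\subset C_c^\infty(G)$ that is dense in $L^{s'}(G)$. For each fixed $\phi\in\mathcal D$ the maps $t\mapsto\langle g_n(t),\phi\rangle$ (the $W^{-1,r}$--$W^{1,r'}_0$ duality pairing, which on $C_c^\infty(G)$ reduces to $\int_G g_n(t)\,\phi\,\dx$ by consistency of the identifications of $g_n(t)$ in $W^{-1,r}(G)$ and in $L^s(G)$) are, for $n\in\N$, uniformly bounded by $\|\phi\|_{W^{1,r'}_0(G)}\sup_{n,t}\|g_n(t)\|_{W^{-1,r}(G)}$ and equi-continuous on $[0,T]$, the latter because $|\langle g_n(t)-g_n(\tau),\phi\rangle|\le\|g_n(t)-g_n(\tau)\|_{W^{-1,r}(G)}\|\phi\|_{W^{1,r'}_0(G)}$ and $(g_n)_{n\in\N}$ is equi-continuous in $C([0,T];W^{-1,r}(G))$. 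By Arzel\`a--Ascoli and a diagonal argument over $\mathcal D$ I would extract a subsequence (not relabelled) with $\langle g_n(\cdot),\phi\rangle\to\ell_\phi(\cdot)$ in $C([0,T])$ for every $\phi\in\mathcal D$; writing $C_0:=\sup_n\|g_n\|_{L^\infty(0,T;L^s(G))}$, each $\ell_\phi$ is continuous with $|\ell_\phi(t)|\le C_0\|\phi\|_{L^{s'}(G)}$ for all $t$, so by density and the duality $L^s(G)=(L^{s'}(G))^\ast$ there is, for each $t$, a unique $g(t)\in L^s(G)$ with $\|g(t)\|_{L^s(G)}\le C_0$ and $\langle g(t),\phi\rangle=\ell_\phi(t)$ on $\mathcal D$. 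To pass to a general $\psi\in L^{s'}(G)$, given $\varepsilon>0$ I would pick $\phi\in\mathcal D$ with $\|\psi-\phi\|_{L^{s'}(G)}<\varepsilon$ and estimate $\sup_{t}|\langle g_n(t)-g(t),\psi\rangle|\le\|\langle g_n(\cdot)-g(\cdot),\phi\rangle\|_{C([0,T])}+2C_0\varepsilon$, so that $\langle g_n(\cdot),\psi\rangle\to\langle g(\cdot),\psi\rangle$ uniformly on $[0,T]$; in particular $t\mapsto\langle g(t),\psi\rangle$ is continuous, $g\in L^\infty(0,T;L^s(G))\cap C_w([0,T];L^s(G))$, and $g_n\to g$ in $C_w([0,T];L^s(G))$, which is (i); moreover $g_n(t)\rightharpoonup g(t)$ in $L^s(G)$ for each $t$.

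For (ii): under the stated conditions the embedding $W^{1,r'}_0(G)\hookrightarrow L^{s'}(G)$ is compact. By Rellich--Kondrachov this holds when $r'\ge d$, i.e.\ $r\le d/(d-1)$ (the borderline case $r'=d$ giving compactness into every $L^q(G)$ with $q<\infty$, which covers $s'<\infty$), and when $1<r'<d$ it holds provided $s'<dr'/(d-r')$, which a direct computation shows is equivalent to $s>dr/(d+r)$. Taking adjoints (Schauder's theorem), $L^s(G)\hookrightarrow\hookrightarrow W^{-1,r}(G)$ is compact, so the weak convergence $g_n(t)\rightharpoonup g(t)$ in $L^s(G)$ from (i) becomes $g_n(t)\to g(t)$ strongly in $W^{-1,r}(G)$ for each fixed $t$, and the same compact embedding applied to the bounded weakly-continuous map $g$ gives $g\in C([0,T];W^{-1,r}(G))$. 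Finally I would upgrade to uniform convergence: if $\sup_t\|g_n(t)-g(t)\|_{W^{-1,r}(G)}\not\to 0$, then along a subsequence there are $t_n\to t_\ast$ in $[0,T]$ with $\|g_n(t_n)-g(t_n)\|_{W^{-1,r}(G)}\ge\varepsilon_0>0$, contradicting $\|g_n(t_n)-g_n(t_\ast)\|_{W^{-1,r}(G)}\to 0$ (equi-continuity of $(g_n)$), $\|g_n(t_\ast)-g(t_\ast)\|_{W^{-1,r}(G)}\to 0$ (fixed-time convergence) and $\|g(t_\ast)-g(t_n)\|_{W^{-1,r}(G)}\to 0$ (continuity of $g$). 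Hence $g_n\to g$ in $C([0,T];W^{-1,r}(G))$.

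The scheme is entirely classical; the only point requiring care is the exponent bookkeeping in (ii) — verifying that $s>dr/(d+r)$ is exactly the threshold for compactness of $W^{1,r'}_0(G)\hookrightarrow L^{s'}(G)$ and treating the borderline $r'=d$ separately. The essential structural input throughout is that $(g_n)_{n\in\N}$ is \emph{equi-continuous} in $C([0,T];W^{-1,r}(G))$ (in the applications of this lemma this follows from a uniform bound on $\partial_t g_n$): without it neither the Arzel\`a--Ascoli step (a) nor the uniformization argument in (ii) can be carried out.
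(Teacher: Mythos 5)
The paper offers no proof of this lemma (it is quoted from Lemma 6.2 of \cite{N-book}), so your argument has to be measured against that classical proof, and it is essentially identical to it: a diagonal Arzel\`a--Ascoli extraction for the scalar maps $t\mapsto\int_G g_n(t)\,\phi\,\dx$ with $\phi$ in a countable subset of $C^\infty_c(G)$ dense in $L^{s'}(G)$; identification of the limit $g(t)\in L^s(G)$ through the $L^\infty(0,T;L^s)$ bound and the duality $(L^{s'})'=L^s$, followed by an $\varepsilon/3$ density argument giving uniform convergence of $\langle g_n(\cdot),\psi\rangle$ for every $\psi\in L^{s'}(G)$, i.e.\ part (i); and, for part (ii), the compact embedding $L^s(G)\hookrightarrow\hookrightarrow W^{-1,r}(G)$ obtained as the adjoint of Rellich--Kondrachov for $W^{1,r'}_0(G)\hookrightarrow L^{s'}(G)$, combined with a $t_n\to t_*$ contradiction argument to pass from pointwise-in-$t$ to uniform-in-$t$ convergence. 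Your exponent bookkeeping is correct: $r\leq d/(d-1)$ is $r'\geq d$ (with the borderline $r'=d$ covered since $s'<\infty$), and for $r'<d$ the requirement $s'<dr'/(d-r')$ is equivalent to $s>dr/(d+r)$.

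The one substantive point is the caveat you raise yourself: every step of your proof uses equicontinuity of $(g_n)$ as maps from $[0,T]$ into $W^{-1,r}(G)$, whereas the statement above records only \emph{boundedness} in $C([0,T];W^{-1,r}(G))$. Your caveat is warranted, not pedantic: with boundedness alone the conclusion is false. Take $g_n(t,x)=\phi(x)\sin(nt)$ with $0\neq\phi\in L^s(G)$; this satisfies every hypothesis as literally stated, yet $\langle g_n(t),\psi\rangle=\sin(nt)\int_G\phi\,\psi\,\dx$ admits no uniformly convergent subsequence on $[0,T]$ whenever $\int_G\phi\,\psi\,\dx\neq 0$, so no subsequence converges in $C_w([0,T];L^s(G))$. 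The statement must therefore be read with a uniform-in-$n$ modulus of continuity into $W^{-1,r}(G)$ as part of the hypotheses, which is how it is in effect used in this paper: wherever the lemma is invoked, the time-derivative bounds, e.g.\ \eqref{s2dt} and \eqref{s1dt}, give uniform H\"older-$\frac12$ continuity of $\eta$ and $\TT$ into $W^{-1,r}(\Omega)$, which is exactly the equicontinuity your Arzel\`a--Ascoli step and your uniformization argument in (ii) require. With the hypothesis understood in this way, your proof is complete and correct.
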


\subsection{Regularity of the parabolic Neumann problem}
We first introduce fractional-order Sobolev spaces. Let $G$ be the whole space $\R^d$ or a bounded Lipschitz domain in $\R^d$. For any $k\in \N$, $\b \in (0,1)$ and $s\in [1,\infty) $, we define
\[
 W^{k+\b,s}(G):=\left\{v\in W^{k,s}(G) : \| v \|_{W^{k+\b,s}(G)}<\infty \right\}\!,
\]
where
$$
\| v \|_{W^{k+\b,s}(G)}:=\| v \|_{W^{k,s}(G)} + \sum_{|\a|=k} \left(\int_G\int_G
\frac{|\d^\a v(x)- \d^\a v(y)|^s}{|x-y|^{d+\b s}} \ \dx \ {\rm d}y\right)^{\frac{1}{s}}.
$$

The following classical results are taken from Section 7.6.1 in \cite{N-book}. Let $G$ be a bounded domain in $\R^d$ and consider the parabolic initial-boundary-value problem:
\ba\label{para-ib-Neumann}
\d_t \rho - \e\, \Delta_x \rho &= h \quad &&\mbox{in} \ (0,T]\times G,\\
\rho(0,\cdot) &= \rho_0  \quad && \mbox{in} \   G,\\
\d_{\bf n}\rho &=0 \quad && \mbox{in} \ (0,T]\times \d G.
\ea
Here $\e>0$, $\rho_0$ and $h$ are known functions, and $\rho$ is the unknown solution. The first regularity result of relevance to us here is encapsulated in the following lemma.

\begin{lemma}\label{lem-parabolic-1}
Let $0<\beta < 1, \ 1<p,q<\infty$ and suppose that $G$ is a bounded domain in $\R^d$,
$$
G \in C^{2,\beta},\quad \rho_0 \in W^{2-\frac{2}{p},q}_{\bf n},\quad h\in L^p(0,T;L^q(G)),
$$
where $W^{2-\frac{2}{p},q}_{\bf n}$ is the completion of the linear space $\{v \in  C^\infty(\overline G): \ \d_{\bf n} v|_{G} = 0 \}$
 in the norm of $W^{2-\frac{2}{p},q}(G)$.
Then, there exists a unique function $\rho$ satisfying
$$
\rho \in L^p(0,T;W^{2,q}(G)) \cap C([0,T];W^{2-\frac{2}{p},q}(G)), \quad \d_t \rho \in L^p(0,T;L^q(G))
$$
and solving $\eqref{para-ib-Neumann}_1$ a.e. in $(0,T]\times G$,  $\eqref{para-ib-Neumann}_2$ a.e. in $G$; in addition, $\rho$ satisfies $\eqref{para-ib-Neumann}_3$ in the sense of the normal trace, which is well defined since $\Delta_x \rho \in L^p(0,T;L^{q}(G))$.
Moreover, we have that
\begin{align*}
\e^{1-\frac{1}{p}}\|\rho\|_{L^\infty(0,T;W^{2-\frac{2}{p},q}(G))}
+ \left\|\partial_t\rho \right\|_{L^p(0,T; L^q(G))}
+ \e \|\rho\|_{L^p(0,T; W^{2,q}(G))}\nonumber\\
\leq C(p,q,G)\big[\e^{1-\frac{1}{p}}
\|\rho_0\|_{W^{2-\frac{2}{p},q}(G)} + \|h\|_{L^p(0,T;L^q(G))}\big].
\end{align*}

\end{lemma}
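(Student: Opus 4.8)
The statement to be proved is Lemma~\ref{lem-parabolic-1}, the maximal $L^p$-$L^q$ regularity result for the parabolic Neumann problem \eqref{para-ib-Neumann}, together with the explicit tracking of the $\e$-dependence in the constants. The plan is as follows.

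\textbf{Reduction to $\e=1$ by scaling.} First I would remove the diffusion coefficient $\e$ by rescaling time. Setting $\tau = \e t$ and $\tilde\rho(\tau,x) := \rho(\tau/\e,x)$, the equation $\d_t\rho - \e\Delta_x\rho = h$ becomes $\d_\tau\tilde\rho - \Delta_x\tilde\rho = \tilde h$ with $\tilde h(\tau,x) = \e^{-1}h(\tau/\e,x)$, posed on the time interval $(0,\e T]$, with the same spatial domain $G$, the same Neumann boundary condition, and initial datum $\rho_0$. This reduces matters to the case $\e=1$; the powers of $\e$ in the final estimate are then bookkeeping: the change of variables produces $\|\tilde h\|_{L^p(0,\e T;L^q(G))} = \e^{-1/p'}\|h\|_{L^p(0,T;L^q(G))} = \e^{1/p-1}\|h\|_{L^p(0,T;L^q(G))}$ after accounting for the Jacobian $\mathrm{d}\tau = \e\,\mathrm{d}t$, and similarly $\|\d_\tau\tilde\rho\|_{L^p} $, $\|\tilde\rho\|_{L^p(W^{2,q})}$, and $\|\tilde\rho\|_{L^\infty(W^{2-2/p,q})}$ pick up the stated powers of $\e$ when translated back. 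One has to be slightly careful that the constant $C(p,q,G)$ obtained for $\e=1$ on the interval $(0,\e T]$ does not degenerate as $\e T$ varies, but for the maximal regularity estimate on a finite interval the constant can be taken uniform in the length of the interval (or monotone in it), so this causes no difficulty.

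\textbf{The case $\e=1$: known maximal regularity theory.} For $\e=1$ the result is the classical maximal $L^p$-$L^q$ regularity theorem for the heat semigroup with Neumann boundary conditions on a $C^{2,\beta}$ (indeed $C^2$ suffices) bounded domain. The standard route is: (i) the Neumann Laplacian $-\Delta_N$ generates a bounded analytic semigroup on $L^q(G)$, $1<q<\infty$; (ii) by the Dore--Venni theorem / Weis's theorem on operator-valued Fourier multipliers, $-\Delta_N$ has bounded imaginary powers (equivalently, is $\mathcal R$-sectorial / admits an $H^\infty$-calculus) on $L^q(G)$, which yields maximal $L^p$ regularity: the map $h\mapsto \d_t\rho$ is bounded $L^p(0,T;L^q(G))\to L^p(0,T;L^q(G))$ for the problem with zero initial data. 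Elliptic regularity for the Neumann problem on $C^{2,\beta}$ domains then upgrades $\Delta_x\rho\in L^p(0,T;L^q(G))$ to $\rho\in L^p(0,T;W^{2,q}(G))$, giving the $\e\|\rho\|_{L^p(W^{2,q})}$ term. (iii) The contribution of the initial datum $\rho_0\in W^{2-2/p,q}_{\bf n}(G)$ is handled via the theory of real interpolation: the trace space of $\{\rho\in L^p(0,T;W^{2,q}):\d_t\rho\in L^p(0,T;L^q)\}$ at $t=0$ is exactly $(L^q,W^{2,q})_{1-1/p,p} = B^{2-2/p}_{q,p}$, and together with the compatibility $\d_{\bf n}\rho_0=0$ encoded in the space $W^{2-2/p,q}_{\bf n}$ this gives the bound on $\|\rho\|_{L^\infty(0,T;W^{2-2/p,q}(G))}$ and the continuity in time with values in that space. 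Uniqueness follows from the energy method or from the injectivity of the resolvent. Since all of this is standard and is precisely what is quoted as ``Section 7.6.1 in \cite{N-book}'', in the write-up I would simply cite that reference for the case $\e=1$ and devote the argument to the scaling reduction.

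\textbf{Main obstacle.} The genuine content here — existence, uniqueness, and the maximal-regularity estimate for $\e=1$ — is imported wholesale from \cite{N-book}, so the only thing that actually requires care in this paper is getting the $\e$-powers right, and in particular verifying that the constant $C(p,q,G)$ really is independent of $\e$ (and of $T$, at least locally). The subtle point is the $\e^{1-1/p}$ weight on both the initial-data term and the $L^\infty(W^{2-2/p,q})$ term: this is the correct scaling weight dictated by the parabolic rescaling $\tau=\e t$ acting on the trace space, and one must check that the interpolation-space norm transforms with exactly this power. That is the step I would write out explicitly; everything else is a citation.
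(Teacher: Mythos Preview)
Your proposal is correct, but note that the paper does not actually prove this lemma: it is stated as a classical result taken verbatim from Section 7.6.1 of \cite{N-book}, with no argument given. Your scaling reduction $\tau=\e t$ to the case $\e=1$, followed by citation of the standard maximal $L^p$-$L^q$ regularity for the Neumann Laplacian, is exactly the right way to recover the stated $\e$-weights, and the bookkeeping you describe checks out (multiplying the $\e=1$ estimate on $(0,\e T]$ by $\e^{1-1/p}$ gives precisely the claimed inequality). So you are supplying strictly more detail than the paper itself; there is nothing to correct.
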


The second result that we state concerns parabolic problems with a divergence-form source term,
$h=\Div_x \bf g$.
\begin{lemma}\label{lem-parabolic-2}
Let $0<\beta < 1, \ 1<p,q<\infty$ and suppose that $G$ is a bounded domain in $\R^d$,
$$
G \in C^{2,\beta},\quad \rho_0 \in L^{q}(G),\quad {\bf g} \in L^p(0,T;L^q(G;\R^d)).
$$
Then, there exists a unique function $ \rho\in L^p(0,T;W^{1,q}(G)) \cap C([0,T];L^{q}(G)) $ satisfying $\eqref{para-ib-Neumann}_2$ a.e. in $G$ and
$$
\frac{\rm d}{\dt} \int_G \rho\, \phi\,\dx + \e \int_G \nabla_x\rho \cdot \nabla_x \phi \,\dx = - \int_G  {\bf g} \cdot \nabla_x \phi \,\dx \quad \mbox{in}\ {\mathcal D}'(0,T).
$$
Moreover, we have that
\begin{equation*} \e^{1-\frac{1}{p}}\|\rho\|_{L^\infty(0,T;L^q(G))}
+ \e \|\nabla_x \rho\|_{L^p(0,T;L^q(G;\R^d))} \leq C(p,q,G)
\left[\e^{1-\frac{1}{p}}\|\rho_0\|_{L^q(G)}
+ \|{\bf g}\|_{L^p(0,T;L^q(G;\R^d))}\right].
\end{equation*}
\end{lemma}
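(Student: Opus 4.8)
The strategy I would follow is to reduce Lemma~\ref{lem-parabolic-2} to the ``second order'' estimate of Lemma~\ref{lem-parabolic-1} by combining a rescaling in time, a density reduction, and one genuinely harmonic-analytic input. The first move is to normalise $\e$: the substitution $\tau=\e t$, $\widetilde\rho(\tau,\cdot)=\rho(\tau/\e,\cdot)$, $\widetilde{\bf g}(\tau,\cdot)={\bf g}(\tau/\e,\cdot)$ turns the problem into $\d_\tau\widetilde\rho-\Delta_x\widetilde\rho=\e^{-1}\Div_x\widetilde{\bf g}$ on $(0,\e T)\times G$, with the same boundary and initial data. Since the constant $C(p,q,G)$ in Lemma~\ref{lem-parabolic-1}, and the constant to be obtained below for $\e=1$, do not depend on the length of the time interval, keeping track of the Jacobians ${\rm d}t=\e^{-1}\,{\rm d}\tau$ in the time integrals reproduces exactly the weights $\e^{1-1/p}$ and $\e$ in the assertion; hence it is enough to prove the statement for $\e=1$.

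Second, I would argue by approximation. Choose $\rho_0^{\,n}\in C^\infty(\overline G)$ with $\d_{\bf n}\rho_0^{\,n}=0$ and $\rho_0^{\,n}\to\rho_0$ in $L^q(G)$, and ${\bf g}^n\in C^\infty([0,T]\times\overline G;\R^d)$ with ${\bf g}^n\cdot{\bf n}=0$ on $\d G$ and ${\bf g}^n\to{\bf g}$ in $L^p(0,T;L^q(G;\R^d))$; such sequences exist because the indicated subspaces are dense. For these data $\Div_x{\bf g}^n\in L^p(0,T;L^q(G))$ and $\rho_0^{\,n}\in W^{2-\frac{2}{p},q}_{\bf n}$, so Lemma~\ref{lem-parabolic-1} produces a unique solution $\rho^n$ of the corresponding classical problem; since ${\bf g}^n\cdot{\bf n}=0$, $\rho^n$ also satisfies the variational identity of Lemma~\ref{lem-parabolic-2} with data $(\rho_0^{\,n},{\bf g}^n)$. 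Uniqueness for the limiting problem is the elementary $L^2$ estimate obtained by testing with $\rho$ itself. Once the $n$-uniform bounds of the next step are in hand, I would extract a subsequence converging weakly-$*$ in $L^\infty(0,T;L^q)$ and weakly in $L^p(0,T;W^{1,q})$ (and, by Lemma~\ref{lem-ALS}, strongly in $L^p(0,T;L^q)$), pass to the limit in the linear weak formulation satisfied by each $\rho^n$ to identify the limit as the desired solution, and invoke weak lower semicontinuity of norms together with standard parabolic time-continuity arguments for the remaining claims $\rho\in C([0,T];L^q)$ and the stated inequality.

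The key step I would need to establish is an estimate for $\rho^n$ that does \emph{not} see $\Div_x{\bf g}^n$ (which need not converge) but only $\|{\bf g}^n\|_{L^p(0,T;L^q)}$. Splitting $\rho^n=\rho^{n,1}+\rho^{n,2}$, with $\rho^{n,1}$ carrying the initial datum and $\rho^{n,2}$ the source $\Div_x{\bf g}^n$, the part $\rho^{n,1}$ is controlled by the smoothing of the Neumann heat semigroup $\{{\rm e}^{-tA}\}_{t\geq0}$, $A:=-\Delta_{\bf n}$ on $L^q(G)$. For $\rho^{n,2}$ I would use the representation $\rho^{n,2}(t)=\int_0^t A^{\frac{1}{2}}{\rm e}^{-(t-s)A}\big(A^{-\frac{1}{2}}\Div_x\big){\bf g}^n(s)\,{\rm d}s$, observe that $A^{-\frac{1}{2}}\Div_x$ is bounded from the fields in $L^q(G;\R^d)$ with vanishing normal trace into $L^q(G)$ (by $L^q$ elliptic regularity for the Neumann problem on the $C^{2,\beta}$ domain $G$, equivalently by duality with boundedness of $\nabla_x A^{-\frac{1}{2}}$, working if necessary with $I-\Delta_{\bf n}$ in place of $\Delta_{\bf n}$ to avoid its kernel), and then invoke the maximal $L^p$-regularity of $A$ on $L^q(G)$ — which underlies Lemma~\ref{lem-parabolic-1} — to conclude that ${\bf f}\mapsto\int_0^t A^{\frac{1}{2}}{\rm e}^{-(t-s)A}{\bf f}(s)\,{\rm d}s$ maps $L^p(0,T;L^q(G))$ into $L^p\big(0,T;D(A^{\frac{1}{2}})\big)=L^p(0,T;W^{1,q}(G))$ with a $T$-independent bound. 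This gives $\nabla_x\rho^{n,2}\in L^p(0,T;L^q(G))$ with norm $\lesssim\|{\bf g}^n\|_{L^p(0,T;L^q)}$; the $L^\infty(0,T;L^q)$ bound on $\rho^{n,2}$ then follows either from the same representation and $\|A^{\frac{1}{2}}{\rm e}^{-\tau A}\|_{\mathcal L(L^q)}\lesssim 1+\tau^{-1/2}$, or from $\rho^{n,2}\in L^p(0,T;W^{1,q})$ together with $\d_t\rho^{n,2}\in L^p\big(0,T;(W^{1,q'})'\big)$ (read off the equation) and the corresponding trace embedding. Equivalently, the whole of this step can be read off the Solonnikov-type parabolic $L^p$ estimates underlying Section~7.6.1 of \cite{N-book}.

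The step I expect to be the genuine obstacle is precisely this divergence-form maximal-regularity estimate. In contrast to the estimate in Lemma~\ref{lem-parabolic-1}, it is \emph{not} accessible by a bare duality argument against Lemma~\ref{lem-parabolic-1}: pairing $\nabla_x\rho^{n,2}$ with a test field $\Phi$ and representing the pairing through the backward Neumann heat problem leads to $\int_0^T\!\int_G{\bf g}^n\!\cdot\nabla_x\theta$ with $-\d_t\theta-\Delta_x\theta=\Div_x\Phi$, $\theta(T)=0$ — a divergence-type problem of exactly the same kind — so one is thrown back onto the estimate one is trying to prove. Closing the argument therefore genuinely requires the bounded $H^\infty$-calculus / $R$-sectoriality of the Neumann Laplacian on $L^q(G)$ for $C^{2,\beta}$ domains (whence operator-valued Fourier-multiplier boundedness and maximal regularity), or, as in the cited reference, the sharp $L^p$--$L^q$ parabolic estimates of Solonnikov type. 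All the remaining steps — the rescaling, the density reduction, and the passage to the limit — are soft.
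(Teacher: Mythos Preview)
The paper does not prove this lemma at all: it is stated as a classical result ``taken from Section 7.6.1 in \cite{N-book}'' (the Novotn\'y--Stra\v{s}kraba monograph), with no argument supplied. Your proposal is therefore not comparable to a proof in the paper --- there is none --- but rather an attempt to derive from scratch a result the authors import wholesale.

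That said, your sketch is a reasonable outline of how such a result is proved, and you correctly identify the genuine analytic content: the divergence-form maximal $L^p$--$L^q$ estimate cannot be obtained by soft duality against Lemma~\ref{lem-parabolic-1} and requires either Solonnikov-type parabolic estimates or the maximal regularity machinery ($R$-sectoriality / bounded $H^\infty$-calculus) for the Neumann Laplacian. The $\e$-rescaling and the density/compactness reduction are standard and correctly handled. One minor caveat: your uniqueness claim via ``testing with $\rho$ itself'' is only immediate when $q\leq 2$; for general $q$ one argues by duality or by testing against $|\rho|^{q-2}\rho$, but this is routine.
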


\section{Definition of the sequence of approximating solutions}\label{sec:4level-app}

We will prove Theorem \ref{thm} by means of a four-level approximation, inspired by the construction of approximate solutions in \cite{FNP, Feireisl-2001, F-N-book} for the study of the compressible Navier--Stokes equations and in \cite{Barrett-Boyaval} for the study of the incompressible Oldroyd-B model. In this section we will describe our four-level approximation scheme. In subsequent sections we will prove the existence of solutions to each of the approximation levels,
the convergence of the approximating solution sequence at each level, and will complete the proof of Theorem \ref{thm}. Finally, upon passing to the limits $\alpha \rightarrow 0$ and $\de\to 0$, we will deduce the existence of a global-in-time large data finite-energy weak solution to the original compressible Oldroyd-B model,
(\ref{01a})--(\ref{07a}),
for the entire range of $\de \in [0,\infty)$, including $\de=0$.

In the sequel, we shall occasionally retain the symbol $d$ in certain (in)equalities in order to emphasize the role of the number of space dimensions in the (in)equality concerned, but it will be understood throughout that the analysis that follows is restricted to the case of $d=2$.

\subsection{Mollification of the initial data} First of all, we consider a mollification of the initial data by using the mollifier introduced in Section \ref{sec:mollifier}.

Let $d=2$, let $\O \subset \R^2$ be a bounded $C^{2,\beta}$ domain, with $\beta \in (0,1)$, and let the initial data $\vr_0,\vu_0, \eta_0,\TT_0$ be given, as in \eqref{ini-data}. We consider the zero-extension of $(\vr_0,\vu_0, \eta_0,\TT_0)$ to the whole of $\mathbb{R}^2$, still denoted by the same symbols, outside of the domain $\O$. We then define for $\th>0$ the following mollified initial data:
 \ba\label{ini-data-mollified}
 \vr_{0,\th} = \th + S_\th[\vr_0]; \quad \vu_{0,\th} =  S_\th[\vu_0]; \quad \eta_{0,\th} := \th + S_\th[\eta_0];\quad \TT_{0,\th} := \th \,\II  + S_\th[\TT_0].
 \ea
Thanks to the properties of the classical Friedrichs mollifier listed in Lemma \ref{lem-mollifier1}, we have the following bounds and convergence results, as $\th \rightarrow 0$:
 \ba\label{ini-data-mollified-pt}
&\vr_{0,\th}\in C^\infty(\R^2), \  \th \leq  \vr_{0,\th} \leq C(\th),\ \vr_{0,\th} \to \vr_0 \ \mbox{in $L^\g(\O)$}; \\
& \vu_{0,\th}\in C^\infty(\R^2;\R^2), \  \vu_{0,\th} \to \vu_0 \
\mbox{in $L^{r}(\O;\R^2)$ for $r \in [1,\infty)$} , \ \vr_{0,\th}|\vu_{0,\th}|^2 \to \vr_{0}|\vu_{0}|^2 \ \mbox{in $L^1(\O)$};\\
& \vr_{0,\th}\vu_{0,\th}\in C^\infty(\R^2;\R^2), \   \vr_{0,\th}\vu_{0,\th} \to  \vr_{0}\vu_{0} \ \mbox{in $L^{\frac{2\g}{\g+1}}(\O;\R^2)$}; \\
&\eta_{0,\th}\in C^\infty(\R^2), \  \th \leq  \eta_{0,\th} \leq C(\th),\ \eta_{0,\th} \to \eta_0 \ \mbox{in $L^2(\O)$}; \\
&\TT_{0,\th}\in C^\infty(\R^2;\R^{2\times 2}), \  \th \leq  \TT_{0,\th}=\TT_{0,\th}^{\rm T} \leq C(\th),\ \TT_{0,\th} \to \TT_0 \ \mbox{in $L^2(\O;\R^{2\times 2})$},
\ea
where $C(\th)$ signifies a constant depending only on $\th$. By Sobolev embedding,
\[
\|\vr_{0,\th}\|_{L^\infty(\R^2)} \leq \th +  \|S_\th[\vr]\|_{L^\infty(\R^2)}
\leq \th + C\|S_\th[\vr]\|_{W^{2,\g}(\R^2)} \leq \th+ C \th^{-2},
\]
and we can therefore take $C(\th) \approx \th^{-2}$ as $\th \rightarrow 0$.

\medskip

\subsection{First level: artificial pressure approximation}\label{sec:1st-app}

Let $\s_1>0$ be small and $\Gamma\geq 4$. We consider the following system of equations, which results from a modification of the pressure in the system \eqref{01}--\eqref{04}:
\begin{align*}
\d_t \vr + \Div_x (\vr \vu) &= 0,\\
\d_t (\vr\vu)+ \Div_x (\vr \vu \otimes \vu) + \nabla_x p(\vr) + \boxed{\s_1 \nabla_x \vr^\Gamma} &+ \nabla_x\big(k L \eta+\de\,\eta^2\big) -
\Div_x \SSS (\nabla_x \vu) \nonumber\\
&=\Div_x \TT   -  \frac{\a}{2}\,\nabla_x  \tr\left(\log \TT \right) + \vr\, \ff,\\
\d_t \eta + \Div_x (\eta \vu) &= \e \Delta_x \eta,\\
\d_t \TT + {\rm Div}_x (\vu\,\TT) - \left(\nabla_x \vu \,\TT + \TT\, \nabla_x^{\rm T} \vu \right) &= \e \Delta_x \TT + \frac{k\,A_0}{2\lambda}(\eta+\alpha)  \,\II - \frac{A_0}{2\lambda} \TT.
\end{align*}
We impose the same boundary conditions as in \eqref{05a}--\eqref{07a} and we consider the mollified initial data defined in \eqref{ini-data-mollified}, satisfying \eqref{ini-data-mollified-pt}.

\subsection{Second level: dissipative approximation}\label{sec:2nd-app}

Let $\s_2>0$ be small. We consider the following system of equations, where a dissipative term is added to the continuity equation and, in order to maintain an energy bound,
a term is added to the momentum equation:
\begin{align}
\label{01-l2}
\d_t \vr + \Div_x (\vr \vu) &= \boxed{\s_2 \Delta_x \vr}\,,\\
\nonumber
 \d_t (\vr\vu)+ \Div_x (\vr \vu \otimes \vu) +\nabla_x p(\vr) + \boxed{\s_1 \nabla_x \vr^\Gamma} &+ \boxed{\s_2 \nabla_x \vu \nabla_x \vr} + \nabla_x\big(k L \eta+\de\,\eta^2\big) -
\Div_x \SSS (\nabla_x \vu) \nonumber \\
\nonumber
&=\Div_x \TT - \frac{\a}{2}\,\nabla_x  \tr\left(\log \TT \right)  +\vr\, \ff,
\\
\nonumber
\d_t \eta + \Div_x (\eta \vu) &= \e \Delta_x \eta,\\
\nonumber
\hspace{-7mm}\d_t \TT + {\rm Div}_x (\vu\,\TT) - \left(\nabla_x \vu \,\TT + \TT\, \nabla_x^{\rm T} \vu \right) &= \e \Delta_x \TT + \frac{k\,A_0}{2\lambda}(\eta+\alpha)  \,\II - \frac{A_0}{2\lambda} \TT.
\end{align}
We consider the mollified initial data defined in \eqref{ini-data-mollified}, satisfying \eqref{ini-data-mollified-pt}.
Since the $\sigma_2$-regularized equation \eqref{01-l2} is now parabolic, in addition to the boundary conditions stated in \eqref{05a}--\eqref{07a} we shall also require that
\be\label{boundary-vr}
\d_{\bf n} \vr =0 \quad \mbox{on}\  (0,T]\times \d\O.
\ee

\subsection{Third level: Galerkin approximation}\label{sec:3rd-app}

By the classical theory of eigenvalue problems for symmetric linear elliptic operators (see, for example, Theorem 1 in Section 6.5 in \cite{Evans-book}), one deduces the existence of an infinite sequence of eigenvalues  $0<\l_1 \leq \l_2 \leq \cdots $ with $\l_n \to \infty,\ n\to \infty$, and an associated orthogonal eigenfunction basis in $L^2(\O;\R^2)$, denoted by $(\pmb{\psi}_n)_{n \in \mathbb{N}}$, such that
\[
 -\Delta_x \pmb{\psi}_n = \l_n \pmb{\psi}_n \ \mbox{in $\O$};\quad \pmb{\psi}_n=\pmb{0}\ \mbox{on $\d\O$}.
\]
Moreover, $\pmb{\psi}_n \in W^{1,2}_0(\O;\R^2) \cap W^{2,2}(\O;\R^2) \cap C^\infty(\O;\R^2)$ and $\pmb{\psi}_n\in C^{2,\beta} (\overline\O;\R^2)$ since $\O$ is a $C^{2,\beta}$ domain, with $0<\beta<1$; by a classical Schauder type elliptic regularity estimate and Sobolev embedding, one also has that
\be\label{est-psi-n}
\|\pmb{\psi}_n\|_{C^{2,\beta}(\overline{\O};\R^2)} \leq C(\l_n) \,\|\pmb{\psi}_n\|_{L^2(\O;\R^2)},\qquad \mbox{with $C(\l_n) \leq C\l_n^{2}$,\; for $n = 1,2,\dots$}.
\ee

We define the $n$-dimensional Hilbert space $X_n$, with inner product $\langle \cdot,\cdot \rangle$, by
\[
X_n:= {\rm span\,}\{\pmb{\psi}_1, \dots,\pmb{\psi}_n\}, \quad \langle \vv, \ww\rangle =
\int_\O \vv\cdot \ww \,\dx, \quad  \vv,\ww\in X_n.
\]
We denote by $P_n$ the orthogonal projection in $L^2(\O;\R^2)$ onto the linear subspace $X_n$, and we consider the following problem:

\ba\label{02-l3}
&\vu_n \in C([0,T],X_n), \ \vu_n(0) = \vu_{0,n} = P_n \vu_{0,\th}; \quad  \mbox{for any $\vvarphi\in X_n$:}\\
&\int_\O \d_t (\vr_n\vu_n) \cdot \vvarphi \,\dx \\
&\quad + \int_\O \left[ \Div_x (\vr_n \vu_n \otimes \vu_n) +\nabla_x p(\vr_n) + \boxed{\s_1 \nabla_x \vr_n^\Gamma} + \boxed{\s_2 \nabla_x \vu_n \nabla_x \vr_n}+ \nabla_x\big(k L \eta_n + \de\,\eta_n^2\big) - \Div_x \SSS(\nabla_x \vu_n)\right]\cdot \vvarphi \,\dx \\
&= \int_\O \left[ \Div_x \TT_n  -  \frac{\a}{2}\,\nabla_x  \tr\left(\log \TT_n \right)  +  \vr_n\, \ff \right]\cdot \vvarphi\,\dx,
\ea
where $\vu_{0,\th}$ is the mollified initial datum for $\vu_0$ defined in \eqref{ini-data-mollified}, and $\vr_n, \eta_n,\TT_n$ are determined by the parabolic equations
\begin{align}
\label{01-l3}
\d_t \vr_n + \Div_x(\vr_n \vu_n) &= \boxed{\s_2 \Delta_x \vr_n}\,,\\
\label{03-l3}
\d_t \eta_n + \Div_x (\eta_n \vu_n) &= \e \Delta_x \eta_n,\\
\label{04-l3}
\d_t \TT_n + {\rm Div}_x (\vu_n\,\TT_n) - \left(\nabla_x \vu_n \,\TT_n + \TT_n \, \nabla_x^{\rm T} \vu_n \right) &= \e \Delta_x \TT_n + \frac{k\,A_0}{2\lambda}(\eta_n+\a)  \,\II - \frac{A_0}{2\lambda} \TT_n,
\end{align}
subject to the boundary conditions stated in \eqref{05a}--\eqref{07a} and \eqref{boundary-vr}, and the mollified initial data defined in \eqref{ini-data-mollified}, satisfying \eqref{ini-data-mollified-pt}, for $\vr_n$, $\vu_n$, $\eta_n$ and $\TT_n$.

\subsection{Fourth level: regularization of the extra stress tensor}\label{sec:5th-app}

As pointed out in Section \ref{sec:est-log}, the \textit{a priori} bounds are obtained by assuming the
that $\TT$ is symmetric positive definite, which we do not have \textit{a priori}. Thus, inspired by the work of Barrett \& Boyaval \cite{Barrett-Boyaval},
we will employ
a regularization for $\TT$ to construct
a family of symmetric positive definite approximations of $\TT$,
which satisfy bounds on their logarithm and inverse similar to the ones
in Section \ref{sec:est-log}. The regularization of the extra stress tensor $\TT$
in \cite{Barrett-Boyaval} needs to be modified slightly
to remain valid in our context.

\medskip

Let $\s_3>0$ be small, in the sense that $\s_3 < \min\{\a,\th\}$, and define $\chi_{\s_3} (s): = \max \{\s_3,s\}$, $s\in \R$. We introduce the following generalization of scalar functions to symmetric matrix functions: let $g :\R\to \R$ be a scalar function and let $\PP\in \R^{d\times d}$ be a real symmetric matrix; then, one has the following diagonalization:
$$
\PP =  \OO \DD \OO^{\rm T},\quad \mbox{$\OO$ is an orthogonal matrix, $\DD={\rm diag}\,\{\l_1,\ldots, \l_d\}$,}
$$
where $\l_j,\ j=1,\ldots, d$, are the eigenvalues of $\PP$.

We define $g(\PP)$ and $g'(\PP)$ by the following formulae
\be\label{def-g-tau}
g(\PP) = \OO (g(\DD)) \OO^{\rm T},\quad g'(\PP) = \OO (g'(\DD)) \OO^{\rm T},
\ee
where
\be\label{def-g-tau-2}
g(\DD):={\rm diag}\,\{g(\l_1),\ldots, g(\l_d)\},\quad g'(\DD):={\rm diag}\,\{g'(\l_1),\ldots, g'(\l_d)\}.
\ee

With these definitions, we have the following lemma, whose proof is given in Appendix \ref{appendixb}.
\begin{lemma}\label{lem-pt-mf}
Let $g\in C^{1,\gamma}(\R)$, with $0 < \gamma \leq 1$, be concave or convex, and let $\PP\in W^{1,2}(0,T;\R^{d\times d})$
be symmetric. Then, the matrix function $t \in (0,T) \mapsto g(\PP(t)) \in \mathbb{R}^{d \times d}$, defined by \eqref{def-g-tau}, is differentiable a.e. on $(0,T)$ and satisfies the identity
\begin{align*}
\d_t \tr\left(g(\PP)\right) = \tr\left(g'(\PP) \,\d_t \PP \right) = \d_t \PP : g'(\PP)\qquad \mbox{a.e. on $(0,T)$}.
\end{align*}
\end{lemma}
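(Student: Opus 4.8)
The plan is to reduce the matrix identity to the scalar spectral representation and then differentiate carefully, controlling the $t$-dependence of the eigenvalues and the orthogonal diagonalizing matrix. First I would fix a point $t_0 \in (0,T)$ at which $\PP(\cdot)$ is differentiable (which is a.e.\ since $\PP \in W^{1,2}(0,T;\R^{d\times d}) \hookrightarrow W^{1,1}(0,T;\R^{d\times d}) \subset AC(0,T;\R^{d\times d})$), and write $\PP(t_0) = \OO \DD \OO^{\rm T}$ with $\DD = \mathrm{diag}\{\l_1,\ldots,\l_d\}$. The key elementary fact is that $\tr(g(\PP)) = \sum_{j=1}^d g(\l_j)$ depends only on the eigenvalues, so $\tr(g(\PP))$ is a symmetric function of the (continuous, since $\PP$ is continuous in $t$) eigenvalues. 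I would therefore express $\tr(g(\PP(t)))$ via the characteristic polynomial, i.e.\ as a smooth function of the coefficients $\sigma_1(\PP(t)),\ldots,\sigma_d(\PP(t))$ (the elementary symmetric polynomials in the eigenvalues, equivalently polynomial invariants of the entries of $\PP(t)$), so that the chain rule applies directly: each $\sigma_k(\PP(t))$ is a polynomial in the entries of $\PP(t)$, hence absolutely continuous in $t$ with derivative a polynomial expression in the entries times $\d_t \PP$. This shows $t \mapsto \tr(g(\PP(t)))$ is differentiable a.e. without ever needing to differentiate $\OO(t)$ or the individual $\l_j(t)$, which is the technical trap (the eigenvalues and eigenprojections need not be differentiable where eigenvalues cross).

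Next I would identify the derivative. By the chain rule at $t_0$, $\d_t \tr(g(\PP))|_{t_0} = \langle \nabla_{\PP}\big(\tr\,g(\PP)\big), \d_t \PP(t_0)\rangle$, where the gradient is with respect to the matrix entries, restricted to symmetric matrices. A standard computation (perturbing $\PP(t_0)$ by a symmetric $H$ and using first-order perturbation theory for the trace of an analytic matrix function, which \emph{is} valid even at coincident eigenvalues because only the trace is involved) gives $\tfrac{d}{ds}\big|_{s=0}\tr\,g(\PP(t_0) + sH) = \tr\big(g'(\PP(t_0))\, H\big)$, where $g'(\PP(t_0)) = \OO\,\mathrm{diag}\{g'(\l_1),\ldots,g'(\l_d)\}\,\OO^{\rm T}$ exactly as in \eqref{def-g-tau}. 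Setting $H = \d_t \PP(t_0)$ yields $\d_t \tr(g(\PP))|_{t_0} = \tr(g'(\PP(t_0))\, \d_t \PP(t_0))$, and since $g'(\PP(t_0))$ is symmetric and $\d_t\PP(t_0)$ is symmetric, $\tr(g'(\PP)\,\d_t\PP) = \d_t\PP : g'(\PP)$, which is the asserted identity.

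The only place where the hypotheses $g \in C^{1,\gamma}(\R)$ with $g$ concave or convex genuinely enter is in justifying that the first-order Taylor expansion above has the right error control so that the a.e.\ pointwise derivative of $\tr(g(\PP(\cdot)))$ really equals the claimed expression, and in ensuring $g'(\PP)$ makes sense and is continuous along the trajectory. Concavity/convexity guarantees $g'$ is monotone, hence of locally bounded variation, so the divided differences $(g(\l_i) - g(\l_j))/(\l_i - \l_j)$ that would appear in a naive differentiation of $g(\PP)$ (as opposed to $\tr\,g(\PP)$) stay bounded and converge to $g'(\l_i)$ as $\l_j \to \l_i$; combined with the $C^{1,\gamma}$ regularity this gives the Hölder modulus $|g(\PP+H) - g(\PP) - (\text{linear term})| \le C|H|^{1+\gamma}$ uniformly, which together with the absolute continuity of $t \mapsto \PP(t)$ upgrades the formal chain rule to a rigorous a.e.\ statement. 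I expect this error-control step --- matching the $L^2$-in-time derivative $\d_t\PP$ against a $C^{1,\gamma}$ nonlinearity and confirming that no extra singular part is produced at eigenvalue crossings --- to be the main (though routine) obstacle; everything else is linear algebra and the chain rule. I would then refer to Appendix~\ref{appendixb} for the detailed estimates.
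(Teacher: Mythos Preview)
Your approach has a genuine gap in the first step. The claim that $\tr(g(\PP)) = \sum_j g(\lambda_j)$ can be written as ``a smooth function of the coefficients $\sigma_1(\PP),\ldots,\sigma_d(\PP)$'' is not correct for a general $g\in C^{1,\gamma}$. Newton's identities recover the power sums $\sum_j \lambda_j^k$ as polynomials in the $\sigma_k$, but for non-polynomial $g$ the map $(\sigma_1,\ldots,\sigma_d)\mapsto\sum_j g(\lambda_j)$ need not be $C^1$ in the $\sigma$-variables: already for $d=2$ one has $\lambda_{1,2}=\tfrac{1}{2}\big(\sigma_1\pm\sqrt{\sigma_1^2-4\sigma_2}\big)$, and the square root is not smooth on the discriminant zero set. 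So this route does not bypass the eigenvalue-crossing difficulty you correctly identified; it merely hides it in the $\sigma$-coordinates. The fallback to ``first-order perturbation theory for the trace of an analytic matrix function'' is likewise unavailable as stated, since $g$ is only $C^{1,\gamma}$, not analytic, and the Daleckii--Krein-type formula you allude to requires more regularity than you have. Your final paragraph gestures at the correct error bound $|\tr g(\PP+H)-\tr g(\PP)-\tr(g'(\PP)H)|\leq C|H|^{1+\gamma}$, but proving \emph{that} is essentially the whole content of the lemma, and no argument is supplied; in particular the concavity/convexity hypothesis is never actually used.

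The paper's proof is shorter and makes concavity (or convexity) the central tool rather than an afterthought. The key observation is the two-sided matrix inequality (from \cite{Barrett-Boyaval}): for concave $g\in C^1(\R)$ and symmetric $\AAA,\BB\in\R^{d\times d}$,
\[
(\AAA-\BB):g'(\BB)\ \geq\ \tr\big(g(\AAA)-g(\BB)\big)\ \geq\ (\AAA-\BB):g'(\AAA),
\]
with both inequalities reversed if $g$ is convex. Setting $\AAA=\PP(t_*+h)$, $\BB=\PP(t_*)$ at a point $t_*$ where $\PP$ is differentiable, dividing by $h$, and letting $h\to 0$ yields the identity by a direct squeeze argument; no diagonalization of $\PP(t_*+h)$ or perturbation theory is needed. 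The only remaining ingredient is the continuity $g'(\PP(t_*+h))\to g'(\PP(t_*))$, which follows from $g'\in C^{0,\gamma}(\R)$ together with Wihler's result \cite{Wihler} that H\"older continuity of a scalar function transfers to the induced matrix function with the same exponent. This is precisely where the $C^{1,\gamma}$ hypothesis enters, and concavity/convexity is what produces the sandwich inequality that drives the whole argument.
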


\medskip

We now fix $d=2$, and state the fourth level of approximation as follows:
\ba\label{02-l5}
&\vu_{n,\s_3} \in C([0,T_n],X_n), \ \vu_{n,\s_3}(0) = \vu_{0,n} = P_n \vu_{0,\th}; \quad
\mbox{for any $\vvarphi\in X_n$:}
\\
&\int_\O \left[\d_t(\vr_{n,\s_3}\vu_{n,\s_3})
+ \Div_x (\vr_{n,\s_3} \vu_{n,\s_3} \otimes \vu_{n,\s_3})\right]
\cdot \vvarphi \,\dx \\
&  \quad + \int_\O \left[ \nabla_x p(\vr_{n,\s_3}) + \boxed{\s_1 \nabla_x\vr_{n,\s_3}^\Gamma} + \boxed{\s_2 \nabla_x \vu_{n,\s_3} \nabla_x \vr_{n,\s_3}} +\nabla_x\big(k L \eta_{n,\s_3} + \de\,\eta_{n,\s_3}^2 \big) - \Div_x \SSS(\nabla_x \vu_{n,\s_3})\right]\cdot \vvarphi \,\dx \\
&= \int_\O \left[\,\boxed{\Div_x \chi_{\s_3}(\TT_{n,\s_3}^S)}  -
\boxed{\frac{\a}{2}\,\nabla_x  \tr\left(\log \chi_{\s_3}(\TT_{n,\s_3}^S) \right)}  + \vr_{n,\s_3}\, \ff \right]\cdot \vvarphi\,\dx,
\ea

where $\vu_{0,\th}$ is the mollified initial datum for $\vu_0$ defined
in \eqref{ini-data-mollified},
\begin{align}
\TT_{n,\s_3}^S := \tfrac{1}{2} \left(\TT_{n,\s_3} + (\TT_{n,\s_3})^{\rm T}\right),
\label{TTS}
\end{align}
and $\vr_{n,\s_3}, \eta_{n,\s_3},\TT_{n,\s_3}$ are determined by the parabolic equations
\begin{align}
\label{01-l5}
\d_t \vr_{n,\s_3} + \Div_x(\vr_{n,\s_3} \vu_{n,\s_3}) &= \boxed{\s_2 \Delta_x \vr_{n,\s_3}}\,,\\
\label{03-l5}
\d_t \eta_{n,\s_3} + \Div_x (\eta_{n,\s_3} \vu_{n,\s_3}) &= \e \Delta_x \eta_{n,\s_3},
\end{align}
\ba
\label{04-l5}
&\d_t \TT_{n,\s_3} + {\rm Div}_x (\vu_{n,\s_3}\,
\boxed{\chi_{\s_3}(\TT_{n,\s_3}^S)} ) -
\left(\nabla_x \vu_{n,\s_3} \,\boxed{\chi_{\s_3}(\TT_{n,\s_3}^S)}
+ \boxed{ \chi_{\s_3}(\TT_{n,\s_3}^S)} \, \nabla_x^{\rm T} \vu_{n,\s_3} \right)
\\
&\hspace{1in}= \e \Delta_x \TT_{n,\s_3} + \frac{k\,A_0}{2\lambda}
(\eta_{n,\s_3}+\a)  \,\II
- \frac{A_0}{2\lambda} \boxed{ \chi_{\s_3}(\TT_{n,\s_3}^S)}\,,
\ea
and the boundary conditions in \eqref{05a}--\eqref{07a} and \eqref{boundary-vr}.
The equations \eqref{01-l5}--\eqref{04-l5} will be
 considered subject to the initial data defined in \eqref{ini-data-mollified}, satisfying \eqref{ini-data-mollified-pt} for $\vr_{n,\s_3}, \eta_{n,\s_3},\TT_{n,\s_3}$.

Compared to the regularization of the extra tensor performed in \cite{Barrett-Boyaval} in the incompressible case,  in the compressible case considered here we require the additional
regularization
$$
\frac{A_0}{2\lambda} \chi_{\s_3}(\TT_{n,\s_3}^S)
$$
featuring in \eqref{04-l5} in order derive sufficiently strong bounds on
$\log(\TT_{n,\s_3})$ and $\TT_{n,\s_3}^{-1}$ (see Section \ref{logarithmic-bb} below).

\section{The fourth level of approximation}\label{sec:4th-app-full}

For any $\s_3>0$, sufficiently small, and any $n\in \N$, the problem \eqref{02-l5} is a system of ordinary differential equations in $\vu_{n,\s_3}$ with respect to $t$ because $X_n$ is a finite-dimensional space; the equations \eqref{01-l5}--\eqref{04-l5} are all of parabolic type and are all well-posed given any smooth $\vu_{n,\s_3}$. Thus, locally in time,  over a time interval $[0,T_{n,\s_3}]$, for some $T_{n,\s_3}>0$, the existence of a unique solution, denoted by $(\vr_{n,\s_3},\vu_{n,\s_3},\eta_{n,\s_3},\TT_{n,\s_3})$, to the problem at the fourth level of approximation, posed in Section \ref{sec:5th-app}, is classical, see \cite{Lions-C, FNP,F-book,N-book}.

Since $\vu_{n,\s_3} \in C([0,T_{n,\s_3}],X_n)$, by the definition of $X_n$ in Section \ref{sec:3rd-app}, we have
\be\label{est-5th-app-u}
\vu_{n,\s_3} \in C([0,T_{n,\s_3}],C^{2,\beta}(\overline \O;\R^2)),\quad \|\vu_{n,\s_3}(t)\|_{C^{2,\beta}(\overline \O;\R^2)} \leq C(n)\|\vu_{n,\s_3}(t)\|_{L^{2}( \O ;\R^2)} \ \mbox{ for all $t\in [0,T_{n,\s_3}]$}.
\ee

By similar arguments as in Section 2.1 in \cite{FNP} concerning well-posedness and uniform bounds for parabolic equations, we have, for all $t\in (0,T_{n,\s_3}]$, that
\ba\label{est-5th-app-vr-eta}
&(\vr_{n,\s_3},\  \eta_{n,\s_3}, \ \TT_{n,\s_3}) \in C([0,T_{n,\s_3}];W^{1,2}(\O) \times W^{1,2}(\O) \times W^{1,2}(\O;\R^{2 \times 2})),\\
&(\vr_{n,\s_3},\  \eta_{n,\s_3}, \ \TT_{n,\s_3}) \in
L^2(0,T_{n,\s_3};W^{2,2}(\O) \times W^{2,2}(\O) \times W^{2,2}(\O;\R^{2 \times 2})),\quad \TT_{n,\s_3} \mbox{ is symmetric},\\
& \th \exp\left(-\int_0^t \|\Div_{ x} \vu_{n,\s_3}(t')\|_{L^\infty(\O)}\,\dt'\right) \leq \vr_{n,\s_3}(t,x) \leq C(\th) \exp\left(\int_0^t \|\Div_{ x} \vu_{n,\s_3}(t')\|_{L^\infty(\O)}\,\dt'\right),\\
& \th \exp\left(-\int_0^t \|\Div_{ x} \vu_{n,\s_3}(t')\|_{L^\infty(\O)}\,\dt'\right) \leq \eta_{n,\s_3}(t,x) \leq C(\th) \exp\left(\int_0^t \|\Div_{ x} \vu_{n,\s_3}(t')\|_{L^\infty(\O)}\,\dt'\right),\\
& \|\vr_{n,\s_3}(t)\|_{W^{1,2}(\O)}^2 + \int_0^t \|\vr_{n,\s_3}(t')\|_{W^{2,2}(\O)}^2\,\dt'
\leq C\left(t,\th,\s_2,\|\nabla_x \vu_{n,\s_3}\|_{L^\infty((0,T_{n,\s_3})\times \O; \R^{2\times 2})}\right),\\
& \|\eta_{n,\s_3}(t)\|_{W^{1,2}(\O)}^2 + \int_0^t \|\eta_{n,\s_3}(t')\|_{W^{2,2}(\O)}^2\,\dt' \leq C\left(t,\th,\|\nabla_x \vu_{n,\s_3}\|_{L^\infty((0,T_{n,\s_3})\times \O;\R^{2\times 2})}\right),\\
& \|\TT_{n,\s_3}(t)\|_{W^{1,2}(\O;\R^{2\times 2})}^2 + \int_0^t \|\TT_{n,\s_3}(t')\|_{W^{2,2}(\O;\R^{2\times 2})}^2\,\dt' \leq C\left(t,\th,\|\nabla_x \vu_{n,\s_3}\|_{L^\infty((0,T_{n,\s_3})\times \O;\R^{2\times 2})}\right).
\ea
The symmetry of $\TT_{n,\s_3}$ can be deduced by using the symmetry of equation
\eqref{04-l5}, the symmetry of $\TT_{n,\s_3}^S$,
the symmetry of the initial datum $\TT_{n,\s_3}(0) = \TT_{0,\th}\geq \th$,
the symmetry of the trace operator appearing on the right-hand side of (\ref{02-l5}),
and the uniqueness of the solution to equation \eqref{04-l5}. The latter is a consequence of \eqref{est-5th-app-u} and the Lipschitz continuity of $\chi_{\sigma_3}$ defined over the space of real symmetric matrices, which follows from the Lipschitz continuity of $\chi_{\sigma_3}$ considered as a mapping from $\mathbb{R}$ into $\mathbb{R}$ (cf. Theorem 1.1 in \cite{Wihler}).

The bound on $\TT_{n,\s_3}$ in \eqref{est-5th-app-vr-eta} can be derived similarly to those on the scalar functions  $\vr_{n,\s_3}$ and $\eta_{n,\s_3}$, by observing that, for any real symmetric matrix $\PP\in \R^{d\times d}$, one has
\[
|\chi_{ \s_3} (\PP)| \leq \s_3 + |\PP|.
\]
In the rest of this section we shall derive uniform bounds on the solution sequence, which guarantee that the existence time $T_{n.\sigma_3}$ identified above can be extended to $T$.

\subsection{Uniform bounds}

We shall now develop some bounds that are uniform in $\s_3$ in the limit of $\s_3\to 0$.
Similarly to the \textit{a priori} bound \eqref{a-priori-vu},
we deduce by taking $\vvarphi=\vu_{n,\s_3}$ in \eqref{02-l5}, noting (\ref{01-l5}) and that
$$\frac{1}{2}\int_{\Omega} \Delta_x \vr_{n,\s_3}\,|\vu_{n,\s_3}|^2 \,\dx
= - \int_{\Omega} (\nabla_x
\vu_{n,\s_3}
\nabla_x \vr_{n,\s_3}) \cdot \vu_{n,\s_3}\, \dx, $$
and combining with
(\ref{01-l5}) tested with $b_1'(\vr_{n,\s_3})$
and (\ref{03-l5}) tested with $b_2'(\eta_{n,\s_3})$, where
$b_1(r)=\frac{a}{\g-1}r^\g+
\frac{\s_1}{{\Gamma}-1}r^\Gamma$ and
$b_2(r)=kL(r \log r+1) +\de r^2$,
that
\ba\label{a-priori-vu-level5}
&\frac{\rm d}{\dt} \int_\O \left[ \frac{1}{2} \vr_{n,\s_3} |\vu_{n,\s_3}|^2
+ \frac{a}{\g-1} \vr_{n,\s_3}^\g + {\frac{\s_1}{{\Gamma}-1} \vr_{n,\s_3}^{\Gamma}}
+ k L  (\eta_{n,\s_3} \log \eta_{n,\s_3} + 1) + \de \,\eta_{n,\s_3}^2\right]\dx \\
&\quad +   {\s_2\int_\O (a\g\vr_{n,\s_3}^{\g-2}
+  { \s_1 \Gamma \vr_{n,\s_3}^{\Gamma-2}} )|\nabla_x \vr_{n,\s_3}|^2\,\dx}
+ \e \int_\O\left( \,\frac{k L}{\eta_{n,\s_3}}  +  2 \,\de   \right)
|\nabla_x \eta_{n,\s_3}|^2 \,\dx \\
&\quad + \int_\O \mu^S \left| \frac{\nabla_x \vu_{n,\s_3} + \nabla^{\rm T}_x \vu_{n,\s_3}}{2} - \frac{1}{d} (\Div_x \vu_{n,\s_3}) \II \right|^2 + \mu^B |\Div_x \vu_{n,\s_3}|^2\,\dx \\
&= - \int_\O  {\chi_{\s_3}(\TT_{n,\s_3})} : \nabla_x \vu_{n,\s_3} \,\dx + \frac{\a}{2}
\int_\O  \tr\left(\log \chi_{\s_3}(\TT_{n,\s_3}) \right)  \Div_{ x} \vu_{n,\s_3} \,\dx + \int_\O \vr_{n,\s_3}\,\ff \cdot  \vu_{n,\s_3} \,\dx,
\ea
for a.e. $t \in (0,T_{n,\sigma_3}]$, where we have used that $\TT_{n,\s_3}^S=\TT_{n,\s_3}$ (cf. the paragraph
following eq. \eqref{est-5th-app-vr-eta}).

Similarly as in \eqref{a-priori-tau},
on taking the trace of (\ref{04-l5}) and integrating over $\O$,
we have that, for a.e. $t \in (0,T_{n,\sigma_3}]$,
\ba\label{a-priori-tau-level5}
\frac{\rm d}{\dt} \int_\O \tr\left( \TT_{n,\s_3}\right) \,\dx  + \frac{A_0}{2\l}\int_\O
\tr\left({\chi_{\s_3}(\TT_{n,\s_3})}\right) \dx = \frac{k\,A_0\,d}{2\lambda}
\int_\O (\eta_{n,\s_3}+\a)  \,\dx  +  2 \int_\O  {\chi_{\s_3}(\TT_{n,\s_3})} : \nabla_x \vu_{n,\s_3} \,\dx .
\ea

\subsection{A logarithmic bound}\label{logarithmic-bb}
Following \cite{Barrett-Boyaval}, we introduce the logarithmic cut-off function $G_{\s_3}: \R\to \R$, defined by
\[
G_{\s_3} (s) = \left\{\begin{array}{cl}
\log s & \mbox{ if $s \geq \s_3$},\\
{\s_3}^{-1}{s} + \log \s_3   - 1 & \mbox{ if $s \leq \s_3$}.
\end{array}
\right.
\]
Since $G_{\s_3}' (s) = \chi_{\s_3}(s)^{-1}$ for all $s\in \R$, we have, for any real symmetric matrix $\TT$, that
\[
G_{\s_3}' (\TT) = \chi_{\s_3}(\TT)^{-1}.
\]
%
It follows from (\ref{04-l5}), (\ref{est-5th-app-u}) and (\ref{est-5th-app-vr-eta})$_6$ that
$\TT_{n,\s_3} \in W^{1,2}(0,T_{n,\s_3};L^2(\O,\R^{d \times d}))$.
Hence, by Lemma \ref{lem-pt-mf}, we have, as $\TT_{n,\s_3}^S=\TT_{n,\s_3}$
and $G_{\s_3} \in C^{1,1}(\R)$ is concave, that
\[
\d_t \TT_{n,\s_3} : G_{\s_3}' (\TT_{n,\s_3} )
= \d_t \tr\left(G_{\s_3}(\TT_{n,\s_3} )  \right)
\qquad \mbox{a.e.\ on } (0,T_{n,\s_3}] \times \Omega.
\]
Further, by \eqref{est-log-2} and \eqref{est-log-1} we deduce that
\ba\label{est-log-2-level5}
&{\rm Div}_x (\vu_{n,\s_3}\,\chi_{\s_3}(\TT_{n,\s_3}))  : G_{\s_3}' (\TT_{n,\s_3} ) \\
& \qquad = \left[ (\vu_{n,\s_3} \cdot \nabla_x)\, \chi_{\s_3}(\TT_{n,\s_3}) + (\Div_{ x} \vu_{n,\s_3})\,\chi_{\s_3}(\TT_{n,\s_3})  \right] : G_{\s_3}' (\TT_{n,\s_3} ) \\
& \qquad = \left((\vu_{n,\s_3} \cdot \nabla_x)\, \chi_{\s_3}(\TT_{n,\s_3}) \right): \chi_{\s_3}(\TT_{n,\s_3})^{-1} + (\Div_{ x} \vu_{n,\s_3})\,\chi_{\s_3}(\TT_{n,\s_3}) : \chi_{\s_3}(\TT_{n,\s_3})^{-1}\\
& \qquad = (\vu_{n,\s_3} \cdot \nabla_x) \tr\left(\log \chi_{\s_3} (\TT_{n,\s_3} ) \right)
+ d \,\Div_{ x} \vu_{n,\s_3}
\qquad \mbox{a.e.\ on } (0,T_{n,\s_3}] \times \Omega
\ea
and
\begin{align*}
&- \left(\nabla_x \vu_{n,\s_3} \,\chi_{\s_3}(\TT_{n,\s_3}) + \chi_{\s_3}(\TT_{n,\s_3})\, \nabla_x^{\rm T} \vu_{n,\s_3} \right)  : G_{\s_3}' (\TT_{n,\s_3} )\\
 & \qquad =- \left(\nabla_x \vu_{n,\s_3} \,\chi_{\s_3}(\TT_{n,\s_3}) + \chi_{\s_3}(\TT_{n,\s_3})\, \nabla_x^{\rm T} \vu_{n,\s_3} \right)  : \chi_{\s_3}(\TT_{n,\s_3})^{-1}\\
&\qquad =-2\, \Div_{ x} \vu_{n,\s_3}
\qquad \mbox{a.e.\ on } (0,T_{n,\s_3}] \times \Omega.
\end{align*}
Thus, by taking the Frobenius inner product of \eqref{04-l5} with
$G_{\s_3}' (\TT_{n,\s_3})$ we get that
\ba\label{eq-log-tau-level5}
&\d_t \tr\left(G_{\s_3}(\TT_{n,\s_3} )\right) + (\vu_{n,\s_3} \cdot \nabla_x)
\tr\left( \log \chi_{\s_3} (\TT_{n,\s_3} ) \right)
+(d-2)\, \Div_{ x} \vu_{n,\s_3}  \\
&\quad = \e \Delta_x \TT_{n,\s_3} : \chi_{\s_3} (\TT_{n,\s_3} )^{-1} +
\frac{k\,A_0}{2\lambda}(\eta_{n,\s_3}+\a)  \,\tr\left(\chi_{\s_3} (\TT_{n,\s_3} )^{-1}\right) -
 \frac{ d\,A_0}{2\lambda}
\qquad \mbox{a.e.\ on } (0,T_{n,\s_3}] \times \Omega.
\ea
Integrating \eqref{eq-log-tau-level5} over $\O$ implies, for a.e.\ $t \in (0,T_{n,\s_3}]$, that
\ba\label{est-log-f1-level5}
\frac{\rm d}{\dt} \int_\O \tr\left(G_{\s_3}(\TT_{n,\s_3} \right) \dx
&=  \int_\O  \tr\left(\log \chi_{\s_3} (\TT_{n,\s_3} ) \right) \Div_{ x} \vu_{n,\s_3}\dx
+ \e\int_\O \Delta_x \TT_{n,\s_3} : \chi_{\s_3} (\TT_{n,\s_3} )^{-1}\,\dx \\
&\quad + \frac{k\,A_0}{2\lambda}\int_\O  (\eta_{n,\s_3}+\a)  \,\tr\left(
\chi_{\s_3} (\TT_{n,\s_3} )^{-1}\right)\,\dx - \frac{d\,A_0}{2\lambda}|\O|.
\ea
To proceed, we require the following generalization of Lemma \ref{lem-lap-tau-inverse},
whose proof is elementary but rather lengthy and has been therefore relegated
to Appendix \ref{appendixc}.

\begin{lemma}\label{lem-lap-tau-inverses3}
For $\sigma_3>0$, let $T_{\sigma_3}>0$, and suppose that
$\PP \in C([0,T_{\sigma_3}];W^{1,2}(\O;\R^{d\times d}))$
is a symmetric matrix function, with $\Delta_x \PP \in
L^2(0,T_{\sigma_3};L^2(\O;\R^{d\times d}))$, satisfying a
homogeneous Neumann boundary condition on $\partial\Omega$;
then, $\chi_{\sigma_3}(\PP)^{-1} \in L^\infty(0,T_{\sigma_3};
W^{1,2}(\Omega;\mathbb{R}^{d \times d}))$, and
\ba\label{est-log-lap-tau-level5}
\int_\O  \Delta_x \PP : \chi_{\s_3}(\PP)^{-1}\,\dx  &= -\int_\O
\nabla_x \PP  :: \nabla_x\chi_{\s_3} (\PP)^{-1}\,\dx \geq \frac{1}{d}
\int_\O \left|\nabla_x \tr\left( \log \chi_{\s_3} (\PP)
\right)\right|^2\,\dx, \qquad \mbox{a.e. on $(0,T_{\sigma_3}]$}.
\ea
\end{lemma}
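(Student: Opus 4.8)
The plan is to mimic the proof of Lemma~\ref{lem-lap-tau-inverse}, but carried out with the Lipschitz functional calculus forced on us by the cut-off $\chi_{\s_3}$. Set $h(s):=1/\chi_{\s_3}(s)=1/\max\{s,\s_3\}$ and $f(s):=\log\chi_{\s_3}(s)=\log\max\{s,\s_3\}$ for $s\in\R$; both are globally Lipschitz on $\R$ (with Lipschitz constants $\s_3^{-2}$ and $\s_3^{-1}$, respectively), $h$ is nonincreasing, and both are $C^1$ on $\R\setminus\{\s_3\}$ with $f'(s)=\chi_{\s_3}'(s)/\chi_{\s_3}(s)=:g(s)$ and $-h'(s)=\chi_{\s_3}'(s)/\chi_{\s_3}(s)^2=g(s)^2$, where $g(s)=s^{-1}$ for $s>\s_3$ and $g(s)=0$ for $s<\s_3$. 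First I would record that, since a globally Lipschitz scalar function induces a globally Lipschitz map on the space of real symmetric $d\times d$ matrices (Theorem~1.1 in \cite{Wihler}) and $\PP\in C([0,T_{\s_3}];W^{1,2}(\O;\R^{d\times d}))$, the chain rule for Sobolev functions gives $\chi_{\s_3}(\PP)^{-1}=h(\PP)\in L^\infty(0,T_{\s_3};W^{1,2}(\O;\R^{d\times d}))$ and $\log\chi_{\s_3}(\PP)=f(\PP)\in L^\infty(0,T_{\s_3};W^{1,2}(\O;\R^{d\times d}))$, together with the a.e.\ identities $\d_{x_j}h(\PP)=Dh(\PP)[\d_{x_j}\PP]$ and $\d_{x_j}f(\PP)=Df(\PP)[\d_{x_j}\PP]$. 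This already yields the asserted membership $\chi_{\s_3}(\PP)^{-1}\in L^\infty(0,T_{\s_3};W^{1,2})$ and makes the right-hand side of \eqref{est-log-lap-tau-level5} meaningful.

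For the first equality in \eqref{est-log-lap-tau-level5} I would integrate by parts in space, using the homogeneous Neumann condition together with $\chi_{\s_3}(\PP)^{-1}\in W^{1,2}$: if the boundary condition is understood in the variational sense this is immediate on testing with $\chi_{\s_3}(\PP)^{-1}$, whereas if it is read as a normal-trace condition one first upgrades, by elliptic regularity for the Neumann Laplacian on the $C^{2,\beta}$ domain $\O$ (the compatibility condition $\int_\O\Delta_x\PP(t)\,\dx=0$ being automatic), to $\PP(t)\in W^{2,2}(\O;\R^{d\times d})$ for a.e.\ $t\in(0,T_{\s_3}]$ and then applies the classical Green identity, whose boundary term vanishes because $\d_{\bf n}\PP(t)=0$.

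The core is the pointwise inequality. Fix $j\in\{1,\dots,d\}$ and a point where $\PP$ has the spectral decomposition $\PP=\OO\,\mathrm{diag}\{\l_1,\dots,\l_d\}\,\OO^{\rm T}$, and set $B:=\OO^{\rm T}(\d_{x_j}\PP)\OO$, which is symmetric. Away from the Lebesgue-null set where some $\l_\kappa$ equals $\s_3$, the Daleckii--Krein formula for the derivative of a matrix function gives $\OO^{\rm T}(\d_{x_j}\chi_{\s_3}(\PP)^{-1})\OO=(h^{[1]}(\l_\kappa,\l_\iota)B_{\kappa\iota})_{\kappa,\iota}$, with $h^{[1]}(a,b):=(h(a)-h(b))/(a-b)$ for $a\neq b$ and $h^{[1]}(a,a):=h'(a)$, and the trace-derivative formula (as in Lemma~\ref{lem-pt-mf}, applied in the spatial variable) gives $\d_{x_j}\tr(\log\chi_{\s_3}(\PP))=\tr(f'(\PP)\,\d_{x_j}\PP)=\sum_{\kappa}g(\l_\kappa)B_{\kappa\kappa}$. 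Hence
\[
-\,\d_{x_j}\PP:\d_{x_j}\big(\chi_{\s_3}(\PP)^{-1}\big)=-\,\tr\!\big(B\,(h^{[1]}(\l_\kappa,\l_\iota)B_{\kappa\iota})_{\kappa,\iota}\big)=\sum_{\kappa,\iota=1}^{d}\big(-h^{[1]}(\l_\kappa,\l_\iota)\big)B_{\kappa\iota}^{\,2}.
\]
Because $h$ is nonincreasing, $-h^{[1]}(a,b)\geq0$ for all $a,b$, so every summand is nonnegative; discarding the off-diagonal ones and using $-h^{[1]}(\l_\kappa,\l_\kappa)=-h'(\l_\kappa)=g(\l_\kappa)^2$ gives, by the Cauchy--Schwarz inequality,
\[
-\,\d_{x_j}\PP:\d_{x_j}\big(\chi_{\s_3}(\PP)^{-1}\big)\ \geq\ \sum_{\kappa=1}^{d}g(\l_\kappa)^2 B_{\kappa\kappa}^{\,2}\ \geq\ \frac1d\Big(\sum_{\kappa=1}^{d}g(\l_\kappa)B_{\kappa\kappa}\Big)^{\!2}=\frac1d\big|\d_{x_j}\tr(\log\chi_{\s_3}(\PP))\big|^2.
\]
Summing over $j$, integrating over $\O$, and combining with the Green identity of the previous paragraph yields \eqref{est-log-lap-tau-level5}.

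The hard part will be the rigorous justification of the non-smooth functional calculus: the Daleckii--Krein and trace-derivative formulae, as well as the Sobolev chain rule, must be established for the merely Lipschitz functions $h$ and $f$ (so the standard $C^1$ perturbation theory of eigenvalues does not apply verbatim), and the set $\{x:\ \l_\kappa(\PP(t,x))=\s_3\ \text{for some}\ \kappa\}$ must be shown not to contribute to the gradients. The natural remedy, consistent with the rest of the paper, is to approximate $\chi_{\s_3}$ from above by a sequence of smooth, convex, nondecreasing functions converging to $\chi_{\s_3}$ in $C^1_{\rm loc}(\R\setminus\{\s_3\})$ and with reciprocals that stay uniformly Lipschitz and nonincreasing, to prove the identity and the inequality for the smooth approximants (where Lemma~\ref{lem-pt-mf} and the classical Daleckii--Krein formula apply directly), and to pass to the limit using the uniform $W^{1,2}$ bounds; alternatively one invokes Rademacher-type differentiability of Lipschitz matrix functions together with a density argument. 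The remaining ingredients --- the $W^{2,2}$ elliptic-regularity upgrade and the Green identity --- are standard on a $C^{2,\beta}$ domain.
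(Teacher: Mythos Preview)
Your proposal is correct in outline, and the regularity and integration-by-parts parts match the paper's treatment closely (Wihler's Lipschitz calculus for the $W^{1,2}$ membership, the $W^{2,2}$ upgrade by elliptic regularity, and the Green identity). The substantive difference is in how the inequality is obtained.

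You work \emph{pointwise in $x$ after differentiating}: you invoke the Daleckii--Krein representation for $\d_{x_j}h(\PP)$ in the eigenbasis, obtain
\[
-\,\d_{x_j}\PP:\d_{x_j}\big(\chi_{\s_3}(\PP)^{-1}\big)=\sum_{\kappa,\iota}\big(-h^{[1]}(\l_\kappa,\l_\iota)\big)B_{\kappa\iota}^{\,2},
\]
and then discard off-diagonal terms and apply Cauchy--Schwarz. As you yourself note, the delicate step is justifying Daleckii--Krein and the trace-derivative formula for the merely Lipschitz $h$ and $f$; your proposed remedy (smooth monotone approximants of $\chi_{\s_3}$ with uniformly Lipschitz reciprocals, then pass to the limit) is viable and standard. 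The paper sidesteps this entirely by working \emph{at the level of finite differences before differentiating}: it first proves the purely algebraic matrix inequality
\[
|\tr(\log\chi_{\s_3}(\AAA))-\tr(\log\chi_{\s_3}(\BB))|^2\leq -d\,\tr\big((\AAA-\BB)(\chi_{\s_3}(\AAA)^{-1}-\chi_{\s_3}(\BB)^{-1})\big)
\]
for \emph{all} symmetric $\AAA,\BB$ (no smoothness needed --- this is the same diagonalisation argument as in Lemma~\ref{lem-lap-tau-inverse}, using only that $\chi_{\s_3}$ is nondecreasing and $1$-Lipschitz together with the scalar log inequality), applies it with $\AAA=\PP(t,x+he_j)$, $\BB=\PP(t,x)$, divides by $h^2$ and integrates over $\Omega_{h,j}$, and then passes to the limit $h\to0$ using strong convergence of the difference quotients of $\PP$ in $L^r$ (from $\PP(t,\cdot)\in W^{2,2}$) against weak convergence of those of $\chi_{\s_3}(\PP)^{-1}$ in $L^2$. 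This buys them the inequality without ever invoking a chain rule or derivative formula for the non-smooth matrix function, so no approximation layer is needed. Your route is more direct once the spectral calculus is in hand, but carries exactly the technical overhead you identified; the paper's route trades that for a (somewhat lengthy) difference-quotient limit argument that stays within elementary Sobolev-space tools.
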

Thanks to \eqref{a-priori-vu-level5}, \eqref{a-priori-tau-level5},
\eqref{est-log-f1-level5} and \eqref{est-log-lap-tau-level5}
we then obtain, for a.e.\ $t \in (0,T_{n,\s_3}]$, that
\ba\label{a-priori-log-f2-level5}
&\frac{\rm d}{\dt} \int_\O \left[ \frac{1}{2} \vr_{n,\s_3} |\vu_{n,\s_3}|^2 + \frac{a}{\g-1} \vr_{n,\s_3}^\g + {\frac{\s_1}{{\Gamma}-1} \vr_{n,\s_3}^{\Gamma}} + k L  (\eta_{n,\s_3}
\log \eta_{n,\s_3} + 1) + \de \,\eta_{n,\s_3}^2 \right]\dx \\
&\quad + \frac{1}{2}\frac{\rm d}{\dt} \int_\O 
\tr \left(\TT_{n,\s_3} 
-
\a\, 
G_{\s_3}(\TT_{n,\s_3})\right)
\dx\\
&\quad +   {\s_2\int_\O (a\g\vr_{n,\s_3}^{\g-2} +  { \s_1 \Gamma \vr_{n,\s_3}^{\Gamma-2}} )|\nabla_x \vr_{n,\s_3}|^2\,\dx} + \e \int_\O \left( \frac{k L}{\eta_{n,\s_3}}  +  2 \,\de  \right)
|\nabla_x \eta_{n,\s_3}|^2 \,\dx \\
&\quad + \int_\O \mu^S \left| \frac{\nabla_x \vu_{n,\s_3} + \nabla^{\rm T}_x \vu_{n,\s_3} }{2} - \frac{1}{d} (\Div_x \vu_{n,\s_3}) \II \right|^2 + \mu^B |\Div_x \vu_{n,\s_3}|^2\,\dx
+ \frac{A_0}{4\l}\int_\O \tr\left(\chi_{\s_3}(\TT_{n,\s_3})\right) \,\dx
\\
&\quad
+ \frac{\a\,k\,A_0}{4\lambda} \int_\O  (\eta_{n,\s_3}+\a)  \,\tr\left(\chi_{\s_3} (\TT_{n,\s_3} )^{-1}\right)\dx +  \frac{\a\,\e}{2d}  \int_\O \left|\nabla_x \tr\left(\log \chi_{\s_3} (\TT_{n,\s_3} ) \right)\right|^2\,\dx\\
&\leq   \int_\O \vr_{n,\s_3}\,\ff \cdot  \vu_{n,\s_3} \,\dx +
\frac{k\,A_0\,d}{4\lambda} \int_\O (\eta_{n,\s_3}+\a)  \,\dx + \frac{\a\,d\,A_0}{4\lambda}|\O|.
\ea
Since we can take $\s_3 < \a$, it is straightforward to see that, for any $s\in \R$, one has
$$
s - \a \, G_{\s_3}(s) \geq \a - \a \log \a.
$$
Thus,
\[
\tr \left(\TT_{n,\s_3} - \a\,G_{\s_3}(\TT_{n,\s_3})\right) = \sum_{j=1}^d \left(\l^{(j)}_{n,\s_3} - \a\, G_{\s_3}(\l^{(j)}_{n,\s_3})\right) \geq d(\a - \a \log \a),
\]
where $\l^{(j)}_{n,\s_3},\ j=1, \ldots, d$, are the eigenvalues of the symmetric matrix $\TT_{n,\s_3}$. Then, similarly as in \eqref{def-energy-free}, we define the following nonnegative functional:
\begin{align*}
E_{n,\s_3}(t)&:=  \int_\O \left[ \frac{1}{2} \vr_{n,\s_3} |\vu_{n,\s_3}|^2 + \frac{a}{\g-1} \vr_{n,\s_3}^\g + {\frac{\s_1}{{\Gamma}-1} \vr_{n,\s_3}^{\Gamma}} + k L  (\eta_{n,\s_3} \log \eta_{n,\s_3} + 1) + \de \,\eta_{n,\s_3}^2 \right]\dx \\
& \qquad  + \frac{1}{2}\int_\O \left[  \tr \left(\TT_{n,\s_3} - \a \,G_{\s_3}(\TT_{n,\s_3})\right)
+ d( \a \log \a - \a ) \right]\dx.
\end{align*}
Similarly to (\ref{a-priori-2-0}),
integrating  \eqref{a-priori-log-f2-level5} over $[0,t]$ for any $t\in (0,T_{n,\s_3}]$ then gives
\ba\label{a-priori-2-0-level5}
E_{n,\s_3}(t)  &+   \s_2 \int_0^t \int_\O (a\g\vr_{n,\s_3}^{\g-2}
+  { \s_1 \Gamma \vr_{n,\s_3}^{\Gamma-2}} )|\nabla_x \vr_{n,\s_3}|^2\,\dx\,\dt' +
\e\int_0^t \int_\O \left( \,\frac{k L}{\eta_{n,\s_3}}  +  2 \,\de   \right)
|\nabla_x \eta_{n,\s_3}|^2 \,\dx \,\dt' \\
& + \int_0^t \int_\O \mu^S \left| \frac{\nabla_x \vu_{n,\s_3} + \nabla^{\rm T}_x \vu_{n,\s_3}}{2} - \frac{1}{d} (\Div_x \vu_{n,\s_3}) \II \right|^2 + \mu^B |\Div_x \vu_{n,\s_3}|^2\,\dx \,\dt' \\
& + \frac{A_0}{4\l} \int_0^t \int_\O
\tr\left(\chi_{\s_3}(\TT_{n,\s_3})\right) \,\dx \,\dt'
+ \frac{\a\,k\,A_0}{4\lambda} \int_0^t \int_\O  (\eta_{n,\s_3}+\a)  \,\tr\left(
\chi_{\s_3} (\TT_{n,\s_3} )^{-1}\right)\,\dx\,\dt' \\
& + \frac{\a\,\e}{2d} \int_0^t   \int_\O \left|\nabla_x
\tr\left(\log \chi_{\s_3} (\TT_{n,\s_3}) \right)\right|^2\,\dx\,\dt' \\
\leq  &\ E_{0,\th} + \int_0^t \int_\O \vr_{n,\s_3}\,\ff \cdot  \vu_{n,\s_3} \,\dx\,\dt'  + \frac{k\,A_0\,d}{4\lambda} \int_0^t \int_\O (\eta_{n,\s_3}+\a)  \,\dx \,\dt' + \frac{\a\,d\,A_0}{4\lambda}|\O|t,
\\
\leq  &\ E_{0,\th} + C \int_0^t E_{n,\s_3}(t') \,\dt'  + C \,t,
\ea
where the initial energy $E_{0,\th}$ is defined as
\ba\label{def-energy-free0-level5}
E_{0,\th}: = &\int_\O \left[ \frac{1}{2} \vr_{0,\th} |\vu_{0,\th}|^2 + \frac{a}{\g-1} \vr_{0,\th}^\g + + {\frac{\s_1}{{\Gamma}-1} \vr_{0,\th}^{\Gamma}} + \left(k L  (\eta_{0,\th} \log \eta_{0,\th} + 1) + \de \,\eta_{0,\th}^2\right) \right]\dx\\
&+ \frac{1}{2}\int_\O \left[ \tr \left(\TT_{0,\th} - \a \,\log \TT_{0,\th} \right)
+ d( \a \log \a - \a )  \right]\dx.
\ea
Here we have used the fact that $\TT_{0,\th} \geq \th >\s_3$, which implies that $G_{\s_3}(\TT_{0,\th}) \equiv \log \TT_{0,\th}$.
Then, Gronwall's inequality implies, for any $t \in (0,T_{n,\s_3}]$, that
\ba\label{a-priori-2-level5}
E_{n,\s_3}(t)  &+   \s_2 \int_0^t \int_\O (a\g\vr_{n,\s_3}^{\g-2} +  { \s_1 \Gamma \vr_{n,\s_3}^{\Gamma-2}} )|\nabla_x \vr_{n,\s_3}|^2\,\dx\,\dt' +
\e\int_0^t \int_\O \left( \frac{k L}{\eta_{n,\s_3}}  +  2 \,\de \right)
|\nabla_x \eta_{n,\s_3}|^2 \,\dx \,\dt' \\
\qquad &+ \int_0^t \int_\O \mu^S \left| \frac{\nabla_x \vu_{n,\s_3} + \nabla^{\rm T}_x \vu_{n,\s_3}}{2} - \frac{1}{d} (\Div_x \vu_{n,\s_3}) \II \right|^2 + \mu^B |\Div_x \vu_{n,\s_3}|^2\,\dx \,\dt' \\
\qquad &+ \frac{A_0}{4\l} \int_0^t \int_\O \tr\left(\chi_{\s_3}(\TT_{n,\s_3})\right) \,\dx
\,\dt' + \frac{\a\,k\,A_0}{4\lambda} \int_0^t \int_\O  (\eta_{n,\s_3}+\a)  \,
\tr\left(\chi_{\s_3} (\TT_{n,\s_3} )^{-1}\right)\,\dx\,\dt' \\
\qquad&+  \frac{\a\,\e}{2d} \int_0^t   \int_\O \left|\nabla_x \tr
\left(\log \chi_{\s_3} (\TT_{n,\s_3} ) \right)\right|^2\,\dx\,\dt' \\
\leq  &\ ( E_{0,\th}+C\,t)\, {\rm e}^{Ct},\qquad t \in (0,T_{n,\s_3}],
\ea
where $C$ is a positive constant, independent of $t$ and of
the approximation parameters $(\Gamma, \s_1,\s_2,\s_3,n)$.

\subsection{Maximal existence time}

In this section, we shall use the uniform bound \eqref{a-priori-2-level5} to show that $T_{n,\s_3}$, the maximal time of existence for solutions to the fourth level of approximation, is in fact equal to the final time $T$.

By Korn's inequality (\ref{korn}), a partial result from the bound \eqref{a-priori-2-level5} is that
\be\label{nabla-u-L2-level5}
\int_0^{T_{n,\s_3}} \| \nabla_x \vu_{n,\s_3}(t)\|^2_{L^2(\O;\R^{2\times 2})}\, \dt \leq ( E_{0,\th}+C\,T_{n,\s_3})\, {\rm e}^{CT_{n,\s_3}}\leq C(E_{0,\th},T).
\ee
Thanks to Friedrichs' inequality, \eqref{nabla-u-L2-level5} implies that
\be\label{nabla-u-L2-level5a}
\int_0^{T_{n,\s_3}} \|\vu_{n,\s_3}(t)\|^2_{W^{1,2}(\O;\R^2)}\, \dt \leq C(E_{0,\th},T).
\ee
By the equivalence of the $W^{1,2}(\Omega)$ and $W^{1,\infty}(\Omega)$ norms in the finite-dimensional linear space $X_n$ (see \eqref{est-psi-n}), and the Cauchy--Schwarz inequality over $(0,T_{n,\s_3}]$, we then have from \eqref{nabla-u-L2-level5a} that
\be\label{nabla-u-Linf-level5}
\int_0^{T_{n,\s_3}} \| \nabla_x \vu_{n,\s_3}(t)\|_{L^\infty(\O;\R^{2\times 2})}\,\dt \leq C(n,E_{0,\th},T).
\ee
Using \eqref{nabla-u-Linf-level5} it follows from the third line of \eqref{est-5th-app-vr-eta} that we have the following lower and upper bounds on $\vr_{n,\s_3}$ in terms of positive constants:
\[
 C(\th,n,E_{0,\th},T)^{-1} \leq    \vr_{n,\s_3}\leq C(\th,n,E_{0,\th},T).
\]
Together with the following  partial result from \eqref{a-priori-2-level5}:
$$
\sup_{t\in (0,T_{n,\s_3}]}\|\vr_{n,\s_3}|\vu_{n,\s_3}|^2(t)\|_{L^1(\O)} \leq C( E_{0,\th},T),
$$
we obtain
$$
\sup_{t\in (0,T_{n,\s_3}]}\| \vu_{n,\s_3}(t)\|_{L^2(\O;\R^2)} \leq C(\th,n,E_{0,\th},T).
$$
Again by the properties in $\eqref{est-psi-n}$ of functions in $X_n$, we have
\be\label{u-L2-inf2-level5}
\sup_{t\in (0,T_{n,\s_3}]}\| \vu_{n,\s_3}(t)\|_{C^{2,\beta}(\overline{\O};\R^2)} \leq C(\th,n,E_{0,\th},T).
\ee
Hence, by a continuity argument, the existence time can exceed $T_{n,\s_3}$. Since the bound in \eqref{a-priori-2-level5} is independent of $n,\s_3$, this process can be repeated a finite number of times, as long as the existence time $T_{n,\s_3}<T $, until the final time $T$ is reached, and therefore the maximal existence time $T_{n,\s_3}=T$. Moreover, by \eqref{est-5th-app-vr-eta} and \eqref{u-L2-inf2-level5},  we have the following bounds that are uniform with respect to $\s_3$:
\ba\label{est-level5-2}
\sup_{t\in (0,T]}\| \vu_{n,\s_3}(t)\|_{C^{2,\beta}(\overline{\O};\R^2)} &\leq C(\th,n,E_{0,\th},T),\\
C(\th,n,E_{0,\th},T)^{-1} \leq    \vr_{n,\s_3}(t,x) &\leq C(\th,n,E_{0,\th},T) \quad \mbox{ for all $(t,x)\in (0,T]\times \O$},\\
C(\th,n,E_{0,\th},T)^{-1} \leq  \eta_{n,\s_3}(t,x) &\leq C(\th,n,E_{0,\th},T) \quad \mbox{ for all $(t,x)\in (0,T]\times \O$},\\
\sup_{t\in (0,T]}\| \vr_{n,\s_3}(t)\|_{W^{1,2}( {\O})} + \|\vr_{n,\s_3}\|_{L^2(0,T; W^{2,2}( {\O}))}  &\leq
C(\s_2, \th,n,E_{0,\th},T),\\
\sup_{t\in (0,T]}\| \eta_{n,\s_3}(t)\|_{W^{1,2}( {\O})} + \| \eta_{n,\s_3}\|_{L^2(0,T; W^{2,2}( {\O}))}  &\leq C(\th,n,E_{0,\th},T),\\
\sup_{t\in (0,T]}\| \TT_{n,\s_3}(t)\|_{W^{1,2}( {\O};\R^{2\times 2})} + \| \TT_{n,\s_3}\|_{L^2(0,T; W^{2,2}( {\O};\R^{2\times 2}))}  &\leq C(\th,n,E_{0,\th},T)
\ea
and
\ba\label{est-level5-3}
\sup_{t\in (0,T]} \int_\O \left[  \tr\left(\TT_{n,\s_3} - \a \,G_{\s_3}(\TT_{n,\s_3})\right) +
d( \a \log \a - \a ) \right]\dx  &\leq C(E_{0,\th},T),\\
 \int_0^T \int_\O  \tr\left(\chi_{\s_3} (\TT_{n,\s_3} )^{-1}\right) \,\dx \,\dt
 &\leq C(\th, n,E_{0,\th},T).
\ea

After these preparatory considerations, we are now ready to pass to the limit $\s_3\to 0$ in the fourth level of approximation, so as to deduce the existence of solutions to the third level
of approximation. 
This will be the subject of the next section.

\section{The third level of approximation}\label{sec:3rd-app-full}

This section is devoted to studying the limit of the solution sequence $(\vr_{n,\s_3},\vu_{n,\s_3},\eta_{n,\s_3},\TT_{n,\s_3})$ as $\s_3\to 0$, and to showing that the resulting limit is a solution to the third level of approximation formulated in Section \ref{sec:3rd-app}. We will also derive bounds on this solution limit that are uniform with respect to $n$, in preparation for passage to the limit $n \rightarrow \infty$ in the next section.

\subsection{Time derivative bounds and strong convergence}\label{sec:level3-1}
The bounds in \eqref{est-level5-2} imply the following weak convergence results,
as $\s_3\to 0$:
\begin{alignat*}{2}
\begin{aligned}
\vr_{n,\s_3} &\to \vr_n, &&\quad \mbox{weakly-* in}\  L^\infty(0,T; W^{1,2}( {\O}))
\cap L^2(0,T; W^{2,2}( {\O})),\\
\eta_{n,\s_3} &\to \eta_n, &&\quad \mbox{weakly-* in}\  L^\infty(0,T; W^{1,2}( {\O}))\cap L^2(0,T; W^{2,2}( {\O})),\\
\TT_{n,\s_3} &\to \TT_n, &&\quad \mbox{weakly-* in}\  L^\infty(0,T; W^{1,2}( {\O};\R^{2\times 2}))\cap L^2(0,T; W^{2,2}( {\O};\R^{2\times 2})).
\end{aligned}
\end{alignat*}
From equations \eqref{01-l5}--\eqref{04-l5} and the bounds in \eqref{est-level5-2}
we then have that

\ba\label{est-time-derivative-l5}
 \big\|(\d_t \vr_{n,\s_3}, \d_t \eta_{n,\s_3},\d_t \TT_{n,\s_3})\big\|_{L^2(0,T; L^{2}( {\O}) \times L^{2}( {\O}) \times L^{2}( {\O};\R^{2 \times 2}))} \leq C(\s_2, \th,n,E_{0,\th},T).
\ea

We can therefore use the Aubin--Lions--Simon compactness theorem (cf. Lemma \ref{lem-ALS}) to deduce the following strong convergence results, on noting that $d=2$, as $\s_3\to 0$:
\begin{alignat}{2}\label{cov-vr-l5-2}
\begin{aligned}
\vr_{n,\s_3}  &\to \vr_n, &&\quad \mbox{~~~strongly in}\  L^2(0,T; W^{1,q}( {\O}))\cap
C ([0,T]; L^{q}( {\O})) \qquad \forall q \in [1,\infty),\\
\eta_{n,\s_3} &\to \eta_n,&&\quad \mbox{~~~strongly in}\  L^2(0,T; W^{1,q}( {\O}))\cap
C ([0,T]; L^{q}( {\O})) \qquad \forall q \in [1,\infty),\\
\TT_{n,\s_3} &\to \TT_n,&&\quad \mbox{~~~strongly in}\  L^2(0,T; W^{1,q}( {\O};\R^{2 \times 2}))\cap C ([0,T]; L^{q}( {\O};\R^{2 \times 2})) \qquad \forall q \in [1,\infty),\\
\chi_{\s_3}(\TT_{n,\s_3}) &\to [\TT_n]_{+},&&\quad \mbox{~~~strongly in}\  L^2(0,T; W^{1,q}( {\O};\R^{2 \times 2}))\cap C ([0,T]; L^{q}( {\O};\R^{2 \times 2})) \qquad \forall q \in [1,\infty),
\end{aligned}
\end{alignat}
where the limit $\TT_n$ is also real
symmetric as
$\TT_{\s_3,n}$ is real symmetric for all $\s_3>0,\ n\in \N$.
In addition, it follows from (\ref{est-level5-2})$_{2,3}$ that
\begin{align}
\vr_n,\,\eta_n \ge 0 \   \mbox{ a.e. in} \ (0,T]\times \O.
\label{gen}
\end{align}

Thanks to \eqref{est-level5-2}, \eqref{a-priori-2-level5}
and equation \eqref{02-l5} it follows that
\ba\label{est-vu-l5}
 \big\|\d_t \vu_{n,\s_3} \big\|_{L^2(0,T; L^{2}( {\O};\R^2))} \leq C(\s_2, \th,n,E_{0,\th},T).
\ea
Thus, thanks to the embedding $C^{2,\beta}(\overline{\O};\R^2) \hookrightarrow W^{2+\beta,2}(\Omega;\R^2)$, the compact
embedding $W^{2+\beta,2}(\Omega;\R^2) \hookrightarrow \hookrightarrow W^{2,2}(\Omega;\R^2)$, \eqref{est-level5-2},
and the Aubin--Lions--Simon theorem, we have the following strong convergence result, as $\s_3\to 0$, for each fixed $n \in \N$:
\ba\label{cov-vu-l5}
\vu_{n,\s_3} \to \vu_{n} \ \mbox{strongly in}\  C([0,T]; W^{2,2}( {\O};\R^2)).
\ea

\subsection{Positivity of the extra tress tensor}\label{sec:level3-2}
Employing a technique from \cite{Barrett-Boyaval},
we will now show by using the bound \eqref{est-level5-3}$_2$ that the
limit $\TT_n$ obtained in \eqref{cov-vr-l5-2} is, almost everywhere,
a real symmetric positive definite matrix.

Assume that $\TT_n$ is not symmetric positive definite a.e.\
in $D_n \subset (0,T] \times \Omega$; then, there exists a ${\bf q} \in L^\infty((0,T]\times \Omega;\R^d)$
such that
\begin{align}
[\TT_n]_{+} {\bf q} = {\bf 0}
\mbox{ a.e.\ in } (0,T]\times \Omega
\quad \mbox{with} \quad |{\bf q}|=1
\mbox{ a.e.\ in } D_n
\mbox{ and } {{\bf q}}={\bf 0} \mbox{ a.e.\ in } ((0,T]\times \Omega) \setminus D_n.
\label{evec}
\end{align}
On noting (\ref{est-level5-3})$_2$ and using the Cauchy--Schwarz inequality, we then have that
\ba
|D_n| &= \int_0^T \int_{\Omega} |{\bf q}|^2 \,\dx\,\dt=
\int_0^T \int_{\Omega}
\left([\chi_{\s_3}(\TT_{n,\s_3})]^{-\frac{1}{2}}\,
{\bf q} \right)\cdot \left([\chi_{\s_3}(\TT_{n,\s_3})]^{\frac{1}{2}}\,
{\bf q}\right) \,\dx\,\dt \\
& \leq \left(\int_0^T \int_{\Omega}
|\chi_{\s_3}(\TT_{n,\s_3})^{-1}| \,\dx\,\dt
\right)^{\frac{1}{2}} \left(\int_0^T \int_{\Omega}
{\bf q}^{\rm T}\chi_{\s_3}(\TT_{n,\s_3})\,{\bf q}
\,\dx\,\dt
\right)^{\frac{1}{2}} \\
& \leq C \left(\int_0^T \int_{\Omega}
{\bf q}^{\rm T}\chi_{\s_3}(\TT_{n,\s_3})\,{\bf q}
\,\dx\,\dt
\right)^{\frac{1}{2}},
\label{eveca}
\ea
where $C$ is independent of $\s_3$.
Passing to the limit $\s_3 \rightarrow 0$ in (\ref{eveca}), and noting
(\ref{cov-vr-l5-2})$_4$ and (\ref{evec}), yields that $|D_n|=0$.
Hence, $\TT_n$ is symmetric positive definite a.e. in $(0,T] \times \Omega$.
Finally, it follows from (\ref{a-priori-2-level5}) and (\ref{cov-vr-l5-2})$_4$
that, as $\s_3 \to 0$:
\begin{align}
\nabla_x \tr\left(\log \chi_{\s_3} (\TT_{n,\s_3}) \right)
\rightarrow \nabla_x \tr\left(\log \TT_{n}\right)
\qquad \mbox{weakly in }L^2(0,T;L^2(\O;\R^d)).
\label{logcon}
\end{align}

\subsection{Convergence to the third level of approximation}

By \eqref{est-time-derivative-l5} and \eqref{est-vu-l5},
we have weak convergence of the time derivatives.  By the strong convergence results established in Section \ref{sec:level3-1} and the positivity of $\TT_n$ shown in Section \ref{sec:level3-2},  letting $\s_3\to 0$ in the fourth level of approximation \eqref{02-l5}--\eqref{04-l5} implies that the limit $(\vr_{n },\vu_{n },\eta_{n },\TT_{n })$ is a solution to the third level of approximation, \eqref{02-l3}--\eqref{04-l3}.
 The attainment of the boundary conditions in \eqref{05a}--\eqref{07a} and \eqref{boundary-vr} for $(\vr_{n },\vu_{n },\eta_{n },\TT_{n })$ follows from the attainment of the boundary conditions for $(\vr_{n,\s_3},\vu_{n,\s_3},\eta_{n,\s_3},\TT_{n,\s_3})$. The initial data for $\vr_n,\ \eta_n, \ \TT_n$ are attained in the sense of the $L^q(\Omega)$ norm for any $q<\infty$ by the first three statements in \eqref{cov-vr-l5-2}, and the initial datum for $\vu_n$ is attained in the sense of the $W^{2,2}(\Omega;\R^{2\times 2})$
norm by \eqref{cov-vu-l5}.

Moreover, by the convergence results established in Section
\ref{sec:level3-1}, weak lower-semicontinuity of the norm in $L^p$ spaces and Fatou's lemma,
letting $\s_3\to 0$ in the bounds \eqref{a-priori-2-0-level5} and \eqref{a-priori-2-level5}
gives, for a.e. $t\in (0,T]$, the following inequalities:
\ba\label{a-priori-2-0-level3}
E_{n }(t)  &+   4\s_2 \int_0^t \int_\O \left( \frac{a}{\gamma} | \nabla_x \vr_{n}^{\frac{\g}{2}} |^2 +   \frac{\s_1}{\Gamma} |\nabla_x \vr_{n }^{\frac{\Gamma}{2}} |^2 \right)\dx\,\dt'
+ 2\e \int_0^t \int_\O \left( 2k L |\nabla_x \eta_{n}^{\frac 12}|^2  +  \de |\nabla_x \eta_{n}|^2  \right)\dx \,\dt' \\
\quad &+ \int_0^t \int_\O \mu^S \left| \frac{\nabla_x \vu_{n} + \nabla^{\rm T}_x \vu_{n}}{2} - \frac{1}{d} (\Div_x \vu_{n}) \II \right|^2 + \mu^B |\Div_x \vu_{n}|^2\,\dx \,\dt' \\
\quad &+ \frac{A_0}{4\l} \int_0^t \int_\O \tr\left(\TT_{n}\right) \dx \,\dt'
+ \frac{\a\,k\,A_0}{4\lambda} \int_0^t \int_\O  (\eta_{n}+\a)  \,\tr\,   (\TT_{n}^{-1})\,\dx\,\dt' + \frac{\a\,\e}{2d} \int_0^t   \int_\O \left|\nabla_x \tr\left(\log \TT_{n} \right)\right|^2\dx\,\dt' \\
 \leq  &E_{0,\th} + \int_0^t \int_\O \vr_{n}\,\ff \cdot  \vu_{n} \,\dx\,\dt'  + \frac{k\,A_0\,d}{4\lambda} \int_0^t \int_\O (\eta_{n}+\a)  \,\dx \,\dt' + \frac{\a\,d\,A_0}{4\lambda}|\O|t,
\ea
with $d=2$ and
\ba\label{a-priori-2-level3}
E_{n}(t)  &+    4\s_2 \int_0^t \int_\O \left( \frac{a}{\gamma}
| \nabla_x \vr_{n}^{\frac{\g}{2}} |^2 +   \frac{\s_1}{\Gamma} |\nabla_x \vr_{n }^{\frac{\Gamma}{2}} |^2 \right)\dx\,\dt'
+2\e \int_0^t \int_\O \left( 2k L |\nabla_x \eta_{n}^{\frac 12}|^2  +  \de |\nabla_x \eta_{n}|^2  \right)\dx \,\dt' \\
&+ \int_0^t \int_\O \mu^S \left| \frac{\nabla_x \vu_{n} + \nabla^{\rm T}_x \vu_n}{2} - \frac{1}{d} (\Div_x \vu_{n}) \II \right|^2 + \mu^B |\Div_x \vu_{n}|^2\,\dx \,\dt' \\
&+ \frac{A_0}{4\l} \int_0^t \int_\O \tr\left(\TT_{n}\right) \dx \,\dt' + \frac{\a\,k\,A_0}{4\lambda} \int_0^t \int_\O  (\eta_{n}+\a)  \,\tr\left(\TT_{n}^{-1}\right)\dx\,\dt'  + \frac{\a\,\e}{2d} \int_0^t   \int_\O \left|\nabla_x \tr\left( \log    \TT_{n}  \right)\right|^2\dx\,\dt' \\
\leq  &( E_{0,\th}+C\,t)\, {\rm e}^{Ct},
\ea
with $d=2$, where the energy $E_n$ is defined by
\begin{align*}
E_{n}(t)&:=  \int_\O \left[ \frac{1}{2} \vr_{n} |\vu_{n}|^2 + \frac{a}{\g-1} \vr_{n}^\g + {\frac{\s_1}{{\Gamma}-1} \vr_{n}^{\Gamma}} + k L  (\eta_{n} \log \eta_{n} + 1) + \de \,\eta_{n}^2 \right]\dx \\
&  \qquad + \frac{1}{2}\int_\O \left[  \tr \left(\TT_{n} - \a \,\log \TT_{n}\right) +
d( \a \log \a - \a ) \right]\dx,
\end{align*}
with $d=2$, and the initial energy $E_{0,\th}$ is the same as in \eqref{def-energy-free0-level5}.

\section{The second level of approximation}\label{sec:2nd-app-full}

Our objective in this section is to study the limit of the solution sequence $(\vr_{n },\vu_{n },\eta_{n },\TT_{n })$ as $n \to \infty$, in order to deduce the existence of a solution to the second level of approximation, stated in Section \ref{sec:2nd-app}. To this end, we need to derive bounds on $(\vr_{n },\vu_{n },\eta_{n },\TT_{n })$ that are uniform in $n$. We note here that while the steps performed hitherto can be extended to the case of $d=3$, with some restrictions on $q$ in
(\ref{cov-vr-l5-2}), in what follows we shall have to restrict ourselves to the case of $d=2$.

\subsection{Uniform bounds and convergence}\label{sec-con-level3}
We summarize the $n$-uniform bounds that follow from \eqref{a-priori-2-level3}
as $\de >0$:
\ba\label{est-level3-1}
\| \vr_n \|_{L^\infty(0,T;L^\g(\O))} + \s_1 \| \vr_n \|_{L^\infty(0,T;L^{\Gamma}(\O))} &\leq C(E_{0,\th},T),\\
\| \nabla_x (\vr_n^{\frac{\g}{2}})\|_{L^2((0,T)\times \O;\R^2)} +
\s_1\, \| \nabla_x (\vr_n^{\frac{\Gamma}{2}})\|_{L^2((0,T)\times \O;\R^2)} &\leq C(\s_2, E_{0,\th},T),\\
\| \eta_n \|_{L^\infty(0,T;L^2(\O))} + \|  \eta_n \|_{L^2(0,T;W^{1,2}(\O))} + \|  \eta_n^{\frac{1}{2}} \|_{L^2(0,T;W^{1,2}(\O))} &\leq C( E_{0,\th},T),\\
\|\vr_n  |\vu_n|^2\|_{L^\infty(0,T;L^1(\O))} + 
\| \vu_n\|_{L^2(0,T;W^{1,2}_0(\O;\R^2))}  &\leq C(E_{0,\th},T), \\
\| \tr\left(\TT_n - \alpha \,\log \TT_n \right) \|_{L^\infty(0,T;L^1(\O))}
+  \|  (\eta_{n}+\a)  \,\tr\,   (\TT_{n}^{-1}) \|_{L^1(0,T;L^1(\O))}  &\leq C(E_{0,\th},T),\\
\| \nabla_x \tr\left(\log \TT_n\right) \|_{L^2(0,T;L^2(\O;\R^2))} &\leq C(E_{0,\th},T).
\ea

Multiplying \eqref{01-l3} by $\vr_n$ and integrating the result over $\O$ implies that
\ba\label{est-vr-l3-1}
\frac{1}{2}\frac{\rm d}{\dt} \int_\O \vr_n^2 \,\dx + \s_2 \int_\O |\nabla_x\vr_n|^2 \,\dx &= -\int_\O \Div_x(\vr_n\vu_n)\,\vr_n\,\dx = - \frac{1}{2} \int_\O  (\Div_x \vu_n)\,\vr_n^2\,\dx\\
&\leq \frac{1}{4}\left( \int_\O |\Div_x \vu_n|^2 \,\dx +\int_\O \vr_n^4\,\dx \right).
\ea
Combining this with \eqref{est-level3-1} and recalling
from Section \ref{sec:1st-app} that ${\Gamma} \geq 4$, we deduce that
$$
\|  \vr_n \|_{L^2(0,T;W^{1,2}(\O))} \leq C(\s_1,\s_2,E_{0,\th},T).
$$

For $d=2$, as is assumed to be the case here,
taking the inner product of (\ref{04-l5}) with $\TT_n$, integrating the result over $\O$ and applying
the same argument as in Section \ref{sec:2d-est} additionally gives
\ba\label{a-priori-tau-f5-level3}
 \int_\O  | \TT_n(t)|^2 \,\dx + \e  \int_0^t \int_\O |\nabla_x \TT_n |^2  \,\dx\,\dt'
 + \frac{A_0}{4\l}\int_0^t \int_\O | \TT_n|^2 \,\dx\,\dt'  \leq C\left(t,E_{0,\th},\|\TT_{0,\th}\|_{L^2(\O;\R^{2\times 2})}^2\right),
\ea
and furthermore
$$
\| \TT_n \|_{L^\infty(0,T;L^2(\O;\R^{2\times 2}))} + \|  \TT_n \|_{L^2(0,T;W^{1,2}(\O;\R^{2\times 2}))}  \leq C\left(T,E_{0,\th},\|\TT_{0,\th}\|_{L^2(\O;\R^{2\times 2})}^2\right).
$$

Therefore, we have the following weak convergence results, as $n\to \infty$:
\begin{alignat}{2}
\begin{aligned}\label{con-level3-1}
\vr_n  &\to \vr_{\s_2} \quad  &&\mbox{weakly-* in} \  L^\infty(0,T;L^{\Gamma}(\O))\cap L^2(0,T;W^{1,2}(\O)),\\
\eta_n &\to \eta_{\s_2} \quad &&\mbox{weakly-* in} \  L^\infty(0,T;L^2(\O))\cap L^2(0,T;W^{1,2}(\O)),\\
\vu_n  &\to \vu_{\s_2} \quad   &&\mbox{weakly in} \   L^2(0,T;W_0^{1,2}(\O;\R^2)),\\
\TT_n &\to \TT_{\s_2} \quad  &&\mbox{weakly-* in} \  L^\infty(0,T;L^2(\O;\R^{2\times 2}))\cap L^2(0,T;W^{1,2}(\O;\R^{2\times 2})),\\
\tr\,\log \TT_n &\to \overline{\tr\,\log \TT_{\s_2}}   \quad  &&\mbox{weakly in} \   L^2(0,T;W^{1,2}(\O)).
\end{aligned}
\end{alignat}

The time derivative bounds obtained from \eqref{01-l3}--\eqref{04-l3} enable us to use the Aubin--Lions--Simon compactness theorem to obtain the following strong convergence results, as $n\to \infty$:
\begin{alignat}{2}
\begin{aligned}
\label{con-level3-2}
\vr_n  &\to \vr_{\s_2} \quad  &&\mbox{strongly in} \  L^2(0,T; L^{q}(\O))\qquad  \forall\, q \in [1,\infty),\\
\eta_n &\to \eta_{\s_2} \quad &&\mbox{strongly in} \  L^2(0,T;L^q (\O)) \qquad \forall\, q \in [1,\infty),\\
\TT_n &\to \TT_{\s_2} \quad &&\mbox{strongly in} \   L^2(0,T;L^q (\O;\R^{2\times 2})) \qquad \forall\, q \in [1,\infty).
\end{aligned}
\end{alignat}
It follows from (\ref{gen}) that
\begin{align}
\vr_{\s_2},\,\eta_{\s_2} \ge 0 \   \mbox{ a.e. in} \ (0,T]\times \O.
\label{ge2}
\end{align}

By Sobolev embedding we have that
$$
\|\vr_n^\frac{\Gamma}{2} \|_{L^2(0,T;L^{q}(\O))}\leq C
\|\vr_n^\frac{\Gamma}{2}\|_{L^2(0,T;W^{1,2}(\O))} \qquad \forall\, q \in [1,\infty),
$$
and therefore $\|\vr_n^{\Gamma} \|_{L^1(0,T;L^{q}(\O))} \leq C(\s_1, \s_2, E_{0,\th},T)$ for all $q \in [1,\infty)$.
Hence, by  $\eqref{est-level3-1}_1$ and interpolation between Lebesgue spaces, we deduce that
$$
\| \vr_n \|_{L^{(2-\delta)\Gamma}((0,T)\times \O)} \leq C(\s_1,\s_2, E_{0,\th},T)\qquad \forall\, \delta \in (0,1).
$$
%
Together with \eqref{con-level3-2}, we obtain the strong convergence result, as
$n \rightarrow \infty$:
\begin{align}
\label{con-level3-3}
& \vr_n \to \vr_{\s_2}  \  \mbox{strongly in} \  L^{\Gamma}((0,T)\times \O).
\end{align}
Thus, we have the following convergence results, as
$n \rightarrow \infty$:
\begin{alignat*}{2}
\vr_n\vu_n \to &~ \vr_{\s_2}\vu_{\s_2} \quad  &&\mbox{weakly-* in }  L^\infty(0,T;L^{\frac{2\g}{\g+1}}(\O;\R^2)),\\
\vr_n^\g \to \vr^\g_{\s_2},&\   \vr_n^\Gamma \to \vr^\Gamma_{\s_2} \quad
&&\mbox{strongly in }  L^1((0,T)\times \O).
\end{alignat*}

\medskip

Next we shall deal with the nonlinear term $\vr_n\vu_n\otimes \vu_n$.
From \eqref{02-l3}, by the same argument as in Section 7.8.2 in \cite{N-book},
we have that
$$
\| \d_t P_n(\vr_n\vu_n) \|_{L^{r_1}(0,T;\widetilde W^{-2,2}(\O;\R^2))} \leq C(\s_1,E_{0,\th},T),\quad \mbox{for some $ r_1 > 1$},
$$
where $P_n$ is the orthogonal projection from $L^2(\Omega;\R^2)$ onto $X_n$
and  $\widetilde W^{-2,2}(\O;\R^2)$ is the dual space of $W_0^{1,2}(\O;\R^2)
\cap W^{2,2}(\O;\R^2)$. On the other hand, since $\Gamma \geq 4$, we deduce from \eqref{est-level3-1} and Sobolev embedding that
\begin{align*}
\|  P_n(\vr_n\vu_n) \|_{L^{2}(0,T;L^2(\O;\R^2))}  &\leq \|\vr_n\vu_n  \|_{L^{2}(0,T;L^2(\O;\R^2))}\\
&\leq C\|\vr_n\|_{L^{\infty}(0,T;L^{\Gamma}(\O))} \| \nabla_x\vu_n  \|_{L^{2}(0,T;L^2(\O;;\R^{2\times 2}))} \leq  C(\s_1,  E_{0,\th},T).
\end{align*}
Thus, the Aubin--Lions--Simon compactness theorem gives
$$
  P_n( \vr_n\vu_n ) \to  \vr_{\s_2}\vu_{\s_2} \quad \mbox{strongly in}\ L^2(0,T; W^{-1,2}(\O;\R^2)).
$$
By writing $\vr_n\vu_n = P_n( \vr_n\vu_n ) + (1-P_n)( \vr_n\vu_n )$ we deduce that
$$
\vr_n\vu_n \to  \vr_{\s_2}\vu_{\s_2} \quad \mbox{strongly in}\ L^2(0,T; W^{-1,2}(\O;\R^2)).
$$
Thus we have the following convergence result for the convective term:
$$
\vr_n\vu_n\otimes \vu_n \to  \vr_{\s_2}\vu_{\s_2}\otimes \vu_{\s_2} \quad \mbox{in}\  {\mathcal D}'((0,T)\times \O; \R^{2 \times 2}).
$$

\medskip

Finally, we shall study the limit of the extra term $\nabla_x \vu_{n} \nabla_x \vr_n$. To this end, we will employ Lemma \ref{lem-parabolic-1} and Lemma \ref{lem-parabolic-2} to show that the limit $(\vr_{\s_2},\vu_{\s_2} )$ fulfills the parabolic equation \eqref{01-l2} a.e. in $(0,T]\times \O$. By
function space interpolation, we have that
$$
(\vr_n\vu_n)_{n\in \N} \mbox{\ is bounded in \ } L^\infty(0,T; L^\frac{2\Gamma}{\Gamma+1}(\O;\R^2)) \cap L^2(0,T; L^{\Gamma-}(\O;\R^2))\hookrightarrow L^r((0,T)\times \O;\R^2),
$$
for some $r>2$, where $\Gamma-$ denotes any number in the interval $[1,\Gamma)$.  We then apply Lemma \ref{lem-parabolic-2} to \eqref{boundary-vr} and \eqref{01-l3} to deduce that
$$
(\nabla_x\vr_n)_{n\in \N} \mbox{\ is bounded in \ } L^r((0,T)\times \O;\R^2) \mbox{ \ for some $r>2$. \ }
$$
Consequently,
$$
\Div_x ( \vr_n \vu_n ) = \vr_n \Div_x \vu_n + \nabla_x\vr_n \cdot \vu_n
\mbox{ \ is bounded in \ } L^s((0,T)\times \O) \mbox{ \ for some $s>1$. \ }
$$
The application of Lemma \ref{lem-parabolic-1} gives
$$
\|\d_t \vr_n\|_{L^s((0,T)\times \O)} + \|\vr_n \|_{L^s(0,T;W^{2,s}(\O))} \leq C, \quad \mbox{\ for some $C>0$ independent of $n$}.
$$

Letting $n\to \infty$ gives
$$
\d_t \vr_{\s_2} \in L^s((0,T)\times \O),  \quad  \vr_{\s_2}  \in L^s(0,T;W^{2,s}(\O))\cap L^r(0,T;W^{1,r}(\O)), \quad  \mbox{\ for some $r>2$ and $s>1$}.
$$
Moreover, $\vr_{\s_2}$  and $\vu_{\s_2}$ satisfy \eqref{01-l2} a.e. in $(0,T]\times \O$, $\d_{\bf n} \vr_{\s_2} = 0$ on $(0,T]\times \d \O$
and $\vr_{\s_2}(0)=\vr_{0,\th}$.
Therefore, similarly as in \eqref{est-vr-l3-1}, we have, for every $t \in (0,T]$, that
\begin{align*}
\|\vr_n(t)\|_{L^2(\O)}^2  + 2 \s_2 \|\nabla_x\vr_n\|^2_{L^2((0,t)\times \O;\R^2)}  &= \|\vr_{0,\th}\|_{L^2(\O)}^2 - \int_0^t \int_\O  (\Div_x \vu_n)\,\vr_n^2\,\dx,\\
\|\vr_{\s_2}(t)\|_{L^2(\O)}^2  + 2 \s_2 \|\nabla_x\vr_{\s_2}\|^2_{L^2((0,t)\times \O;\R^2)}  &= \|\vr_{0,\th}\|_{L^2(\O)}^2 - \int_0^t \int_\O  (\Div_x \vu_{\s_2})\,\vr_{\s_2}^2\,\dx.
\end{align*}

Letting $n\to \infty$, noting (\ref{con-level3-3}), (\ref{con-level3-1})$_3$
and by the weak lower-semicontinuity of the $L^p$ norm we deduce, for any $t\in (0,T]$, that
\begin{align*}
&  \|\vr_n(t)\|_{L^2(\O)}^2 \to  \|\vr_{\s_2}(t)\|_{L^2(\O)}^2,\quad \|\nabla_x\vr_n\|^2_{L^2((0,t)\times \O;\R^2)} \to  \|\nabla_x\vr_{\s_2}\|^2_{L^2((0,t)\times \O;\R^2)}.
\end{align*}
This implies the strong convergence of $\nabla_x\vr_n$ and, in addition,
$$
\nabla_x \vu_{n} \nabla_x \vr_n \to \nabla_x \vu_{\s_2} \nabla_x \vr_{\s_2}  \quad \mbox{in}\  {\mathcal D}'((0,T)\times \O;\R^2).
$$
We shall combine the convergence results established in this section to show that the limit $(\vr_{\s_2},\vu_{\s_2},\eta_{\s_2},\TT_{\s_2})$ solves the second level of approximation; this will be done in Section
\ref{con-second-level}. Before doing so however we need to prove the positive definiteness of the limiting symmetric extra stress tensor $\TT_{\s_2}$.

\subsection{Positivity of the extra stress tensor}\label{sec:tau-positive-level2}
As $\TT_n$ is symmetric positive definite a.e.\ in $(0,T] \times \Omega$,
we have from (\ref{con-level3-2})$_3$ that
$\TT_{\s_2}$ is symmetric nonnegative definite a.e.\ in
$(0,T] \times \Omega$.
It follows from (\ref{est-level3-1})$_5$ and (\ref{ge2}) that
\begin{align}
\|\tr\left(\TT_{n}^{-1}\right)\|_{L^1(0,T;L^1(\Omega))} \leq C(E_{0,\th},T,\a).
\label{Tninv}
\end{align}
We now adapt the argument in Section \ref{sec:level3-2} to show that $\TT_{\s_2}$
is in fact symmetric positive definite on $(0,T]\times \Omega$.
Assume that $\TT_{\s_2}$ is not positive definite a.e.\
in $D_{\s_2} \subset (0,T] \times \Omega$.
Then there exists a ${\bf q} \in L^\infty((0,T]\times \Omega;\R^d)$
such that
\begin{align}
\TT_{\s_2} {\bf q} = {\bf 0}
\mbox{ a.e.\ in } (0,T]\times \Omega
\quad \mbox{with} \quad |{\bf q}|=1
\mbox{ a.e.\ in } D_{\s_2}
\mbox{ and } {{\bf q}}={\bf 0} \mbox{ a.e.\ in } ((0,T]\times \Omega)
\setminus D_{\s_2}.
\label{evec2}
\end{align}
On noting (\ref{Tninv}), we then have that
\ba
|D_{\s_2}| &= \int_0^T \int_{\Omega} |{\bf q}|^2 \,\dx\,\dt=
\int_0^T \int_{\Omega}
\left((\TT_{n})^{-\frac{1}{2}}\,
{\bf q} \right)\cdot \left((\TT_{n})^{\frac{1}{2}}\,
{\bf q}\right) \,\dx\,\dt \\
& \leq C \left(\int_0^T \int_{\Omega}
{\bf q}^{\rm T} \TT_{n}\,{\bf q}
\,\dx\,\dt
\right)^{\frac{1}{2}},
\label{evecas2}
\ea
where $C$ is independent of $n$.
Passing to the limit $n \rightarrow \infty$ in (\ref{evecas2}), and noting
(\ref{con-level3-2})$_3$ and (\ref{evec2}), yields that $|D_{\s_2}|=0$.
Hence, $\TT_{\s_2}$ is symmetric positive definite a.e.\ in $(0,T] \times \Omega$.
Finally, by \eqref{con-level3-1}$_5$ and \eqref{con-level3-2}$_3$ we deduce, as
$n \rightarrow \infty$, that
$$
\tr\left(\log \TT_n\right) \to \tr\left(\log \TT_{\s_2}\right)
\quad  \mbox{weakly in} \   L^2(0,T;W^{1,2}(\O)).
$$
\subsection{Convergence to the second level of approximation}\label{con-second-level}

 We have already shown that $\vr_{\s_2}$  and $\vu_{\s_2}$ satisfy
 \eqref{01-l2} a.e. in $(0,T]\times \O$, $\d_{\bf n} \vr_{\s_2} = 0$ on $(0,T]\times \d \O$
 and $\vr_{\s_2}(0)=\vr_{0,\th}$.

By the convergence results obtained in Section \ref{sec-con-level3} and a compactness argument, letting $n\to \infty$ in \eqref{02-l3} implies that, for any $\vvarphi \in C^\infty([0,T]; \DC({\Omega};\R^d))$, we have
\begin{align*}
&\int_0^t \intO{ \big[ \vr_{\s_2} \vu_{\s_2} \cdot \partial_t \vvarphi + (\vr_{\s_2} \vu_{\s_2} \otimes \vu_{\s_2}) : \Grad \vvarphi  + (p(\vr_{\s_2})+\s_1\vr_{\s_2}^{\Gamma})\, \Div_x \vvarphi \big] } \, \dt'\\
& \quad + \int_0^t \intO{ \big[  \big( kL\eta_{\s_2} + \de\,\eta_{\s_2}^2\big)\, \Div_x \vvarphi  - \SSS(\nabla_x \vu_{\s_2}) : \Grad \vvarphi - \s_2 \nabla_x \vu_{\s_2} \nabla_x \vr_{\s_2}  \cdot \vvarphi \big] } \, \dt'\\
&= \int_0^t \intO{ \TT_{\s_2} : \nabla_x\vvarphi + \frac{\a}{2} (\tr\,\log \TT_{\s_2}) \,\Div_{ x}\vvarphi - \vr_{\s_2}\, \ff \cdot \vvarphi} \, \dt' \\
&\quad + \intO{ \vr_{\s_2} \vu_{\s_2} (t, \cdot) \cdot \vvarphi (t, \cdot) } - \intO{ \vr_{0,\th} \vu_{0,\th} \cdot \vvarphi(0, \cdot) }.
\end{align*}
Again by the convergence results obtained in Section \ref{sec-con-level3}, we deduce that the weak formulations \eqref{weak-form2} and  \eqref{weak-form4} are satisfied by the limit $(\vr_{\s_2},\vu_{\s_2},\eta_{\s_2},\TT_{\s_2})$.

\medskip

Moreover, by the convergence results established in Section \ref{sec-con-level3}, weak lower-semicontinuity of the norm in $L^p$ spaces and Fatou's lemma,  letting $n \to \infty$ in the inequalities \eqref{a-priori-2-0-level3} and \eqref{a-priori-2-level3}
gives, for a.e. $t\in (0,T]$:
%
\ba\label{a-priori-2-0-level2}
E_{\s_2}(t)  &+   4\s_2 \int_0^t \int_\O \left( \frac{a}{\g} | \nabla_x \vr_{\s_2}^{\frac{\g}{2}} |^2 +    \frac{\s_1}{\Gamma} |\nabla_x \vr_{\s_2}^{\frac{\Gamma}{2}} |^2 \right)\dx\,\dt'
+ 2\e\int_0^t \int_\O \left( 2k L |\nabla_x \eta_{\s_2}^{\frac 12}|^2  +  \de |\nabla_x \eta_{\s_2}|^2  \right)\dx \,\dt' \\
&+ \int_0^t \int_\O \mu^S \left| \frac{\nabla_x \vu_{\s_2} + \nabla^{\rm T}_x \vu_{\s_2} }{2} - \frac{1}{d} (\Div_x \vu_{\s_2}) \II \right|^2 + \mu^B |\Div_x \vu_{\s_2}|^2\,\dx \,\dt' \\
&+ \frac{A_0}{4\l} \int_0^t \int_\O \tr\left(\TT_{\s_2}\right) \dx \,\dt'
+ \frac{\a\,k\,A_0}{4\lambda} \int_0^t \int_\O  (\eta_{\s_2}+\a)  \,\tr\left(\TT_{\s_2}^{-1}\right)
\dx\,\dt' + \frac{\a\,\e}{2d} \int_0^t   \int_\O \left|\nabla_x \tr\left(\log \TT_{\s_2} \right)\right|^2\dx\,\dt' \\
\leq&\  E_{0,\th} + \int_0^t \int_\O \vr_{\s_2}\,\ff \cdot  \vu_{\s_2} \,\dx\,\dt'  + \frac{k\,A_0\,d}{4\lambda} \int_0^t \int_\O (\eta_{\s_2}+\a)  \,\dx \,\dt' + \frac{\a\,d\,A_0}{4\lambda}|\O|t,
\ea
%
with $d=2$, and
\ba\label{a-priori-2-level2}
E_{\s_2}(t) &+   4\s_2 \int_0^t \int_\O \left( \frac{a}{\g} | \nabla_x \vr_{\s_2}^{\frac{\g}{2}} |^2 +   \frac{\s_1}{\Gamma} |\nabla_x \vr_{\s_2}^{\frac{\Gamma}{2}} |^2 \right)\dx\,\dt'
+ 2\e\int_0^t \int_\O \left( 2 k L |\nabla_x \eta_{\s_2}^{\frac 12}|^2  + \de |\nabla_x \eta_{\s_2}|^2  \right) \dx \,\dt' \\
&+ \int_0^t \int_\O \mu^S \left| \frac{\nabla_x \vu_{\s_2} + \nabla^{\rm T}_x \vu_{\s_2} }{2} - \frac{1}{d} (\Div_x \vu_{\s_2}) \II \right|^2 + \mu^B |\Div_x \vu_{\s_2}|^2\,\dx \,\dt' \\
&+ \frac{A_0}{4\l} \int_0^t \int_\O \tr \left(\TT_{\s_2}\right) \dx \,\dt'
+ \frac{\a\,k\,A_0}{4\lambda} \int_0^t \int_\O  (\eta_{\s_2}+\a)
\,\tr\left(\TT_{\s_2}^{-1}\right)\dx\,\dt'  + \frac{\a\,\e}{2d} \int_0^t
\int_\O \left|\nabla_x \tr\left(\log    \TT_{\s_2}  \right)\right|^2 \dx\,\dt' \\
\leq&\  ( E_{0,\th}+C\,t)\, {\rm e}^{Ct},
\ea
with $d=2$,  where the energy $E_{\s_2}$ is defined as
\begin{align*}
E_{\s_2}(t)&:=  \int_\O \left[ \frac{1}{2} \vr_{\s_2} |\vu_{\s_2}|^2 + \frac{a}{\g-1} \vr_{\s_2}^\g + {\frac{\s_1}{{\Gamma}-1} \vr_{\s_2}^{\Gamma}} + k L  (\eta_{\s_2} \log \eta_{\s_2} + 1) + \de \,\eta_{\s_2}^2 \right]\dx \\
&\quad\,  + \frac{1}{2}\int_\O \left[ \tr \left(\TT_{\s_2} - \a \,\log \TT_{\s_2}\right)
 + d( \a \log \a - \a )\right]\dx,
\end{align*}
with $d=2$, and the initial energy $E_{0,\th}$ is the same as in \eqref{def-energy-free0-level5}.

Moreover, letting $n \to \infty$ in \eqref{a-priori-tau-f5-level3} implies, for a.e. $t\in (0,T]$, that
\ba\label{a-priori-tau-f5-level2}
 \int_\O  | \TT_{\s_2}(t)|^2 \,\dx + \e  \int_0^t \int_\O |\nabla_x \TT_{\s_2} |^2  \,\dx\,\dt' + \frac{A_0}{4\l}\int_0^t \int_\O | \TT_{\s_2}|^2 \,\dx\,\dt'  \leq C\left(T,E_{0,\th},\|\TT_{0,\th}\|_{L^2(\O)}^2\right).
\ea

\section{The first level of approximation}\label{sec:1st-app-full}

Now we let $\s_2\to 0$ in the solution sequence
$(\vr_{\s_2},\vu_{\s_2},\eta_{\s_2},\TT_{\s_2})$, in order to deduce the existence of a solution to the first level of approximation, formulated in Section \ref{sec:1st-app}. First, we derive uniform bounds on $(\vr_{\s_2},\vu_{\s_2},\eta_{\s_2},\TT_{\s_2})$ as $\s_2\to 0$. It follows directly from \eqref{a-priori-2-level2} and \eqref{a-priori-tau-f5-level2},
as $\de >0$, that
%
\ba\label{est-level2}
\| \vr_{\s_2} \|_{L^\infty(0,T;L^\g(\O))} + \s_1^{\frac{1}{\Gamma}} \| \vr_{\s_2} \|_{L^\infty(0,T;L^{\Gamma}(\O))} &\leq C(E_{0,\th},T),\\
\sqrt{\s_2} \,\| \nabla_x (\vr_{\s_2}^{\frac{\g}{2}})\|_{L^2((0,T)\times \O;\R^2)} + \sqrt{\s_1} \sqrt{\s_2}\, \| \nabla_x (\vr_{\s_2}^{\frac{\Gamma}{2}})\|_{L^2((0,T)\times \O;\R^2)}
&\leq C(E_{0,\th},T),\\
\| \eta_{\s_2} \|_{L^\infty(0,T;L^2(\O))} + \|  \eta_{\s_2} \|_{L^2(0,T;W^{1,2}(\O))} + \|  \eta_{\s_2}^{\frac{1}{2}} \|_{L^2(0,T;W^{1,2}(\O))} &\leq C( E_{0,\th},T),\\
    \|\vr_{\s_2}  |\vu_{\s_2}|^2\|_{L^\infty(0,T;L^1(\O))} 
   + \| \vu_{\s_2}\|_{L^2(0,T;W^{1,2}_0(\O;\R^2))} &\leq C(E_{0,\th},T), \\
 \| \tr\left(\TT_{\s_2}-\a\,\log \TT_{\s_2}\right) \|_{L^\infty(0,T;L^1(\O))}
 +  \| (\eta_{\s_2}+\a)  \,\tr\, (\TT_{\s_2}^{-1}) \|_{L^1(0,T;L^1(\O))}  &\leq C(E_{0,\th},T),\\
 \| \nabla_x \tr\left(\log \TT_{\s_2}\right) \|_{L^2(0,T;L^2(\O;\R^2))} &\leq C(E_{0,\th},T),\\
\| \TT_{\s_2} \|_{L^\infty(0,T;L^2(\O;\R^{2\times 2}))} + \|  \TT_{\s_2} \|_{L^2(0,T;W^{1,2}(\O;\R^{2\times 2}))}  &\leq C\left(E_{0,\th},T,\|\TT_{0,\th}\|_{L^2(\O;\R^{2\times 2})}^2\right).
\ea

The process of letting $\s_2\to 0$ can be performed similarly as in the study of the compressible Navier--Stokes system (see for example Section 3 in \cite{FNP}, where the Bogovski{\u\i} operator (Lemma \ref{lem-bog}) is needed to show the higher integrability of the density, and Lemma \ref{lem-Cw} is used to pass to the limit in the nonlinear terms $\vr_{\s_2}\vu_{\s_2}$ and
$\vr_{\s_2}\vu_{\s_2}\otimes \vu_{\s_2}$), by observing the strong convergence of the additional unknowns $\eta_{\s_2}$ and
$\TT_{\s_2}$. A key step in passing to the limit in a sequence of approximations to
the compressible Navier--Stokes system is the proof of strong convergence of the approximations to the density,
based on weak convergence of the, so called,
effective viscous flux. A helpful tool in the proof of this is Lemma 7.36 in \cite{N-book}; see also Lemma 5.6 in
\cite{Barrett-Suli} and Lemma 2.3 in \cite{BS2016}, which are the appropriate extensions of Lemma 7.36 in \cite{N-book}, required for
 deducing weak convergence of the effective viscous flux in the presence of the extra stress tensor,
 in a compressible Navier--Stokes--Fokker--Planck system. Unlike the compressible FENE models in \cite{Barrett-Suli} and \cite{BS2016},
 where only strong convergence of the approximations to the extra stress tensor in $L^r((0,T) \times \Omega)$, with $r \in [1,\frac{4(d+2)}{3d+4})$, was available, for the compressible Oldroyd-B model considered here these extensions are not needed: Lemma 7.36 from \cite{N-book} (suitably adapted to the case of $d=2$;
cf. Lemma 2.3 in \cite{BS2016}) directly applies, as in the case of the
compressible Navier--Stokes system, thanks to $\eqref{est-level2}_{6,7}$, yielding $\eqref{con-level2}_{3,5}$,
thus ensuring fulfillment of condition (7.5.4) of Lemma 7.36 in \cite{N-book}.
From the uniform estimates \eqref{est-level2} and the equations for
$\eta_{\s_2}$ and $\TT_{\s_2}$ we deduce, for any $r \in (1,2)$ as $d=2$, that
\ba
\|\d_t \eta_{\s_2}\|_{L^{2}(0,T;W^{-1,r}(\Omega))}
+ \| \d_t \TT_{\s_2}\|_{L^{2}(0,T;W^{-1,r}(\Omega;\R^{2\times 2}))} \leq C(E_{0,\th},T),
\label{s2dt}
\ea
where $W^{-1,r}(\O)$ is the dual of $W^{1,r'}_0(\O)$, with $1/r + 1/r'=1$.
These time derivative bounds, (\ref{est-level2}) and
the application of the Aubin--Lions--Simon compactness theorem implies, as $\s_2\to 0$, that
\begin{alignat}{2}\label{con-level2}
\begin{aligned}
\eta_{\s_2} &\to \eta_{\s_1} \quad &&\mbox{weakly-* in} \  L^\infty(0,T;L^2(\O))\cap L^2(0,T;W^{1,2}(\O)),\\
 \eta_{\s_2} &\to \eta_{\s_1} \quad &&\mbox{strongly in} \  L^2(0,T;L^q (\O)) \qquad \forall\, q \in [1,\infty),\\
 \TT_{\s_2} &\to \TT_{\s_1} \quad  &&\mbox{weakly-* in} \  L^\infty(0,T;L^2(\O;\R^{2\times 2}))\cap L^2(0,T;W^{1,2}(\O;\R^{2\times 2})),\\
\TT_{\s_2} &\to \TT_{\s_1} \quad  &&\mbox{strongly in} \   L^2(0,T;L^q (\O;\R^{2\times 2})) \qquad \forall\, q \in [1,\infty),\\
 \tr\left(\log \TT_{\s_2}\right) &\to  \tr\left(\log \TT_{\s_1}\right)   \quad  &&\mbox{weakly in} \   L^2(0,T;W^{1,2}(\O)).
\end{aligned}
\end{alignat}
Then, by Lemmas \ref{lem-Cw-Cw*} and \ref{lem-Cw}, (\ref{s2dt}) and \eqref{con-level2},
$$
\eta_{\s_1} \in C_w([0,T];L^2(\O)),\quad \TT_{\s_1} \in C_w([0,T];L^2(\O;\R^{2\times 2})).
$$

The nonnegativity of $\vr_{\s_1}$ and $\eta_{\s_1}$ a.e. in $(0,T]\times \O$
follows from (\ref{gen}).
We note that since $\TT_{\sigma_2}$ is symmetric and positive definite a.e.\
in $(0,T] \times \Omega$, it follows from (\ref{con-level2})$_4$ that $\TT_{\sigma_1}$
is symmetric and positive semidefinite a.e.\ in $(0,T] \times \Omega$.
The positive definiteness of  $\TT_{\s_1}>0$ a.e. in $(0,T]\times \O$ can be deduced by an argument that is identical to the one in Section \ref{sec:tau-positive-level2}; we therefore omit the details and only state the conclusion. The limit  $(\vr_{\s_1},\vu_{\s_1},\eta_{\s_1},\TT_{\s_1})$ is a weak solution to the first level of approximation stated in Section \ref{sec:1st-app}.
Passing to the limit $\s_2 \rightarrow 0$ in
(\ref{a-priori-2-0-level2})--(\ref{a-priori-tau-f5-level2}),
one has the following bounds, for a.e. $t\in (0,T]$:
\ba\label{a-priori-2-0-level1}
E_{\s_1}(t)  &+ 2\e\int_0^t \int_\O \left( 2k L |\nabla_x \eta_{\s_1}^{\frac 12}|^2  +  \de |\nabla_x \eta_{\s_1}|^2  \right)\dx \,\dt' \\
\quad &+ \int_0^t \int_\O \mu^S \left| \frac{\nabla_x \vu_{\s_1} + \nabla^{\rm T}_x \vu_{\s_1} }{2} - \frac{1}{d} (\Div_x \vu_{\s_1}) \II \right|^2 + \mu^B |\Div_x \vu_{\s_1}|^2\,\dx \,\dt' \\
\quad &+ \frac{A_0}{4\l} \int_0^t \int_\O \tr\left(\TT_{\s_1}\right) \dx \,\dt'
+ \frac{\a\,k\,A_0}{4\lambda} \int_0^t \int_\O  (\eta_{\s_1}+\a)  \,\tr\left(\TT_{\s_1}^{-1}
\right)\dx\,\dt' + \frac{\a\,\e}{2d} \int_0^t   \int_\O \left|\nabla_x \tr
\left(\log \TT_{\s_1} \right)\right|^2\dx\,\dt' \\
\leq & \  E_{0,\th} + \int_0^t \int_\O \vr_{\s_1}\,\ff \cdot  \vu_{\s_1} \,\dx\,\dt'  + \frac{k\,A_0\,d}{4\lambda} \int_0^t \int_\O (\eta_{\s_1}+\a)  \,\dx \,\dt' + \frac{\a\,d\,A_0}{4\lambda}|\O|t,
\ea
with $d=2$, and
\ba\label{a-priori-2-level1}
E_{\s_1}(t)  &+  2\e\int_0^t \int_\O \e\left( 2 k L |\nabla_x \eta_{\s_1}^{\frac 12}|^2
+  \de |\nabla_x \eta_{\s_1}|^2  \right) \dx \,\dt' \\
&+ \int_0^t \int_\O \mu^S \left| \frac{\nabla_x \vu_{\s_1} + \nabla^{\rm T}_x \vu_{\s_1} }{2} - \frac{1}{d} (\Div_x \vu_{\s_1}) \II \right|^2 + \mu^B |\Div_x \vu_{\s_1}|^2\,\dx \,\dt' \\
&+ \frac{A_0}{4\l} \int_0^t \int_\O \tr\left(\TT_{\s_1} \right)\dx \,\dt' + \frac{\a\,k\,A_0}{4\lambda} \int_0^t \int_\O  (\eta_{\s_1}+\a)  \,\tr\left(\TT_{\s_1}^{-1}\right)\dx\,\dt'  + \frac{\a\,\e}{2d} \int_0^t   \int_\O \left|\nabla_x \tr\left(\log    \TT_{\s_1}  \right)\right|^2\dx\,\dt' \\
\leq & \  ( E_{0,\th}+C\,t)\,{\rm e}^{Ct},
\ea
with $d=2$, and
\ba\label{a-priori-tau-f5-level1}
\int_\O  | \TT_{\s_1}(t)|^2 \,\dx + \e  \int_0^t \int_\O |\nabla_x \TT_{\s_1} |^2  \,\dx\,\dt' + \frac{A_0}{4\l}\int_0^t \int_\O | \TT_{\s_1}|^2 (t',x)\,\dx\,\dt'  \leq C\left(T,E_{0,\th},\|\TT_{0,\th}\|_{L^2(\O;\R^{2\times 2})}^2\right),
\ea
where the energy $E_{\s_1}$ is defined by
\begin{align*}
E_{\s_1}(t)&:=  \int_\O \left[\frac{1}{2}  \vr_{\s_1} |\vu_{\s_1}|^2 + \frac{a}{\g-1} \vr_{\s_1}^\g + {\frac{\s_1}{{\Gamma}-1} \vr_{\s_1}^{\Gamma}} + k L  (\eta_{\s_1} \log \eta_{\s_1} + 1) + \de \,\eta_{\s_1}^2 \right]\dx \\
&\qquad  +\frac{1}{2} \int_\O \left[  \tr\left(\TT_{\s_1} - \a \,\log \TT_{\s_1}\right)
+ d( \a \log \a - \a )\right]\dx,
\end{align*}
with $d=2$, and the initial energy $E_{0,\th}$ is the same as in \eqref{def-energy-free0-level5}.

\section{Completion of the proof}\label{sec:completion-proof}

\subsection{Passage to the limits $\s_1 \rightarrow 0$ and $\th \rightarrow 0$}
The next step is to show that the limit $(\vr, \vu,\eta,\TT)$ of the sequence $(\vr_{\s_1},\vu_{\s_1},\eta_{\s_1},\TT_{\s_1})$, as $\s_1\to 0$, is a weak solution to \eqref{01}--\eqref{04}, \eqref{05a}--\eqref{07a},
in the sense of Definition \ref{def-weaksl} with regularized initial data \eqref{ini-data-mollified}, satisfying \eqref{ini-data-mollified-pt}. We choose $\th=\s_1$ and simultaneously pass to the limits $\s_1\rightarrow 0$ and $\th\rightarrow 0$. Having done so, in the next section we shall also pass to the limit
$\alpha \rightarrow 0$ with the regularization parameter $\alpha$, with $\de>0$ held fixed, and in the final section we shall let $\mathfrak{z} \rightarrow 0$, with $L>0$ kept fixed, in order to cover the entire range of the parameter $\de \in [0,\infty)$.
We begin our considerations by noting that, thanks to \eqref{a-priori-2-0-level1}--\eqref{a-priori-tau-f5-level1}, we have the following uniform bounds as $\de >0$:
\ba\label{est-level1}
\| \vr_{\s_1} \|_{L^\infty(0,T;L^\g(\O))} + \s_1^{\frac{1}{\Gamma}} \| \vr_{\s_1} \|_{L^\infty(0,T;L^{\Gamma}(\O))} &\leq C(E_{0,\th},T),\\
\| \eta_{\s_1} \|_{L^\infty(0,T;L^2(\O))} + \|  \eta_{\s_1} \|_{L^2(0,T;W^{1,2}(\O))} + \|  \eta_{\s_1}^{\frac{1}{2}} \|_{L^2(0,T;W^{1,2}(\O))} &\leq C( E_{0,\th},T),\\
    \|\vr_{\s_1}  |\vu_{\s_1}|^2\|_{L^\infty(0,T;L^1(\O))} 
    + \| \vu_{\s_1}\|_{L^2(0,T;W^{1,2}_0(\O;\R^2))} &\leq C(E_{0,\th},T), \\
 \| \tr\left(\TT_{\s_1}-\a\,\log(\TT_{\s_1})\right)\|_{L^\infty(0,T;L^1(\O))}
 +  \| (\eta_{\s_1}+\a)  \,\tr\, (\TT_{\s_1}^{-1}) \|_{L^1(0,T;L^1(\O))}  &\leq C(E_{0,\th},T),\\
 \sqrt{\a}\, \| \nabla_x \tr\left(\log \TT_{\s_1}\right) \|_{L^2(0,T;L^2(\O;\R^2))} &\leq C(E_{0,\th},T),\\
\| \TT_{\s_1} \|_{L^\infty(0,T;L^2(\O;\R^{2\times 2}))} + \|  \TT_{\s_1} \|_{L^2(0,T;W^{1,2}(\O;\R^{2\times 2}))}  &\leq C\left(E_{0,\th},T,\|\TT_{0,\th}\|_{L^2(\O;\R^{2\times 2})}^2\right).
\ea
Similarly to (\ref{s2dt}), the
uniform estimates \eqref{est-level1} and the equations for
$\eta_{\s_1}$ and $\TT_{\s_1}$ imply, for any $r \in (1,2)$, that
\ba
\|\d_t \eta_{\s_1}\|_{L^{2}(0,T;W^{-1,r}(\Omega))}
+ \| \d_t \TT_{\s_1}\|_{L^{2}(0,T;W^{-1,r}(\Omega;\R^{2\times 2}))} \leq C(E_{0,\th},T).
\label{s1dt}
\ea
Similarly as in \eqref{con-level2}, we have the following convergence results for
$\eta_{\s_1}$ and $\TT_{\s_1}$,
as
$\s_1 \rightarrow 0$:
\ba\label{con-level1}
 \eta_{\s_1} &\to \eta  \quad &&\mbox{weakly-* in} \  L^\infty(0,T;L^2(\O))\cap L^2(0,T;W^{1,2}(\O)),\\
 \eta_{\s_1} &\to \eta  \quad &&\mbox{strongly in} \  L^2(0,T;L^q (\O)) \qquad \forall\, q \in [1,\infty),\\
 \TT_{\s_1} &\to \TT \quad  &&\mbox{weakly-* in} \  L^\infty(0,T;L^2(\O;\R^{2\times 2}))\cap L^2(0,T;W^{1,2}(\O;\R^{2\times 2})),\\
 \TT_{\s_1} &\to \TT \quad  &&\mbox{strongly in} \   L^2(0,T;L^q (\O;\R^{2\times 2})) \qquad \forall\, q \in [1,\infty),\\
 \tr\left(\log \TT_{\s_1}\right) &\to  \tr\left(\log \TT\right)
 \quad  &&\mbox{weakly in} \   L^2(0,T;W^{1,2}(\O)).
\ea
We note that since  $\TT_{\sigma_1}$ is symmetric and positive definite a.e.
on $(0,T] \times \Omega$, it follows
from (\ref{con-level1})$_4$
that $\TT$ is symmetric and positive semidefinite a.e. on $(0,T] \times \Omega$,
and we have
$\eta \geq 0$ and
$\TT>0$ a.e. in $(0,T]\times \O$ by employing the argument from Section
\ref{sec:tau-positive-level2}. The other limit processes, associated with $\vr_{\s_1}$ and $\vu_{\s_1}$, can be performed similarly as in the study of the compressible Navier--Stokes equations, and we refer to Section 4 in \cite{FNP} for details (see also the paragraph following \eqref{est-level2} above). Thus, by observing the strong convergence of the initial data in \eqref{ini-data-mollified-pt}, we deduce that the limit $(\vr,\vu,\eta,\TT)$ is a weak solution to \eqref{01}--\eqref{04},
\eqref{05a}--\eqref{07a}, in the sense of Definition \ref{def-weaksl}, with the initial data satisfying \eqref{ini-data}. The bounds \eqref{a-priori-tau-f5} and (\ref{energy1}) follow
by letting $\s_1=\th \to 0$ in \eqref{a-priori-tau-f5-level1} and
(\ref{a-priori-2-0-level1}), respectively.
 Moreover, thanks to the definition of $E_{\s_1}(t)$, following \eqref{a-priori-tau-f5-level1} above,
   and by noting that the expression appearing on the right-hand side of \eqref{a-priori-2-level1} is
   independent of $\de$, the argument contained in Remark \ref{rem-eta2-tau} implies that the
   constant on the right-hand side of \eqref{a-priori-tau-f5} is independent of $\de$, as long as $L>0$.
Finally, by Lemmas \ref{lem-Cw-Cw*} and \ref{lem-Cw}, (\ref{con-level1}) and
(\ref{s1dt}), we have
$$
\eta \in C_w([0,T];L^2(\O)),\quad \TT  \in C_w([0,T];L^2(\O;\R^{2\times 2})).
$$
The proof of Theorem \ref{thm} is complete.

\subsection{The vanishing logarithmic term limit: passage to the limit $\a \rightarrow 0$}\label{sec:alpha=0}

In this section, we study the process of letting $\a\to 0$ in the problem \eqref{01}--\eqref{04},
\eqref{05a}--\eqref{07a}.
We will show that letting $\a\to 0$, with $\de>0$ held fixed, yields the existence of a global-in-time weak solution to the corresponding problem without the logarithmic term $\frac{\a}{2}
\nabla_x \tr\left(\log\TT\right)$ in (\ref{02})
and with no $\alpha$ term in (\ref{04}),
which is our original model \eqref{01a}--\eqref{07a} in the case of $\de>0$.

\subsubsection{Weak solutions and main theorem}
We re-iterate our hypotheses on the initial data, but this time we do so without requiring the positivity of the initial extra stress tensor (only its symmetry and nonnegativity are assumed):
\ba\label{ini-data-f}
&\vr(0,\cdot) = \vr_0(\cdot) \ \mbox{with}\ \vr_0 \geq 0 \ {\rm a.e.} \ \mbox{in} \ \O, \quad \vr_0 \in L^\gamma(\O),\\
&\vu(0,\cdot) = \vu_0(\cdot) \in L^r(\O;\R^d) \ \mbox{for some $r\geq 2\g'$}\ \mbox{such that}\ \vr_0|\vu_0|^2 \in L^1(\O),\\
&\eta(0,\cdot)=\eta_0 \ \mbox{with}\ \eta_0 \geq 0 \ {\rm a.e.} \ \mbox{in} \ \O, \quad \eta_0 \in L^2 (\O), \\ 
&\TT(0,\cdot) = \TT_0(\cdot) \ \mbox{with}\ \TT_0=\TT_0^{\rm T} \geq 0 \ \mbox{a.e. in}  \ \O ,\quad  \TT_0 \in L^2(\O;\R^{d \times d}).
\ea
The corresponding weak solution is defined similarly as in Definition \ref{def-weaksl}.

\begin{definition}\label{def-weaksl-f} Let $T>0$ and suppose that $\O\subset \R^d$ is a bounded $C^{2,\beta}$ domain, with  $0<\beta<1$. Assume further that $\ff\in L^\infty((0,T]\times \O;\R^d)$. We say that $(\vr,\vu,\eta,\TT)$ is a finite-energy
weak solution in $(0,T]\times \O$ to the system of equations \eqref{01a}--\eqref{07a},  supplemented by the initial data \eqref{ini-data-f}, if:
\begin{itemize}
\item $\vr \geq 0 \ {\rm a.e.\  in} \ (0,T] \times \Omega,\quad  \vr \in  C_w ([0,T];  L^\gamma(\Omega)),\quad \vu\in L^{2}(0,T;W_0^{1,2}(\Omega; \R^d)),\quad
\TT \mbox{ \rm is symmetric}$,
\begin{align*}
&\vr \vu \in C_w([0,T]; L^{\frac{2 \gamma}{ \gamma + 1}}(\Omega; \R^d)),\quad \vr |\vu|^2 \in L^\infty(0,T; L^{1}(\Omega)),\\
&\eta  \geq 0  \ {\rm a.e.\  in} \ (0,T] \times \Omega,\quad \eta \in C_w ([0,T];  L^2(\Omega)) \cap L^2 (0,T;  W^{1,2}(\Omega)),\\
&{ \TT \geq 0  \ {\rm a.e.\  in} \ (0,T] \times \Omega,\quad \TT \in C_w ([0,T];  L^2(\Omega;\R^{d \times d})) \cap L^2 (0,T;  W^{1,2}(\Omega;\R^{d \times d}))}.
\end{align*}
\item For any $t \in (0,T]$ and any test function $\phi \in C^\infty([0,T] \times \Ov{\Omega})$, one has
\be\label{weak-form1-f}
\int_0^t\intO{\big[ \vr \partial_t \phi + \vr \vu \cdot \Grad \phi \big]} \,\dt' =
\intO{\vr(t, \cdot) \phi (t, \cdot) } - \intO{ \vr_{0} \phi (0, \cdot) },
\ee
\be\label{weak-form2-f}
\int_0^t \intO{ \big[ \eta \partial_t \phi + \eta \vu \cdot \Grad \phi - \e \nabla_x\eta \cdot \nabla_x \phi \big]} \, \dt' =  \intO{ \eta(t, \cdot) \phi (t, \cdot) } - \intO{ \eta_{0} \phi (0, \cdot) }.
\ee
\item For any $t \in (0,T]$ and any test function $\vvarphi \in C^\infty([0,T]; \DC({\Omega};\R^d))$, one has
\ba\label{weak-form3-f}
&\int_0^t \intO{ \big[ \vr \vu \cdot \partial_t \vvarphi + (\vr \vu \otimes \vu) : \Grad \vvarphi  + p(\vr)\, \Div_x \vvarphi + \big(kL\eta+\de\,\eta^2\big)\, \Div_x \vvarphi  - \SSS(\nabla_x \vu) : \Grad \vvarphi \big] } \, \dt'\\
&= \int_0^t \intO{ \TT : \nabla_x\vvarphi - \vr\, \ff \cdot \vvarphi} \, \dt' + \intO{ \vr \vu (t, \cdot) \cdot \vvarphi (t, \cdot) } - \intO{ \vr_{0} \vu_{0} \cdot \vvarphi(0, \cdot) }.
\ea
\item For any $t \in (0,T]$ and any test function $\YY \in C^\infty([0,T] \times \Ov{\Omega};\R^{d \times d})$, one has
\ba\label{weak-form4-f}
&\int_0^t \intO{ \left[ \TT : \partial_t \YY +  (\vu \,\TT) :: \nabla_{x} \YY
+ \left(\nabla_x \vu \,\TT + \TT\, \nabla_x^{\rm T} \vu \right):\YY
- \e \nabla_x  \TT :: \nabla_x \YY  \right]}\,\dt'\\
&=\int_0^t \intO{ \left[ - \frac{k\,A_0}{2\lambda}\eta  \,\tr\left(\YY\right) + \frac{A_0}{2\lambda} \TT:\YY \right]} \, \dt' + \intO{ \TT (t, \cdot) : \YY (t, \cdot) }
- \intO{ \TT_{0} : \YY(0, \cdot) }.
\ea
\item The continuity equation holds in the sense of renormalized solutions:
\be\label{weak-renormal-f}
\d_t b(\vr) +\Div_x (b(\vr)\vu) + (b'(\vr)\vr - b(\vr))\,\Div_x \vu =0 \quad \mbox{in} \ \mathcal{D}' ((0,T)\times \O),
\ee
for any $b\in C^0[0,\infty)\cap C^1(0,\infty)$ satisfying {\rm (\ref{cond-b-renormal})}.
%
%
\item  For a.e. $t \in (0,T]$, the following \emph{energy inequality} holds:
\ba\label{energy1-f}
&\int_\O \left[ \frac{1}{2} \vr |\vu|^2 + \frac{a}{\g-1} \vr^\g + \left(k L  (\eta \log \eta + 1) + \de \,\eta^2\right)+ \frac{1}{2}\tr \left(\TT \right)\right]\dx \\
&\quad+ 2\e\int_0^t \int_\O 2k L   |\nabla_x \eta^{\frac12}|^2 +  \de\,  |\nabla_x \eta|^2 \,\dx\,\dt'  + \frac{A_0}{4\l} \int_0^t  \int_\O \tr\left(\TT\right) \dx \,\dt'  \\
&\quad +  \int_0^t  \int_\O \mu^S \left| \frac{\nabla_x \vu + \nabla^{\rm T}_x \vu}{2} - \frac{1}{d} (\Div_x \vu) \II \right|^2 + \mu^B |\Div_x \vu|^2\,\dx \,\dt'  \\
& \leq  \int_\O \left[ \frac{1}{2} \vr_0 |\vu_0|^2 + \frac{a}{\g-1} \vr_0^\g + \left(k L  (\eta_0 \log \eta_0 + 1) + \de \,\eta_0^2\right)+ \frac{1}{2}\tr \left(\TT_0\right) \right]\dx + \int_\O \vr\,\ff \cdot  \vu \,\dx + \frac{k\,A_0\,d}{4\lambda} \int_\O \eta  \,\dx .
\ea
\end{itemize}
\end{definition}

We state the associated result concerning the existence of large data global-in-time finite-energy weak solutions.

\begin{theorem}\label{thm-f}
Let $d=2$, $\g>1$ and $\de>0$. Then, there exists a finite-energy global-in-time weak solution $(\vr,\vu,\eta,\TT)$ to the compressible Oldroyd-B model \eqref{01a}--\eqref{07a} in the sense of Definition \ref{def-weaksl-f} with initial data \eqref{ini-data-f}. Moreover, the extra stress tensor $\TT $ in such a weak solution satisfies the bound \eqref{a-priori-tau-f5}.
\end{theorem}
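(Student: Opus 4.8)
The plan is to pass to the limit $\a\to 0$ in the sequence of $\a$-regularized weak solutions supplied by Theorem \ref{thm}. Given initial data as in \eqref{ini-data-f}, for each $\a>0$ set $\TT_{0,\a}:=\a\,\II+\TT_0$; since $\TT_0$ is symmetric nonnegative definite, $\TT_{0,\a}$ is symmetric positive definite with spectrum bounded below by $\a$, so $\tr(\log\TT_{0,\a})\in L^1(\O)$ and $(\vr_0,\vu_0,\eta_0,\TT_{0,\a})$ satisfies \eqref{ini-data}. Let $(\vr_\a,\vu_\a,\eta_\a,\TT_\a)$ be the corresponding finite-energy weak solution given by Theorem \ref{thm}, with initial energy $E_{0,\a}$ defined as in \eqref{def-energy-free0} with $\TT_0$ replaced by $\TT_{0,\a}$. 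The first point is that $E_{0,\a}$ is bounded uniformly in $\a$: if $\mu_i\ge 0$ denote the eigenvalues of $\TT_0$, then $(\mu_i+\a)-\a\log(\mu_i+\a)+\a\log\a-\a=\mu_i-\a\log(1+\mu_i/\a)\in[0,\mu_i]$, whence $0\le\tr(\TT_{0,\a}-\a\log\TT_{0,\a})+d(\a\log\a-\a)\le\tr\TT_0\in L^1(\O)$, while the remaining contributions to $E_{0,\a}$ are $\a$-independent and finite by \eqref{ini-data-f} (note $\eta_0\log\eta_0\le C(1+\eta_0^2)\in L^1(\O)$). Consequently the energy inequality \eqref{energy1} and Gronwall's lemma, exactly as in \eqref{a-priori-2}, provide bounds on $(\vr_\a,\vu_\a,\eta_\a,\TT_\a)$ that are uniform in $\a$ — in particular $\vr_\a$ in $L^\infty(0,T;L^\g(\O))$, $\vu_\a$ in $L^2(0,T;W^{1,2}_0(\O;\R^2))$, $\vr_\a|\vu_\a|^2$ in $L^\infty(0,T;L^1(\O))$, $\eta_\a$ in $L^\infty(0,T;L^2(\O))\cap L^2(0,T;W^{1,2}(\O))$, $\sqrt{\a}\,\nabla_x\tr(\log\TT_\a)$ in $L^2(0,T;L^2(\O;\R^2))$, $(\eta_\a+\a)\,\tr(\TT_\a^{-1})$ in $L^1(0,T;L^1(\O))$ — and, by the argument of Section \ref{sec:2d-est} (valid since $d=2$, $\de>0$ is fixed, and $\|\TT_{0,\a}\|_{L^2(\O;\R^{2\times 2})}$ is uniformly bounded), the bound \eqref{a-priori-tau-f5} for $\TT_\a$ with a constant independent of $\a$.

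Next I would derive the usual time-derivative bounds — $\d_t\eta_\a$ and $\d_t\TT_\a$ in $L^2(0,T;W^{-1,r}(\O))$ for any $r\in(1,2)$, together with the standard bound on $\d_t(\vr_\a\vu_\a)$ in a negative-order Sobolev space — and extract convergent subsequences. The Aubin--Lions--Simon theorem (Lemma \ref{lem-ALS}), using the Sobolev embedding $W^{1,2}(\O)\hookrightarrow L^q(\O)$ for all $q<\infty$ valid in two dimensions, gives $\eta_\a\to\eta$ and $\TT_\a\to\TT$ strongly in $L^2(0,T;L^q(\O))$ for every $q<\infty$ and weakly-$*$ in $L^\infty(0,T;L^2)\cap L^2(0,T;W^{1,2})$, while $\vu_\a\rightharpoonup\vu$ weakly in $L^2(0,T;W^{1,2}_0(\O;\R^2))$; the density and momentum are handled exactly by the compressible Navier--Stokes machinery used in the earlier limit passages (cf. Section \ref{sec:1st-app-full} and \cite{FNP,N-book}), namely strong convergence of $\vr_\a$ in $L^\g((0,T)\times\O)$ via weak convergence of the effective viscous flux and the renormalized continuity equation, $\vr_\a\vu_\a\to\vr\vu$ strongly in $L^2(0,T;W^{-1,2}(\O;\R^2))$, and $\vr_\a\vu_\a\otimes\vu_\a\to\vr\vu\otimes\vu$ in $\mathcal{D}'$. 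The $C_w$-in-time regularity of $\vr,\vr\vu,\eta,\TT$ then follows from the uniform bounds, the time-derivative bounds, and Lemmas \ref{lem-Cw-Cw*}--\ref{lem-Cw}. Since each $\TT_\a$ is symmetric positive definite a.e. and $\TT_\a\to\TT$ a.e. in $(0,T]\times\O$ along a subsequence, the limit $\TT$ is symmetric positive semidefinite a.e., which is all that Definition \ref{def-weaksl-f} demands, and likewise $\vr,\eta\ge 0$ a.e.

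Passing to the limit in the weak formulations, every term is treated exactly as in the compressible Navier--Stokes setting except those carrying a factor $\a$. In the momentum equation the logarithmic term is integrated by parts in space — legitimate since $\tr(\log\TT_\a)\in L^2(0,T;W^{1,2}(\O))$ for fixed $\a$ and $\vvarphi$ is compactly supported in $\O$ —
\[
-\frac{\a}{2}\int_0^t\int_\O\tr(\log\TT_\a)\,\Div_x\vvarphi\,\dx\,\dt'=\frac{\a}{2}\int_0^t\int_\O\nabla_x\tr(\log\TT_\a)\cdot\vvarphi\,\dx\,\dt',
\]
and the right-hand side tends to $0$ because $\frac{\a}{2}\|\nabla_x\tr(\log\TT_\a)\|_{L^2(0,T;L^2(\O;\R^2))}\le\frac{\sqrt{\a}}{2}\,C\to0$. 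In the extra-stress equation \eqref{weak-form4}, $(\eta_\a+\a)\,\tr\YY\to\eta\,\tr\YY$ by the strong convergence of $\eta_\a$; the terms $\TT_\a:\YY$, $(\vu_\a\,\TT_\a)::\nabla_x\YY$ and $(\nabla_x\vu_\a\,\TT_\a+\TT_\a\,\nabla_x^{\rm T}\vu_\a):\YY$ pass to the limit by pairing the strong convergence of $\TT_\a$ (and the boundedness of the smooth test field $\YY$) with the weak convergence of $\vu_\a$ and $\nabla_x\vu_\a$, and $\e\,\nabla_x\TT_\a::\nabla_x\YY\to\e\,\nabla_x\TT::\nabla_x\YY$ by the weak convergence of $\nabla_x\TT_\a$. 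Hence $(\vr,\vu,\eta,\TT)$ satisfies \eqref{weak-form1-f}--\eqref{weak-form4-f} and the renormalized continuity equation \eqref{weak-renormal-f}.

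Finally, the energy inequality \eqref{energy1-f} is obtained by letting $\a\to0$ in \eqref{energy1}: the nonnegative $\a$-weighted dissipation terms on the left are simply discarded, the $\frac{\a\,d\,A_0}{4\l}|\O|$ term and the $\a$ inside $(\eta_\a+\a)$ on the right vanish, the convex contributions $\frac12\vr_\a|\vu_\a|^2$, $\frac{a}{\g-1}\vr_\a^\g$, $kL(\eta_\a\log\eta_\a+1)+\de\eta_\a^2$ and the dissipation integrals are handled by weak lower semicontinuity of the relevant functionals, and the only delicate term, $\frac12\big[\tr(\TT_\a-\a\log\TT_\a)+d(\a\log\a-\a)\big]=\frac12\sum_{i=1}^d\big[\l_i^\a-\a\log(\l_i^\a/\a)-\a\big]$, is pointwise nonnegative with $\liminf_{\a\to 0}$ at least $\frac12\tr\TT$ a.e. (the $i$-th summand converges to $\l_i$ when $\l_i>0$ and remains $\ge 0=\l_i$ when $\l_i=0$), so Fatou's lemma yields the desired lower bound $\frac12\int_\O\tr\TT$. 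The bound \eqref{a-priori-tau-f5} for $\TT$ follows by weak lower semicontinuity from its $\a$-uniform version, and its independence of $\de$ when $L>0$ is inherited as in Remark \ref{rem-eta2-tau}. The principal difficulty throughout is the absence of any uniform-in-$\a$ lower bound on the spectrum of $\TT_\a$: the $\a\log\TT_\a$ contributions cannot be shown to vanish by a crude estimate and must be controlled precisely through the weighted gradient bound $\sqrt{\a}\,\|\nabla_x\tr(\log\TT_\a)\|_{L^2(0,T;L^2(\O;\R^2))}\le C$ (in the momentum equation) and the convexity inequality for $s\mapsto s-\a\log s$ combined with Fatou's lemma (in the energy functional).
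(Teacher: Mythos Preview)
Your proposal is correct and follows essentially the same approach as the paper's proof: regularize $\TT_0$ by $\TT_{0,\a}=\TT_0+\a\,\II$, apply Theorem~\ref{thm}, use the $\a$-uniform energy bounds together with \eqref{a-priori-tau-f5} and Aubin--Lions--Simon compactness, kill the logarithmic momentum term via the $\sqrt{\a}$-weighted gradient bound, and recover \eqref{energy1-f} by lower semicontinuity. Your eigenvalue identity $(\mu_i+\a)-\a\log(\mu_i+\a)+\a\log\a-\a=\mu_i-\a\log(1+\mu_i/\a)\in[0,\mu_i]$ and the Fatou argument for the $\tr(\TT_\a-\a\log\TT_\a)$ term are slightly more explicit than the paper's use of $-\log s\ge 1-s$, but the substance is the same.
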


In the rest of this section we briefly prove Theorem \ref{thm-f}.   The first step is the regularization of the initial stress tensor in order to make it strictly positive definite.  This allows us to apply Theorem \ref{thm} to construct a family of approximating solutions.

\subsubsection{Proof of Theorem \ref{thm-f}} Let $\TT_0$ be as in  \eqref{ini-data-f} and $\a\in (0,1)$. We define:
\be\label{reg-tau-alpha}
\TT_{0,\a} = \TT_0 + \a \,\II.
\ee
Direct calculations give
\be\label{reg-tau-alpha-2}
\TT_{0,\a} \geq \a \,\II >0, \ \mbox{a.e. in } \ \O,\quad
|\tr\left(\log \TT_{0,\a}\right) | \leq d|\log \a| + \tr\left(\TT_0\right) + d\a \in L^1(\O)\quad \mbox{(with $d=2$ in our case here)}.
\ee

We consider the problem \eqref{01}--\eqref{04},
\eqref{05a}--\eqref{07a}, where $\a$ is chosen to be the same as in \eqref{reg-tau-alpha}. The  initial data are as in \eqref{ini-data-f} except that the initial stress tensor is taken to be the regularized one in \eqref{reg-tau-alpha}. When $d=2$, as is assumed to be the case here, by Theorem \ref{thm} and its proof, for any $\a\in (0,1)$, there exists a weak solution $(\vr_\a, \vu_\a, \eta_\a, \TT_\a)$ in the sense of Definition \ref{def-weaksl} satisfying \eqref{a-priori-tau-f5}.

By the energy inequality \eqref{energy1} and Gronwall's inequality we deduce, for a.e. $t\in (0,T]$, that, with $d=2$,
\ba\label{a-priori-2-level0}
&E_{\a}(t)  +  2\e\int_0^t \int_\O \left(2 k L |\nabla_x \eta_{\a}^{\frac 12}|^2  +  \de |\nabla_x \eta_{\a}|^2  \right)   \dx \,\dt' \\
&\quad + \int_0^t \int_\O \mu^S \left| \frac{\nabla_x \vu_{\a} + \nabla^{\rm T}_x \vu_{\a} }{2} - \frac{1}{d} (\Div_x \vu_{\a}) \II \right|^2 + \mu^B |\Div_x \vu_{\a}|^2\,\dx \,\dt' \\
&\quad + \frac{A_0}{4\l} \int_0^t \int_\O \tr\left(\TT_{\a}\right) \dx \,\dt'
+ \frac{\a\,k\,A_0}{4\lambda} \int_0^t \int_\O  (\eta_{\a}+\a)  \,\tr\left(\TT_{\a}^{-1}\right)\dx\,\dt'  + \frac{\a\,\e}{2d} \int_0^t   \int_\O \left|\nabla_x \tr\left( \log    \TT_{\a}  \right)\right|^2\dx\,\dt' \\
& \leq  ( E_{0,\a}+C\,t)\, {\rm e}^{Ct}.
\ea
Here $E_{\a}(t)$ is the same as $E(t)$, as defined in \eqref{def-energy-free}, but with
$(\vr,\vu,\eta,\TT)$ replaced by $(\vr_{\a},\vu_{\a},\eta_{\a},\TT_{\a})$.
Similarly, $E_{0,\a}$ is the same as $E_0$, as defined in \eqref{def-energy-free0},
but with $\TT_0$ replaced by $\TT_{0,\a}$.
We explore the behavior of $E_{0,\a}$ as $\a\to 0$. Thanks to the property registered in \eqref{reg-tau-alpha-2}, the quantity $E_{0,\a}$ is uniformly bounded as $\a\to 0$, and  we have the following convergence result, as $\a\to 0$:
$$
E_{0,\a} \to \int_\O \left[ \frac{1}{2} \vr_0 |\vu_0|^2 + \frac{a}{\g-1} \vr_0^\g + \left(k L  (\eta_0 \log \eta_0 + 1) + \de \,\eta_0^2\right)+ \frac{1}{2}\tr \left(\TT_0\right) \right]\dx.
$$
Thus, from \eqref{a-priori-2-level0} and \eqref{a-priori-tau-f5}, we derive analogous uniform bounds to those in \eqref{est-level1}.
Time derivative bounds, similar to (\ref{s1dt}),
obtained from 
the equations for $\eta_\a$ and $\TT_\a$, and
the application of the Aubin--Lions--Simon compactness theorem
then yield (strong) convergence of the sequences $(\eta_\a)_{\alpha > 0}$ and $(\TT_\a)_{\alpha >0}$.

Letting $\a\to 0$ in \eqref{weak-form1}--\eqref{energy1} we deduce \eqref{weak-form1-f}--\eqref{energy1-f}; here we only deal with the terms associated with $\a$, as all other terms can be handled similarly as in \cite{FNP} in the case of the compressible Navier--Stokes equations, together with the strong convergence we have obtained for the sequences $(\eta_\a)_{\alpha > 0}$ and $(\TT_\a)_{\alpha >0}$.

A partial result of \eqref{a-priori-2-level0} is the following uniform bound:
$$
 \sqrt{\a}\, \| \nabla_x \tr\left(\log \TT_{\a}\right) \|_{L^2(0,T;L^2(\O;\R^2))} \leq C(E_{0,\a},T).
$$
Then, for any $\vvarphi \in C^\infty([0,T]; \DC({\Omega};\R^d)$, as $\a\to 0$,
$$
\left|\frac{\a}{2}\int_0^t \intO{  \tr\left(\log \TT_\a\right) \,\Div_{ x}\vvarphi } \, \dt'\right| \leq \sqrt{\a}\sqrt{\a}\, \| \nabla_x \tr\left(\log \TT_{\a}\right) \|_{L^2(0,T;L^2(\O;\R^2))}  \|  \vvarphi \|_{L^2(0,T;L^2(\O;\R^2))}\to 0.
$$

\medskip

 The energy inequality \eqref{energy1-f} can be deduced  by letting $\a\to 0$ in \eqref{energy1}. Indeed,  thanks to the fact that $s-\log s -1\geq 0$ for any $s>0$, we have $-\log s\geq - s + 1$. Thus,
$$
\tr \left(\TT_\a - \a\, \log \TT_\a \right) + d\left( \a \log \a - \a \right)  \geq
\tr \left(\TT_\a\right) + \a \left(- \tr\left(\TT_\a\right)  + d \right) + d\left( \a \log \a - \a \right) \to \tr\,(\TT).
$$
All of the other terms can be handled directly. The additional bound \eqref{a-priori-tau-f5} follows similarly. The proof of Theorem \ref{thm-f} is thereby complete. \hfill $\Box$

\section{Passing to the limit $\de \to 0$}\label{sec:de-to-0}
Inspired by the conclusions of  \cite{BS2016}, in this  section we shall study the limit process for the problem in Section \ref{sec:alpha=0} as $\de\to 0$, so as to be able to cover the entire parameter range $\de \in [0,\infty)$. We will show that the limiting problem is one that arises by formally setting $\de=0$. To this end we shall assume henceforth that $L>0$ is kept fixed. The initial data are as follows (note, in particular, that the initial polymer number density $\eta_0$ is now only assumed to have
$L \log L(\Omega)$ integrability instead of the, stronger, $L^2(\Omega)$ integrability assumed hitherto (cf. \eqref{ini-data-f}): 
\ba\label{ini-data-ff}
&\vr(0,\cdot) = \vr_0(\cdot) \ \mbox{with}\ \vr_0 \geq 0 \ {\rm a.e.} \ \mbox{in} \ \O, \quad \vr_0 \in L^\gamma(\O),\\
&\vu(0,\cdot) = \vu_0(\cdot) \in L^r(\O;\R^d) \ \mbox{for some $r\geq 2\g'$}\ \mbox{such that}\ \vr_0|\vu_0|^2 \in L^1(\O),\\
& \eta(0,\cdot)=\eta_0 \ \mbox{with}\ \eta_0 \geq 0 \ {\rm a.e.} \ \mbox{in} \ \O, \quad \eta_0\log\eta_0 \in L^1 (\O),\\
&\TT(0,\cdot) = \TT_0(\cdot) \ \mbox{with}\ \TT_0 =\TT_0^{\rm T}\geq 0 \ \mbox{a.e. in}  \ \O ,\quad  \TT_0 \in L^2(\O;\R^{d \times d}).
\ea

We first regularize the initial polymer number density $\eta_0$ given in $\eqref{ini-data-ff}_3$ to obtain a square-integrable function:
\be\label{reg-eta-ff}
\eta_{0,\de} = \frac{\eta_0}{1+\de^{\frac{1}{4}}\eta_0^{\frac 12}}.
\ee
Hence,
\be\label{reg-eta-ff2}
\de \int_\O \eta_{0,\de}^2\,\dx \leq \de^{\frac 12} \int_\O \eta_{0}\,\dx \to 0, \ \mbox{as $\de \to 0$}.
\ee
Furthermore, by direct computations and the Dominated Convergence Theorem we deduce that
\ba\label{reg-eta-ff3}
 \int_\O \eta_{0,\de}\log \eta_{0,\de}\,\dx & =  \int_\O \frac{\eta_0}{1+\de^{\frac{1}{4}}\eta_0^{\frac 12}} \left(\log \eta_{0} - \log \big(1+\de^{\frac{1}{4}}\eta_0^{\frac 12}\big)   \right) \dx \\
 &= \int_\O  \left(\eta_0 \log \eta_{0} - \frac{\de^{\frac{1}{4}} \eta_0^{\frac{3}{2}}}{1+\de^{\frac{1}{4}}\eta_0^{\frac 12}}   \log \eta_{0} - \frac{\eta_0}{1+\de^{\frac{1}{4}}\eta_0^{\frac 12}} \log (1+\de^{\frac{1}{4}}\eta_0^{\frac 12})   \right) \dx\\
 & \to  \int_\O  \left(\eta_0 \log \eta_{0} \right) \dx, \  \ \mbox{as $\de \to 0$}.
\ea
The results in \eqref{reg-eta-ff2} and \eqref{reg-eta-ff3} then allow us to pass to the limit $\de\to 0$ on the right-hand side of the energy inequality \eqref{energy1-f}. We are now ready to state our third main theorem.

\begin{theorem}\label{thm-ff}
Suppose that $L>0$ is held fixed.
For any $\de>0$, let $(\vr_\de,\vu_\de,\eta_\de,\TT_\de) $ be a weak solution in the sense of Definition \ref{def-weaksl-f} with initial data as in \eqref{ini-data-ff}, except that the initial polymer number density is taken as the regularized one, as in \eqref{reg-eta-ff}, satisfying \eqref{reg-eta-ff2} and \eqref{reg-eta-ff3}. We then have that
$$
(\vr_\de,\vu_\de,\eta_\de,\TT_\de) \to (\vr,\vu,\eta,\TT)  \ \mbox{in}\ {\mathcal D}'((0,T)\times \O),\quad \mbox{as} \ \de\to 0,
$$
and the limit $(\vr,\vu,\eta,\TT)$ is a weak solution to the problem \eqref{01a}--\eqref{07a},
with initial data \eqref{ini-data-ff}, in the same sense as in Definition \ref{def-weaksl-f} except that $\de$ is taken to be $0$ and $\eta$ is taken in the set of all $\eta$ such that
$$
\eta  \geq 0  \ {\rm a.e.\  in} \ (0,T] \times \Omega,\  \eta \in C_w ([0,T];  L^1(\Omega)), \ \eta\log\eta \in L^\infty (0,T; L^1(\O)),\ \eta^{\frac 12} \in L^2 (0,T; W^{1,2}(\O)).
$$
\end{theorem}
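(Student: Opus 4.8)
The plan is to pass to the limit $\de\to0$ in the family $(\vr_\de,\vu_\de,\eta_\de,\TT_\de)$ of weak solutions supplied (for $\de>0$ and $\eta_{0,\de}\in L^2(\O)$, cf.\ \eqref{reg-eta-ff}) by Theorem~\ref{thm-f}, working from estimates that are \emph{uniform in $\de$}. Such estimates are available precisely because $L>0$ is kept fixed: in the energy inequality \eqref{energy1-f} written for $(\vr_\de,\vu_\de,\eta_\de,\TT_\de)$ the right-hand side stays bounded as $\de\to0$ thanks to \eqref{reg-eta-ff2}--\eqref{reg-eta-ff3}, and — as already recorded in Theorem~\ref{thm-f} and Remark~\ref{rem-eta2-tau} — the resulting bounds, including \eqref{a-priori-tau-f5}, do not degenerate. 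Combining Gronwall's inequality, Korn's inequality \eqref{korn}, the Gagliardo--Nirenberg inequality for $\eta_\de^{1/2}$, the identity $\nabla_x\eta_\de=2\,\eta_\de^{1/2}\nabla_x\eta_\de^{1/2}$, and the $d=2$ embedding $W^{1,1}(\O)\hookrightarrow L^2(\O)$ exactly as in Remark~\ref{rem-eta2-tau}, I would first record, uniformly in $\de$:
\ba\label{de-unif}
&\vr_\de\in L^\infty(0,T;L^\g(\O)),\quad \vr_\de|\vu_\de|^2\in L^\infty(0,T;L^1(\O)),\quad \vu_\de\in L^2(0,T;W^{1,2}_0(\O;\R^2)),\\
&\eta_\de\log\eta_\de\in L^\infty(0,T;L^1(\O)),\quad \eta_\de^{\frac12}\in L^\infty(0,T;L^2(\O))\cap L^2(0,T;W^{1,2}(\O)),\quad \eta_\de\in L^2((0,T)\times\O),\\
&\TT_\de\in L^\infty(0,T;L^2(\O;\R^{2\times2}))\cap L^2(0,T;W^{1,2}(\O;\R^{2\times2})),\quad \tr(\TT_\de)\in L^\infty(0,T;L^1(\O)),
\ea
together with $\de\,\eta_\de^2\to0$ in $L^1((0,T)\times\O)$ (immediate, as $\de\to0$ and $\eta_\de$ is bounded in $L^2((0,T)\times\O)$).

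Next I would extract $\de$-uniform bounds on $\d_t\vr_\de$, $\d_t(\vr_\de\vu_\de)$, $\d_t\eta_\de$ and $\d_t\TT_\de$ in suitable spaces $L^{s_0}(0,T;W^{-1,s_1}(\O))$ from the equations \eqref{01a}--\eqref{04a}, and apply the Aubin--Lions--Simon theorem (Lemma~\ref{lem-ALS}) and Lemmas~\ref{lem-Cw-Cw*}--\ref{lem-Cw} to obtain, along a subsequence as $\de\to0$: $\vr_\de\to\vr$ in $C_w([0,T];L^\g(\O))$; $\vu_\de\to\vu$ weakly in $L^2(0,T;W^{1,2}_0(\O;\R^2))$; $\eta_\de\to\eta$ strongly in $L^2(0,T;L^q(\O))$ for all $q<4$ and in $C_w([0,T];L^1(\O))$, with $\eta_\de^{1/2}\to\eta^{1/2}$ weakly in $L^2(0,T;W^{1,2}(\O))$ (the limit being identified through a.e.\ convergence of $\eta_\de$) and $\eta\ge0$ a.e.; and $\TT_\de\to\TT$ weakly-$*$ in $L^\infty(0,T;L^2(\O;\R^{2\times2}))\cap L^2(0,T;W^{1,2}(\O;\R^{2\times2}))$, strongly in $L^2(0,T;L^q(\O;\R^{2\times2}))$ for every $q<\infty$, and in $C_w([0,T];L^2(\O;\R^{2\times2}))$, with $\TT$ symmetric and nonnegative definite a.e.

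With these convergences I would pass to the limit in the weak formulation of Definition~\ref{def-weaksl-f} term by term. The equations \eqref{weak-form2-f} for $\eta_\de$ and \eqref{weak-form4-f} for $\TT_\de$ are easy: no $\de$-dependent term survives in them, and every remaining term is either linear in $\eta_\de$ or a product of a strongly convergent factor ($\eta_\de$, $\TT_\de$) with a weakly convergent one ($\vu_\de$, $\nabla_x\vu_\de$), with admissible integrability exponents for $d=2$. In the momentum equation \eqref{weak-form3-f} the terms $kL\eta_\de\,\Div_x\vvarphi$, $\de\,\eta_\de^2\,\Div_x\vvarphi$, $\SSS(\nabla_x\vu_\de):\Grad\vvarphi$, $\TT_\de:\nabla_x\vvarphi$ and $\vr_\de\,\ff\cdot\vvarphi$ pass by the above (using $\de\,\eta_\de^2\to0$ in $L^1$), while the convective term $\vr_\de\vu_\de\otimes\vu_\de$ and the pressure $a\vr_\de^\g$ are handled by the classical compressible Navier--Stokes machinery — Bogovski{\u\i}-operator higher integrability of $\vr_\de$ (Lemma~\ref{lem-bog}), weak convergence of the effective viscous flux via Lemma~7.36 of \cite{N-book} adapted to $d=2$ (cf.\ Lemma~2.3 in \cite{BS2016}), and the renormalized continuity equation \eqref{weak-renormal-f} together with Lemma~\ref{lem-Cw} — which upgrade $\vr_\de\to\vr$ to strong convergence in $L^1((0,T)\times\O)$ and identify $a\vr_\de^\g\to a\vr^\g$ in $L^1((0,T)\times\O)$; this is precisely as in \cite{FNP}. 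The $\de=0$ energy inequality then follows from weak lower semicontinuity of the convex terms on the left (discarding the nonnegative $\de\,|\nabla_x\eta_\de|^2$ and $\de\,\eta_\de^2$ contributions), together with the convergence of the initial-data terms by \eqref{reg-eta-ff2}--\eqref{reg-eta-ff3} and of $\int_\O\vr_\de\,\ff\cdot\vu_\de$ and $\int_\O\eta_\de$ from Step~2; and the asserted regularity of $\eta$ ($\eta\in C_w([0,T];L^1(\O))$, $\eta\log\eta\in L^\infty(0,T;L^1(\O))$, $\eta^{1/2}\in L^2(0,T;W^{1,2}(\O))$) is read off from \eqref{de-unif} and the compactness above.

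The main obstacle will be the density compactness — showing that $a\vr_\de^\g$ converges to $a\vr^\g$ via the effective viscous flux argument. Although this is by now standard for the compressible Navier--Stokes system, one has to check that neither $\Div_x\TT_\de$ nor $\nabla_x(kL\eta_\de+\de\,\eta_\de^2)$ obstructs the commutator estimates underlying Lemma~7.36 of \cite{N-book}; this is exactly what the $\de$-uniform bounds \eqref{de-unif} secure, since they provide strong convergence of $\TT_\de$ in $L^2(0,T;L^q)$ for every $q<\infty$, strong convergence of $\eta_\de$ in $L^2((0,T)\times\O)$, and $\de\,\eta_\de^2\to0$. A secondary, bookkeeping, point is the weaker $L\log L$-type integrability of the initial datum $\eta_0$: in the limit $\eta$ lies only in $C_w([0,T];L^1(\O))$, yet every term of the weak formulation remains meaningful because the $d=2$ argument of Remark~\ref{rem-eta2-tau} still yields $\eta\in L^2((0,T)\times\O)$ and $\eta^{1/2}\in L^2(0,T;W^{1,2}(\O))$.
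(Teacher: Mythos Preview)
Your proposal is correct and follows essentially the same strategy as the paper's proof: $\de$-uniform bounds from the energy inequality (with $L>0$ and \eqref{kL1} ensuring $\de$-independence), compactness for $\eta_\de$ and $\TT_\de$, the standard compressible Navier--Stokes machinery for $(\vr_\de,\vu_\de)$, and term-by-term passage to the limit with the $\de$-dependent terms discarded.

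The one methodological difference worth noting concerns the compactness argument for $\eta_\de$. The paper applies Dubinski{\u\i}'s theorem (Lemma~\ref{lem-Dubinskii}) with the seminormed set $X_0=\{\varphi\ge0:\sqrt{\varphi}\in W^{1,2}(\O)\}$, $X=L^1(\O)$, $X_1=W^{-2,2}(\O)$, obtaining strong convergence in $L^1((0,T)\times\O)$ and then upgrading to $L^{2-\delta}((0,T)\times\O)$ via Vitali's theorem and the uniform $L^2$ bound from Remark~\ref{rem-eta2-tau}. Your route through Aubin--Lions is also legitimate --- writing $\nabla_x\eta_\de=2\eta_\de^{1/2}\nabla_x\eta_\de^{1/2}$ gives $\eta_\de\in L^{4/3}(0,T;W^{1,4/3}(\O))$ uniformly, and $W^{1,4/3}(\O)\hookrightarrow\hookrightarrow L^q(\O)$ for $q<4$ in $d=2$ --- but note that Aubin--Lions then delivers only strong convergence in $L^{4/3}(0,T;L^q)$, not the $L^2(0,T;L^q)$ you state; you still need the Vitali step afterwards. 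Dubinski{\u\i} is arguably the more natural tool here because it accommodates the nonlinear constraint on $\sqrt{\eta_\de}$ directly.

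One detail you pass over: the conclusion $\eta\in C_w([0,T];L^1(\O))$ does not follow from Lemmas~\ref{lem-Cw-Cw*}--\ref{lem-Cw} in their reflexive-space form, since $L^1$ is not reflexive. The paper invokes Lemma~\ref{lem-Cw-Cw*}(ii) with the Orlicz space $X=L^\Phi(\O)$, $\Phi(s)=\mathcal{F}(1+|s|)$ where $\mathcal{F}(s)=s(\log s-1)+1$, whose separable predual $E^\Psi(\O)$ (with $\Psi$ the Fenchel conjugate of $\Phi$) contains $W^{2,2}_0(\O)$; this yields $\eta\in C_{w^*}([0,T];L^\Phi(\O))\subset C_w([0,T];L^1(\O))$.
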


\begin{proof}
By the energy inequality \eqref{energy1-f} and Gronwall's inequality we deduce the following uniform bounds:
\ba\label{est-level0-f}
\| \vr_{\de} \|_{L^\infty(0,T;L^\g(\O))}   &\leq C ,\\
\| \eta_{\de} \log \eta_{\de}\|_{L^\infty(0,T;L^1(\O))} + \|  \eta_{\de}^{\frac{1}{2}} \|_{L^2(0,T;W^{1,2}(\O))}  &\leq C,\\
 \de \|   \eta_{\de}\|^2_{L^\infty(0,T;L^2(\O))} + \de \|  \eta_{\de} \|^2_{L^2(0,T;W^{1,2}(\O))}  &\leq C,\\
    \|\vr_{\de}  | \vu_{\de}|^2\|_{L^\infty(0,T;L^1(\O))} 
+ \| \vu_{\de}\|_{L^2(0,T;W^{1,2}_0(\O;\R^2))} &\leq C, \\
\| \TT_{\de} \|_{L^\infty(0,T;L^2(\O;\R^{2\times 2}))} + \|  \TT_{\de} \|_{L^2(0,T;W^{1,2}(\O;\R^{2\times 2}))}  &\leq C,
\ea
where $C$ only depends on $T$ and the initial data; in particular it is independent of $\de$ as $\de\to 0$.
We draw the reader's attention here to the alternatives \eqref{kL1}
and \eqref{kL2}, corresponding to $L > 0$, $\de\geq 0$ and $L\geq 0$ and $\de>0$, respectively, and emphasize that
we are now operating in the first of these two regimes, corresponding to \eqref{kL1}, which guarantees the independence of the constant in the energy inequality \eqref{energy1-f} on $\de$, provided that $L>0$ is held fixed (as has been assumed in the statement of the theorem).
The independence of the constant on $\de$, as $\de\to 0$, in the bounds
on $\eta_\de$ stated in $\eqref{est-level0-f}_2$ and in the bounds on $\TT_\de$
stated in $\eqref{est-level0-f}_5$ can be shown by the same argument as in Remark \ref{rem-eta2-tau}, thanks to $L>0$ being held fixed.

The uniform bounds for $\vr_\de, \ \vu_\de, \ \TT_\de$ are the same as in the previous section, Section \ref{sec:alpha=0}. To understand the limit as $\de\to 0$, we only focus on $\eta_\de$ and the terms in the equations related to $\eta_\de$. The passage to the limit for the other terms can be dealt with similarly as in the previous sections.

We apply Dubinski{\u\i}'s compactness theorem (Lemma \ref{lem-Dubinskii}) to show strong convergence of $\eta_\de$. Let
$$
X:= L^1 (\O ),\quad X_0:=\{ \vp\in X : \vp\geq 0,\ \sqrt{\vp} \in W^{1,2} (\O) \},\quad X_1: =
 W^{-2,2}(\O) = [W^{2,2}_0(\O)]',
$$
where $X_0$ is a seminomed space in the sense of Dubinski{\u\i}, with seminorm defined by
$$
[\vp]_{X_0}: = \|\vp\|_{L^1(\O)} + \int_{\O }  \left|\nabla_x \sqrt{\vp}\right|^2 \dx.
$$

We shall now verify that $X$, $X_0$ and $X_1$ thus defined do indeed satisfy the requirements of Lemma \ref{lem-Dubinskii}.
By the bounds in $\eqref{est-level0-f}_2$ and Sobolev embedding we obtain
\be\label{est-eta-de}
\| \eta_{\de} \log \eta_{\de}\|_{L^\infty(0,T;L^1(\O))}
+ \|  \eta_{\de} \|_{L^1(0,T; L^{\frac{1}{\delta}}(\O))}  \leq C,\quad \mbox{for any $\delta \in (0,1)$}.
\ee
From function space interpolation, we deduce that
\be \label{est-eta-int-de}
\| \eta_{\de}\|_{L^{2-\delta}(0,T;L^2(\O))} + \|  \eta_{\de} \|_{L^{2+\frac{\delta^2}{2}}(0,T;
L^{2-\delta}(\O))}  \leq C,\quad \mbox{for any $\delta \in (0,1)$}.
\ee
Together with (\ref{est-level0-f})$_{2,4}$ and equation \eqref{03a},  we have that
\be\label{psin-est1}
\mbox{$(\eta_\de)_{\de>0}$ is bounded in $L^1(0,T;X_0)$ and $(\d_t \eta_\de)_{\de>0}$ is bounded in $L^{1}(0,T;X_1)$}.
\ee

The continuity of the embedding $X\hookrightarrow X_1$ is immediate by Sobolev embedding.
We now verify the compactness of the embedding $X_0 \hookrightarrow X$. Let $(\vp_n)_{n\in N}$ be a bounded sequence in $X_0$. Thus, the sequence $\big(\sqrt{\vp_n}\big)_{n\in N}$ is bounded in $W^{1,2}(\O)$, which is compactly embedded into $L^2(\O)$. This means that
$$
\sqrt{\vp_n} \to \rho \quad \mbox{strongly in}\ L^2(\O), \quad \mbox{as }n \rightarrow \infty.
$$
Define $\vp:= \rho^2$. Then,
\begin{align*}
\|\vp_n - \vp\|_{L^1(\O)} &= \| \left(\sqrt{\vp_n} + \sqrt{\vp}\right)\left(\sqrt{\vp_n} - \sqrt{\vp}\right)\|_{L^1(\O)} \\
&\leq \|\sqrt{\vp_n} + \sqrt{\vp}\|_{L^2(\O)} \|\sqrt{\vp_n} - \sqrt{\vp}\|_{L^2(\O)} \to 0,\quad \mbox{as} \ n \to \infty.
\end{align*}
This implies that the embedding $X_0 \hookrightarrow X$ is compact.

By \eqref{psin-est1} and Dubinski{\u\i}'s compactness theorem, we obtain
$$
\eta_\de \to \eta \quad \mbox{strongly in} \ L^1((0,T)\times \Omega),
\quad \mbox{as }\de \rightarrow 0.
$$
This implies that
\be\label{eta-lim20}
\eta_\de \to \eta \quad \mbox{a.e. in} \ (0,T)\times\Omega,
\quad \mbox{as }\de \rightarrow 0.
\ee

\medskip

Again by the argument in Remark \ref{rem-eta2-tau}, we have the following uniform bound on the $L^2$ norm of $\eta_\de$:
\ba\label{est-eta2-de-0}
\| \eta_\de \|_{L^2(0,T;L^{2}(\O))}   \leq C,
\ea
where $C$ is independent of $\de$. Thus, by the almost everywhere convergence in \eqref{eta-lim20} and Vitali's theorem, we
have, for any $\delta\in (0,1)$, that
\be\label{conv-eta-de}
\eta_\de \to \eta \quad \mbox{strongly in} \ L^{2-\delta}((0,T)\times \Omega),
\quad \mbox{as }\de \rightarrow 0.
\ee

By \eqref{est-eta2-de-0}, \eqref{conv-eta-de} and writing $\nabla_x \eta_{\de}$ as $2\eta_{\de}^{\frac{1}{2}}
\,\nabla_x \eta_{\de}^{\frac{1}{2}}$, we can pass to the limit $\de\to 0$ in the nonlinear terms associated with $\eta_\de$ in the weak formulations \eqref{weak-form1-f}--\eqref{weak-form4-f} to deduce that the equations are satisfied by the
limiting quadruple of functions, $(\vr,\vu,\eta,\TT)$.

The energy inequality for $(\vr,\vu,\eta,\TT)$ can be obtained by letting $\de\to 0$ in \eqref{energy1-f}, using \eqref{reg-eta-ff2}--\eqref{reg-eta-ff3}, and omitting the nonnegative terms $\de \, \eta_\de^2$ and $\de  |\nabla_x \eta_\de|^2$ on the left-hand side of \eqref{energy1-f}.

It is immediate to deduce that the solution $\eta$ satisfies $\eta \geq 0$
a.e.\ in $(0,T] \times \Omega$ and
$$
\eta \log \eta \in L^\infty(0,T;L^1(\O)), 
\ \quad \eta^{\frac{1}{2}} \in L^2(0,T;W^{1,2}(\O)), \quad \eta \in L^2(0,T;L^2(\O)), \quad  \d_t \eta \in L^1(0,T;W^{-2,2}(\O)).
$$
Thus,  by using Lemma \ref{lem-Cw-Cw*} (ii), we have that
$$\eta \in C_w([0,T];L^1(\O)).$$
The proof of this assertion proceeds as follows. Let $\mathcal{F}(s):= s(\log s - 1) + 1$ for $s>0$, and define $\mathcal{F}(0):=1$. Clearly,
$\mathcal{F}(s) \geq 0$ for all $s \in [0,\infty)$, $\mathcal{F}(1)=0$, $\mathcal{F}$ is strictly convex with superlinear growth
as $s \rightarrow \infty$. We take $X:=L^\Phi(\Omega)$, the Orlicz space with Young's function $\Phi(s)=\mathcal{F}(1+|s|)$
(cf. Kufner, John \& Fu\v{c}\'{\i}k \cite{KJF}, Sec. 3.6) whose separable predual $E:=E^\Psi(\Omega)$ has Young's function,
$\Psi(s) = {\rm exp}\,|s| - |s| - 1$, the Fenchel conjugate of $\Phi$ (see, Section 3.12 in \cite{KJF} for the definition
of $E^\Psi(\Omega)$, Theorem 3.12.9 in \cite{KJF} for the separability of $E^\Psi(\Omega)$, and Section 3.13.8, eq. (1)
in \cite{KJF} for the duality $[E^\psi(\Omega)]' = L^\Phi(\Omega)$).
Further, we choose $Y:=W^{-2,2}(\Omega)$, whose predual $F=W^{2,2}_0(\Omega)$ is, clearly,
continuously embedded in $L^\infty(\Omega)$ by the Sobolev embedding theorem, and $L^\infty(\Omega)$ is, in turn, continuously embedded in $E=E^\Psi(\Omega)$ thanks to Theorem 3.17.7 in \cite{KJF}. It then follows from Lemma \ref{lem-Cw-Cw*} (ii) that
$\eta \in C_{w*}([0,T]; L^\Phi(\Omega))$.
However, as $L^\infty(\O)=[L^1(\O)]'$, $C_{w*}([0,T]; L^\Phi(\O))$ is contained in $C_{w}([0,T]; L^1(\O))$, whereby $\eta \in C_w([0,T];L^1(\O))$, as has been asserted.
\end{proof}

We conclude with a further result, which shows that if the initial polymer number density has stronger integrability than
$L \log L(\Omega)$, say $\eta_0 \in L^{q}(\O)$, $q>1$, then the regularity and the integrability properties of $\eta(t,\cdot)$
for $t \in (0,T]$ are also improved; the proof is based on function space interpolation and repeated application of Lemma \ref{lem-parabolic-2}.

\begin{proposition}\label{rem-eta-better}

Suppose that $L>0$ is held fixed. Assume further
that $\eta_0\in L^q(\O)$, $q>1$; then, for any $\delta \in (0,1)$ such that $\delta<q-1$, we have that
\be\label{est-eta-para2-0}
\eta \in L^{\infty}(0,T; L^q(\O)) \cap   L^{2-\delta}(0,T; W^{1,q}(\O)) \cap L^2(0,T; W^{1,q-\delta}(\O)).
\ee
\end{proposition}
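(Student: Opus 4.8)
The plan is to bootstrap the integrability and differentiability of $\eta$ directly from \eqref{03a}, read as a linear parabolic problem for $\eta$ with a divergence–form source $-\eta\vu$, using the fact that $\vu\in L^2(0,T;W_0^{1,2}(\O;\R^2))$ (already controlled in \eqref{est-level0-f}) embeds, in two space dimensions, into $L^2(0,T;L^s(\O;\R^2))$ for every $s\in[1,\infty)$. All the estimates below are first carried out on the smooth approximating solutions, where $\eta$ enjoys the parabolic regularity of \eqref{est-5th-app-vr-eta} and the mollified datum satisfies $\eta_{0,\th}\to\eta_0$ in $L^q(\O)$, and are then transferred to $(\vr,\vu,\eta,\TT)$ by weak lower semicontinuity along the successive limits; the constants depend only on $T$, on $\|\eta_0\|_{L^q(\O)}$, and on the quantities already bounded in \eqref{est-level0-f}. \textbf{Step 1 ($L^q$ energy estimate).} Multiplying \eqref{03a} by $q\eta^{q-1}$ (for $1<q<2$ one first truncates $s\mapsto qs^{q-1}$ and passes to the limit) and integrating over $\O$ with the Neumann condition \eqref{06a} gives $\frac{\rm d}{\dt}\int_\O\eta^q\,\dx+\frac{4\e(q-1)}{q}\int_\O|\nabla_x\eta^{q/2}|^2\,\dx=-(q-1)\int_\O\eta^q\,\Div_x\vu\,\dx$. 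Bounding the right-hand side by $\|\Div_x\vu\|_{L^2(\O)}\|\eta^{q/2}\|_{L^4(\O)}^2$ and using the two-dimensional Gagliardo--Nirenberg inequality \eqref{G-N-ineq} in the form $\|\eta^{q/2}\|_{L^4(\O)}^2\le C\|\eta^{q/2}\|_{L^2(\O)}\|\eta^{q/2}\|_{W^{1,2}(\O)}$, one absorbs $\|\nabla_x\eta^{q/2}\|_{L^2(\O)}^2$ into the left-hand side at the cost of $C(1+\|\Div_x\vu\|_{L^2(\O)}^2)\int_\O\eta^q\,\dx$. Since $\|\Div_x\vu\|_{L^2(\O)}^2\in L^1(0,T)$, Gronwall's lemma yields $\eta\in L^\infty(0,T;L^q(\O))$ — the first inclusion in \eqref{est-eta-para2-0} — together with $\eta^{q/2}\in L^2(0,T;W^{1,2}(\O))$, hence (interpolating with $L^\infty(0,T;L^2(\O))$) $\eta^{q/2}\in L^p(0,T;L^r(\O))$ for all $\frac2p+\frac2r=1$, $r\in[2,\infty)$.

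\textbf{Step 2 (gradient bound in $W^{1,q-\delta}$).} Fix $\delta\in(0,1)$ with $\delta<q-1$, so that $q-\delta>1$. Writing \eqref{03a} as $\d_t\eta-\e\Delta_x\eta=\Div_x(-\eta\vu)$ with homogeneous Neumann data, i.e. with ${\bf g}=-\eta\vu$ in the notation of Lemma \ref{lem-parabolic-2}, I would combine $\eta\in L^\infty(0,T;L^q(\O))$ from Step 1 with $\vu\in L^2(0,T;L^s(\O;\R^2))$ for $s$ large and H\"older's inequality to get $\eta\vu\in L^2(0,T;L^{q-\delta}(\O;\R^2))$. Since $\eta_0\in L^{q-\delta}(\O)$ as $\O$ is bounded, Lemma \ref{lem-parabolic-2} with exponent $p=2$ and spatial exponent $q-\delta$ then gives $\eta\in L^2(0,T;W^{1,q-\delta}(\O))\cap C([0,T];L^{q-\delta}(\O))$, which is the third inclusion in \eqref{est-eta-para2-0}.

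\textbf{Step 3 (gradient bound in $W^{1,q}$).} From Step 2 and the two-dimensional Sobolev embedding, $W^{1,q-\delta}(\O)\hookrightarrow L^{r_1}(\O)$ with $r_1>q$ for $\delta$ small, so $\eta\in L^2(0,T;L^{r_1}(\O))$; interpolating with $\eta\in L^\infty(0,T;L^q(\O))$ produces, for $\theta>0$ small, $\eta\in L^{p_1}(0,T;L^{q_1}(\O))$ with $p_1=2/\theta$ arbitrarily large and $q_1>q$ arbitrarily close to $q$. Taking such a pair and $\vu\in L^2(0,T;L^{s_1}(\O;\R^2))$ with $\tfrac1{s_1}=\tfrac1q-\tfrac1{q_1}$, H\"older's inequality gives $\eta\vu\in L^{2-\delta}(0,T;L^q(\O;\R^2))$ as soon as $\tfrac1{p_1}+\tfrac12\le\tfrac1{2-\delta}$, which holds for $\theta\le\tfrac{\delta}{2-\delta}$. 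Applying Lemma \ref{lem-parabolic-2} now with $p=2-\delta$, spatial exponent $q$, and initial datum $\eta_0\in L^q(\O)$ — this is precisely where the hypothesis $\eta_0\in L^q(\O)$ is used — yields $\eta\in L^{2-\delta}(0,T;W^{1,q}(\O))$, the remaining inclusion in \eqref{est-eta-para2-0}.

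The only genuinely delicate point I anticipate is the rigorous justification of the $L^q$ identity in Step 1 when $1<q<2$, where $s\mapsto s^q$ is not $C^2$ and the Gagliardo--Nirenberg absorption is borderline: this is handled by regularising the multiplier and exploiting the parabolic smoothness of $\eta$ available at the approximation level, after which the bounds are passed to the limit by weak lower semicontinuity. Steps 2 and 3 are then routine applications of the parabolic regularity Lemma \ref{lem-parabolic-2}, the only care being the exponent bookkeeping that ensures one lands exactly on the target spaces; the restriction $\delta<q-1$ is what guarantees $q-\delta>1$, keeping Lemma \ref{lem-parabolic-2} applicable throughout.
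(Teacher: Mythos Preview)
Your proposal is correct, but it takes a different route from the paper's own proof in one essential respect. Both arguments eventually feed $\eta\vu$ as a divergence-form source into the parabolic regularity Lemma~\ref{lem-parabolic-2} to extract the gradient bounds in $L^{2}(0,T;W^{1,q-\delta})$ and $L^{2-\delta}(0,T;W^{1,q})$, and the exponent bookkeeping in your Steps~2 and~3 is the same in spirit as the paper's. The genuine difference lies in how the $L^\infty(0,T;L^q(\O))$ bound is obtained. You get it by a direct $L^q$ energy estimate on \eqref{03a} (multiply by $q\eta^{q-1}$, use the two-dimensional Gagliardo--Nirenberg inequality \eqref{G-N-ineq} exactly as in Section~\ref{sec:2d-est}, absorb, and apply Gronwall); this immediately yields $\eta\in L^\infty(0,T;L^q(\O))$ and $\eta^{q/2}\in L^2(0,T;W^{1,2}(\O))$ in one stroke. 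The paper never performs this nonlinear testing: it relies solely on repeated application of the linear Lemma~\ref{lem-parabolic-2}, starting from the baseline $\eta\in L^\infty(0,T;L^1)\cap L^2(0,T;L^2)$ already available from \eqref{est-level0-f}, and reads off the $L^\infty(0,T;L^q)$ bound from the $C([0,T];L^q)$ part of the conclusion of Lemma~\ref{lem-parabolic-2}. Your route is shorter and more transparent (one energy identity replaces a bootstrap iteration), at the acknowledged cost of the mild technicality of justifying the test function $q\eta^{q-1}$ when $1<q<2$, which you correctly handle at the approximation level where $\eta$ is smooth and bounded below; the paper's route avoids this issue entirely but needs more passes through Lemma~\ref{lem-parabolic-2} and a case split $q<2$ versus $q\ge 2$.
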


\begin{proof}
By $\eqref{est-level0-f}$, we have that the limit $\vu \in L^2(0,T;W^{1,2}(\O,\R^2))$
and $\eta \in L^\infty(0,T;L^1(\O)) \cap L^2(0,T;L^2(\O))$. By Sobolev embedding $W^{1,2}(\O)\hookrightarrow L^{\frac{1}{\delta}}(\O)$, for any $\delta\in (0,1)$, we have that
\be\label{est-eta-u-de1}
 \eta  \vu \in L^{2-\frac{\delta}{2}}(0,T; L^1(\O;\R^2)) \cap  L^1(0,T; L^{2-\frac{\delta}{2}}(\O;\R^2))\hookrightarrow L^{1+c(\delta)}(0,T; L^{2-\delta}(\O;\R^2)),
\ee
for any $\delta\in (0,1)$ and some $c(\delta)>0$.

We first consider the case $1<q<2$.
Using (\ref{est-eta-u-de1}),  we can apply Lemma \ref{lem-parabolic-2} to deduce that
$$
\eta \in L^{\infty}(0,T; L^{q}(\O)) \cap   L^{1+c(\delta)}(0,T; W^{1,q}(\O))\hookrightarrow L^{2+2c(\delta)}(0,T; L^{\frac{4q}{4-q}}(\O)),
$$
where we have used the Sobolev embedding
$$
W^{1,q}(\O)\hookrightarrow  L^{\frac{2q}{2-q}}(\O), \quad \mbox{if $1\leq q < 2$}.
$$
Again by Sobolev embedding and function space interpolation, with $\delta \in (0,1)$ such that $\delta<q-1$, we deduce that
$$
 \eta  \vu \in L^{2}(0,T; L^{q-\delta}(\O;\R^2)) \cap L^{1+c(\delta)}(0,T; L^{\frac{4q}{4-q}}(\O;\R^2)) , \ \mbox{for some $c(\delta)>0$}.
$$
This implies that $\eta\vu\in   L^{2-\delta}(0,T; L^q(\O;\R^2))$, for any $\delta\in (0,1)$ such that $\delta<q-1$ and some $c(\delta)>0$.  By using Lemma \ref{lem-parabolic-2} again we arrive at \eqref{est-eta-para2-0}.

\medskip

Let us now consider the case $q\geq2$.  By \eqref{est-eta-u-de1} and Lemma \ref{lem-parabolic-2} we deduce that
$$
\eta \in L^{\infty}(0,T; L^{2-\delta}(\O)) \cap   L^{1+c(\delta)}(0,T; W^{1,2-\delta}(\O)),\quad \mbox{for any $\delta\in (0,1)$ and some $c(\delta)>0$}.
$$
This implies furthermore that
$$
\eta \vu \in L^{2-\delta}(0,T; L^2(\O;\R^2)) \cap  L^2(0,T; L^{2-\delta}(\O;\R^2)), \quad \mbox{for any $\delta\in (0,1)$}.
$$
By applying Lemma \ref{lem-parabolic-2} again we deduce that
\be\label{est-eta-para2}
\eta \in L^{\infty}(0,T; L^2(\O)) \cap   L^{2-\delta}(0,T; W^{1,2}(\O)) \cap  L^2(0,T; W^{1,2-\delta}(\O)),\quad \mbox{for any $\delta\in (0,1)$},
\ee
which proves \eqref{est-eta-para2-0} with $q=2$. It remains to consider the case when $q>2$. By \eqref{est-eta-para2} we have that
$$
\eta \vu \in   L^{2}(0,T; L^{2-\delta}(\O;\R^2)) \cap  L^{1+c(\delta)}(0,T; L^{\frac{1}{\delta}}(\O;\R^2)), \quad \mbox{for any $\delta\in (0,1)$ and some $c(\delta)>0$}.
$$
Together with Lemma \ref{lem-parabolic-2} we then deduce that
$$
\eta \in L^{\infty}(0,T; L^{q}(\O)) \cap   L^{1+c(q)}(0,T; W^{1,q}(\O)),\quad \mbox{for some $c(q)>0$}.
$$
Again by the bound on $\vu$ and Sobolev embedding we have that
$$
\eta\vu \in L^{2}(0,T; L^{q-\delta}(\O;\R^2)) \cap  L^{1+c(\delta)}(0,T; L^{2q-\delta}(\O;\R^2)),\quad \mbox{for any $\delta\in (0,1)$ and some $c(\delta)>0$}.
$$
This gives $\eta\vu\in L^{2-\delta}(0,T; L^{q}(\O;\R^2))$ for any $\delta\in (0,1)$. Hence, the application of Lemma \ref{lem-parabolic-2} implies  \eqref{est-eta-para2-0}. That completes the proof of the proposition.
\end{proof}

\section*{Acknowledgements}
\thispagestyle{empty}

The authors acknowledge the support of the project LL1202 in the programme ERC-CZ funded by the Ministry of Education, Youth and Sports of the Czech Republic. ES is grateful to members of the Ne\v{c}as Center for Mathematical Modeling and the
Faculty of Mathematics and Physics of the Charles University in Prague for their kind hospitality during his sabbatical leave from the University of Oxford.



\begin{appendix}

\section{Proof of Lemma \ref{lem-lap-tau-inverse}}\label{appendixa}

We complete our analysis of the compressible Oldroyd-B model by
providing the proofs of Lemmas \ref{lem-lap-tau-inverse},
\ref{lem-pt-mf} and \ref{lem-lap-tau-inverses3}.
Here, we give the proof of Lemma \ref{lem-lap-tau-inverse};
the proofs of Lemmas \ref{lem-pt-mf} and \ref{lem-lap-tau-inverses3} are contained in Appendix \ref{appendixb}
and Appendix \ref{appendixc}, respectively.
%
%
%

\begin{proof}[Proof of Lemma \ref{lem-lap-tau-inverse}]
We begin by proving the inequality stated in Lemma \ref{lem-lap-tau-inverse}.
As $$s \in [1,\infty) \mapsto g(s):=s^2 - 2s\log s -1 \in \mathbb{R}_{\geq 0}$$ is a convex function, with a unique stationary point located at $s=1$, where $g$ attains its minimum value on $[1,\infty)$, it follows that $g(s) \geq g(1)=0$ for all $s \in [1,\infty)$. Assuming that $a,b \in \mathbb{R}_{>0}$, and rearranging the expression $g(s_*) \geq 0$, where $$s_*:=\sqrt{\frac{\max\{a,b\}}{\min\{a,b\}}},$$ we deduce that
\begin{align}\label{eq-ab}
- (a-b)\left(\frac{1}{a}- \frac{1}{b}\right) \geq (\log a  - \log b)^2\qquad \forall\, a,b \in \mathbb{R}_{>0}.
\end{align}

In order to extend this inequality to symmetric positive definite matrices,
we adapt an argument from \cite{Barrett-Boyaval}.
Suppose that $\AAA, \BB \in \mathbb{R}^{d \times d}$ are symmetric positive definite matrices, with respective diagonalizations
\[ \AAA = \OO_{\AAA} \DD_{\AAA} \OO^{\rm T}_{\AAA}\quad \mbox{and}\quad \BB = \OO_{\BB} \DD_{\BB} \OO^{\rm T}_{\BB},\]
where $\DD_{\AAA}, \DD_{\BB} \in \mathbb{R}^{d\times d}$ are diagonal, with positive diagonal entries, and $\OO_{\AAA}$ and $\OO_{\BB}$ are orthogonal. By defining the matrix $\CC:= \log \AAA - \log \BB$, applying the Cauchy--Schwarz inequality, and noting that $\CC = \CC^{\rm T}$, we have that
\begin{align}\label{eq-AAA}
|\tr(\log \AAA) - \tr(\log \BB)|^2 &= |\tr(\CC)|^2
\leq d \sum_{i=1}^d (\CC_{ii})^2 \leq d \sum_{i,k=1}^d (\CC_{ik})^2
= d \sum_{i,k=1}^d \CC_{ik} \CC_{ki} = d \sum_{i=1}^d (\CC^2)_{ii}
= d\, \tr(\CC^2) \nonumber\\
&= d\, \tr((\log \AAA - \log \BB)^2) = d\, \tr((\log \AAA - \log \BB)(\log \AAA - \log \BB))\nonumber\\
&= d\, \tr((\OO_{\AAA} (\log \DD_{\AAA}) \OO^{\rm T}_{\AAA} - \OO_{\BB} (\log\DD_{\BB}) \OO^{\rm T}_{\BB})(\OO_{\AAA} (\log\DD_{\AAA}) \OO^{\rm T}_{\AAA} - \OO_{\BB} (\log\DD_{\BB}) \OO^{\rm T}_{\BB})).
\end{align}
Since $\OO_{\AAA}$ is orthogonal and the trace of a product of matrices is invariant under cyclic permutations of the factors
appearing in the product, we have, with $\OO:=\OO^{\rm T}_{\AAA}\OO_{\BB}$, that
\begin{align*}
&\tr((\OO_{\AAA} (\log \DD_{\AAA}) \OO^{\rm T}_{\AAA} - \OO_{\BB} (\log\DD_{\BB}) \OO^{\rm T}_{\BB})(\OO_{\AAA} (\log\DD_{\AAA}) \OO^{\rm T}_{\AAA})) \\
&\quad = \tr(\OO_{\AAA} (\log \DD_{\AAA})^2\, \OO^{\rm T}_{\AAA}) - \tr(\OO^{\rm T}_{\AAA}\OO_{\BB} (\log\DD_{\BB}) (\OO^{\rm T}_{\AAA}\OO_{\BB})^{\rm T} (\log\DD_{\AAA}))\\
&\quad = \tr((\log \DD_{\AAA})^2) - \tr(\OO (\log\DD_{\BB}) \OO^{\rm T} (\log\DD_{\AAA}))\\
&\quad = \tr((\log \DD_{\AAA})^2)  - \sum_{i,j=1}^d (\OO_{ij})(\log\DD_{\BB})_{jj} (\OO^{\rm T})_{ji} (\log\DD_{\AAA})_{ii} \\
&\quad = \sum_{i,j=1}^d (\OO_{ij})^2(\log\DD_{\AAA})_{ii} (\log\DD_{\AAA})_{ii} - \sum_{i,j=1}^d (\OO_{ij})^2(\log\DD_{\BB})_{jj} (\log\DD_{\AAA})_{ii},
\end{align*}
where in the transition to the last line we have used that $\sum_{j=1}^d (\OO_{ij})^2 = 1$ for all $i \in \{1,\dots, d\}$, which is a direct consequence of the fact that $\OO \OO^{\rm T} = \II$, because $\OO_\AAA$ and $\OO_\BB$ are orthogonal matrices. Thus we have shown that
\begin{align}\label{eq-BBB}
&\tr((\OO_{\AAA} (\log \DD_{\AAA}) \OO^{\rm T}_{\AAA} - \OO_{\BB} (\log\DD_{\BB}) \OO^{\rm T}_{\BB})(\OO_{\AAA} (\log\DD_{\AAA}) \OO^{\rm T}_{\AAA})) = \sum_{i,j=1}^d (\OO_{ij})^2[(\log\DD_{\AAA})_{ii} - (\log\DD_{\BB})_{jj}] (\log\DD_{\AAA})_{ii}.
\end{align}
By swapping $\AAA$ and $\BB$ in this identity, and noting that the matrix $\OO_\BB^{\rm T} \OO_{\AAA}$, resulting
from swapping $\AAA$ and $\BB$ in the definition of $\OO=\OO_\AAA^{\rm T} \OO_\BB$, is equal to the transpose of $\OO$, we have that
\begin{align*}
&\tr((\OO_{\BB} (\log\DD_{\BB}) \OO^{\rm T}_{\BB}- \OO_{\AAA}(\log \DD_{\AAA}) \OO^{\rm T}_{\AAA})(\OO_{\BB} (\log\DD_{\BB}) \OO^{\rm T}_{\BB})) = \sum_{i,j=1}^d (\OO_{ji})^2[(\log\DD_{\BB})_{ii} - (\log\DD_{\AAA})_{jj}] (\log\DD_{\BB})_{ii}.
\end{align*}
After 
renaming $i$ into $j$ and $j$ into $i$ under the double summation sign appearing on the right-hand side, we have that
\begin{align}\label{eq-CCC}
&- \tr((\OO_{\AAA} (\log\DD_{\AAA}) \OO^{\rm T}_{\AAA}- \OO_{\BB}(\log \DD_{\BB}) \OO^{\rm T}_{\BB})(\OO_{\BB} (\log\DD_{\BB}) \OO^{\rm T}_{\BB})) = - \sum_{i,j=1}^d (\OO_{ij})^2[(\log\DD_{\AAA})_{ii} - (\log\DD_{\BB})_{jj}] (\log\DD_{\BB})_{jj}.
\end{align}
By summing \eqref{eq-BBB} and \eqref{eq-CCC} and recalling the inequality \eqref{eq-ab}, we deduce that
\begin{align}\label{eq-DDD}
&\tr((\OO_{\AAA} (\log \DD_{\AAA}) \OO^{\rm T}_{\AAA} - \OO_{\BB} (\log\DD_{\BB}) \OO^{\rm T}_{\BB})(\OO_{\AAA} (\log\DD_{\AAA}) \OO^{\rm T}_{\AAA}- \OO_{\BB} (\log\DD_{\BB}) \OO^{\rm T}_{\BB}))\nonumber\\
&\qquad= \sum_{i,j=1}^d (\OO_{ij})^2[(\log\DD_{\AAA})_{ii} - (\log\DD_{\BB})_{jj}] [(\log\DD_{\AAA})_{ii} -
(\log\DD_{\BB})_{jj}]\nonumber\\
&\qquad = \sum_{i,j=1}^d (\OO_{ij})^2[(\log\DD_{\AAA})_{ii} - (\log\DD_{\BB})_{jj}]^2\nonumber\\
&\qquad \leq - \sum_{i,j=1}^d (\OO_{ij})^2\left[\left((\DD_{\AAA})_{ii} - (\DD_{\BB})_{jj}\right)\left(\frac{1}{(\DD_{\AAA})_{ii}} - \frac{1}{(\DD_{\BB})_{jj}}\right)\right] \nonumber\\
&\qquad= - \sum_{i,j=1}^d (\OO_{ij})^2\left[\left((\DD_{\AAA})_{ii} - (\DD_{\BB})_{jj}\right)\left((\DD_{\AAA}^{-1})_{ii} - (\DD_{\BB}^{-1})_{jj}\right)\right].
\end{align}
Now, by an analogous calculation to the one that led to the first equality in \eqref{eq-DDD} above, we have that
\begin{align}\label{eq-EEE}
\tr\left(\left( \OO_{\AAA} \DD_{\AAA} \OO^{\rm T}_{\AAA} -  \OO_{\BB} \DD_{\BB} \OO^{\rm T}_{\BB}
\right)\left( \OO_{\AAA} (\DD_{\AAA})^{-1} \OO^{\rm T}_{\AAA} -  \OO_{\BB} (\DD_{\BB})^{-1} \OO^{\rm T}_{\BB}
\right)\right) =  \sum_{i,j=1}^d (\OO_{ij})^2\left[\left((\DD_{\AAA})_{ii} - (\DD_{\BB})_{jj}\right)\left((\DD_{\AAA}^{-1})_{ii} - (\DD_{\BB}^{-1})_{jj}\right)\right].
\end{align}
By comparing the right-hand sides of \eqref{eq-DDD} and \eqref{eq-EEE}, we deduce that
\begin{align*}
\tr((\OO_{\AAA} (\log \DD_{\AAA}) \OO^{\rm T}_{\AAA} - \OO_{\BB} (\log\DD_{\BB}) \OO^{\rm T}_{\BB})(\OO_{\AAA} (\log\DD_{\AAA}) \OO^{\rm T}_{\AAA}- \OO_{\BB} (\log\DD_{\BB}) \OO^{\rm T}_{\BB}))\\
\leq -\tr\left(\left( \OO_{\AAA} \DD_{\AAA} \OO^{\rm T}_{\AAA} -  \OO_{\BB} \DD_{\BB} \OO^{\rm T}_{\BB}
\right)\left( \OO_{\AAA} (\DD_{\AAA})^{-1} \OO^{\rm T}_{\AAA} -  \OO_{\BB} (\DD_{\BB})^{-1} \OO^{\rm T}_{\BB}
\right)\right);
\end{align*}
equivalently,
\begin{align}\label{eq-FFF}
\tr((\log \AAA - \log\BB)(\log \AAA - \log \BB)) \leq -\tr\left(\left(\AAA -\BB \right)\left({\AAA}^{-1} - {\BB}^{-1} \right)\right).
\end{align}
Substitution of \eqref{eq-FFF} into the penultimate line of \eqref{eq-AAA} then implies that,
for any two symmetric positive definite matrices $\AAA$, $\BB \in \mathbb{R}^{d\times d}$, the following inequality holds:
\begin{align}\label{eq-AiBi}
&|\tr(\log \AAA) - \tr(\log \BB)|^2 \leq - d\, \tr\left(\left(\AAA -\BB \right)\left({\AAA}^{-1} - {\BB}^{-1} \right)\right).
\end{align}

Let $e_j$ denote the unit vector pointing in the positive $Ox_j$ direction, $j=1,\dots, d$. Then, for each $x \in \Omega$ and
each $j \in \{1,\dots, d\}$, there exists a bounded closed interval $I_{x,j} \subset \mathbb{R}$, with $0$ contained in the interior of $I_{x,j}$, such that $x + he_j \in \overline{\Omega}$ for all $h \in I_{x,j}$. As, by hypothesis, $\PP$ is symmetric and positive definite, uniformly on $\overline\Omega$, there exists a $c_0 \in \mathbb{R}_{>0}$ such that $\PP(x) \geq c_0 \II$ for all $x \in \overline\Omega$. Thus, $\AAA = \PP(x + h e_j)$ and $\BB = \PP(x)$ are legitimate
choices in \eqref{eq-AiBi} for all $h \in I_{x,j}$ and $j \in \{1,\dots,d\}$. Dividing the resulting inequality by $h^2d$, and passing to the limit $h \rightarrow 0$, thanks to the assumed regularity $\PP \in C^1(\overline\Omega;\mathbb{R}^{d \times d})$,
we deduce that
\begin{align}\label{eq-AjBj}
&\frac{1}{d}\,|\partial_{x_j}\tr(\log \PP(x))|^2 \leq - \tr\left((\partial_{x_j}\PP(x))\,(\partial_{x_j}({\PP}^{-1}(x)))\right)
\qquad \forall\, x \in \Omega, \quad \forall\,j \in \{1,\dots,d\}.
\end{align}
Here, to obtain the expression on the left-hand side of the last inequality,
we have made use of the fact that by 
Jacobi's identity, $\tr(\log \PP(x)) = \log \det \PP(x)$, and $x \in \Omega \mapsto \log \det \PP(x)\in \mathbb{R}$ is a $C^1$ function, whereby
the same is true of $x \in \Omega \mapsto \tr(\log\PP(x)) \in \mathbb{R}$.

As $\PP \PP^{-1} = \II$, it follows from the product rule that $\partial_{x_j}(\PP^{-1}) = - \PP^{-1} (\partial_{x_j} \PP) \PP^{-1}$, and therefore \eqref{eq-AjBj} yields
\begin{align*}
&\frac{1}{d}\,|\partial_{x_j}\tr(\log \PP(x))|^2 \leq \tr\left(((\partial_{x_j}\PP(x))\PP^{-1}(x))^2\right)\qquad \forall\, x \in \Omega, \quad \forall\,j \in \{1,\dots,d\}.
\end{align*}
Because $\PP \in C^1(\overline\Omega;\mathbb{R}^{d\times d})$, the expression $x \in \Omega \mapsto \tr\left(((\partial_{x_j}\PP(x))\PP^{-1}(x))^2\right)$ appearing on the right-hand side of this inequality is a
bounded continuous function on $\Omega$.
Also, thanks to the discussion in the previous paragraph, the expression appearing on the left hand side of this inequality is a continuous (and therefore, thanks to the upper bound furnished by the inequality, a bounded continuous) function on $\Omega$. By integrating the inequality over $\Omega$ and summing over $j=1,\dots, d$ we thereby deduce that
\begin{align*}
&\frac{1}{d}\int_\Omega |\nabla_x\tr(\log \PP)|^2\, \dx \leq \sum_{j=1}^d \int_\Omega \tr\left(\left((\partial_{x_j}\PP)\PP^{-1}\right)^2\right) \dx,
\end{align*}
thus completing the proof of the inequality stated in the lemma.

It remains to prove the equality stated in Lemma \ref{lem-lap-tau-inverse}. By partial integration (cf. Corollary 2.6 in Ch.1 of \cite{GR}), recalling that, by hypothesis, the symmetric and uniformly positive definite matrix function $\PP
\in W^{2,2}(\Omega;\mathbb{R}^{d \times d})\cap C^1(\overline\Omega;\mathbb{R}^{d \times d})$ satisfies a homogeneous Neumann boundary condition on $\partial\Omega$, and noting that $\PP^{-1}
\in C^1(\overline\Omega;\mathbb{R}^{d \times d}) \subset W^{1,2}(\Omega;\mathbb{R}^{d \times d})$, we have that
\begin{align*}
\int_\O \Delta_x \PP : \PP^{-1}\,\dx &= - \sum_{j=1}^d \int_\Omega \partial_{x_j} \PP : \partial_{x_j}(\PP^{-1})\,\dx\\
&= - \sum_{j=1}^d \int_\Omega \tr \left((\partial_{x_j} \PP) (\partial_{x_j}(\PP^{-1}))\right)\dx
= \sum_{j=1}^d  \int_\O \tr \left(\left((\d_{x_j} \PP)(\PP^{-1})\right)^2\right) \dx,
\end{align*}
where we have, once again, made use of the identity $\partial_{x_j}(\PP^{-1}) = - \PP^{-1} (\partial_{x_j} \PP) \PP^{-1}$.
%
\end{proof}

\section{Proof of Lemma \ref{lem-pt-mf}}\label{appendixb}

\begin{proof}[Proof of Lemma \ref{lem-pt-mf}]
According to (2.15) in \cite{Barrett-Boyaval}, for any concave function $g \in C^1(\mathbb{R})$, and any pair
of symmetric matrices $\AAA, \BB \in \mathbb{R}^{d \times d}$, one has that
\begin{align}\label{eq:eq2-1}
(\AAA - \BB):g'(\BB) \geq \tr(g(\AAA)-g(\BB)) \geq (\AAA - \BB):g'(\AAA).
\end{align}
For a convex function $g \in C^1(\mathbb{R})$, the inequalities in \eqref{eq:eq2-1} are reversed, yielding
\begin{align}\label{eq:eq2-2}
(\AAA - \BB):g'(\BB) \leq \tr(g(\AAA)-g(\BB)) \leq (\AAA - \BB):g'(\AAA)
\end{align}
for any pair of symmetric matrices $\AAA, \BB \in \mathbb{R}^{d \times d}$.

Let us suppose that $g \in C^{1,\gamma}(\mathbb{R})$, with $0<\gamma \leq 1$,
is concave. As the univariate symmetric matrix function  $\PP \in W^{1,2}((0,T);\mathbb{R}^{d\times d})$ is absolutely continuous on $[0,T]$,
it is differentiable a.e. on $(0,T)$. Let $t_* \in (0,T)$ be such that $\PP$ is differentiable at $t_*$. Hence, by choosing $\AAA=\PP(t_*+h)$ and $\BB=\PP(t_*)$ in \eqref{eq:eq2-1}, where $0<|h|<\min(t_*,T-t_*)$, we have that
\begin{align}\label{eq:eq2-2a}
\frac{\PP(t_*+h) - \PP(t_*)}{h}:g'(\PP(t_*)) \geq \frac{\tr(g(\PP(t_*+h))-g(\PP(t_*)))}{h}
\geq \frac{\PP(t_*+h) - \PP(t_*)}{h}:g'(\PP(t_*+h)),
\end{align}
for $h>0$, and with the $\geq$ signs replaced by $\leq$ for $h<0$.

Now, the function $t \in [0,T] \mapsto g'(\PP(t))$ is continuous on $[0,T]$. Indeed, as $g' \in C^{0,\gamma}(\mathbb{R})$,
the matrix function $\QQ \mapsto g'(\QQ)$, defined on the space of symmetric matrices $\QQ \in \mathbb{R}^{d\times d}$,
is also H\"older continuous, with the same H\"older exponent $\gamma$ (cf. Theorem 1.1 in \cite{Wihler}), and
\[ |g'(\PP(t_*+h)) - g'(\PP(t_*))|\leq \|g'\|_{C^{0,\gamma}(\mathbb{R})} d^{\frac{1-\gamma}{2}} |\PP(t_*+h) - \PP(t_*)|^\gamma. \]
Thanks to the (absolute) continuity of $t \in [0,T] \mapsto \PP(t) \in \mathbb{R}^{d \times d}$, the right-hand side of this inequality converges to $0$ as $h \rightarrow 0$; therefore the same is true of the left-hand side of the inequality. Hence,
\[ \lim_{h \rightarrow 0} g'(\PP(t_*+h)) = g'(\PP(t_*)).\]
Since $\PP$ is differentiable at $t_* \in (0,T)$, we can now pass to the limit $h \rightarrow 0_+$ in \eqref{eq:eq2-2a} to deduce that
\begin{align*}
\partial_t\PP(t_*):g'(\PP(t_*)) \geq
\lim_{h \rightarrow 0_+}\frac{\tr(g(\PP(t_*+h))-g(\PP(t_*)))}{h} \geq
\partial_t\PP(t_*): g'(\PP(t_*)),
\end{align*}
for $h>0$, with the $\geq$ signs replaced by $\leq$ and $\lim_{h \rightarrow 0_+}$ replaced by $\lim_{h \rightarrow 0_-}$ for $h<0$.

Hence, and thanks to the linearity of the trace operator $\tr$,
\begin{align*}
\lim_{h \rightarrow 0}\frac{\tr(g(\PP(t_*+h)))-\tr(g(\PP(t_*)))}{h} =
\partial_t\PP(t_*): g'(\PP(t_*)).
\end{align*}
Consequently, for a concave function $g \in C^{1,\gamma}(\mathbb{R})$, $0<\gamma \leq 1$,
\[ \d_t \tr\left(g(\PP(t_*))\right) =  g'(\PP(t_*)): (\d_t \PP(t_*)) = \tr\left(  g'(\PP(t_*)) \,(\d_t \PP(t_*)) \right)\]
at each point $t_* \in (0,T)$ at which $\PP$ is differentiable. In the case when $g \in C^{1,\gamma}(\mathbb{R})$, $0<\gamma \leq 1$, is a convex function the same pair of equalities is arrived at by an analogous argument, but now starting from \eqref{eq:eq2-2}.

Thus we have shown that, for any symmetric matrix $\PP \in W^{1,2}(0,T; \mathbb{R}^{d\times d})$,
\be\label{pt-mf}
\d_t \tr\left(g(\PP(t))\right) =  g'(\PP(t)): (\d_t \PP(t)) = \tr\left(  g'(\PP(t)) \,(\d_t \PP(t)) \right)
\qquad \mbox{for a.e. $t \in (0,T)$},
\ee
under the assumption that $g \in C^{1,\gamma}(\mathbb{R})$, with $0<\gamma \leq 1$, is concave or convex.
\end{proof}

\section{Proof of Lemma \ref{lem-lap-tau-inverses3}}\label{appendixc}

\begin{proof}[Proof of Lemma \ref{lem-lap-tau-inverses3}]
By hypothesis,  the symmetric matrix function $\PP \in C([0,T_{\sigma_3}];W^{1,2}(\O;\R^{d\times d}))$
 and $\Delta_x \PP \in L^2(0,T_{\sigma_3};L^2(\O;\R^{d\times d}))$. We shall first show that this implies that $\chi_{\sigma_3}(\PP)^{-1} \in L^\infty(0,T_{\sigma_3}; W^{1,2}(\Omega;\mathbb{R}^{d \times d}))$.
To this end, we first note that, as $\PP \in C([0,T_{\sigma_3}];W^{1,2}(\Omega;\mathbb{R}^{d\times d}))$, it follows that, for any $\Omega' \Subset \Omega$ and any $h\neq 0$ such that
$\mbox{dist}(\Omega',\partial\Omega)>|h|$, we have a bounded difference quotient
\[ \int_{\Omega'} \left|\frac{\PP(t,x+he_j) - \PP(t,x)}{h}\right|^2 \dx \leq \int_\Omega |\partial_{x_j} \PP(t,x)|^2 \dx,
\quad j=1,\dots,d, \quad \forall\,t \in (0,T_{\sigma_3}].\]
On the other hand, as $s \in \mathbb{R} \mapsto \chi_{\sigma_3}(s) \in \mathbb{R}_{>0}$ is globally Lipschitz continuous with Lipschitz constant equal to 1, by Theorem 1.1 in \cite{Wihler}, $\PP\in \mathbb{R}^{d\times d} \mapsto \chi_{\sigma_3}(\PP) \in \mathbb{R}^{d\times d}$ is globally Lipschitz with Lipschitz constant equal to 1; hence,
\[ |\chi_{\sigma_3}(\PP(t,x+he_j)) - \chi_{\sigma_3}(\PP(t,x))| \leq |\PP(t,x+h e_j) - \PP(t,x)|,\]
which then implies that
\[ \int_{\Omega'} \left|\frac{\chi_{\sigma_3}(\PP(t,x+he_j)) - \chi_{\sigma_3}(\PP(t,x))}{h}\right|^2 \dx \leq \int_\Omega |\partial_{x_j} \PP(t,x)|^2\, \dx,
\quad j=1,\dots,d,\quad \forall\,t \in (0,T_{\sigma_3}],\]
for any $\Omega' \Subset \Omega$ and any $h \neq 0$ such that $\mbox{dist}(\Omega',\partial\Omega)>|h|$.
Hence $x \in \Omega \mapsto \chi_{\sigma_3}(\PP(t,x)) \in \mathbb{R}^{d \times d}$ is weakly differentiable on $\Omega$
for all $t \in (0,T_{\sigma_3}]$, with $\partial_{x_j} \chi_{\sigma_3}(\PP(t,\cdot)) \in L^2(\Omega;\mathbb{R}^{d\times d})$ for all $j \in \{1,\dots, d\}$ and all $t \in (0,T_{\sigma_3}]$; furthermore,
\begin{align}\label{eq-ap3-1}
 \mbox{ess.sup}_{t \in (0,T_{\sigma_3}]}\int_\Omega |\partial_{x_j}(\chi_{\sigma_3}(\PP(t,x)))|^2 \dx \leq
\mbox{ess.sup}_{t \in (0,T_{\sigma_3}]}\int_\Omega |\partial_{x_j}\PP(t,x)|^2\, \dx.
\end{align}

Now,
\[ \mbox{ess.sup}_{t \in (0,T_{\sigma_3}]} \int_\Omega |\chi_{\sigma_3}(\PP(t,x))^{-1}|^2\, \dx =
\mbox{ess.sup}_{t \in (0,T_{\sigma_3}]} \int_\Omega |G'_{\sigma_3}(\PP(t,x))|^2\, \dx \leq \frac{|\Omega|d}{\sigma_3^2},\]
and, similarly, for any $j \in \{1,\dots,d\}$,
\begin{align*}
\mbox{ess.sup}_{t \in (0,T_{\sigma_3}]}\int_\Omega |\partial_{x_j}(\chi_{\sigma_3}(\PP(t,x))^{-1})|^2\, \dx&
= \mbox{ess.sup}_{t \in (0,T_{\sigma_3}]} \int_\Omega |(\chi_{\sigma_3}(\PP(t,x))^{-1}) (\partial_{x_j}(\chi_{\sigma_3}(\PP(t,x)))) (\chi_{\sigma_3}(\PP(t,x))^{-1})|^2\, \dx \nonumber\\
&\leq \mbox{ess.sup}_{t \in (0,T_{\sigma_3}]} \int_\Omega |\chi_{\sigma_3}(\PP(t,x))^{-1}|^2\,
|\partial_{x_j}(\chi_{\sigma_3}(\PP(t,x)))|^2\,
|\chi_{\sigma_3}(\PP(t,x))^{-1}|^2\, \dx
\nonumber\\
&\leq \frac{d^2}{\sigma_3^4} \mbox{ess.sup}_{t \in (0,T_{\sigma_3}]} \int_\Omega |\partial_{x_j}(\chi_{\sigma_3}(\PP(t,x)))|^2\, \dx,
\end{align*}
whereby, thanks to \eqref{eq-ap3-1},
\begin{align*}
\mbox{ess.sup}_{t \in (0,T_{\sigma_3}]}\int_\Omega |\partial_{x_j}(\chi_{\sigma_3}(\PP(t,x))^{-1})|^2\, \dx
\leq \frac{d^2}{\sigma_3^4}\, \mbox{ess.sup}_{t \in (0,T_{\sigma_3}]}\int_\Omega |\partial_{x_j}\PP(t,x)|^2\, \dx.
\end{align*}
Thus we have shown that $\chi_{\sigma_3}(\PP)^{-1} \in L^\infty(0,T_{\sigma_3}; W^{1,2}(\Omega;\mathbb{R}^{d\times d}))$, as has been asserted above.

As $\PP \in C([0,T_{\sigma_3}];W^{1,2}(\O;\R^{d\times d}))$, with $\Delta_x \PP \in L^2(0,T_{\sigma_3};L^2(\O;\R^{d\times d}))$, satisfies a homogeneous Neumann boundary condition on $\partial \Omega$, and, as was shown above, $\chi_{\sigma_3}(\PP)^{-1} \in L^\infty(0,T_{\sigma_3}; W^{1,2}(\Omega;\mathbb{R}^{d \times d}))$, we can apply
Corollary 2.6 in Ch.1 of \cite{GR} to integrate by parts:
\begin{align*}
\int_\O \Delta_x \PP : \chi_{\sigma_3}(\PP)^{-1}\,\dx &= - \sum_{j=1}^d \int_\Omega \partial_{x_j} \PP : \partial_{x_j}(\chi_{\sigma_3}(\PP)^{-1})\,\dx  = - 
\int_\Omega \nabla_{x} \PP :: \nabla_{x}(\chi_{\sigma_3}(\PP)^{-1})\,\dx, \qquad \mbox{a.e. on $(0,T_{\sigma_3}]$,}
\end{align*}
thus proving the equality stated in Lemma \ref{lem-lap-tau-inverses3}.

\bigskip

Next we will show that, for any $j\in \{1,\ldots, d\}$,  the following inequality holds:
\ba
-\int_\O \d_{x_j} \PP(t,x) : \d_{x_j} (\chi_{\s_3}(\PP(t,x))^{-1})\,\dx
& \geq
\frac{1}{d}  \int_\O \left|\d_{x_j} \tr\left(\log
\chi_{\s_3}(\PP(t,x)) \right)\right|^2\,\dx,\qquad \mbox{for a.e. $t \in (0,T_{\sigma_3}]$}.
\label{C1}
\ea
As $\chi_{\sigma_3}$ is nondecreasing and globally Lipschitz continuous with Lipschitz constant equal to 1, it follows that
\[ -(a-b) \left(\frac{1}{\chi_{\sigma_3}(a)} - \frac{1}{\chi_{\sigma_3}(b)}\right)
    \geq - (\chi_{\sigma_3}(a) - \chi_{\sigma_3}(b))\left(\frac{1}{\chi_{\sigma_3}(a)} - \frac{1}{\chi_{\sigma_3}(b)}\right)
    \qquad \forall\, a,b \in \mathbb{R},\quad \forall \sigma_3 \in \mathbb{R}_{>0}.\]
In conjunction with \eqref{eq-ab}, with $a$ and $b$ in \eqref{eq-ab} replaced by $\chi_{\sigma_3}(a)$ and $\chi_{\sigma_3}(b)$,
respectively, this then yields that
\begin{align}\label{eq:ap3-2}
 -(a-b) \left(\frac{1}{\chi_{\sigma_3}(a)} - \frac{1}{\chi_{\sigma_3}(b)}\right)  \geq
(\log \chi_{\sigma_3}(a) - \log \chi_{\sigma_3(b)})^2\qquad \forall\, a,b \in
 \mathbb{R},\quad \forall \sigma_3 \in \mathbb{R}_{>0}.
\end{align}
By an identical argument to the one above that resulted in \eqref{eq-AiBi},
we then have, for all symmetric $\AAA,\, \BB \in \R^{d\times d}$, that
\begin{align}\label{eq-ap3-3}
&|\tr(\log \chi_{\sigma_3}(\AAA)) - \tr(\log \chi_{\sigma_3}(\BB))|^2 \leq - d\,
\tr\left(\left(\AAA -\BB \right)\left(\chi_{\sigma_3}({\AAA})^{-1} - \chi_{\sigma_3}({\BB})^{-1} \right)\right).
\end{align}
Let, again, $e_j$ denote the unit vector pointing in the positive $Ox_j$ direction, $j=1,\dots, d$. Thus, for any $\Omega'\Subset\Omega$ and any $h \neq 0$ such that $0<|h|<\mbox{dist}(\Omega',\partial\Omega)$,
and any $j\in \{1,\dots, d\}$, we deduce from
\eqref{eq-ap3-3} that
\begin{align}\label{eq-ap3-4}
&\frac{1}{d}\,\int_{\Omega'}
\left|\frac{\tr(\log \chi_{\sigma_3}(\PP(t,x + h e_j))) - \tr(\log \chi_{\sigma_3}(\PP(t,x)))}{h}\right|^2 \dx\nonumber\\
&\qquad \leq -
\int_{\Omega'}\tr\left(\frac{\PP(t,x + h e_j) -\PP(t,x)}{h}\;\frac{\chi_{\sigma_3}(\PP(t,x + h e_j))^{-1} - \chi_{\sigma_3}(\PP(t,x))^{-1}}{h} \right)\dx\qquad \forall\,t \in (0,T_{\sigma_3}].
\end{align}
By the Cauchy--Schwarz inequality, the right-hand side of \eqref{eq-ap3-4} can be bounded, for each $t \in (0,T_{\sigma_3}]$,
as follows:
\begin{align*}
0 &\leq -
\int_{\Omega'}\tr\left(\frac{\PP(t,x + h e_j) -\PP(t,x)}{h}\;\frac{\chi_{\sigma_3}(\PP(t,x + he_j))^{-1} - \chi_{\sigma_3}(\PP(t,x))^{-1}}{h} \right)\dx\\
 &\leq \mbox{ess.sup}_{t \in (0,T_{\sigma_3}]} \left\|\frac{\PP(t,\cdot + h e_j) -\PP(t,\cdot)}{h}\right\|_{L^2(\Omega')}
\mbox{ess.sup}_{t \in (0,T_{\sigma_3}]}\left\| \frac{\chi_{\sigma_3}(\PP(t,\cdot + he_j))^{-1} - \chi_{\sigma_3}(\PP(t,\cdot))^{-1}}{h}\right\|_{L^2(\Omega')} \\
 &\leq \mbox{ess.sup}_{t \in (0,T_{\sigma_3}]}\|\partial_{x_j} \PP(t,\cdot)\|_{L^2(\Omega)}\,\,
\mbox{ess.sup}_{t \in (0,T_{\sigma_3}]}\|\partial_{x_j}(\chi_{\sigma_3}(\PP(t,\cdot))^{-1})\|_{L^2(\Omega)}.
\end{align*}
Thus we deduce from \eqref{eq-ap3-4} that $\partial_{x_j}\tr(\log \chi_{\sigma_3}(\PP(t,\cdot)) \in L^2(\Omega;\mathbb{R}^{d\times d})$, for all $j \in \{1, \dots, d\}$ and all $t \in (0,T_{\sigma_3}]$, and by letting $h \rightarrow 0$,
\begin{align}\label{eq-ap3-5}
&\frac{1}{d}\,
\|\partial_{x_j}\tr(\log \chi_{\sigma_3}(\PP(t,\cdot)))\|^2_{L^2(\Omega)}
=\frac{1}{d}\,\lim_{h \rightarrow 0} \int_{\Omega_{h,j}}
\left|\frac{\tr(\log \chi_{\sigma_3}(\PP(t,x + he_j))) - \tr(\log \chi_{\sigma_3}(\PP(t,x)))}{h}\right|^2 \dx\nonumber\\
&\qquad \leq -\lim_{h \rightarrow 0}
\int_{\Omega_{h,j}}\tr\left(\frac{\PP(x + he_j) -\PP(x)}{h}\;\frac{\chi_{\sigma_3}(\PP(x + he_j))^{-1} - \chi_{\sigma_3}(\PP(x))^{-1}}{h} \right)\dx  \qquad \forall\,t \in (0,T_{\sigma_3}],
\end{align}
where $\Omega_{h,j}:= \{x \in \Omega\,:\,x+he_j \in \Omega\}$, $j=1,\dots, d$. The limit of the sequence of integrals
appearing in the second line of \eqref{eq-ap3-5} exists, possibly upon extraction of a subsequence, thanks to the boundedness
of the sequence. Hence, for all
$t \in (0,T_{\sigma_3}]$, we have
\begin{align}\label{eq-ap3-6}
&\frac{1}{d}\,
\|\partial_{x_j}\tr(\log \chi_{\sigma_3}(\PP(t,\cdot)))\|^2_{L^2(\Omega)} \nonumber\\
&\qquad \leq -\lim_{h \rightarrow 0}
\int_{\Omega} \chi_{\Omega_{h,j}}(x)\,\frac{\PP(t,x + he_j) -\PP(x)}{h}\,:\,\frac{\chi_{\sigma_3}(\PP(t,x + he_j))^{-1} - \chi_{\sigma_3}(\PP(t,x))^{-1}}{h}\, \dx,\qquad j=1,\dots,d.
\end{align}
Here $\chi_{\Omega_{h,j}}$ denotes the characteristic function of the set
$\Omega_{h,j}$ (not to be confused with the cut-off function $\chi_{\s_3}$).
As $\PP \in C([0,T_{\sigma_3}];W^{1,2}(\Omega;\mathbb{R}^{d \times d}))$ and $\chi_{\sigma_3}(\PP)^{-1} \in L^\infty(0,T_{\sigma_3};W^{1,2}(\Omega;\mathbb{R}^{d\times d}))$, it follows that, as $h \rightarrow 0$,
\begin{alignat*}{2}
\frac{\PP(t,\cdot + he_j) -\PP(t,\cdot)}{h} &\rightarrow  \partial_{x_j} \PP(t,\cdot) \quad &&\mbox{weakly in $L^2(\Omega;\mathbb{R}^{d\times d})$, $\; j=1,\dots,d$, $\;\forall\,t \in (0,T_{\sigma_3}]$},\\
\frac{\chi_{\sigma_3}(\PP(t,\cdot + he_j))^{-1} - \chi_{\sigma_3}(\PP(t,\cdot))^{-1}}{h}
&\rightarrow  \partial_{x_j} \chi_{\sigma_3}(\PP(t,\cdot))^{-1} \; &&\mbox{weakly in $L^2(\Omega;\mathbb{R}^{d\times d})$, $\; j=1,\dots,d$, $\;$ for a.e. $t \in (0,T_{\sigma_3}]$}.
\end{alignat*}

Furthermore, since $\Delta_x \PP(t,\cdot) \in L^2(\Omega;\mathbb{R}^{d\times d})$ for a.e. $t \in (0,T_{\sigma_3}]$, $\PP$ satisfies a homogeneous Neumann boundary condition on $\partial \Omega$, and $\Omega$ is a $C^{2,\beta}$ domain, with $\beta\in(0,1)$, it follows by elliptic regularity theory (cf. Lemma 4.27 in \cite{N-book}) that $\PP(t,\cdot) \in W^{2,2}(\Omega;\mathbb{R}^{d\times d})$ for a.e. $t \in (0,T_{\sigma_3}]$. Hence,
\begin{alignat*}{2}
\frac{\partial_{x_i}\PP(t,\cdot + he_j) -\partial_{x_i}\PP(t,\cdot)}{h} &\rightarrow  \partial_{x_i}\partial_{x_j} \PP(t,\cdot) \qquad &&\mbox{weakly in $L^2(\Omega;\mathbb{R}^{d\times d})$, $\quad i,j=1,\dots,d,\quad $ for a.e. $t \in (0,T_{\sigma_3}]$}.
\end{alignat*}
Consequently,
\begin{alignat*}{2}
\frac{\PP(t,\cdot + he_j) -\PP(t,\cdot)}{h} &\rightarrow  \partial_{x_j} \PP(t,\cdot) \qquad &&\mbox{weakly in $W^{1,2}(\Omega;\mathbb{R}^{d\times d})$, $\quad j=1,\dots,d,\quad$ for a.e. $t \in (0,T_{\sigma_3}]$},
\end{alignat*}
and therefore
\begin{alignat*}{2}
\frac{\PP(t,\cdot + he_j) -\PP(t,\cdot)}{h} &\rightarrow  \partial_{x_j} \PP(t,\cdot) \qquad &&\mbox{strongly in $L^{r}(\Omega;\mathbb{R}^{d\times d})$, $\quad j=1,\dots,d,\quad$ for a.e. $t \in (0,T_{\sigma_3}]$},
\end{alignat*}
where $r \in [1,\infty)$ when $d=2$ and $r \in [1,6)$ when $d=3$; also, $\chi_{\Omega_{h,j}} \rightarrow 1$, strongly in $L^s(\Omega)$, as $h \rightarrow 0$, for all $s \in [1,\infty)$. Thus we deduce, with $r=s=4$, that
\begin{alignat*}{2}
\chi_{\Omega_{h,j}} \frac{\PP(t,\cdot + he_j) -\PP(t,\cdot)}{h} &\rightarrow  \partial_{x_j} \PP(t,\cdot) \qquad &&\mbox{strongly in $L^{2}(\Omega;\mathbb{R}^{d\times d})$, $\quad j=1,\dots,d,\quad$ for a.e. $t \in (0,T_{\sigma_3}]$},
\end{alignat*}
as $h \rightarrow 0$. Hence, as $h \rightarrow 0$,
\begin{align*}
\chi_{\Omega_{h,j}}(\cdot)\,\frac{\PP(t,\cdot + he_j) -\PP(t,\cdot)}{h}\,:\,\frac{\chi_{\sigma_3}(\PP(t,\cdot + he_j))^{-1} - \chi_{\sigma_3}(\PP(t,\cdot))^{-1}}{h} \rightarrow  (\partial_{x_j} \PP) : (\partial_{x_j} \chi_{\sigma_3}(\PP(t,\cdot))^{-1})\\ \qquad \mbox{weakly in $L^{1}(\Omega),\quad j=1,\dots,d,\quad$ for a.e. $t \in (0,T_{\sigma_3}]$}.
\end{align*}
Hence, by passing to the limit in \eqref{eq-ap3-4}, we have that
\begin{align*}
&\frac{1}{d}\,
\|\partial_{x_j}\tr(\log \chi_{\sigma_3}(\PP(t,\cdot)))\|^2_{L^2(\Omega)}
\leq - \int_\Omega (\partial_{x_j} \PP(t,x)):(\partial_{x_j} \chi_{\sigma_3}(\PP(t,x))^{-1})\, \dx, \quad
j=1,\dots,d,\quad \mbox{for a.e. $t \in (0,T_{\sigma_3}]$}.
\end{align*}
Finally, by summing over $j=1,\dots, d$, we deduce that
\begin{align*}
&\frac{1}{d}\,
\|\nabla_x\tr(\log \chi_{\sigma_3}(\PP(t,\cdot)))\|^2_{L^2(\Omega)}
\leq - \int_\Omega \nabla_x \PP(t,x) :: \nabla_x \chi_{\sigma_3}(\PP(t,x))^{-1}\, \dx \quad \mbox{for a.e. $t \in (0,T_{\sigma_3}]$}.
\end{align*}
That completes the proof of Lemma \ref{lem-lap-tau-inverses3}.
\end{proof}

\end{appendix}

\end{document}